\documentclass{amsart}

\usepackage{comment}

\usepackage{amsfonts}
\usepackage{amsthm}
\usepackage{amssymb}
\usepackage{amsmath}
\usepackage{graphicx}
\usepackage{shuffle}
\usepackage{bbm}
\usepackage{multicol}
\usepackage{mathrsfs}
\usepackage{mathtools}
\usepackage{float}
\usepackage{geometry}
\usepackage{hyperref}
\usepackage{tikz}
\usepackage{mathrsfs}
\usepackage{multirow}

\newcommand{\R}{\mathbb R}
\newcommand{\N}{\mathbb N}
\newcommand{\E}{\mathbb E}
\renewcommand{\P}{\mathbb P}

\newcommand{\dif}{\mathrm{d}}

\newcommand{\cov}{\operatorname{cov}}
\renewcommand{\P}{\mathbb{P}}

\allowdisplaybreaks
\usetikzlibrary{matrix,arrows,decorations.pathmorphing}

\setcounter{MaxMatrixCols}{10}

\newcounter{cprop}[section]

\newtheorem{theorem}[cprop]{Theorem}

\theoremstyle{plain}

\newtheorem{corollary}[cprop]{Corollary}

\newtheorem{lemma}[cprop]{Lemma}
\newtheorem{proposition}[cprop]{Proposition}

\newtheorem{assumption}[cprop]{Assumption}

\numberwithin{equation}{section}

\theoremstyle{definition}
\newtheorem{definition}[cprop]{Definition}
\newtheorem{example}[cprop]{Example}

\theoremstyle{remark}
\newtheorem{remark}[cprop]{Remark}



\newcommand{\vertiii}[1]{{\left\vert\kern-0.25ex\left\vert\kern-0.25ex\left\vert #1 
		\right\vert\kern-0.25ex\right\vert\kern-0.25ex\right\vert}}

\begin{document}
	\title[Introduction to rough paths theory]{Introduction to rough paths theory}
	\author{M. Ghani Varzaneh}
	\address{Mazyar Ghani Varzaneh\\
		Fakult\"at f\"ur  Mathematik und Informatik, FernUniversit\"at in Hagen, Hagen, Germany}
	\email{mazyar.ghanivarzaneh@fernuni-hagen.de}

	\author{S. Riedel}
	\address{Sebastian Riedel \\
		Fakult\"at f\"ur  Mathematik und Informatik, FernUniversit\"at in Hagen, Hagen, Germany}
	\email{sebastian.riedel@fernuni-hagen.de }
	

	\keywords{rough paths, rough differential equations}
	
	\subjclass[2020]{60L20}
	
	\begin{abstract}
		These notes are an extended version of the course ``Introduction to rough paths theory'' given at the XXV Brazilian School of Probability in Campinas in August 2022. Their aim is to give a concise overview to Lyons' theory of rough paths with a special focus on applications to stochastic differential equations. 
	\end{abstract}
	
	\maketitle
	
	\tableofcontents
	
	\section{Introduction}\label{sec_inf_mfd}
	
	Rough paths theory, as we know it today, originates from a series of papers T.~Lyons wrote in the 90s, cf. \cite{Lyo98} and the references therein. In his work, Lyons obtained a deep understanding of paths with low regularity and their interaction within nonlinear systems. One strong motivation for a study of irregular paths is their ubiquity in stochastic analysis. In fact, there are various examples of rescaled random systems that converge to objects with ``rough'' behaviour. The most prominent example is, of course, the \emph{Brownian motion} (Bm) which is a rescaled version of a whole class of random walks. Due to its universality, the Brownian motion plays a key role in stochastic modelling. An important example is a stochastic differential equation in which the ``noise'' is modelled by the (formal) derivative of a Brownian motion. For instance, let us look at the equation
	\begin{align}\label{eqn:stoch_ODE_intro}
		\dif Y_t = \sigma(Y_t) \, \dif B_t(\omega)
	\end{align}
	where $B(\omega)$ is the trajectory of a Brownian motion and $\sigma$ a nonlinear function. Although this equation might look like an innocent non-autonomous random ordinary differential equation, it constitutes a great challenge if we want to analyze it with the tools of classical analysis (we will see in these notes some explanations why this is the case). A major contribution to the understanding of the equation \eqref{eqn:stoch_ODE_intro} was made in the 50s by K.~It\=o who gave a rigorous meaning to it using a \emph{stochastic integral} that nowadays bears his name. It\=o did not view \eqref{eqn:stoch_ODE_intro} as an ordinary differential equation for every trajectory (what is called a \emph{pathwise} point of view) but put forward the \emph{probabilistic} properties of the Brownian motion, namely, its martingale property. His integral is defined using an isometry on a space of martingales, not referring to single trajectories anymore. Eventually, he understood \eqref{eqn:stoch_ODE_intro} as an equation on a space of stochastic processes. It\=o's stochastic calculus was (and is!) extremely successful. Still, a pathwise understanding of \eqref{eqn:stoch_ODE_intro} is desirable in many situations, and one of Lyons' goals was to provide the ground for it. \smallskip
	
	The present text focuses on defining solutions to \emph{rough differential equations} for which \eqref{eqn:stoch_ODE_intro} is a prototype. On the journey to this overall goal, we will touch several key aspects of rough paths theory. These notes are almost self-contained as we give formal proofs of the stated results wherever possible. However, some calculations will be omitted in order not to overload the reader with technical details, but references are given in that case, though. We hope that the reader can use this text to get an idea of what rough paths theory is about, why it was invented and what it can be used for. \smallskip
	
	There are some branches of rough paths theory we were not able to discuss in these notes, and we want to mention two of them here. The first concerns applications of rough paths in the field of stochastic \emph{partial} differential equations. The most famous result here is probably M.~ Hairer's solution to the KPZ-equation that was constructed with the help of rough paths theory \cite{Hai13}. Later, Hairer systematically expanded his ideas and built a whole solution theory for a class of stochastic partial differential equations that he called the theory of \emph{regularity structures} \cite{Hai14}. The reader who is interested in these topics is referred to \cite[Chapter 12 -- 15]{FH20} and \cite{Hai15} for an overview. A second complex we were not able to touch concerns the relationship between rough paths theory and machine learning. In fact, the so-called \emph{signature method} is a very powerful tool that can be used to analyze and forecast very different kinds of data streams. For an introduction to this method, the reader may consult \cite{CK16} and \cite{LM22}. \smallskip
	
	Several monographs about rough paths theory are available now, cf. e.g. \cite{LQ02, LCL07, FV10, FH20}. The structure of our notes has some similarities to \cite{LCL07}, but our notation and the proofs we present are closer to \cite{FH20}. In particular, we wanted to present the important notion of a \emph{controlled path} introduced by Gubinelli \cite{Gub04}, since this concept plays a prominent role also in regularity structures. In this context, we discuss a more recent result about the geometry of controlled paths in Section \ref{sec:controlled_paths_fobs}. In this form, these results did not appear elsewhere yet.

	\subsection{Notation}
	
	A \emph{path} denotes a continuous function defined on a compact interval with values in a topological space. If $E$ is a topological vector space and $X \colon [0,T] \to E$ a path, we call $X_t - X_s$ with $s, t \in [0,T]$ an \emph{increment} of the path. We will use the notation $\delta X_{s,t} \coloneqq X_t - X_s$. If $(E, |\cdot|)$ is a normed space, we define for a function $\Xi$ defined on a simplex $\Xi \colon \{0 \leq s \leq t \leq T \} \to E$ and $\alpha > 0$ the quantity
	\begin{align*}
		\| \Xi \|_{\alpha} \coloneqq \sup_{s < t} \frac{| \Xi_{s,t} |}{|t-s|^{\alpha}}.
	\end{align*}
	If $X \colon [0,T] \to E$ is a path and $\alpha \in (0,1]$, $\| X \|_{\alpha} \coloneqq \| \delta X \|_{\alpha}$ is the usual $\alpha$-H\"older seminorm. A \emph{partition} of an interval $[s,t]$ is a finite set of points $\mathcal{P} = \{s = t_0 < \ldots < t_N = t\}$. We will also view the partition $\mathcal{P}$ as a set of closed intervals $\mathcal{P} = \{[t_i,t_{i+1}] \,:\, i = 0,\ldots,N-1 \}$. The \emph{mesh size} of $\mathcal{P}$ is defined as $|\mathcal{P}| \coloneqq \max_{[u,v] \in \mathcal{P}} |v-u|$. For two Banach spaces $V$ and $W$, $L(V,W)$ denotes the space of continuous linear functions from $V$ to $W$. The space $L(V,W)$ itself is equipped with the operator norm
	\begin{align*}
		\|\Phi\| \coloneqq \sup_{v \neq 0} \frac{| \Phi v |}{|v|}, \quad \Phi \in L(V,W).
	\end{align*}
	
	By $C$, we will mostly mean a generic constant that depends on the 
		aforementioned parameters. If we want to emphasize the dependence on a certain parameter $p$, we use the notation $C_{p}$. In a series of (in-)equalities, the actual value of this constant may change from line to line.

	\section{Motivation: Fractional Brownian motion}
	In this section, we present some background about the \emph{fractional Brownian motion}. These processes form a natural generalization of the \emph{Brownian motion} and were first introduced by Mandelbrot and van Ness in \cite{MvN68}. Let us first recall the definition of a Gaussian process.
	\begin{definition}
		A stochastic process $X \colon [0,\infty) \to \R$ is called \emph{Gaussian} if for every $k \in \N$ and every $t_1,\ldots,t_k \in [0,\infty)$, the random variable $(X_{t_1},\ldots,X_{t_k})$ is a multivariate Gaussian random variable.
	\end{definition}
	Note that the law of a Gaussian process is completely determined by the mean function $\E(X_t)$, $t \in [0,\infty)$, and the covariance function $\cov(X_s,X_t)$, $s,t  \in [0,\infty)$, of the process.
	\begin{definition}[Mandelbrot, van Ness '68]
		Let $H \in (0,1)$. The \emph{fractional Brownian motion} (fBm) is a continuous zero mean Gaussian process $B^H \colon [0,\infty) \to \R$ starting at $0$ with covariance function given by
		\begin{align*}
			R(s,t) \coloneqq \cov(B^H_s,B^H_t) = \E(B^H_s B^H_t) = \frac{1}{2} \left(|t|^{2H} + |s|^{2H} - |t-s|^{2H} \right).
		\end{align*}
		The parameter $H \in (0,1)$ is called \emph{Hurst parameter}.
	\end{definition}
	
	\begin{remark}
		For $H = \frac{1}{2}$, one obtains $R(s,t) = \min\{s,t\}$, i.e. $B^H$ is the usual Brownian motion (Bm). 
	\end{remark}
	
	Below, we show typical trajectories of the fractional Brownian motion with different Hurst parameters.
	
	\begin{figure}[H]
		\includegraphics[width=10cm]{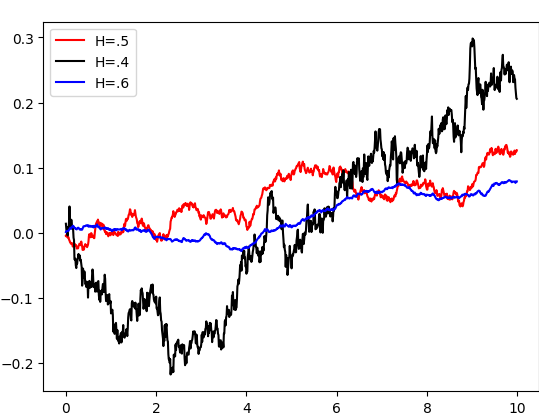}
		\caption{Trajectories of a fractional Brownian motion}
	\end{figure}
	
	For later purposes, we will list some properties of the fractional Brownian motion here. These and others can be found e.g. in \cite[Chapter 5]{Nua06} and \cite{BHOZ08}.
	
	\begin{proposition}
		Let $B^H \colon [0,\infty) \to \R$ be a fractional Brownian motion with Hurst parameter $H \in  (0,1)$. Then the following holds:
		\begin{itemize}
			\item[(i)] $B^H$ has {stationary increments}, i.e. for every $s \geq 0$, we have
			\begin{align*}
				(B^H_{t + s} - B^H_s)_{t \geq 0} \stackrel{\mathcal{D}}{=} (B^H_t)_{t \geq 0}.
			\end{align*}
			\item[(ii)] $B^H$ is self-similar with index $H$, i.e. for every $a > 0$,
			\begin{align*}
				(B^H_{at})_{t \geq 0} \stackrel{\mathcal{D}}{=} (a^{-H} B^H_t)_{t \geq 0}.
			\end{align*}

		\end{itemize}

	\end{proposition}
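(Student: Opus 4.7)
The plan is to exploit the fact that, as noted in the comment preceding the statement, the law of a Gaussian process is determined entirely by its mean and covariance functions. In both (i) and (ii), the processes on the two sides are centered Gaussian processes (being pointwise linear combinations of the Gaussian family $B^H$), so it suffices to verify that their covariance functions coincide. The whole proof therefore reduces to two algebraic computations with the explicit formula
\begin{align*}
R(s,t) = \tfrac{1}{2}\bigl(|t|^{2H} + |s|^{2H} - |t-s|^{2H}\bigr).
\end{align*}

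For (i), fix $s \geq 0$, set $Y_t \coloneqq B^H_{t+s} - B^H_s$, and note $\E(Y_t) = 0$. I would then expand
\begin{align*}
\cov(Y_t, Y_u) = R(t+s,u+s) - R(t+s,s) - R(s,u+s) + R(s,s)
\end{align*}
using the defining formula for $R$. The terms $(t+s)^{2H}$, $(u+s)^{2H}$, and $s^{2H}$ all cancel in pairs, leaving exactly $\tfrac{1}{2}(t^{2H} + u^{2H} - |t-u|^{2H}) = R(t,u)$, which matches the covariance of $B^H$ itself. Together with equality of (zero) mean functions, this gives equality in law for the two centered Gaussian processes.

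For (ii), set $Z_t \coloneqq B^H_{at}$ for $a > 0$. Again $Z$ is a centered Gaussian process, and a direct substitution gives
\begin{align*}
\cov(Z_t, Z_u) = R(at, au) = \tfrac{1}{2} a^{2H} \bigl(t^{2H} + u^{2H} - |t-u|^{2H}\bigr) = a^{2H} R(t,u),
\end{align*}
which is readily checked to coincide with the covariance of the centered Gaussian process $(a^{H} B^H_t)_{t \geq 0}$ (the $\pm$ sign in the exponent is irrelevant up to passing from $a$ to $1/a$, so the relation can be written in either direction). Hence equality in law follows once more.

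There is no genuine obstacle here; the only subtle point is to remember that finite-dimensional distributions suffice to identify the law of a continuous Gaussian process, so matching the covariance functions at two points really does imply the stated equalities in distribution as processes. The rest is routine algebra with the kernel $R$.
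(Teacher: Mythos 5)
The paper leaves this proposition as an exercise, so there is no proof of record to compare against; your approach — identifying both sides as centered Gaussian processes and matching covariance kernels — is the standard one and your computation for (i) is correct. For (ii) your covariance computation $\cov(B^H_{at},B^H_{au}) = a^{2H}R(t,u)$ is also correct, and you rightly observe it matches $\cov(a^{H}B^H_t,a^{H}B^H_u)$, not $\cov(a^{-H}B^H_t,a^{-H}B^H_u)$; the exponent $-H$ in the stated proposition is a typo (the standard formulation is $(B^H_{at})_t \stackrel{\mathcal{D}}{=} (a^{H}B^H_t)_t$, equivalently $(a^{-H}B^H_{at})_t \stackrel{\mathcal{D}}{=} (B^H_t)_t$). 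Your parenthetical remark that the sign is ``irrelevant up to passing from $a$ to $1/a$'' is not quite right, though: replacing $a$ by $1/a$ in the correct identity reproduces the same identity, it does not turn the $+H$ version into the $-H$ version, so the two signed statements are genuinely different and only $+H$ is true. Better to simply flag the sign as an error in the statement and prove the correct version, as your computation in fact does.
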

	
	\begin{proof}
		Exercise.
	\end{proof}
	
	The Hurst parameter describes the behaviour of the process. One easy observation is the following:
	\begin{proposition}
		The increments of the fractional Brownian motion are
		\begin{enumerate}
			\item  uncorrelated for $H = \frac{1}{2}$,
			\item  positively correlated for $H > \frac{1}{2}$,
			\item  negatively correlated for $H < \frac{1}{2}$.
		\end{enumerate}
	\end{proposition}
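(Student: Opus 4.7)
The plan is to reduce the claim to a sign analysis of a rectangular second-order difference of $f(x) = x^{2H}$. For two disjoint increments $\delta B^H_{s,t}$ and $\delta B^H_{u,v}$ with $0 \leq s \leq t \leq u \leq v$, bilinearity of the covariance gives
$$\cov(\delta B^H_{s,t}, \delta B^H_{u,v}) = R(t,v) - R(t,u) - R(s,v) + R(s,u),$$
and substituting $R(p,q) = \tfrac{1}{2}(p^{2H} + q^{2H} - |q-p|^{2H})$ causes all pure $p^{2H}$ and $q^{2H}$ contributions to cancel, leaving
$$\cov(\delta B^H_{s,t}, \delta B^H_{u,v}) = \tfrac{1}{2}\left[(u-t)^{2H} + (v-s)^{2H} - (v-t)^{2H} - (u-s)^{2H}\right].$$

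Setting $a = u-t \geq 0$, $b = t-s > 0$, $c = v-u > 0$, the bracket equals $f(a) + f(a+b+c) - f(a+b) - f(a+c)$, a rectangular second difference of $f$. Two applications of the fundamental theorem of calculus yield the identity
$$f(a) + f(a+b+c) - f(a+b) - f(a+c) = \int_0^b \int_0^c f''(a+\sigma+\tau)\, d\tau\, d\sigma,$$
first for $a > 0$, where $f''(x) = 2H(2H-1)x^{2H-2}$ is bounded on the rectangle of integration; the identity then extends to $a = 0$ either by continuity in $a$ or by direct evaluation, since the double integral remains convergent for any $H \in (0,1)$ (the apparent singularity at the corner has order $r^{2H-2}$ and is tamed by the two-dimensional measure element). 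Because $\operatorname{sign} f''(x) = \operatorname{sign}(2H-1)$ on $(0,\infty)$, the integrand---and hence the covariance---is zero for $H = \tfrac{1}{2}$, strictly positive for $H > \tfrac{1}{2}$, and strictly negative for $H < \tfrac{1}{2}$.

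I expect the only substantive piece of work to be the book-keeping in step one: carrying out the polarization-type expansion of the covariance and tracking the cancellation of the eight pure power terms that arise after substituting the explicit form of $R$. Everything downstream is a one-variable calculus exercise, and the three-way case distinction is read off directly from the sign of $2H-1$, so no genuine obstacle remains.
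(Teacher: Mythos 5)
The paper states this proposition without giving a proof, so there is nothing to compare against; your proof is correct and complete. The polarization step yielding $\cov(\delta B^H_{s,t}, \delta B^H_{u,v}) = \tfrac12\bigl[(u-t)^{2H} + (v-s)^{2H} - (v-t)^{2H} - (u-s)^{2H}\bigr]$ is right, as is the identification of the bracket with the rectangular second difference $f(a)+f(a+b+c)-f(a+b)-f(a+c)$ of $f(x)=x^{2H}$, and the double integral $\int_0^b\int_0^c f''(a+\sigma+\tau)\,\dif\tau\,\dif\sigma$ cleanly exposes the sign. Your handling of the boundary case $a=0$ (adjacent increments, the most relevant one) is careful: the singularity of $f''$ near the corner has order $(\sigma+\tau)^{2H-2}$, which is locally integrable against planar Lebesgue measure for every $H\in(0,1)$. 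One shortcut worth noting: the bracket equals $\bigl[f(a+b+c)-f(a+c)\bigr]-\bigl[f(a+b)-f(a)\bigr]$, the difference of two increments of $f$ over intervals of common length $b$, the first shifted to the right by $c>0$; its sign follows at once from strict convexity ($H>\tfrac12$), affinity ($H=\tfrac12$), or strict concavity ($H<\tfrac12$) of $x\mapsto x^{2H}$ on $[0,\infty)$, with no integration required. Your double-integral argument is the explicit quantitative version of that convexity observation.
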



	\begin{remark}

		\begin{itemize}
            \item In stochastic modelling, the term \emph{noise} usually denotes the formal derivative of the Brownian motion $B$. To give a rigorous definition, $\dot{B}$ is understood as a random \emph{generalized function} or \emph{distribution}. More precisely,
        \begin{align*}
            \langle \dot{B},\phi\rangle=\int_{0}^{\infty}\phi(s)\mathrm{d}B_{s}
        \end{align*}
        for every smooth function $\phi \colon [0,\infty) \to \R$ with compact support. In particular,
        \begin{align*}
            \E( \langle \dot{B},\phi\rangle  \langle \dot{B},\psi\rangle) = \mathbb{E} \left( \int_{0}^{\infty}\phi(s) \, \mathrm{d}B_{s}\int_{0}^{\infty}\psi(s) \, \mathrm{d}B_{s} \right) = \int_{0}^{\infty}\phi(s)\psi(s)\, \mathrm{d}s.
        \end{align*}
        This suggests that $\E(\dot{B}_t) = 0$ and
        \begin{align}\label{eqn:white_noise}
            \cov(\dot{B}_s, \dot{B}_t) = \E(\dot{B}_s \dot{B}_t) = \delta_{s t} = \begin{cases}
                1   &\text{if } s = t, \\
                0 &\text{otherwise}
            \end{cases} 
        \end{align}
        for every $s,t \in [0,\infty)$.  Note, however, that these identities are only formal since the indicator functions  $\mathbbm{1}_{[0,t]}$ are not smooth and thus not a valid choice for $\phi$ and $\psi$. Still, \eqref{eqn:white_noise} justifies the name \emph{white noise} for $\dot{B}$. For the fractional Brownian motion $B^H$, one can make a similar (formal) calculation that indicates that the process $\dot{B}^H$ is stationary but has non-vanishing correlations for $H \neq \frac{1}{2}$. Sometimes, this kind of noise is called \emph{colored}.
		
			\item  Using the fractional Brownian motion instead of the Brownian motion for modelling random phenomena can be more realistic in case of models with memory. For instance, it was used to model price processes in illiquid markets (electricity markets, gas markets etc.) 
		\end{itemize}
	\end{remark}
	A generic form of a stochastic differential equation (SDE) driven by a fractional Brownian motion is
	\begin{align}\label{eqn:SDE_fBm}
		\begin{split}
			\dif Y_t &= b(Y_t)\, \dif t + \sum_{i = 1}^d \sigma_i(Y_t) \, \dif B^{H;i}_t \\
			Y_0 &= y_0 \in \R^m
		\end{split}
	\end{align}
	where $B^H = (B^{H;1}, \ldots, B^{H;d})$ is a \emph{$d$-dimensional fractional Brownian motion}, i.e. a vector of independent one-dimensional fractional Brownian motion, $b, \sigma_1,\ldots,\sigma_d \colon \R^m \to \R^m$ is a collection of vector fields and $Y \colon [0,\infty) \to \R^m$ is a stochastic process we aim to call a solution to \eqref{eqn:SDE_fBm}.
	The {fundamental problem} is: \emph{How should we interpret \eqref{eqn:SDE_fBm}?} Or, in other words: \emph{What properties should the process $Y$ satisfy to call it a \emph{solution} to the stochastic differential equation \eqref{eqn:SDE_fBm}}? \bigskip
	
	
	\textbf{First attempt:}  If the trajectories of $B^H$, i.e. the paths $t \mapsto B^H_t(\omega)$, $\omega \in \Omega$, were differentiable, we could interpret \eqref{eqn:SDE_fBm} \emph{pathwise} as a random (non-autonomous) ordinary differential equation (ODE):
	\begin{align}\label{eqn:pathwise_SDE}
		\frac{\dif Y_t}{\dif t} &= b(Y_t) + \sum_{i = 1}^d \sigma^i(Y_t) \frac{\dif B^{H;i}_t(\omega)}{\dif t}.
	\end{align}
	However, we will see now that this attempt fails.
	
	\begin{lemma}\label{lemma:rogers}
		For a fractional Brownian motion $B^H$ and $p > 0$, we have
		\begin{align}\label{eqn:rogers_sum}
			\sum_{j = 1}^{2^n} |B^H_{j 2^{-n}} - B^H_{(j-1) 2^{-n}}|^p  \stackrel{\P}{\to} \begin{cases}
				0 &\text{if } pH > 1, \\
				\infty &\text{if } pH < 1
			\end{cases}
		\end{align}
		as $n \to \infty$.
		
	\end{lemma}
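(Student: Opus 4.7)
The plan is to exploit the self-similarity and stationarity of increments established in the previous proposition in order to rewrite the sum in \eqref{eqn:rogers_sum} as a (rescaled) ergodic average. Self-similarity with $a = 2^{-n}$ together with the fact that increments transform linearly gives the distributional identity
\begin{align*}
    \bigl(B^H_{j2^{-n}} - B^H_{(j-1)2^{-n}}\bigr)_{j=1}^{2^n} \stackrel{\mathcal{D}}{=} 2^{-nH} \bigl(B^H_j - B^H_{j-1}\bigr)_{j=1}^{2^n}.
\end{align*}
Writing $Y_j \coloneqq B^H_j - B^H_{j-1}$ and $\mu_p \coloneqq \E|B^H_1|^p \in (0,\infty)$, this means that the sum in \eqref{eqn:rogers_sum} has the same law as $2^{n(1-Hp)} \cdot V_n$ with $V_n \coloneqq 2^{-n} \sum_{j=1}^{2^n} |Y_j|^p$. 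By the stationarity of increments, the sequence $(Y_j)_{j \geq 1}$ (the fractional Gaussian noise) is stationary and Gaussian.

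For the case $pH > 1$, the bound is immediate by taking expectations: $\E(S_n) = 2^{n(1-Hp)} \mu_p \to 0$, so $S_n \to 0$ in $L^1$ and hence in probability. (One in fact gets $\sum_n \E(S_n) < \infty$, and a Markov/Borel--Cantelli argument upgrades this to almost-sure convergence.)

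For the case $pH < 1$, the strategy is that $V_n \to \mu_p > 0$ while the prefactor $2^{n(1-Hp)}$ diverges. To justify the convergence of $V_n$, I would invoke the ergodic theorem for the stationary sequence $(Y_j)$. Its ergodicity follows from Maruyama's theorem — a stationary Gaussian sequence is ergodic iff its spectral measure has no atoms — together with the fact that fractional Gaussian noise has an absolutely continuous spectral density; for the special case $H = \tfrac{1}{2}$ the $Y_j$ are i.i.d.\ and this reduces to the strong law of large numbers. Given $V_n \to \mu_p$ almost surely and $2^{n(1-Hp)} \to \infty$, the displayed distributional identity yields $\P(S_n \leq M) \to 0$ for every fixed $M > 0$, which is precisely convergence to $\infty$ in probability.

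The main obstacle is the $pH < 1$ case, where one needs to rule out that $V_n$ undershoots $\mu_p$ with non-vanishing probability. A direct second-moment estimate works for $H < 3/4$, since there the covariances $\cov(Y_i,Y_j) \sim H(2H-1)|i-j|^{2H-2}$ become summable after squaring and Chebyshev gives $V_n \to \mu_p$ in $L^2$; in the long-memory regime $H \in [3/4,1)$, however, the second-moment method breaks down and one is forced either to use the ergodic-theoretic route sketched above or, equivalently, to exploit the Hermite expansion of $x \mapsto |x|^p$ in Gaussian noise. I would therefore simply cite Maruyama's theorem to keep the argument uniform in $H$.
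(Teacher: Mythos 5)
Your proof follows essentially the same strategy as the paper: rescale via self-similarity so the sum has the law of $2^{n(1-pH)}$ times an ergodic average $V_n = 2^{-n}\sum_{j=1}^{2^n}|B^H_j - B^H_{j-1}|^p$, then apply Birkhoff's ergodic theorem to conclude $V_n \to \E|B^H_1|^p > 0$ almost surely and send the prefactor to $0$ or $\infty$ accordingly. You are in fact slightly more careful than the paper, which invokes Birkhoff without verifying that the limit is the unconditional mean rather than a conditional expectation with respect to the invariant $\sigma$-algebra; your appeal to Maruyama's theorem and the absolute continuity of the spectral measure of fractional Gaussian noise supplies the ergodicity that the paper's argument tacitly assumes.
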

	
	\begin{proof}
		Define
		\begin{align*}
			Y_n \coloneqq  \sum_{j = 1}^{2^n} |B^H_{j 2^{-n}} - B^H_{(j-1) 2^{-n}}|^p (2^n)^{pH - 1}.
		\end{align*}
		By the scaling property,
		\begin{align*}
			Y_n = \sum_{j = 1}^{2^n} |B^H_{j 2^{-n}} - B^H_{(j-1) 2^{-n}}|^p (2^n)^{pH - 1} \stackrel{\mathcal{D}}{=} \frac{1}{2^n} \sum_{j = 1}^{2^n} |B^H_{j} - B^H_{(j-1)} |^p \eqqcolon \tilde{Y}_n.
		\end{align*}
		Since the fractional Brownian motion has stationary increments, the sequence $(B^H_{j} - B^H_{(j-1)})_{j \geq 1}$ is stationary. Therefore, by Birkhoff's ergodic theorem,
		\begin{align*}
			\tilde{Y}_n \to \E(|B^H_1|^p) \eqqcolon c_p > 0
		\end{align*}
		almost surely and in $L^1$ as $n \to \infty$. It follows that $Y_n \stackrel{\mathcal{D}}{\to} c_p$ and, consequently, $Y_n \stackrel{\P}{\to} c_p$ as $n \to \infty$. From this, the claim follows.

	\end{proof}

	\begin{proposition}
		On any interval $[0,T]$, the fractional Brownian motion is not continuously differentiable almost surely.
	\end{proposition}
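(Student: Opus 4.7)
The plan is to derive a contradiction from Lemma \ref{lemma:rogers}. By self-similarity it suffices to consider $T=1$. Let $A = \{\omega : B^H(\omega) \in C^1([0,1])\}$ and suppose for contradiction that $\P(A) > 0$. Since any continuously differentiable function on a compact interval has a bounded derivative, we may decompose $A = \bigcup_{k \in \N} A_k$ with $A_k = \{\sup_{t \in [0,1]} |(B^H)'(t)| \leq k\}$. The sets $A_k$ increase to $A$, so by continuity of measure there exists some $k$ with $\P(A_k) > 0$.

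On $A_k$, the mean value theorem gives $|B^H_{j 2^{-n}} - B^H_{(j-1) 2^{-n}}| \leq k \, 2^{-n}$ for every $j$, whence the dyadic sum appearing in \eqref{eqn:rogers_sum} is bounded above by $k^p \, 2^{n(1-p)}$. I would then choose $p$ in the interval $(1, 1/H)$, which is nonempty precisely because $H < 1$. For such $p$, the bound $k^p \, 2^{n(1-p)}$ tends to $0$, so on $A_k$ the sum is eventually at most $1$; in particular $\P(\text{sum} \leq 1) \geq \P(A_k) > 0$ for all $n$ sufficiently large.

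On the other hand, since $pH < 1$, Lemma \ref{lemma:rogers} forces the same sum to diverge to $\infty$ in probability, so $\P(\text{sum} \leq 1) \to 0$ as $n \to \infty$. This contradicts the lower bound from the previous step, finishing the argument.

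The main — and essentially only — subtle point is selecting $p$ to satisfy $p > 1$ simultaneously with $pH < 1$: the first inequality is what turns differentiability into a vanishing upper bound, and the second is what activates the divergent branch of Lemma \ref{lemma:rogers}. The existence of such a $p$ is exactly what the hypothesis $H < 1$ provides. The remaining ingredients — the reduction by self-similarity, the exhaustion by sub-events with bounded derivative, and the mean value estimate — are routine.
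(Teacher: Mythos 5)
Your argument is correct and follows the same strategy as the paper: deduce from $C^1$-regularity a bound on the dyadic increment sums in \eqref{eqn:rogers_sum} and contradict the divergence in probability given by Lemma~\ref{lemma:rogers}. The only differences are cosmetic — the paper takes $p=1$ directly and works with a random Lipschitz constant that is finite on a set of positive probability, whereas you first fix a deterministic bound by decomposing into the events $A_k$ and then choose $p\in(1,1/H)$ so the bound vanishes; both are valid, and indeed once the deterministic bound $k$ is in hand the choice $p=1$ would already suffice.
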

	
	\begin{proof}
		By rescaling, we can assume w.l.o.g. that $[0,T] = [0,1]$. Assume that $B^H$ is continuously differentiable with positive probability on $[0,1]$. Then there is a random constant $C > 0$ that is finite with positive probability such that $|B^H_t - B^H_s| \leq C |t-s|$ for every $s,t \in [0,1]$. Therefore,
		\begin{align*}
			\sum_{j = 1}^{2^n} |B^H_{j 2^{-n}} - B^H_{(j-1) 2^{-n}}| \leq \frac{C}{2^n} \sum_{j = 1}^{2^n} j - (j-1) = C < \infty
		\end{align*}
		for every $n  \in \N$ with positive probability which is a contradiction to Lemma \ref{lemma:rogers}.
	\end{proof}
	
	
	\begin{remark}
		There is a stronger statement saying that the fractional Brownian motion is nowhere differentiable with probability one that can be deduced from a general result on Gaussian processes, cf. \cite{KK71}. However, we will not need this stronger statement here.
	\end{remark}

	Motivated by partial differential equations, one might have the idea to weaken the notion of differentiability in order to give a meaning to \eqref{eqn:pathwise_SDE}. We could interpret $\frac{\dif B^{H;i}_t(\omega)}{\dif t}$ as a \emph{weak derivative}, i.e. as a \emph{distribution} or \emph{generalized function} \cite{Str03, Eva10}. However, this will lead to another problem: the equation \eqref{eqn:pathwise_SDE} contains products {$\sigma^i(Y_t) \cdot \frac{\dif B^{H;i}_t(\omega)}{\dif t}$} of non-smooth functions with distributions, and such products are (in general) not well defined \cite{Sch54}.
	
	\bigskip

	\textbf{Second attempt:} In stochastic analysis, the It\=o integral $\int Y\, \dif X$ is defined in case of $X$ being a semimartingale and $Y$ being adapted to the filtration generated by $X$. One could try to interpret \eqref{eqn:SDE_fBm} as an integral equation
	\begin{align*}
		Y_t = Y_0 + \int_0^t b(Y_s)\, \dif s + \sum_{i = 1}^d \int_0^t \sigma^i(Y_s)\, \dif B^{H,i}_s 
	\end{align*}
	where the stochastic integral is understood in It\=o-sense. However, one can prove the following:
	\begin{proposition}
		The fractional Brownian motion $B^H$ is not a semimartingale unless $H = \frac{1}{2}$.
	\end{proposition}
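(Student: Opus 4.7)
The plan is to split by cases on $H$ and derive a contradiction in each regime from Lemma \ref{lemma:rogers}, combined with two standard facts about continuous semimartingales. Throughout, I would assume for contradiction that $B^H$ is a semimartingale; because its trajectories are continuous, it would then automatically be a continuous semimartingale, admitting a decomposition $B^H = B^H_0 + M + A$ with $M$ a continuous local martingale and $A$ a continuous finite-variation process, both vanishing at zero.

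\textbf{Case $H > 1/2$.} First I would apply Lemma \ref{lemma:rogers} with $p = 2$: since $pH = 2H > 1$, the sums $\sum_{j = 1}^{2^n} (B^H_{j 2^{-n}} - B^H_{(j-1) 2^{-n}})^2$ tend to $0$ in probability. Invoking the standard identification of the quadratic variation of a continuous semimartingale as the in-probability limit of such dyadic sums, this forces $[B^H, B^H]_t \equiv 0$. Since a continuous local martingale with vanishing bracket is almost surely constant, $M \equiv 0$, so $B^H - B^H_0 = A$ would have paths of bounded variation on $[0,1]$. I would then apply Lemma \ref{lemma:rogers} a second time with $p = 1$: because $pH = H < 1$, the dyadic sums of absolute increments diverge in probability, contradicting the bounded variation of $A$.

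\textbf{Case $H < 1/2$.} Here the argument is a single-step application of Lemma \ref{lemma:rogers} with $p = 2$: one has $pH = 2H < 1$, so the dyadic sums of squared increments diverge in probability. But any continuous semimartingale has an almost surely finite quadratic variation on $[0,1]$, realized as the in-probability limit of precisely these sums, which yields an immediate contradiction.

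The main obstacle is Case $H > 1/2$, which relies on two standard but non-trivial facts from stochastic calculus: that for a continuous semimartingale the quadratic variation coincides with the limit in probability of partition sums along any sequence of partitions with vanishing mesh, and that a continuous local martingale with vanishing quadratic variation is almost surely constant. Since these notes are centred on rough paths rather than on stochastic calculus, I would simply quote both from a standard reference such as Revuz--Yor rather than redeveloping them.
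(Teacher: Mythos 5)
Your proof is correct and follows essentially the same route as the paper: split into the cases $H < 1/2$ and $H > 1/2$, use Lemma \ref{lemma:rogers} with $p = 2$ to identify the quadratic variation (infinite in the first case, zero in the second), and in the second case conclude the paths would have finite variation, contradicting Lemma \ref{lemma:rogers} with $p = 1$. You spell out the two facts from stochastic calculus that the paper's proof leaves implicit, which is a reasonable expository choice but not a different argument.
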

	
	\begin{proof}
		This is another consequence of Lemma \ref{lemma:rogers}: If the fractional Brownian motion was a semimartingale, the sum \eqref{eqn:rogers_sum} would converge in probability for $p = 2$ to the quadratic variation process evaluated at $1$. Since this random variable is finite almost surely, this is a contradiction to Lemma \ref{lemma:rogers} in the case $H < \frac{1}{2}$. For $H> \frac{1}{2}$, Lemma \ref{lemma:rogers} implies that the quadratic variation process equals 0 almost surely. This means that the martingale part in the semimartingale decomposition vanishes and that the fractional Brownian motion has almost surely paths of finite variation. This, however, is a contradiction to Lemma \ref{lemma:rogers} when choosing $p = 1$.   
	\end{proof}
	This shows that the classical It\=o approach is not applicable to the fractional Brownian motion, too. \bigskip
	
	\textbf{Third attempt:} In 1936, L.C.~Young introduced a notion of an integral that generalizes Riemann-Stieltjes integration \cite{You36}. More concretely, he defined an integral for functions $f,g \colon [0,T] \to \R$ that are H\"older continuous with H\"older index $\alpha \in (0,1]$ resp. $\beta \in (0,1]$ of the form $\int f\, \dif g$ provided $\alpha + \beta > 1$. To employ this approach, we first need to understand the regularity of the fractional Brownian motion. The following theorem is a classical result:
	\begin{theorem}[Kolmogorov-Chentsov]\label{thm:kolmogorov1}
		Let $X \colon [0,T] \to \R$ be a continuous stochastic process, $q \geq 2$, $\beta > \frac{1}{q}$ and assume that
		\begin{align*}
			\| X_t - X_s \|_{L^q} \leq C|t-s|^{\beta}
		\end{align*}
		for a constant $C > 0$ and any $s,t \in [0,T]$. Then for all $\alpha \in [0, \beta - 1/q)$, there is a random variable $K_{\alpha} \in L^q$ such that
		\begin{align*}
			|X_t - X_s| \leq K_{\alpha} |t-s|^{\alpha}
		\end{align*}
		for all $s,t \in [0,T]$. In particular, the trajectories of $X$ are almost surely $\alpha$-H\"older continuous.
	\end{theorem}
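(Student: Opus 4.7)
The plan is to run the classical dyadic chaining argument. By rescaling the time interval I may assume $T = 1$, and I write $D_n \coloneqq \{k 2^{-n} : 0 \leq k \leq 2^n\}$ and $D \coloneqq \bigcup_{n \geq 0} D_n$ for the dyadic rationals in $[0,1]$.

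The first step is to control the maximum dyadic increment at level $n$,
\begin{align*}
M_n \coloneqq \max_{0 \leq k < 2^n} |X_{(k+1)2^{-n}} - X_{k 2^{-n}}|.
\end{align*}
Using the crude bound $M_n^q \leq \sum_{k} |X_{(k+1)2^{-n}} - X_{k 2^{-n}}|^q$ together with the hypothesis, I get $\E(M_n^q) \leq C^q 2^n \cdot 2^{-n \beta q} = C^q 2^{-n(\beta q - 1)}$. Since $\beta q > 1$, this is summable after multiplication by $2^{n\alpha q}$ as long as $\alpha q < \beta q - 1$, i.e. $\alpha < \beta - 1/q$. Consequently the candidate random variable
\begin{align*}
K_\alpha \coloneqq \sup_{n \geq 0} 2^{n\alpha} M_n
\end{align*}
satisfies $\E(K_\alpha^q) \leq \sum_{n \geq 0} 2^{n\alpha q}\, \E(M_n^q) < \infty$, so $K_\alpha \in L^q$.

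Next I perform the chaining. Given dyadic $s < t$ in $D$, pick the unique $N \geq 0$ with $2^{-(N+1)} \leq t - s < 2^{-N}$. Any two dyadic points in a window of length $2^{-N}$ can be joined by a finite path that uses at most two edges from each level $n \geq N+1$ (one to ``descend'' from $s$, one to ``ascend'' to $t$). This gives the bound
\begin{align*}
|X_t - X_s| \leq 2 \sum_{n \geq N+1} M_n \leq 2 K_\alpha \sum_{n \geq N+1} 2^{-n \alpha} = \frac{2 K_\alpha}{1 - 2^{-\alpha}}\, 2^{-(N+1)\alpha} \leq C_\alpha\, K_\alpha |t-s|^{\alpha},
\end{align*}
with $C_\alpha \coloneqq 2/(1 - 2^{-\alpha})$. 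Thus the desired H\"older estimate holds on the countable set $D$.

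Finally I extend to all $s,t \in [0,1]$: by assumption $X$ is continuous, and $D$ is dense, so passing to limits in the inequality above preserves it for arbitrary $s,t \in [0,1]$. The main technical point is really the chaining step, where one has to carefully verify that two arbitrary points of $D$ can indeed be connected using at most two edges per dyadic level beyond $N$; everything else is moment bookkeeping. The conclusion that trajectories are $\alpha$-H\"older is then immediate since $K_\alpha < \infty$ almost surely (it lies in $L^q$).
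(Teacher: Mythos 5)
Your proof is correct and follows essentially the same dyadic chaining strategy as the paper: bound the maximal level-$n$ dyadic increment in $L^q$, connect two dyadic points by a chain using at most two edges per level, and sum. The only cosmetic difference is that you take $K_\alpha$ to be a supremum $\sup_n 2^{n\alpha} M_n$ and then sum a geometric series, whereas the paper packages the random constant directly as the series $K_\alpha = 2\sum_n 2^{n\alpha} K_n$ (avoiding the extra deterministic factor $C_\alpha$ you carry at the end, which you should fold into $K_\alpha$ to match the theorem's exact statement); both routes give the same $L^q$ membership via the same moment bookkeeping.
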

	\begin{proof}
		The proof is classical and can be found e.g. in \cite[(2.1) Theorem]{RY99}. Since we will use similar arguments later for proving Theorem \ref{thm:kolmogorov}, we provide a full proof here.
  
        Without loss of generality, we can assume that $T = 1$. Set
		\begin{align*}
			D_n = \{ k2^{-n}\, :\, k = 0,\ldots, 2^n \} \quad \text{and}  \quad D = \cup_{n \geq 0} D_n.
		\end{align*}
		We further define the random variables
		\begin{align*}
			K_n \coloneqq \sup_{t \in D_n} |\delta X_{t, t+2^{-n}}|, \quad \delta X_{t, t+2^{-n}} = X_{t+2^{-n}} - X_t.
		\end{align*}
		Then it holds that
		\begin{align*}
			\E(K^q_n) \leq \E \sum_{t \in D_n} |\delta X_{t,t+2^{-n}}|^q \leq \frac{1}{|D_n|} C^q |D_n|^{q \beta} = C^q |D_n|^{q\beta - 1}
		\end{align*}
		where $|D_n| = 2^{-n}$. Fix $s < t \in D$ and choose $m$ such that $|D_{m+1}| < t-s \leq |D_m|$. Going from coarser to finer partitions successively, we can find $\tau_0, \ldots,\tau_N \in \cup_{n \geq m+1} D_n$ such that
		\begin{align*}
			s = \tau_0 < \tau_1 < \ldots < \tau_N = t
		\end{align*}
		with the property that at most two intervals of the form $[\tau_i,\tau_{i+1}]$ have the same length.
		With this choice, it follows that
		\begin{align*}
			|\delta X_{s,t}| \leq \sum_{i = 0}^{N-1} |\delta X_{\tau_i, \tau_{i+1}}| \leq 2 \sum_{n \geq m+1} K_n.
		\end{align*}
		We thus obtain
		\begin{align*}
			\frac{|\delta X_{s,t}|}{|t-s|^{\alpha}} \leq \sum_{n \geq m+1} \frac{2 K_n}{|D_{m+1}|^{\alpha}} \leq \sum_{n \geq m+1} \frac{2 K_n}{|D_{n}|^{\alpha}} \leq 2 \sum_{n \geq 0} \frac{K_n}{|D_{n}|^{\alpha}} \eqqcolon K_{\alpha}.
		\end{align*}
		Therefore, we have shown that
		\begin{align*}
			|\delta X_{s,t}| \leq K_{\alpha}|t-s|^{\alpha}
		\end{align*}
		for every $s,t \in D$. By continuity of $X$, this bound holds in fact for every $s,t  \in [0,1]$. It remains to check that $K_{\alpha}$ is in $L^q$. Indeed,
		\begin{align*}
			\| K_{\alpha} \|_{L^q} \leq 2 \sum_{n \geq 0} \frac{\|K_n\|_{L^q}}{|D_n|^{\alpha}} \leq 2C \sum_{n \geq 0} |D_n|^{\beta - \frac{1}{q} - \alpha}
		\end{align*}
		which is summable by assumption on $\alpha$. This proves the theorem.
	\end{proof}

 \begin{remark}
		Often, the formulation of the Kolmogorov-Chentsov theorem does not assume that $X$ is continuous. The statement then says that $X$ has a H\"older-continuous \emph{modification} $\tilde{X}$, i.e. $X_t = \tilde{X}_t$ almost surely for every $t$. Note that the proof above yields the same statement: instead of using continuity of $X$, we define a process $\tilde{X}$ to coincide with $X$ on the dyadic numbers $D$ and extend it continuously to the whole interval $[0,1]$. One can check that $\tilde{X}$ is a modification of $X$.
	\end{remark}
	
 Using the Kolmogorov-Chentsov theorem, we can deduce an important property concerning the trajectories of a fractional Brownian motion:

	\begin{corollary}\label{cor:hoelder_fbm}
		The trajectories of the fractional Brownian motion are almost surely $\alpha$-H\"older continuous for every $\alpha < H$.
	\end{corollary}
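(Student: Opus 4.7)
The plan is to reduce the claim to a direct application of the Kolmogorov-Chentsov theorem (Theorem \ref{thm:kolmogorov1}), exploiting that $B^H$ is Gaussian so that all $L^q$-norms of increments are controlled by the variance.

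First, I would compute the variance of an increment from the covariance function. For $0 \le s \le t$,
\begin{align*}
    \E\bigl[(B^H_t - B^H_s)^2\bigr] = R(t,t) + R(s,s) - 2R(s,t) = t^{2H} + s^{2H} - \bigl(t^{2H} + s^{2H} - |t-s|^{2H}\bigr) = |t-s|^{2H}.
\end{align*}
Since $B^H_t - B^H_s$ is a centered Gaussian random variable (as a linear combination of coordinates of the Gaussian process $B^H$ with mean zero), its $L^q$-norm is a constant multiple of its standard deviation: there exists $c_q > 0$ with
\begin{align*}
    \|B^H_t - B^H_s\|_{L^q} = c_q \, |t-s|^H
\end{align*}
for every $q \ge 2$ and every $s,t \in [0,T]$.

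Next, I would invoke Theorem \ref{thm:kolmogorov1} with $\beta = H$: for any $q \ge 2$ such that $H > 1/q$, the trajectories are almost surely $\alpha$-Hölder continuous for every $\alpha < H - 1/q$. Finally, to obtain the full statement, I would fix an arbitrary $\alpha < H$ and choose $q$ large enough so that $H - 1/q > \alpha$; the existence of such a $q$ is immediate. Since a countable intersection of almost sure events is almost sure, one can even conclude simultaneously for a countable sequence $\alpha_n \uparrow H$, and hence for every $\alpha < H$ on a single event of full measure.

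There is essentially no obstacle here: the only subtlety worth mentioning is that the Hölder exponent obtained via Kolmogorov-Chentsov is always strictly less than $\beta - 1/q$, so one must let $q \to \infty$ to squeeze out exponents arbitrarily close to $H$. This is precisely why the Gaussian tail bound (giving \emph{all} moments) is crucial, whereas merely bounding the second moment would only yield Hölder regularity up to exponent $H - 1/2$.
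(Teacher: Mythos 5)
Your proposal is correct and takes essentially the same route as the paper: compute the second moment of an increment from the covariance function, invoke equivalence of Gaussian moments to control all $L^q$-norms, and apply Theorem \ref{thm:kolmogorov1} with $\beta = H$, letting $q \to \infty$. The paper states the final step more tersely (``the result now follows''), but the $q \to \infty$ squeeze you spell out is exactly what is meant.
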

	\begin{proof}
		By definition,
			\begin{align*}
				&\quad	\|B^H_t - B^H_s \|_{L^2}^2=\E\big( (B^H_t-B^{H}_s)^2\big)=\E(B^{H}_tB^{H}_t)-2\E(B^{H}_tB^{H}_s)+\E(B^{H}_sB^{H}_s)=|t-s|^{2H}
			\end{align*}
			for every $s,t$. Since $B^H$ is Gaussian, all $L^q$-norms are equivalent. Therefore,
			\begin{align*}
				\|B^H_t - B^H_s \|_{L^q}^{q} \leq C_q^{q}  (\|B^H_t - B^H_s \|_{L^2}^{2})^{\frac{q}{2}}= C_q^{q}|t-s|^{qH},
			\end{align*}
			for every $q \geq 2$. The result now follows from Theorem \ref{thm:kolmogorov1}.
	\end{proof}

	Corollary \ref{cor:hoelder_fbm} opens the possibility to understand the integral that appears in the integrated equation \eqref{eqn:SDE_fBm} using Young's integration theory. We will follow this approach in the next section.

	\section{Sewing lemma and Young's integral}
	\subsection{The Sewing lemma}
	In rough path theory, the Sewing lemma is one of the cornerstones which will allow us to define integrals. In this part, we present this result and show how it can be used to define Young integrals. Before doing this, we will introduce some more notation.

	\begin{definition}
		Let $W$ be a Banach space. 
		\begin{enumerate}
			\item $\mathcal{C}([0,T],W)$ will denote the space of continuous functions $f \colon [0,T] \to W$.
			\item For $\alpha \in (0,1]$, $\mathcal{C}^{\alpha}([0,T],W)$ is defined as the space of $\alpha$-H\"older continuous functions $f \colon [0,T] \to W$, i.e. $f \in \mathcal{C}^{\alpha}([0,T],W)$ if and only if
			\begin{align*}
				\| f \|_{\alpha} = \sup_{s < t} \frac{| \delta f_{s,t} |}{|t-s|^{\alpha}} < \infty.
			\end{align*} 		
			\item The space $\mathcal{C}_2^{\alpha,\beta}([0,T],W)$ denotes the space of functions $\Xi$ defined on the simplex $\{(s,t) \in [0,T]^2\, :\, s \leq t \}$ such that $\Xi_{t,t} = 0$ and 
			\begin{align*}
				\| \Xi \|_{\alpha, \beta} \coloneqq \| \Xi \|_{\alpha} + \| \delta \Xi \|_{\beta} < \infty
			\end{align*}
			where 
			\begin{align*}
				\delta \Xi_{s,u,t} \coloneqq \Xi_{s,t} - \Xi_{s,u} - \Xi_{u,t}, \qquad  \| \delta \Xi \|_{\beta} \coloneqq \sup_{s < u < t} \frac{|\delta \Xi_{s,u,t}|}{|t-s|^{\beta}}.
			\end{align*}
		\end{enumerate}
	\end{definition}
	We can now formulate the Sewing lemma.

	\begin{lemma}[Sewing lemma]\label{lemma:sewing}
		Let $0 < \alpha \leq 1 < \beta$. Then there exists a unique continuous linear map $\mathcal{I} \colon \mathcal{C}^{\alpha,\beta}_2([0,T],W) \to \mathcal{C}^{\alpha}([0,T],W)$ such that $(\mathcal{I} \Xi)_0 = 0$ and
		\begin{align}\label{eqn:sewing}
			|\delta \mathcal{I} \Xi_{s,t} -  \Xi_{s,t}| \leq \| \delta \Xi \|_{\beta}\big{[} 2^{\beta}(\zeta(\beta)-1)+1\big{]} |t-s|^{\beta} 
		\end{align}
		where $C > 0$ depends on $\beta$ and $\zeta$ denotes the Riemann zeta function
		\begin{align*}
			\zeta(s) = \sum_{n=1}^{\infty} \frac{1}{n^s}.
		\end{align*}
		Moreover,
		\begin{align*}
			\delta \mathcal{I} \Xi_{s,t} = \lim_{|\mathcal{P}| \to 0} \sum_{[u,v] \in \mathcal{P}} \Xi_{u,v}.
		\end{align*}

		
	\end{lemma}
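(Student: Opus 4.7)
My plan is to handle uniqueness first and then construct $\mathcal{I}\Xi$ as a limit of Riemann sums over increasingly fine partitions. \textbf{Uniqueness.} If $\mathcal{I}_1,\mathcal{I}_2$ both satisfy the conclusions, then $\Phi_{s,t} := \delta\mathcal{I}_1\Xi_{s,t} - \delta\mathcal{I}_2\Xi_{s,t}$ is additive, i.e.\ $\Phi_{s,t} = \Phi_{s,u} + \Phi_{u,t}$, because it is the increment of a path; and by the triangle inequality applied to \eqref{eqn:sewing}, $|\Phi_{s,t}|\le 2\|\delta\Xi\|_\beta[2^\beta(\zeta(\beta)-1)+1]|t-s|^\beta$. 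Splitting $[s,t]$ into $N$ equal pieces and using additivity gives $|\Phi_{s,t}|\le N\cdot C((t-s)/N)^\beta = C(t-s)^\beta N^{1-\beta}\to 0$ as $N\to\infty$ since $\beta>1$. Hence $\Phi\equiv 0$, and combined with $\mathcal{I}_1\Xi_0 = \mathcal{I}_2\Xi_0 = 0$ this forces $\mathcal{I}_1\Xi = \mathcal{I}_2\Xi$.

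\textbf{Existence.} For any partition $\mathcal{P}$ of $[s,t]$, set $I^{\mathcal{P}}_{s,t} := \sum_{[u,v]\in\mathcal{P}}\Xi_{u,v}$. I would first prove the deterministic partition-level bound
\[
|I^{\mathcal{P}}_{s,t} - \Xi_{s,t}| \le \|\delta\Xi\|_\beta\bigl[2^\beta(\zeta(\beta)-1)+1\bigr](t-s)^\beta,
\]
uniformly in $\mathcal{P}$, via a Young-style pigeonhole argument. If $\mathcal{P}=\{s=t_0<\cdots<t_N=t\}$ with $N\ge 2$, the identity $\sum_{i=1}^{N-1}(t_{i+1}-t_{i-1}) \le 2(t-s)$ forces some interior $t_i$ with $t_{i+1}-t_{i-1}\le\tfrac{2(t-s)}{N-1}$. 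Removing this $t_i$ changes the Riemann sum by $-\delta\Xi_{t_{i-1},t_i,t_{i+1}}$, which is bounded in norm by $\|\delta\Xi\|_\beta(2(t-s)/(N-1))^\beta$. Iterating the removal and handling the final step separately (from two intervals to the trivial partition $\{s,t\}$, where the contribution is directly $\|\delta\Xi\|_\beta(t-s)^\beta$ without the factor $2^\beta$) produces a telescoping sum whose total is bounded via $\sum_{k=1}^\infty k^{-\beta}=\zeta(\beta)$ by precisely the claimed constant.

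Next, to define $\mathcal{I}\Xi$ itself, I would show $I^{\mathcal{P}}_{s,t}$ is Cauchy as $|\mathcal{P}|\to 0$. For any refinement $\mathcal{P}'$ of $\mathcal{P}$,
\[
I^{\mathcal{P}'}_{s,t} - I^{\mathcal{P}}_{s,t} = \sum_{[u,v]\in\mathcal{P}}\bigl(I^{\mathcal{P}'|_{[u,v]}}_{u,v} - \Xi_{u,v}\bigr),
\]
and applying the uniform bound to each summand yields $|I^{\mathcal{P}'}_{s,t} - I^{\mathcal{P}}_{s,t}| \le C\|\delta\Xi\|_\beta|\mathcal{P}|^{\beta-1}(t-s)$. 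Taking common refinements of two arbitrary partitions thus gives a Cauchy net, and I define $\mathcal{I}\Xi_t := \lim_{|\mathcal{P}|\to 0} I^{\mathcal{P}}_{0,t}$ with $\mathcal{I}\Xi_0 := 0$. Additivity of $\delta\mathcal{I}\Xi$ (so that in particular $\delta\mathcal{I}\Xi_{s,t} = \lim I^{\mathcal{P}}_{s,t}$) follows by concatenating partitions of $[0,s]$ and $[s,t]$; the inequality \eqref{eqn:sewing} passes to the limit; and $\mathcal{I}\Xi\in\mathcal{C}^\alpha$ follows from $|\delta\mathcal{I}\Xi_{s,t}|\le|\Xi_{s,t}|+C|t-s|^\beta \le (\|\Xi\|_\alpha + C\|\delta\Xi\|_\beta T^{\beta-\alpha})|t-s|^\alpha$, which simultaneously yields the continuity of $\mathcal{I}$ as a linear map.

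The main technical point I expect is the pigeonhole-plus-telescoping argument giving the sharp constant $[2^\beta(\zeta(\beta)-1)+1]$: one has to separate the very last removal step (the one from a two-interval partition, where there is only one interior point and the averaging gives no gain) in order to avoid the overestimate $(2(t-s)/1)^\beta$ that would otherwise replace $\zeta(\beta)-1$ by $\zeta(\beta)$.
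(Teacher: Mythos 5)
Your proposal is correct and follows essentially the same route as the paper: the Young pigeonhole removal with the two-interval step handled separately to get the constant $2^\beta(\zeta(\beta)-1)+1$, the Cauchy argument over common refinements, and uniqueness via the observation that a $\beta$-Hölder path with $\beta>1$ must be constant (you spell out the dyadic-splitting proof of this fact, which the paper merely invokes). You also add the explicit $\alpha$-Hölder estimate $|\delta\mathcal I\Xi_{s,t}|\le(\|\Xi\|_\alpha+C\|\delta\Xi\|_\beta T^{\beta-\alpha})|t-s|^\alpha$ showing $\mathcal I\Xi\in\mathcal C^\alpha$ and the continuity of $\mathcal I$, a point the paper leaves implicit.
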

	\begin{proof}
		Let us prove uniqueness first. Assume that $I$ and $\tilde{I}$ both satisfy \eqref{eqn:sewing}. Then it holds that
		\begin{align*}
			|(I - \tilde{I})_t - (I - \tilde{I})_s| \leq C |t-s|^{\beta}.
		\end{align*}
		Since $\beta > 1$ and $I - \tilde{I}$ is a path, $I - \tilde{I}$ is constant. Since $I_0 = \tilde{I}_0 = 0$, uniqueness follows.
		Now fix an interval $[s,t]$ and a partition $\mathcal{P} = \{s = u_0 < u_1 < \ldots < u_r =t \}$ of this interval.
		We set 
		\begin{align*}
			\int_{\mathcal{P}}\Xi \coloneqq \sum_{[u,v] \in \mathcal{P}} \Xi_{u,v}.
		\end{align*}
		The idea is now to establish a \emph{maximal inequality} for $\int_{\mathcal{P}}\Xi$ by successively removing distinguished points from the partition $\mathcal{P}$. We claim that if $r \geq 3$, there exists a point $u \in \mathcal{P}$ such that for its neighbouring points $u_{-} < u < u_{+} \in \mathcal{P}$,
		\begin{align*}
			|u_{+} - u_{-}| \leq \frac{2}{r-1}|t-s|.
		\end{align*}
		Indeed, otherwise we would have
		\begin{align*}
			2 |t-s| \geq \sum_{u \in \mathcal{P} \setminus\{u_0,u_r\}} |u_{+} - u_{-}| > 2|t-s|
		\end{align*}
		which is a contradiction. Note that for $r=2$, clearly $|u_{+} - u_{-}|\leq |t-s|$.
		With this choice for $\# \mathcal{P} = r+1\geq 4$, we obtain
		\begin{align*}
			\left| \int_{\mathcal{P}} \Xi - \int_{\mathcal{P} \setminus\{u\}} \Xi \right| = | \delta \Xi_{u_{-},u,u_{+}}| \leq \| \delta \Xi \|_{\beta}|u_{+} - u_{-}|^{\beta} \leq  \| \delta \Xi \|_{\beta} \frac{2^{\beta}|t-s|^{\beta}}{(r-1)^{\beta}}.
		\end{align*}
		By successively removing points, we arrive at the uniform bound
		\begin{align}\label{eqn:maximal_inequality}
			\begin{split}
				\sup_{\mathcal{P}} \left| \int_{\mathcal{P}} \Xi -  \Xi_{s,t} \right| &\leq 2^{\beta} |t-s|^{\beta} \| \delta \Xi \|_{\beta} \sum_{k = 2}^{\infty} \frac{1}{k^\beta}+ |t-s|^{\beta}\| \delta \Xi \|_{\beta} \\
				&=   \| \delta \Xi \|_{\beta}\big{[} 2^{\beta}(\zeta(\beta)-1)+1\big{]} |t-s|^{\beta} 
			\end{split}
		\end{align}	
		where the right hand side is finite since $\beta > 1$. We aim to define $\mathcal{I} \Xi$ as the limit $\lim_{|\mathcal{P}| \to 0} \int_{\mathcal{P}} \Xi$ for which we have to prove the existence now. It suffices to show that
		\begin{align*}
			\sup_{\max\{ |\mathcal{P}|, |\mathcal{P}'|\} \leq \varepsilon} \left| \int_{\mathcal{P}} \Xi -  \int_{\mathcal{P}'} \Xi \right| \to 0 \quad \text{as } \varepsilon \to 0.
		\end{align*}
		By adding and subtracting $\int_{\mathcal{P} \cup \mathcal{P}'} \Xi$, we can assume without loss of generality that $\mathcal{P} \subset \mathcal{P}'$. In this case,
		\begin{align*}
			\int_{\mathcal{P}} \Xi -  \int_{\mathcal{P}'} \Xi = \sum_{[u,v] \in \mathcal{P}} \left( \Xi_{u,v} - \int_{\mathcal{P}' \cap [u,v]} \Xi \right).
		\end{align*}
		For $\max\{ |\mathcal{P}|, |\mathcal{P}'|\} = |\mathcal{P}| \leq \varepsilon$, we can use the maximal inequality \eqref{eqn:maximal_inequality} to see that
		\begin{align*}
			\left| \int_{\mathcal{P}} \Xi -  \int_{\mathcal{P}'} \Xi \right| &\leq \| \delta \Xi \|_{\beta}\big{[} 2^{\beta}(\zeta(\beta)-1)+1\big{]} \sum_{[u,v] \in \mathcal{P}} |v - u|^{\beta} = \mathcal{O}(|\mathcal{P}|^{\beta - 1}) = \mathcal{O}(\varepsilon^{\beta - 1}).
		\end{align*}
		This finishes the proof.
	\end{proof}
		%
	
	\subsection{Young's integral and differential equations driven by H\"older paths}
	
	We are now ready to state Young's result that generalizes Riemann–Stieltjes integration.
	
	\begin{theorem}[Young integral]\label{theorem:young_integral}
		Let $V$ and $W$ be Banach spaces, $g \in \mathcal{C}^{\alpha}([0,T],V)$ and $f \in \mathcal{C}^{\beta}([0,T],L(V,W))$. Assume that $\alpha + \beta > 1$. Then the integral
		\begin{align*}
			\int_s^t f_u \, \dif g_u \in W
		\end{align*}
		exists as a limit of Riemann sums for every $s < t \in [0,T]$ and we call it the \emph{Young integral}. Moreover, we have the estimate
		\begin{align}
			\left| \int_s^t f_u\, \dif g_u - f_s(g_t - g_s) \right| \leq C\|f\|_{\beta} \|g\|_{\alpha} |t-s|^{\alpha + \beta}
		\end{align}
		where $C > 0$ depends $ \alpha +  \beta$.
	\end{theorem}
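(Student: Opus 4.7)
The plan is to invoke the Sewing lemma (Lemma \ref{lemma:sewing}) applied to the natural first-order germ
\begin{align*}
    \Xi_{s,t} \coloneqq f_s \, \delta g_{s,t} = f_s(g_t - g_s) \in W,
\end{align*}
and then \emph{define} $\int_s^t f_u\, \dif g_u \coloneqq \delta(\mathcal{I}\Xi)_{s,t}$. Once this is set up, both the error bound and the Riemann-sum representation follow immediately from the corresponding statements in Lemma \ref{lemma:sewing}.

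First I would check that $\Xi \in \mathcal{C}_2^{\alpha, \alpha+\beta}([0,T],W)$, with $\alpha \leq 1 < \alpha+\beta$. Clearly $\Xi_{t,t}=0$. Since $f$ is continuous on the compact interval $[0,T]$, the quantity $M \coloneqq \sup_{u \in [0,T]} \|f_u\|$ is finite, and therefore
\begin{align*}
    |\Xi_{s,t}| \leq M \|g\|_{\alpha} |t-s|^{\alpha},
\end{align*}
so $\|\Xi\|_{\alpha} < \infty$. The crucial step is the algebraic identity for the defect: a direct computation gives
\begin{align*}
    \delta \Xi_{s,u,t} = f_s \delta g_{s,t} - f_s \delta g_{s,u} - f_u \delta g_{u,t} = -\delta f_{s,u} \cdot \delta g_{u,t},
\end{align*}
whence
\begin{align*}
    |\delta \Xi_{s,u,t}| \leq \|f\|_{\beta} \|g\|_{\alpha} |u-s|^{\beta} |t-u|^{\alpha} \leq \|f\|_{\beta} \|g\|_{\alpha} |t-s|^{\alpha+\beta}.
\end{align*}
This is precisely where the hypothesis $\alpha + \beta > 1$ gets used: it turns $\delta \Xi$ into a genuine ``sewable'' object with $\|\delta \Xi\|_{\alpha+\beta} \leq \|f\|_{\beta} \|g\|_{\alpha}$.

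Now I would apply the Sewing lemma with the pair of exponents $(\alpha, \alpha+\beta)$ in place of $(\alpha,\beta)$ there. This yields a unique $\mathcal{I}\Xi \in \mathcal{C}^{\alpha}([0,T],W)$ with $(\mathcal{I}\Xi)_0 = 0$, and defining $\int_s^t f_u \, \dif g_u \coloneqq \delta (\mathcal{I}\Xi)_{s,t}$, the sewing estimate \eqref{eqn:sewing} combined with the bound on $\|\delta \Xi\|_{\alpha+\beta}$ reads
\begin{align*}
    \left| \int_s^t f_u \, \dif g_u - f_s(g_t - g_s) \right| \leq C \|f\|_{\beta} \|g\|_{\alpha} |t-s|^{\alpha+\beta}
\end{align*}
with $C = 2^{\alpha+\beta}(\zeta(\alpha+\beta)-1)+1$, depending only on $\alpha+\beta$.

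Finally, the Riemann-sum representation is free: the Sewing lemma yields
\begin{align*}
    \int_s^t f_u \, \dif g_u = \lim_{|\mathcal{P}| \to 0} \sum_{[u,v] \in \mathcal{P}} \Xi_{u,v} = \lim_{|\mathcal{P}| \to 0} \sum_{[u,v] \in \mathcal{P}} f_u (g_v - g_u),
\end{align*}
which is exactly the limit of Riemann sums. There is no real obstacle in this proof; essentially all the analytic difficulty has already been absorbed into the Sewing lemma. The only substantive step is identifying the correct germ $\Xi_{s,t} = f_s \delta g_{s,t}$ and observing that its defect factors as $-\delta f_{s,u} \delta g_{u,t}$, a product whose scaling exponent $\alpha+\beta$ exceeds $1$ precisely under Young's hypothesis.
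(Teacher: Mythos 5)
Your proof is correct and is essentially identical to the paper's: both choose the germ $\Xi_{s,t} = f_s\,\delta g_{s,t}$, observe $\delta\Xi_{s,u,t} = -\delta f_{s,u}\,\delta g_{u,t}$, bound $\|\delta\Xi\|_{\alpha+\beta}\le \|f\|_\beta\|g\|_\alpha$, and invoke the Sewing lemma to define the integral and extract both the estimate and the Riemann-sum limit.
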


	\begin{proof}
		Set 
		\begin{align*}
			\Xi_{s,t} \coloneqq f_s (g_t - g_s).
		\end{align*}
		Then we have
		\begin{align*}
			\delta \Xi_{s,u,t} &= f_s (g_t - g_s) - f_s (g_u - g_s) - f_u (g_t - g_u) \\
			&= -(f_u - f_s)(g_t - g_u),
		\end{align*}
		thus $\| \Xi \|_{\alpha} \leq \|f\|_{\infty}\|g\|_{\alpha} < \infty$ and
		\begin{align*}
			\|\delta \Xi \|_{\alpha + \beta} \leq \|f\|_{\beta} \|g\|_{\alpha} < \infty.
		\end{align*}
		We can therefore apply the Sewing lemma and set
		\begin{align*}
			\int_s^t f_u\, \dif g_u =  \delta \mathcal{I} \Xi_{s,t}.
		\end{align*}
	\end{proof}
	Interpreting the integral in this way, we can give  meaning to differential equations driven by sufficiently regular H\"older paths. Before we formulate the statement, we define a class of functions that will be important for us.
	
	\begin{definition}
		For $k \geq 0$, $\mathcal{C}^k(\R^m, L(\R^d,\R^m))$ denotes the space of bounded $k$-times continuously differentiable functions $\sigma \colon \R^m \to L(\R^d,\R^m)$ with bounded derivatives, i.e. $\sigma = (\sigma_1, \ldots,\sigma_d)$ and every $\sigma_i \colon \R^m \to \R^m$ is bounded and $k$-times continuously differentiable with all derivatives being bounded. For $\sigma \in \mathcal{C}^k = \mathcal{C}^k(\R^m, L(\R^d,\R^m))$, we set 
		\begin{align*}
			\|\sigma\|_{\mathcal{C}^k} \coloneqq \max_{l = 0,\ldots,k} \| D^l \sigma \|_{\infty}.
		\end{align*}
		
	\end{definition}

		%

	\begin{theorem}\label{thm:young_ode}
		Let $X \in \mathcal{C}^{\alpha}([0,T],\R^d)$ for some $\alpha > \frac{1}{2}$ and let $\sigma \in \mathcal{C}^2(\R^m, L(\R^d,\R^m))$. Then the integral equation
		\begin{align}\label{eqn:young_ODE}
			Y_t &= y + \int_0^t \sigma(Y_t)\, \dif X_t ;  \quad t \in [0,T]
		\end{align}
		possesses a unique solution  $Y \in \mathcal{C}^{\alpha}([0,T],\R^m)$ for every initial condition $y \in \R^m$. The integral is understood as a Young integral.
	\end{theorem}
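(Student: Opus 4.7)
My plan is to prove the theorem by a Picard iteration argument, combining the Young integral estimate from Theorem \ref{theorem:young_integral} with the regularity of $\sigma$ to set up a contraction in a small-time $\alpha$-Hölder ball, and then concatenating solutions to cover $[0,T]$.

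First I would fix some $T_0 \in (0,T]$ to be chosen and some $R > 0$, and consider the closed subset
\begin{align*}
B_R \coloneqq \{ Y \in \mathcal{C}^{\alpha}([0,T_0],\R^m) \,:\, Y_0 = y, \ \|Y\|_{\alpha} \leq R \}
\end{align*}
of the Banach space $\mathcal{C}^{\alpha}([0,T_0],\R^m)$. Since $\sigma \in \mathcal{C}^1$ and $\alpha + \alpha > 1$, the composition $\sigma(Y)$ is $\alpha$-Hölder with $\|\sigma(Y)\|_{\alpha} \leq \|D\sigma\|_{\infty}\|Y\|_{\alpha}$, so the Young integral $\int_0^{\cdot} \sigma(Y_u)\,\dif X_u$ is well-defined and the map
\begin{align*}
\mathcal{M}(Y)_t \coloneqq y + \int_0^t \sigma(Y_u)\,\dif X_u
\end{align*}
makes sense for $Y \in B_R$. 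The estimate from Theorem \ref{theorem:young_integral} gives
\begin{align*}
|\delta \mathcal{M}(Y)_{s,t}| \leq \|\sigma\|_{\infty} \|X\|_{\alpha} |t-s|^{\alpha} + C\|D\sigma\|_{\infty} R \|X\|_{\alpha} |t-s|^{2\alpha},
\end{align*}
so $\|\mathcal{M}(Y)\|_{\alpha} \leq \|\sigma\|_{\infty}\|X\|_{\alpha} + C\|D\sigma\|_{\infty} R \|X\|_{\alpha} T_0^{\alpha}$. Choosing $R = 2\|\sigma\|_{\infty}\|X\|_{\alpha}$ and then $T_0$ small enough (depending only on $\|\sigma\|_{\mathcal{C}^2}$ and $\|X\|_{\alpha}$), this is bounded by $R$, proving that $\mathcal{M}$ maps $B_R$ into itself.

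The main technical step, and where the $\mathcal{C}^2$ assumption is actually used, is the contraction estimate. For $Y, \tilde{Y} \in B_R$, I write $\delta(\mathcal{M}(Y)-\mathcal{M}(\tilde Y))_{s,t} = \int_s^t (\sigma(Y_u)-\sigma(\tilde Y_u))\,\dif X_u$ and apply Theorem \ref{theorem:young_integral} again. The key lemma to establish separately is the bilinear estimate
\begin{align*}
\|\sigma(Y)-\sigma(\tilde Y)\|_{\alpha} \leq \|D\sigma\|_{\infty}\|Y-\tilde Y\|_{\alpha} + \|D^2\sigma\|_{\infty}\bigl(\|Y\|_{\alpha}+\|\tilde Y\|_{\alpha}\bigr)\|Y-\tilde Y\|_{\infty},
\end{align*}
which follows by writing $\sigma(Y_t)-\sigma(\tilde Y_t)-\sigma(Y_s)+\sigma(\tilde Y_s)$ as a double integral using the fundamental theorem of calculus applied to $\sigma$ and $D\sigma$. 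Combined with $\|Y-\tilde Y\|_{\infty} \leq T_0^{\alpha}\|Y-\tilde Y\|_{\alpha}$ (which uses $Y_0 = \tilde Y_0 = y$), one obtains
\begin{align*}
\|\mathcal{M}(Y)-\mathcal{M}(\tilde Y)\|_{\alpha} \leq C(\|\sigma\|_{\mathcal{C}^2},\|X\|_{\alpha},R)\bigl(T_0^{\alpha} + T_0^{2\alpha}\bigr)\|Y-\tilde Y\|_{\alpha}.
\end{align*}
Shrinking $T_0$ further if necessary, this is a strict contraction and the Banach fixed point theorem yields a unique solution in $B_R$ on $[0,T_0]$. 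Uniqueness within $B_R$ upgrades to uniqueness in all of $\mathcal{C}^{\alpha}([0,T_0],\R^m)$ starting at $y$ by choosing $R$ larger than any candidate solution's norm on a potentially even smaller interval and using a continuity/bootstrap argument.

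Finally, since the choice of $T_0$ depends only on $\|\sigma\|_{\mathcal{C}^2}$ and $\|X\|_{\alpha}$ and not on the initial condition, I can iterate: solve on $[0,T_0]$, then restart at $Y_{T_0}$ on $[T_0, 2T_0]$, and so on, covering $[0,T]$ in finitely many steps. The resulting solution is globally $\alpha$-Hölder since it is $\alpha$-Hölder on each piece and continuous at the junction points. I expect the bilinear estimate on $\|\sigma(Y)-\sigma(\tilde Y)\|_{\alpha}$ to be the main obstacle, as it is the step that genuinely requires the second derivative of $\sigma$ and drives the whole contraction mechanism.
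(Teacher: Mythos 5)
Your proof is correct, and it takes a genuinely different route around the one subtle point that the paper itself flags. The paper works on the \emph{unit} ball $\{\|Y\|_{\alpha'} \le 1\}$ and gets an invariance estimate of the form $\|\mathcal{M}_{T_0}(Y)\|_{\alpha'} \le C\|X\|_{\alpha'}$; since $\|X\|_{\alpha}$ on $[0,T_0]$ need not shrink as $T_0 \to 0$, the paper replaces $\alpha$ by an auxiliary $\alpha' \in (1/2,\alpha)$ so that $\|X\|_{\alpha'} \to 0$ as $T_0 \to 0$, and then bootstraps at the end to recover $\alpha$-Hölder regularity of the fixed point. You instead let the radius $R$ depend on $\|X\|_{\alpha}$ (taken on the full interval $[0,T]$, so it is fixed once and for all), setting $R = 2\|\sigma\|_{\infty}\|X\|_{\alpha}$. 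Then the leading term $\|\sigma\|_{\infty}\|X\|_{\alpha} = R/2$ is absorbed into the radius, while the remaining contribution carries an explicit factor $T_0^{\alpha}$ that can be made small without touching the Hölder exponent; the same mechanism makes the contraction constant explicitly $O(T_0^{\alpha} + T_0^{2\alpha})$. This avoids both the $\alpha'$ detour and the final bootstrap. Your bilinear estimate on $\|\sigma(Y)-\sigma(\tilde Y)\|_{\alpha}$ plays the role of the paper's Lemma~\ref{lemma:friz_hairer_75} (which the paper cites rather than proves), and your derivation of it via the fundamental theorem of calculus applied to $\sigma$ and $D\sigma$ is sound; combined with $\|Y-\tilde Y\|_{\infty} \le T_0^{\alpha}\|Y-\tilde Y\|_{\alpha}$ it gives exactly the needed smallness. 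One minor point worth making explicit: the a priori bound $\|Y\|_{\alpha;[0,T_0]} \le R$ holds automatically for \emph{any} $\alpha$-Hölder solution on a small enough interval (it follows from the Young estimate applied to the fixed point identity), which gives a slightly cleaner uniqueness argument than the ``enlarge $R$'' heuristic you sketch, and also justifies the concatenation step uniformly in the initial condition.
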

	
	\begin{remark}
		The integral equation \eqref{eqn:young_ODE} is often called \emph{Young differential equation} although Young never used his integral to solve differential equations. To the authors knowledge, T.~Lyons was the first who studied differential equations involving the Young integral in \cite{Lyo94}.
	\end{remark}
	
	Before we give the proof of Theorem \ref{thm:young_ode}, we state a technical result that will be needed.

	\begin{lemma}\label{lemma:friz_hairer_75}
		Let $\sigma \in \mathcal{C}^2(\R^m, L(\R^d,\R^m))$ and $T \leq 1$. Then there exists a constant $C_{\alpha,K}$ such that for every $X, Y \in \mathcal{C}^{\alpha}([0,T],\R^d)$ with $\|X\|_{\alpha} \vee \|Y\|_{\alpha} \leq K$,
		\begin{align*}
			\|\sigma(X) - \sigma(Y) \|_{\alpha} \leq C_{\alpha, K} \| \sigma \|_{\mathcal{C}^2} \left(|X_0 - Y_0| + \|X - Y \|_{\alpha} \right).
		\end{align*}
		
	\end{lemma}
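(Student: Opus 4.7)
The plan is to bound the increment $\delta[\sigma(X)-\sigma(Y)]_{s,t}$ by factoring it into a product of two factors, one controlling the spatial gap between the paths and one controlling the temporal increment, and then to use the $\mathcal{C}^2$-regularity of $\sigma$ to handle both.

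First I would write, via the fundamental theorem of calculus,
\begin{align*}
    \sigma(X_u) - \sigma(Y_u) = \int_0^1 D\sigma\bigl(Y_u + \theta(X_u - Y_u)\bigr) (X_u - Y_u)\, \dif \theta =: \int_0^1 F_u(\theta)\, h_u\, \dif\theta,
\end{align*}
where $h_u := X_u - Y_u$ and $F_u(\theta) := D\sigma(Y_u + \theta h_u)$. Then for $s < t$ I would expand the increment as
\begin{align*}
    F_t(\theta)\, h_t - F_s(\theta)\, h_s = \bigl(F_t(\theta) - F_s(\theta)\bigr) h_t + F_s(\theta)\, (h_t - h_s),
\end{align*}
and bound the two terms separately. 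The second term is immediate: $\|F_s(\theta)\| \leq \|D\sigma\|_\infty \leq \|\sigma\|_{\mathcal{C}^2}$ and $|h_t - h_s| \leq \|X-Y\|_\alpha |t-s|^\alpha$. For the first term, the mean value theorem applied to $D\sigma$ gives $\|F_t(\theta) - F_s(\theta)\| \leq \|D^2 \sigma\|_\infty (|Y_t - Y_s| + \theta|h_t - h_s|) \leq 2K \|\sigma\|_{\mathcal{C}^2} |t-s|^\alpha$, using $\|X\|_\alpha \vee \|Y\|_\alpha \leq K$.

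The only remaining piece is a pointwise bound on $|h_t|$; here I would use $T \leq 1$ to write
\begin{align*}
    |h_t| \leq |h_0| + |h_t - h_0| \leq |X_0 - Y_0| + \|X - Y\|_\alpha\, t^\alpha \leq |X_0 - Y_0| + \|X-Y\|_\alpha.
\end{align*}
Combining these bounds and integrating over $\theta \in [0,1]$, I obtain
\begin{align*}
    |\delta[\sigma(X)-\sigma(Y)]_{s,t}| \leq (2K+1)\, \|\sigma\|_{\mathcal{C}^2}\, \bigl(|X_0-Y_0| + \|X-Y\|_\alpha\bigr)\, |t-s|^\alpha,
\end{align*}
which yields the claim with $C_{\alpha,K} = 2K+1$.

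There is no real obstacle here; the only point requiring care is the role of the hypothesis $T \leq 1$, which is what lets me absorb the pointwise norm of $h$ into $|X_0-Y_0|+\|X-Y\|_\alpha$ without an extra $T^\alpha$ factor. The decomposition into ``Lipschitz in space'' and ``Hölder in time'' parts is the key observation, and it is essentially dictated by the need to have $\sigma \in \mathcal{C}^2$ rather than merely $\mathcal{C}^1$: the derivative $D\sigma$ must itself be Lipschitz in order to convert a temporal increment of $F_u(\theta)$ into the factor $|t-s|^\alpha$.
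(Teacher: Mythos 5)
Your argument is correct and is essentially the Taylor-theorem approach that the paper delegates to \cite[Lemma 7.5]{FH20}. One small slip: in bounding $\|F_t(\theta)-F_s(\theta)\|$ you write $|Y_t-Y_s|+\theta|h_t-h_s|\leq 2K|t-s|^\alpha$, but $|h_t-h_s|\leq 2K|t-s|^\alpha$, so the naive estimate yields $3K$; the constant doesn't matter, but a cleaner way (which also makes transparent why the factor here is $K$ and not $\|X-Y\|_\alpha$ — crucial for avoiding a quadratic term) is to observe $Y_u+\theta h_u=(1-\theta)Y_u+\theta X_u$, whence $\|F_t(\theta)-F_s(\theta)\|\leq\|D^2\sigma\|_\infty\bigl((1-\theta)|Y_t-Y_s|+\theta|X_t-X_s|\bigr)\leq K\|\sigma\|_{\mathcal{C}^2}|t-s|^\alpha$.
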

	
	\begin{proof}
		It follows by using Taylor's theorem repeatedly, cf. \cite[Lemma 7.5]{FH20} for details.
	\end{proof}

	\begin{proof}[Proof of Theorem \ref{thm:young_ode}]
		The proof is classical and uses a fixed point argument. For $0 < T_0 \leq T$ and $Y \in \mathcal{C}^{\alpha}([0,T],\R^m)$ with $Y_0 = y$, we set 
		\begin{align*}
			\mathcal{M}_{T_0}(Y) \coloneqq \left( t \mapsto y + \int_0^{t} \sigma(Y_s)\, \dif X_s\, ;\, t \in [0,T_0] \right).
		\end{align*}
		{Since $\sigma$ is Lipschitz, the path $t \mapsto \sigma(Y_t)$ is $\alpha$-H\"older continuous and the integral is defined as a Young integral.}
		Thus, $\mathcal{M}_{T_0}$ is in fact a map from $\mathcal{C}_y^{\alpha}([0,T_0],\R^m)$ to itself where $\mathcal{C}_y^{\alpha}([0,T_0],\R^m)$ is the complete metric space of $\alpha$-H\"older paths starting in $y$. We aim to show that it is a contraction. We will not do this on the whole space, but restrict ourselves to the closed unit ball
		\begin{align*}
			\mathcal{B}_{T_0} \coloneqq \{Y \in \mathcal{C}_y^{\alpha}([0,T_0],\R^m)\, :\, \|Y\|_{\alpha} \leq 1 \}.
		\end{align*}
        that is still a closed metric space with the induced metric. We first show that $\mathcal{M}_{T_0}$ leaves $\mathcal{B}_{T_0}$ invariant for $T_0 > 0$ sufficiently small, i.e. $\mathcal{M}_{T_0} \colon \mathcal{B}_{T_0} \to \mathcal{B}_{T_0}$.
		From Theorem \ref{theorem:young_integral},
		\begin{align*}
			\|\mathcal{M}_{T_0}\|_{\alpha} &= \| \int_0^{\cdot} \sigma(Y_s)\, dX_s \|_{\alpha} \\
			&\leq C \|X\|_{\alpha} (T^{\alpha} \|\sigma(Y)\|_{\alpha} + \|\sigma(Y) \|_{\infty}) \\
			&\leq C\|X\|_{\alpha}(T^{\alpha} \|\sigma\|_{\mathcal{C}^1} \|Y\|_{\alpha} + \|\sigma\|_{\mathcal{C}^0}) \\
			&\leq C \|X\|_{\alpha}
		\end{align*}
		where $\|X\|_{\alpha}$ denotes the $\alpha$-H\"older norm on $[0,T_0]$. We aim to choose $T_0$ sufficiently small such that $C \| X \|_{\alpha} \leq 1$. However, it is in general \emph{not} true that $\| X \|_{\alpha}$ gets small as $T_0$ tends to $0$ (to see this, take the $\frac{1}{2}$-H\"older norm for the square root function, for instance). To solve this issue, we choose $\alpha'$ such that $\frac{1}{2} < \alpha' < \alpha$ and repeat the calculation for $\alpha'$. If $X$ is $\alpha$-H\"older, it follows that $\| X\|_{\alpha'} \to 0$ as $T_0 \to 0$, thus we can choose $T_0$ small enough to conclude that $\|\mathcal{M}_{T_0}\|_{\alpha'} \leq 1$ and therefore $\mathcal{M}_{T_0} \colon \mathcal{B}_{T_0} \to \mathcal{B}_{T_0}$. We proceed showing that $\mathcal{M}_{T_0}$ is a contraction. For $Y,\tilde{Y} \in  \mathcal{B}_{T_0}$, Theorem \ref{theorem:young_integral} implies that 
		\begin{align*}
			&\left| \delta \mathcal{M}_{T_0}(Y)_{s,t} - \delta \mathcal{M}_{T_0}(\tilde{Y})_{s,t} \right| \\
			=\ &\left| \int_s^t \sigma(Y_u) - \sigma(\tilde{Y}_u) \, \dif X_u \right| \\
			\leq\ &C \left(\|\sigma(Y) - \sigma(\tilde{Y}) \|_{\infty} + \|\sigma(Y) - \sigma(\tilde{Y}) \|_{\alpha'}\right) \| X \|_{\alpha' ; [0,T_0]}|t-s|^{\alpha'}. \\
		\end{align*}
		Note that, since $Y_0 = \tilde{Y}_0$,
		\begin{align*}
			\|\sigma(Y) - \sigma(\tilde{Y}) \|_{\infty} &\leq |\sigma(Y_0) - \sigma(\tilde{Y}_0)| + T_0^{\alpha'} \|\sigma(Y) - \sigma(\tilde{Y}) \|_{\alpha'}\\
			&=  T_0^{\alpha'} \|\sigma(Y) - \sigma(\tilde{Y}) \|_{\alpha'}.
		\end{align*}
		From Lemma \ref{lemma:friz_hairer_75},
		\begin{align*}
			\|\sigma(Y) - \sigma(\tilde{Y}) \|_{\alpha'} &\leq C_{\alpha',K} \|\sigma\|_{\mathcal{C}^2} (|Y_0 - \tilde{Y}_0| + \|Y - \tilde{Y}\|_{\alpha'}) \\
			&= C_{\alpha',K} \|\sigma\|_{\mathcal{C}^2}  \|Y - \tilde{Y}\|_{\alpha'}
		\end{align*}
		where $K > 0$ satisfies $\|Y\|_{\alpha} \vee \|\tilde{Y}\|_{\alpha} \leq K$. Since $Y,\tilde{Y} \in \mathcal{B}_{T_0}$, $C$ can be chosen independently of $Y$ and $\tilde{Y}$. Therefore, we arrive at an estimate of the form
		\begin{align*}
			\|\mathcal{M}_{T_0}(Y) - \mathcal{M}_{T_0}(\tilde{Y}) \|_{\alpha'} \leq C \| X \|_{\alpha'} \|Y - \tilde{Y} \|_{\alpha'}
		\end{align*}
		and choosing $T_0 >0$ smaller if necessary, we obtain $C \| X \|_{\alpha'} < 1$, i.e. $\mathcal{M}_{T_0}$ is a contraction on the space $\mathcal{B}_{T_0}$.  It follows that the equation possesses a unique solution on the interval $[0,T_0]$. We can now repeat the argument on the interval $[T_0, 2T_0]$ with initial condition $Y_{T_0}$ and glue together the solutions. Iterating this sufficiently often, we eventually obtain a unique solution $Y$ on the interval $[0,T]$. A posteriori, the estimates for the Young integral in Theorem \ref{theorem:young_integral} show that $Y$ is not only $\alpha'$ H\"older, but even $\alpha$-H\"older continuous. This finishes the proof.
	\end{proof}
	Combining Corollary \ref{cor:hoelder_fbm} and Theorem \ref{thm:young_ode}, we can show:

	\begin{theorem}
		Let $B^H$ be a fractional Brownian motion with $H > \frac{1}{2}$. Assume that $\sigma \in \mathcal{C}^2(\R^m, L(\R^d,\R^m))$. Then for every $y \in \R^m$, the stochastic differential equation
		\begin{align}
			\begin{split}\label{eqn:young_SDE}
				\dif Y_t &= \sigma(Y_t)\, \dif B^H_t(\omega); \quad t \in [0,T], \\
				Y_0 &= y,
			\end{split}
		\end{align}
		can be interpreted as an integral equation using the Young integral and possesses a unique solution $Y$ for almost every trajectory.
		
	\end{theorem}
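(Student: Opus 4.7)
The plan is to reduce the theorem to a pathwise application of Theorem \ref{thm:young_ode}. Since $H > \tfrac{1}{2}$, I would first fix some $\alpha$ with $\tfrac{1}{2} < \alpha < H$. By Corollary \ref{cor:hoelder_fbm}, there is a measurable set $\Omega_0 \subset \Omega$ with $\P(\Omega_0) = 1$ on which every component of the trajectory $t \mapsto B^H_t(\omega)$ belongs to $\mathcal{C}^{\alpha}([0,T],\R)$, and hence $B^H(\cdot,\omega) \in \mathcal{C}^{\alpha}([0,T],\R^d)$. Outside $\Omega_0$, which is a null set, I would simply set $Y_t(\omega) \equiv y$ so that $Y$ is defined on all of $\Omega$.

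For each fixed $\omega \in \Omega_0$, the driver $X := B^H(\cdot,\omega)$ lies in $\mathcal{C}^{\alpha}([0,T],\R^d)$ with $\alpha > \tfrac{1}{2}$, and $\sigma \in \mathcal{C}^2(\R^m, L(\R^d,\R^m))$ by hypothesis. Theorem \ref{thm:young_ode} then applies verbatim and produces a unique $Y(\omega) \in \mathcal{C}^{\alpha}([0,T],\R^m)$ with $Y_0(\omega) = y$ satisfying
\begin{align*}
Y_t(\omega) = y + \int_0^t \sigma(Y_s(\omega))\, \dif B^H_s(\omega), \qquad t \in [0,T],
\end{align*}
where the integral is the Young integral of Theorem \ref{theorem:young_integral}. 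Since $[0,T]$ is deterministic, existence and uniqueness are inherited trajectory by trajectory, which already yields the statement in the pathwise sense claimed by the theorem.

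The only point that goes beyond this immediate application is to ensure that $\omega \mapsto Y(\omega)$ defines a bona fide stochastic process, i.e.\ is measurable. I would handle this by noting that the contraction argument used to prove Theorem \ref{thm:young_ode} also yields continuous dependence of the solution on the driver in the $\alpha'$-H\"older topology for any $\alpha'$ with $\tfrac{1}{2} < \alpha' < \alpha$: both $\mathcal{M}_{T_0}$ and its fixed point depend continuously on $X$ through the same estimates that provided the contraction. Thus the pathwise solution map $X \mapsto Y$ is continuous from $\mathcal{C}^{\alpha'}([0,T],\R^d)$ into $\mathcal{C}^{\alpha'}([0,T],\R^m)$, and composing it with the measurable map $\omega \mapsto B^H(\cdot,\omega)$ gives the required measurability. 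The main obstacle is therefore essentially cosmetic: there is no genuinely new probabilistic content beyond Corollary \ref{cor:hoelder_fbm}, since once the regularity of the driver is secured on a set of full measure, Young's theory does all the analytic work.
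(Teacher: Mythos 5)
Your proposal is correct and follows exactly the route the paper intends: fix $\alpha \in (\tfrac12, H)$, invoke Corollary \ref{cor:hoelder_fbm} to obtain $\alpha$-H\"older trajectories on a set of full measure, and apply Theorem \ref{thm:young_ode} pathwise on that set. The extra remark on measurability via continuity of the solution map is a welcome addition but not part of the paper's (implicit, one-line) argument.
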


	\begin{remark}
		The solution theory just presented is \emph{pathwise}, meaning that one can solve the stochastic differential equation \eqref{eqn:young_SDE} path-by-path. In particular, if the fractional Brownian motion is $\alpha$-H\"older continuous outside a set $\mathcal{N} \subset \Omega$ of measure zero, the equation \eqref{eqn:young_SDE} can be solved outside exactly that set. This is in contrast to It\=o's theory of stochastic differential equations which is \emph{not} pathwise. The solution of an It\=o stochastic differential equation is defined outside a set of measure zero \emph{that depends on the whole equation}, e.g. on $y$ and on $\sigma$, too. Considering another initial condition $\tilde{y}$ will create a new set of measure zero outside of which the solution is defined. Since the set of allowed initial conditions is not countable, it is a priori not clear whether there exists a set of full measure on which an It\=o stochastic differential equation can be solved for every initial condition $y$. In fact, assuming that $\sigma$ is globally Lipschitz continuous, such a universal set always exists, but there are examples of solutions to It\=o stochastic differential equations that fail to have this property, cf. \cite{LS11}. For a pathwise solution theory, this  cannot happen.
	\end{remark}
	
	\subsection{Limitations of the Young integral}
	
	While the Young integral can be used successfully for the fractional Brownian motion in the case $H>\frac{1}{2}$, it cannot be applied even to the {Brownian motion}. The main obstacle is the lack of sufficient regularity, which is essential for defining the integral. We indeed have the following statement for the fractional Brownian motion:
	
	\begin{proposition}
		The fractional Brownian motion $B^H$ does not have $\alpha$-H\"older continuous trajectories on $[0,T]$ almost surely for $\alpha > H$. 
	\end{proposition}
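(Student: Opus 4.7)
The plan is to mirror the argument used earlier to rule out continuous differentiability, but applied at the fractional scale $\alpha$. The key ingredient is again Lemma \ref{lemma:rogers}, which we will invoke for a carefully chosen exponent $p$.

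First, I would reduce to $T=1$ by a scaling argument (either by the self-similarity of $B^H$, or simply by noting that $\alpha$-Hölder continuity on $[0,T]$ entails $\alpha$-Hölder continuity on $[0,1]$ after rescaling time). Next, using the assumption $\alpha > H$, I would select an exponent $p>0$ with
\begin{align*}
    \frac{1}{\alpha} < p < \frac{1}{H},
\end{align*}
which is possible precisely because $\alpha > H$. By construction $p\alpha > 1$ and $pH < 1$.

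The contradiction then runs as follows. Let $A$ denote the event that the trajectory $t \mapsto B^H_t(\omega)$ is $\alpha$-Hölder continuous on $[0,1]$, so that on $A$ there exists a finite random constant $K(\omega) > 0$ with $|B^H_t - B^H_s| \leq K(\omega)|t-s|^\alpha$. Setting
\begin{align*}
    S_n \coloneqq \sum_{j=1}^{2^n} |B^H_{j 2^{-n}} - B^H_{(j-1)2^{-n}}|^p,
\end{align*}
we obtain the pointwise estimate $S_n(\omega) \leq K(\omega)^p \, 2^n \cdot 2^{-np\alpha} = K(\omega)^p \, 2^{n(1 - p\alpha)}$ on $A$. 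Since $p\alpha > 1$, this forces $S_n \to 0$ almost surely on $A$. On the other hand, $pH < 1$, so Lemma \ref{lemma:rogers} gives $S_n \xrightarrow{\P} \infty$.

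Assuming $\P(A) > 0$ would contradict this: for any $M > 0$ we have $\P(S_n > M) \leq \P(\{S_n > M\} \cap A) + \P(A^c)$, the first term tends to $0$ by dominated convergence (or a direct $\varepsilon$-argument using the pointwise bound on $A$), and the left-hand side tends to $1$ by convergence in probability to $\infty$. Hence $\P(A) = 0$, which is the claim. The only subtlety worth flagging is the reconciliation of almost sure convergence on $A$ with convergence in probability to $\infty$ globally, but this is handled by the standard inclusion above, so I do not anticipate a genuine obstacle; the argument is essentially a refinement of the proof of non-differentiability already given.
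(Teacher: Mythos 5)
Your proof is correct and is precisely the filled-in version of the argument the paper gestures at (the paper's proof simply says ``Can be deduced from Lemma \ref{lemma:rogers}; details left to the reader''). Choosing $p$ with $1/\alpha < p < 1/H$ to pit the deterministic bound on the dyadic $p$-variation (forcing convergence to $0$ on the Hölder event) against the divergence in probability from Lemma \ref{lemma:rogers} is exactly the intended refinement of the non-differentiability proof, and your handling of the reconciliation between almost sure convergence on $A$ and convergence in probability to infinity is sound.
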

	\begin{proof}
		Can be deduced from Lemma \ref{lemma:rogers}. The details are left to the reader.
	\end{proof}
	
	It is natural to look for an extension of the Young integral that works for paths with low H\"older-regularity, too. More generally, the minimal condition on the notion of an integral would be that we can apply it to the Brownian motion. We make the following (very general) definition:

	\begin{definition}\label{Wiener measure}
		Let $E$ be a Banach space of paths in $\R$ and $(C_n)_{n \geq 1}$ and $(S_n)_{n \geq 1}$ be two series of independent standard Gaussian random variables. Let $c_n(t) \coloneqq \cos(2\pi n t)$ and $s_n(t) \coloneqq \sin(2\pi nt)$. We say that $E$ \emph{carries the Wiener measure} if and only if $s_n$ and $c_n$ belong to $E$ and if the series
		\begin{align*}
			\sum_{n = 1}^{\infty} \frac{C_n c_n + S_n s_n}{2 \pi n}
		\end{align*}
		converges in $E$ almost surely.

	\end{definition}

			\begin{example}\label{example:wiener_measusure}
				Let us motivate Definition \ref{Wiener measure} by showing that $L^2[0,1]$ carries the Wiener measure. Assume that $B = (B_{t})_{0\leq t\leq 1}$ is a Brownian motion, i.e. $\E(B_t) = 0$ and $\E(B_s B_t) = \min\{s,t\}$ for every $s,t \in [0,1]$. Recall that $\lbrace 1,\sqrt{2}c_{n},\sqrt{2}s_{n}\rbrace_{n\geq 1}$ is an orthonormal basis for $L^{2}[0,1]$. By $\langle \cdot , \cdot \rangle$, we denote the inner product of two functions in $L^{2}[0,1]$. If we expand $B$ with respect to this basis, we obtain
				\begin{align*}
					B = \int_{0}^{1} B_{s} \, \mathrm{d}s + \sum_{n=1}^{\infty}\left[\langle B,\sqrt{2}c_{n}\rangle \sqrt{2}c_{n} + \langle B,\sqrt{2}s_{n}\rangle \sqrt{2}s_{n} \right]
				\end{align*}
				almost surely in $L^2([0,1])$. Since $B$ is a zero mean Gaussian process, it is easy to check that $\big\lbrace \langle B, 1\rangle,\langle B, \sqrt{2}c_{n}\rangle,\langle B, \sqrt{2}s_{n}\rangle\big\rbrace_{n\geq 1}$ is a family of normal random variables with zero mean. Therefore, the law of each random variable is determined by its second moment. We calculate
				\begin{align*}
					\mathbb{E}\big(\langle B, \sqrt{2}c_n\rangle^2 \big) &= \mathbb{E} \left( \int_{0}^{1}\int_{0}^{1}{2 B_{t} B_{s}}\cos(2\pi nt)\cos(2\pi ns) \, \mathrm{d}t \, \mathrm{d}s \right)\\
                    &=\int_{0}^{1}\int_{0}^{1} 2\min\lbrace s,t\rbrace\cos(2\pi nt)\cos(2\pi ns) \, \mathrm{d}t \, \mathrm{d}s \\
                    &=\int_{0}^{1}\int_{0}^{s}2t\cos(2\pi nt)\cos(2\pi ns) \, \mathrm{d}t \, \mathrm{d}s + \int_{0}^{1}\int_{s}^{1}2s\cos(2\pi nt)\cos(2\pi ns) \, \mathrm{d}t \, \mathrm{d}s \\
                    &=\frac{1}{(2\pi n)^2}.
				\end{align*}
				Similarly, $\mathbb{E}(\langle B, \sqrt{2}s_n\rangle^2 )=\frac{1}{(2\pi n)^2}.$ Proceeding with similar calculations, we see that
				\begin{align*}
					\mathbb{E}\big(\langle B, 1\rangle\langle B, \sqrt{2}c_n\rangle\big) &= \mathbb{E}\big(\langle B, 1\rangle\langle B, \sqrt{2}s_n\rangle\big) = 0 \quad \text{for every } n \geq  1,\\
					\mathbb{E}\big(\langle B, \sqrt{2}c_n\rangle \langle B, \sqrt{2}s_m\rangle\big) &= 0 \quad \text{for every } n, m\geq 1 \text{ and}\\
					\mathbb{E}\big(\langle B, \sqrt{2}c_n\rangle \langle B, \sqrt{2}c_m\rangle\big) &= \mathbb{E}\big(\langle B, \sqrt{2}s_n\rangle \langle B, \sqrt{2}s_m\rangle\big) = 0 \quad \text{for every } n \neq m.
				\end{align*}
				These calculations reveal that the elements $\big\lbrace \langle B, 1\rangle,\langle B, \sqrt{2}c_{n}\rangle,\langle B, \sqrt{2}s_{n}\rangle\big\rbrace_{n\geq 1}$ are uncorrelated and therefore, since they are normal, independent. Setting 
				\begin{align*}
					C_{n} \coloneqq 2\pi n \langle B, \sqrt{2}c_n\rangle  \quad \text{and} \quad S_{n} \coloneqq 2\pi n \langle B, \sqrt{2}s_n\rangle,
				\end{align*}
				we can thus represent $B$ by
				\begin{align*}
					B = \int_{0}^{1} B_{s} \, \mathrm{d}s + \sum_{n = 1}^{\infty} \frac{C_n c_n + S_n s_n}{\sqrt{2} \pi n}
				\end{align*}
				almost surely in $L^2[0,1]$. In fact, with more work, one can even show that the convergence of the series holds uniformly almost surely. This shows that $L^2[0,1]$ and $\mathcal{C}[0,1]$ carry the Wiener measure. One can also show that $\mathcal{C}^{\alpha}[0,1]$ carries the Wiener measure for $\alpha < \frac{1}{2}$ but not for $\alpha \geq \frac{1}{2}$.
			\end{example}
	The following result is taken from \cite{LCL07}.
	\begin{theorem}[Lyons]\label{Lyons}
		Let $E$ be a Banach space that carries the Wiener measure. Then there is no continuous bilinear map $I \colon E \times E \to \R$ such that if $x$ and $y$ are trigonometric functions, $I(x,y) = \int_0^1 x_t \, \dif y_t$.
	\end{theorem}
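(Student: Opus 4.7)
The plan is to suppose such a continuous bilinear $I$ exists, so that $|I(x,y)| \leq M\|x\|_E \|y\|_E$ for some constant $M$, and to derive a contradiction from the growth of $I$ on the trigonometric basis. Direct Riemann--Stieltjes integration gives $I(c_n, s_n) = 2\pi n \int_0^1 \cos^2(2\pi n t)\, dt = \pi n$, while $I(c_n, c_m) = I(s_n, s_m) = 0$ for all $n,m \geq 1$ and $I(c_n, s_m) = 0$ for $n \neq m$. Consequently the restriction of $I$ to $V_n := \mathrm{span}(c_n, s_n)$ is exactly $\pi n$ times the standard determinant form, and the continuity bound applied at $(c_n, s_n)$ forces
\begin{align*}
\|c_n\|_E \cdot \|s_n\|_E \geq \frac{\pi n}{M} \qquad \text{for every } n \geq 1.
\end{align*}

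Next I would use the Wiener-measure hypothesis to bound these same norms from above. Since $\sum (C_n c_n + S_n s_n)/(2\pi n)$ converges in $E$ almost surely and is a Gaussian series, Fernique's theorem (together with the resulting uniform integrability of the partial sums) upgrades the convergence to $L^2(\Omega; E)$, so the individual terms satisfy $\E\|C_n c_n + S_n s_n\|_E^2/n^2 \to 0$ as $n \to \infty$. Applying Jensen's inequality to the convex map $C \mapsto \|C c_n + S s_n\|_E^2$ after conditioning on $S$ gives $\E\|C c_n + S s_n\|_E^2 \geq \|s_n\|_E^2$, and symmetrically $\geq \|c_n\|_E^2$, whence $\max(\|c_n\|_E, \|s_n\|_E) = o(n)$.

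For the concrete spaces carrying the Wiener measure discussed in Example~\ref{example:wiener_measusure} --- $L^2[0,1]$, $\mathcal{C}[0,1]$, and $\mathcal{C}^\alpha[0,1]$ with $\alpha < 1/2$ --- one has $\|c_n\|_E, \|s_n\|_E \leq C n^\alpha$ with $\alpha < 1/2$ (indeed $\alpha = 0$ for $L^2$ and $\mathcal{C}$) directly from the definitions, so $\|c_n\|_E \|s_n\|_E = O(n^{2\alpha}) = o(n)$, contradicting the first step. For a completely abstract $E$ carrying only the Wiener measure, closing the gap between the max-bound $o(n)$ and the product bound $o(n)$ is where I expect the main technical difficulty to lie; I would attempt to close it by a finer analysis of the two-dimensional subspaces $V_n$ with the inherited norm $\nu_n(a,b) := \|a c_n + b s_n\|_E$, combining the continuity bound (which controls the size of the determinant form on $V_n$ in terms of $\nu_n$) with the convergence of the Gaussian Fourier series restricted to $V_n$ (which controls the average size of $\nu_n^2$ on the Euclidean unit circle). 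A clean way to implement this would be to exhibit a continuous linear functional on $E$ whose restriction to each $V_n$ is non-degenerate uniformly in $n$, which holds automatically in the concrete examples via pointwise evaluation or $L^p$-duality.
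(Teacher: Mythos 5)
Your approach is genuinely different from the paper's, and it has a gap that you correctly identify but that, in my view, cannot be closed along the lines you sketch. Your two ingredients are (i) the continuity bound $|I(c_n,s_n)|=\pi n\leq M\|c_n\|_E\|s_n\|_E$, which is a \emph{lower} bound on the product $\|c_n\|_E\|s_n\|_E$ of order $n$, and (ii) the Wiener-measure hypothesis, which (via $L^2$-convergence of the Gaussian series and Jensen) gives an \emph{upper} bound $\max(\|c_n\|_E,\|s_n\|_E)=o(n)$. These are not in contradiction: the second gives $\|c_n\|_E\|s_n\|_E=o(n^2)$, and the regime $\|c_n\|_E\asymp\|s_n\|_E\asymp\sqrt{n}$ satisfies both. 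The obstacle is not a technical one about the shape of the inherited norms $\nu_n$ on $V_n$: any argument of this kind would need an upper bound on $\|c_n\|_E\|s_n\|_E$ of order $o(n)$, and the hypothesis ``$E$ carries the Wiener measure'' simply does not yield one-at-a-time bounds on $\|c_n\|_E$, $\|s_n\|_E$ that are better than $o(n)$ each. Indeed the continuity bound and the Gaussian bound live on different ``scales'' (a product vs.\ a max), and your suggested continuous linear functional would only produce a \emph{lower} bound on $\|c_n\|_E$, which points in the wrong direction. So the concrete cases ($L^2$, $\mathcal{C}$, $\mathcal{C}^\alpha$, $\alpha<1/2$) work, but the abstract statement does not follow.

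The paper's proof sidesteps norm estimates on basis elements entirely. It uses the \emph{same} random coefficients $C_n,S_n$ to build two correlated Gaussian series
\begin{align*}
W^N = \sum_{n=1}^{N}\frac{C_n c_n + S_n s_n}{2\pi n}, \qquad \tilde{W}^N = \sum_{n=1}^{N}\frac{C_n s_n - S_n c_n}{2\pi n},
\end{align*}
and observes that $\tilde{W}^N\stackrel{\mathcal{D}}{=}W^N$, so both converge in $E$ almost surely by hypothesis. Bilinearity of $I$ and the orthogonality relations you already computed then give $I(W^N,\tilde{W}^N)=\sum_{n\leq N}\frac{C_n^2+S_n^2}{2\pi n}$, which diverges almost surely by the strong law of large numbers, contradicting $I(W^N,\tilde{W}^N)\to I(W,\tilde{W})$. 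What the paper's argument buys is precisely what your approach cannot reach: the quadratic combination $C_n^2+S_n^2$ arises because $\tilde{W}$ is the $90^\circ$ rotation of $W$ in each $V_n$, so the ``correlation'' between the two arguments of $I$ replaces the need for any quantitative estimate on $\|c_n\|_E$ or $\|s_n\|_E$. If you want to preserve your per-frequency viewpoint, the thing to import from the paper is the rotated series $\tilde{W}$; without some device that couples $c_n$ and $s_n$ in a single random argument, the one-at-a-time bounds are provably insufficient.
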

	\begin{proof}
		For $N \geq 1$, we define
		\begin{align*}
			W^N \coloneqq \sum_{n = 1}^{N} \frac{C_n c_n + S_n s_n}{2 \pi n} \quad \text{and} \quad  \tilde{W}^N \coloneqq \sum_{n = 1}^{N} \frac{C_n s_n - S_n c_n}{2 \pi n}
		\end{align*}
		where in both definitions, we take the same random variables $S_n$ and $C_n$. Note that $W^N \stackrel{\mathcal{D}}{=} \tilde{W}^N$ and that both $W^N$ and $\tilde{W}^N$ converge almost surely to processes $W$ resp. $\tilde{W}$ in $E$ by assumption.  We assume that a bilinear map $I \colon E \times E \to \R$ satisfying the stated conditions exists. Then we have $I(W^N,\tilde{W}^N) \to I(W,\tilde{W})$ almost surely as $N \to \infty$. On the other hand,
		\begin{align*}
			I(W^N,\tilde{W}^N) = \int_0^1 W^N_t\, \dif \tilde{W}^N_t = \sum_{n = 1}^N \frac{C_n^2 + S_n^2}{2\pi n}
		\end{align*}
		diverges almost surely as $N \to \infty$ which is a contradiction.
	\end{proof}
	\begin{remark}
		In view of Example \ref{example:wiener_measusure}, we see that the processes $W$ and $\tilde{W}$ are both (essentially) the sum of a Brownian motion and a smooth random function. Note that the nonexistence of the integral $\int_0^1 W_t \, \dif \tilde{W}_t$ does not contradict It\=o's theory of stochastic integration: the processes $W$ and $\tilde{W}$ are highly correlated and $W$ is not adapted to the filtration generated by $\tilde{W}$.
	\end{remark}

	Theorem \ref{Lyons} shows that we cannot expect to find a linear theory of deterministic integration that is rich enough to handle Brownian sample paths. The fact that a pathwise approach to stochastic differential equations driven by a Brownian motion seems impossible underlines the importance of It\=o's theory of stochastic integration and is one of many reasons for its tremendous success.

	\section{Rough paths and linear equations}
	Our goal is to solve differential equations driven by paths with regularity less than Brownian sample paths. We already saw that a direct approach using the Young integral will not work. To simplify the problem, we will consider linear equations first. For a $d$-dimensional path $X = (X^1,\ldots,X^d)$, we look at the equation
	\begin{align}\label{eqn:linear_RDE}
		\begin{split}
			\dif Y_t  &= Y_t\, \dif X_t = \sum_{i = 1}^d Y_t \, \dif X^i_t ; \qquad t \geq 0,\\
			Y_0 &= y \in \R.
		\end{split}
	\end{align}
	\emph{Formally}, a solution to \eqref{eqn:linear_RDE} is given by
	\begin{align*}
		Y_t = y &+ y \sum_{i = 1}^d \int_0^t \dif X^i_s + y \sum_{i,j = 1}^d \int_0^t \int_0^s \dif X^i_u \dif X^j_s \\
		&+ \ldots + y \sum_{i_1,\ldots,i_n = 1}^d \int_{0 < t_1 < \cdots < t_n < t} \dif X_{t_1}^{i_1} \cdots \dif X_{t_n}^{i_n} + \ldots
	\end{align*}
	The problem is, of course, that there is no good notion of an integral we can use to define the iterated integrals for irregular paths $X$. On the other hand, there are situations where iterated integrals are given in a non-pathwise manner. For instance, in stochastic analysis, the It\=o and the Stratonovich integral are defined for a Brownian motion. The idea of rough paths theory is to just \emph{assume} that the iterated integrals exist and satisfy some key properties. In this chapter, we will discuss these properties and see how linear equations can be solved. Eventually, we will consider the case of a Brownian motion.
	
	\subsection{Iterated integrals and rough paths}
	What are the properties that characterize an iterated integral? To answer this question, let us start with the second iterated integral. For smooth $X$, we use the notation
	\begin{align*}
		\mathbb{X}^{ij}_{s,t} \coloneqq \int_s^t \int_s^u \dif X^i_v\, \dif X^j_u.
	\end{align*}
	One basic algebraic property is additivity of the integral, i.e. $\int_s^u + \int_u^t = \int_s^t$ for $s < u < t$. For the iterated integral, this leads to 
	\begin{align*}
		\mathbb{X}^{ij}_{s,t} &= \int_s^t(X^i_v - X^i_s)\, \dif X^j_v = \int_s^u(X^i_v - X^i_s)\, \dif X^j_v + \int_u^t(X^i_v - X^i_s)\, \dif X^j_v \\
		&= \int_s^u(X^i_v - X^i_s)\, \dif X^j_v +  \int_u^t(X^i_v - X^i_u)\, \dif X^j_v + \int_u^t(X^i_u - X^i_s)\, \dif X^j_v \\
		&= \mathbb{X}^{ij}_{s,u} + \mathbb{X}^{ij}_{u,t} + \int_s^u \, \dif X^i_v \int_u^t \, \dif X^j_v.
	\end{align*}
	Setting $\mathbb{X}_{s,t} \coloneqq (\mathbb{X}^{ij}_{s,t})_{i,j = 1,\ldots,d} \in \R^{d \times d} \cong \R^d \otimes \R^d$, the above equality reads
	\begin{align*}
		\mathbb{X}_{s,t} = \mathbb{X}_{s,u} + \mathbb{X}_{u,t} + \delta X_{s,u} \otimes \delta X_{u,t}.
	\end{align*}
	To describe the corresponding property for the higher-order iterated integrals, we will introduce some more notations. Note that the $n$-th order iterated integral of a $d$-dimensional smooth path can be understood as an element in $(\R^d)^{\otimes n}$. Thus the collection of all iterated integrals will be an element in the direct product of all tensor products.
	\begin{definition}
		The direct product
		\begin{align*}
			T((\R^d)) \coloneqq \R \times \R^d \times (\R^d \otimes \R^d) \times \cdots \times (\R^d)^{\otimes n} \times \cdots = \prod_{n = 0}^{\infty} (\R^d)^{\otimes n}
		\end{align*}
		where $(\R^d)^{\otimes 0} = \R$, $(\R^d)^{\otimes 1} = \R^d$, is called \emph{extended tensor algebra}. The maps
		\begin{align*}
			\pi_n  \colon T((\R^d)) \to (\R^d)^{\otimes n}
		\end{align*}
		are the usual projection maps.
	\end{definition}
	\begin{definition}\label{tensor}
		For elements $a, b \in T((\R^d))$, we define an element $a \otimes b \in T((\R^d))$ by setting
		\begin{align*}
			\pi_n(a \otimes b) \coloneqq \sum_{i + j = n} \pi_i(a) \otimes \pi_j(b)
		\end{align*}
		for every $n \in \N_0$. We also define
		\begin{align*}
			1 \coloneqq (1,0,0, \ldots) \in T((\R^d)).
		\end{align*}
		
	\end{definition}
	Note that the extended tensor algebra carries a natural vector space structure. It becomes a \emph{associative unital algebra} with product $\otimes$ and $1$ as the unit.

	\begin{definition}
		Let $X \colon [0,T] \to \R^d$ be a smooth path. We define the \emph{(canonical) lift} $\mathbf{X}$ of $X$ as a map $\mathbf{X} \colon \Delta \to T((\R^d))$, $\Delta \coloneqq \{(s,t) \in [0,T]^2\, :\, s \leq t \}$, by setting
		\begin{align*}
			\pi_n(\mathbf{X}_{s,t}) \coloneqq \mathbb{X}_{s,t}^{(n)} &\coloneqq \int_{s < u_1 < \cdots < u_n < t} \dif X_{u_1} \otimes \cdots \otimes \dif X_{u_n} \\
			&\coloneqq \left(\int_{s < u_1 < \cdots < u_n < t} \dif X^{i_1}_{u_1}  \cdots  \dif X^{i_n}_{u_n} \right)_{i_1,\ldots,i_n \in \{1,\ldots,d\}} \in (\R^d)^{\otimes n}
		\end{align*}
		for $n \geq 1$ and $\pi_0(\mathbb{X}_{s,t}) \coloneqq 1$.
		
	\end{definition}
	We can now prove an algebraic property called \emph{Chen's identity} that is satisfied by iterated integrals.

	\begin{theorem}[Chen]
		Let $X \colon [0,T] \to \R^d$ be smooth and $\mathbf{X}$ its canonical lift. Then
		\begin{align*}
			\mathbf{X}_{s,t} = \mathbf{X}_{s,u} \otimes \mathbf{X}_{u,t} 
		\end{align*}
		for every $s < u < t$.
		
	\end{theorem}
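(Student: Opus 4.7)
The plan is to prove the identity level by level, by projecting onto each tensor grade $n$ and using a combinatorial decomposition of the simplex of integration. Since $\pi_0(\mathbf{X}_{s,t})=1$ for all $s,t$, the identity at level $0$ is immediate, and for $n\geq 1$ the goal becomes
\begin{align*}
\mathbb{X}^{(n)}_{s,t} \;=\; \sum_{k=0}^{n}\mathbb{X}^{(k)}_{s,u}\otimes\mathbb{X}^{(n-k)}_{u,t},
\end{align*}
where by convention $\mathbb{X}^{(0)}_{s,t}=1\in\R$ so the extreme terms $k=0$ and $k=n$ reproduce $\mathbb{X}^{(n)}_{u,t}$ and $\mathbb{X}^{(n)}_{s,u}$ respectively.

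The key observation is that the open simplex $\Sigma_n(s,t)=\{s<u_1<\cdots<u_n<t\}$ decomposes, up to a Lebesgue-null set where some $u_i$ equals the intermediate point $u$, as the disjoint union
\begin{align*}
\Sigma_n(s,t) \;=\; \bigsqcup_{k=0}^{n}\bigl\{s<u_1<\cdots<u_k<u<u_{k+1}<\cdots<u_n<t\bigr\} \;=\; \bigsqcup_{k=0}^{n}\Sigma_k(s,u)\times\Sigma_{n-k}(u,t).
\end{align*}
First I would record this decomposition and note that since $X$ is smooth, the integrand $\dif X^{i_1}_{u_1}\cdots\dif X^{i_n}_{u_n}$ on $\Sigma_n(s,t)$ is given by a continuous density $\dot X^{i_1}_{u_1}\cdots \dot X^{i_n}_{u_n}\,\dif u_1\cdots\dif u_n$, so Fubini applies and the null set $\{u_i=u\}$ contributes nothing.

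Next I would apply Fubini on each piece $\Sigma_k(s,u)\times\Sigma_{n-k}(u,t)$ to factor the integral as a product of an integral over $\Sigma_k(s,u)$ and an integral over $\Sigma_{n-k}(u,t)$, coordinate-by-coordinate in the tensor indices. This yields
\begin{align*}
\int_{\Sigma_n(s,t)}\dif X_{u_1}\otimes\cdots\otimes\dif X_{u_n} \;=\; \sum_{k=0}^{n}\Bigl(\int_{\Sigma_k(s,u)}\dif X_{u_1}\otimes\cdots\otimes\dif X_{u_k}\Bigr)\otimes\Bigl(\int_{\Sigma_{n-k}(u,t)}\dif X_{u_{k+1}}\otimes\cdots\otimes\dif X_{u_n}\Bigr),
\end{align*}
which is precisely $\sum_{k=0}^{n}\mathbb{X}^{(k)}_{s,u}\otimes\mathbb{X}^{(n-k)}_{u,t}$. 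Summing over $n$ and invoking Definition \ref{tensor}, one obtains $\mathbf{X}_{s,t}=\mathbf{X}_{s,u}\otimes\mathbf{X}_{u,t}$.

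There is no serious obstacle here; the only thing to be careful about is the bookkeeping of tensor indices when applying Fubini, which is why I would write out a single scalar component $\mathbb{X}^{i_1\cdots i_n}_{s,t}$ first and only then repackage the result in tensor notation. An alternative I considered is an induction on $n$ using the recursion $\mathbb{X}^{(n+1)}_{s,t}=\int_s^t \mathbb{X}^{(n)}_{s,r}\otimes \dif X_r$ together with the base case $\delta X_{s,t}=\delta X_{s,u}+\delta X_{u,t}$, but the direct simplex-splitting argument is shorter and more transparent.
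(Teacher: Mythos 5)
Your proof is correct, and it takes a genuinely different route from the paper's. The paper proves Chen's identity by induction on $n$, using the recursion $\mathbb{X}^{(n)}_{s,t}=\int_s^t \mathbb{X}^{(n-1)}_{s,v}\otimes\dif X_v$: it splits the outer integral at $u$, applies the induction hypothesis to $\mathbb{X}^{(n-1)}_{s,v}$ inside the second piece, and reorganizes the resulting double sum. This is exactly the ``alternative'' you mention and discard at the end. Your argument instead works directly on the simplex, decomposing $\Sigma_n(s,t)$ (up to a Lebesgue-null set) into $\bigsqcup_{k=0}^n \Sigma_k(s,u)\times\Sigma_{n-k}(u,t)$ and applying Fubini; the grade-$n$ identity falls out in one line with no induction. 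Both approaches are correct and comparably short. The simplex decomposition makes the combinatorial content transparent and treats all levels symmetrically, while the paper's induction fits more naturally with the recursive way the iterated integrals are defined and rehearses the ``compensate and reuse the Chen relation at the previous level'' manipulation that reappears later in the proof of the Extension Theorem. One small thing to be careful about in your write-up: make the disjointness of the union and the null-set argument explicit for general $n$, since the boundary pieces $\{u_i=u\}$ are a union of $n$ hyperplanes and you want to say cleanly that they have measure zero in the $n$-dimensional simplex.
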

	
	\begin{proof}
		We have to show that for every $n \in \N$,
		\begin{align*}
			\mathbb{X}_{s,t}^{(n)} = \sum_{i + j = n} \mathbb{X}_{s,u}^{(i)} \otimes \mathbb{X}_{u,t}^{(j)}.
		\end{align*}
		We do this by induction. For $n=1$, the statement is obvious. For arbitrary $n \geq 2$,
		\begin{align*}
			\mathbb{X}_{s,t}^{(n)} &= \int_s^t \mathbb{X}_{s,v}^{(n-1)} \, \otimes \dif X_v \\
			&= \int_s^u \mathbb{X}_{s,v}^{(n-1)} \, \otimes \dif X_v + \int_u^t \mathbb{X}_{s,v}^{(n-1)} \, \otimes \dif X_v \\
			&= \mathbb{X}_{s,u}^{(n)} + \sum_{i + j = n-1} \int_u^t \mathbb{X}_{s,u}^{(i)} \otimes \mathbb{X}_{u,v}^{(j)} \, \otimes \dif X_v \\
			&= \mathbb{X}_{s,u}^{(n)} + \sum_{i + j = n-1} \mathbb{X}_{s,u}^{(i)} \otimes \int_u^t  \mathbb{X}_{u,v}^{(j)} \, \otimes \dif X_v \\
			&= \mathbb{X}_{s,u}^{(n)} + \sum_{i + j = n-1} \mathbb{X}_{s,u}^{(i)} \otimes \mathbb{X}_{u,t}^{(j + 1)} \\
			&= \sum_{i + j = n} \mathbb{X}_{s,u}^{(i)} \otimes \mathbb{X}_{u,t}^{(j)}
		\end{align*}
		and the claim is shown.

	\end{proof}
	From Chen's theorem, we can deduce that
	
	\begin{align*}
		\mathbb{X}^{(2)}_{s,t} = \pi_2(\mathbf{X}_{s,u} \otimes \mathbf{X}_{u,t}) =  \mathbb{X}^{(2)}_{s,u} + \mathbb{X}^{(2)}_{u,t} + \delta X_{s,u} \otimes \delta X_{u,t}
	\end{align*}
	which we calculated ``by hand'' above.
	\begin{definition}
		For $N \geq 0$, the direct sum
		\begin{align*}
			T^N(\R^d) \coloneqq \R \oplus \R^d \oplus (\R^d \otimes \R^d) \oplus \cdots \oplus (\R^d)^{\otimes N} = \bigoplus_{n = 0}^{N} (\R^d)^{\otimes n}
		\end{align*}
		if called \emph{truncated tensor algebra} of level $N$. The truncated tensor product $\otimes^{N}:T^N(\R^d)\times T^N(\R^d)\rightarrow T^N(\R^d)$
			is then defined similarly as in Definition \ref{tensor} by truncating each term of the product to level $N$. Abusing notation, we will still use the symbol $\otimes$ instead of $\otimes^N$ on the truncated tensor algebra.
	\end{definition}
	The truncated tensor algebra is also an associative unital algebra with sum and product induced by the extended tensor algebra.
	
	\begin{definition}
		A map $\mathbf{X} \colon \Delta \to T^N(\R^d)$ satisfying the \emph{Chen relation}
		\begin{align*}
			\mathbf{X}_{s,t} = \mathbf{X}_{s,u} \otimes \mathbf{X}_{u,t} 
		\end{align*}
		for every $s < u < t$ is called a \emph{multiplicative functional}.
	\end{definition}
	Multiplicative functionals satisfy an \emph{algebraic} property that we expect from iterated integrals. There is also an \emph{analytic} property an iterated integral should satisfy. Recall that for the Young integral, we showed that for an $\alpha$-H\"older path $X$ with $\alpha > \frac{1}{2}$,
	
	\begin{align*}
		|\delta X_{s,t}| &= \left| \int_s^t \, \dif X_u \right| = \mathcal{O}(|t-s|^{\alpha}), \\
		\left|\int_s^t (X_u - X_s)\, \otimes \dif X_u \right| &= \left|\int_{s < u_1 < u_2 < t} \dif X_{u_1}\, \otimes \dif X_{u_2} \right|  =  \mathcal{O}(|t-s|^{2\alpha}).
	\end{align*}
	What is the regularity of higher order iterated integrals? We consider the third order first. We set  $\mathbb{X}^{(2)}_{s,t} = \int_s^t (X_u - X_s)\, \otimes \dif X_u$ and define
	\begin{align*}	
		\Xi_{u,v} \coloneqq \mathbb{X}^{(2)}_{0,u} \otimes \delta X_{u,v} + \delta X_{0,u} \otimes \mathbb{X}^{(2)}_{u,v} \in (\R^d)^{\otimes 3}.
	\end{align*}
	By assumption, $\| \Xi \|_{\alpha} < \infty$. Using the Chen identity, for $u<v<w$,
	\begin{align*}
		\delta \Xi_{u, v, w} &= (\mathbb{X}^{(2)}_{0,u} - \mathbb{X}^{(2)}_{0,v}) \otimes \delta X_{v,w} + \delta X_{0,u} \otimes (\mathbb{X}^{(2)}_{u,w} - \mathbb{X}^{(2)}_{u,v}) - \delta X_{0,v} \otimes \mathbb{X}^{(2)}_{v,w} \\
		&= -( \mathbb{X}^{(2)}_{u,v} + \delta X_{0,u} \otimes \delta X_{u,v})\otimes \delta X_{v,w} + \delta X_{0,u} \otimes(\mathbb{X}^{(2)}_{v,w} + \delta X_{u,v} \otimes \delta X_{v,w}) \\
		&\quad - \delta X_{0,v} \otimes \mathbb{X}^{(2)}_{v,w} \\
		&= -\mathbb{X}^{(2)}_{u,v} \otimes \delta X_{v,w} - \delta X_{u,v} \otimes \mathbb{X}^{(2)}_{v,w}.
	\end{align*}
	Therefore, $\| \delta \Xi \|_{3\alpha} < \infty$. From the Sewing lemma,
	\begin{align*}
		\delta \mathcal{I} \Xi_{s,t} = \lim_{|\mathcal{P}| \to 0} \sum_{[u,v] \in \mathcal{P}} \Xi_{u,v} &=  \lim_{|\mathcal{P}| \to 0} \sum_{[u,v] \in \mathcal{P}} \mathbb{X}^{(2)}_{0,u} \otimes \delta X_{u,v} + \delta X_{0,u} \otimes \mathbb{X}^{(2)}_{u,v}  \\
		&= \lim_{|\mathcal{P}| \to 0} \sum_{[u,v] \in \mathcal{P}} \mathbb{X}^{(2)}_{0,u} \otimes \delta X_{u,v} \\
		&= \int_s^t \mathbb{X}^{(2)}_{0,u}\, \otimes \dif X_u
	\end{align*}
	exists. In the second equality, we used that $|\mathbb{X}^{(2)}_{u,v}| = \mathcal{O}(|v-u|^{2\alpha})$ and $2\alpha > 1$. The Sewing lemma also tells us that
	\begin{align*}
		\left|\int_{s < u_1 < u_2 < u_3 < t} \dif X_{u_1}\, \otimes \dif X_{u_2} \otimes \dif X_{u_3} \right|  =  \mathcal{O}(|t-s|^{3\alpha}).
	\end{align*}
	Our goal is now to deduce the right regularity of iterated integrals of any order. As we will see in the sequel, we can repeat the previous argument by applying the Sewing lemma. Another property of iterated integrals we know from smooth functions is that their value decays very quickly when considering higher orders. To prove this property in our context, we need the \emph{neo-classical inequality} that we cite now.

	\begin{theorem}[Neo-classical inequality]
		For $\alpha\in (0,1]$, $n\in\mathbb{N}$ and $s,t>0$,
		\begin{align*}
			\alpha \sum_{0\leq j\leq n}\frac{s^{j\alpha}t^{(n-j)\alpha}}{(j\alpha)!((n-j)\alpha)!}\leq \frac{(t+s)^{n\alpha}}{(n\alpha)!},
		\end{align*}
		where $(j\alpha)! \coloneqq \Gamma(1+j\alpha)$ and 
		\begin{align*}
			\Gamma(z) = \int_{0}^{\infty}t^{z-1}\exp(-t)\, \mathrm{d} t, \ \   \ z\in\mathbb{C}\ \ \text{and}\ \  \mathcal{R}(z)>0 .
		\end{align*} 
	\end{theorem}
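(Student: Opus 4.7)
The plan is to approach the neo-classical inequality by first reducing to a single-variable problem via homogeneity. Both sides are homogeneous of degree $n\alpha$ in $(s,t)$, so one may normalise $s + t = 1$, set $s = x$, $t = 1-x$ with $x \in [0,1]$, and it suffices to show
\[
G(x) \coloneqq \alpha \sum_{j=0}^n \frac{x^{j\alpha}(1-x)^{(n-j)\alpha}}{\Gamma(j\alpha+1)\,\Gamma((n-j)\alpha+1)} \leq \frac{1}{\Gamma(n\alpha+1)}.
\]
A useful sanity check is that at $\alpha = 1$ this reduces, via the binomial theorem, to the equality $G(x) = 1/n!$, so the bound is tight precisely at the endpoint $\alpha = 1$; one expects the excess $1/\Gamma(n\alpha+1) - G(x)$ to be nonnegative essentially because of the extra factor $\alpha \leq 1$ in front of the sum.

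My main strategy is induction on $n$, leveraging the fractional-integral semigroup identity
\[
\frac{y^{k\alpha}}{\Gamma(k\alpha+1)} = \frac{1}{\Gamma(\alpha)}\int_0^y (y-u)^{\alpha-1}\,\frac{u^{(k-1)\alpha}}{\Gamma((k-1)\alpha+1)}\, \dif u, \qquad k\geq 1,
\]
which follows from the composition law $I^\alpha\circ I^{(k-1)\alpha} = I^{k\alpha}$ for Riemann--Liouville fractional integrals applied to the constant function $1$. Writing $a_k(y) \coloneqq y^{k\alpha}/\Gamma(k\alpha+1)$, the inequality becomes $\alpha \sum_{j=0}^n a_j(s)a_{n-j}(t) \leq a_n(s+t)$, and the base case $n=0$ is simply $\alpha \leq 1$. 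For the inductive step I would represent $a_n(s+t)$ via the identity above, split the integration $\int_0^{s+t}$ at $u = s$, and on the upper piece substitute $v = u - s$, producing two integrals in which the integrand contains $a_{n-1}(u)$ and $a_{n-1}(s+v)$ respectively.

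The main obstacle is that $a_{n-1}(s+v)$ does not factor as $\sum_i a_i(s)a_{n-1-i}(v)$, so the induction hypothesis cannot be applied directly to the cross term. Overcoming this requires a second application of the semigroup identity to unfold $a_{n-1}(s+v)$, after which the inner integrals can be evaluated in closed form using the Beta-function identity $B(j\alpha+1,m) = \Gamma(j\alpha+1)\Gamma(m)/\Gamma(j\alpha+1+m)$; the factor $\alpha \leq 1$ in the statement is precisely what is needed to absorb an excess constant coming from the mismatch between $\Gamma(k\alpha)$ and $k!$. In spirit, one has to re-derive a fractional analogue of Pascal's identity $\binom{n-1}{j-1} + \binom{n-1}{j} = \binom{n}{j}$, with Beta functions of shifted arguments playing the role of binomial coefficients.

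An alternative route, should the bookkeeping in the induction become forbidding, is to use the representation $[\Gamma(j\alpha+1)\Gamma((n-j)\alpha+1)]^{-1} = [\Gamma(n\alpha+2)\,B(j\alpha+1,(n-j)\alpha+1)]^{-1}$ to recast the LHS as a sum of terms of the form $s^{j\alpha}t^{(n-j)\alpha}/\int_0^1 u^{j\alpha}(1-u)^{(n-j)\alpha}\,\dif u$. After the substitution $y = (s+t)u$, each denominator equals $(s+t)^{-n\alpha-1}\int_0^{s+t} y^{j\alpha}(s+t-y)^{(n-j)\alpha}\,\dif y$, and unimodality of $y\mapsto y^{j\alpha}(s+t-y)^{(n-j)\alpha}$ yields a pointwise bound in terms of $(s+t)^{n\alpha}$; the challenge is then to sum the resulting bounds over $j$ without losing the constant $1/\Gamma(n\alpha+1)$, which I expect would require a Jensen-type averaging argument over $j/n$ to regain sharpness.
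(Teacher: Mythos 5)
The paper gives no proof of this theorem: the stated ``proof'' is simply a citation to Hara and Hino \cite{HH10}, where the sharp constant $\alpha$ was first obtained (Lyons' original version has $\alpha^2$ instead). There is therefore no argument in the paper to compare against, and the remarks below concern the internal consistency of your sketch.

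Your induction breaks down at a point different from the one you flag. With $a_k(y) \coloneqq y^{k\alpha}/\Gamma(k\alpha+1)$, your split reads $a_n(s+t)=I_1+I_2$ with
\begin{align*}
I_1 \coloneqq \frac{1}{\Gamma(\alpha)}\int_0^s (s+t-u)^{\alpha-1}a_{n-1}(u)\,\dif u, \qquad I_2 \coloneqq \frac{1}{\Gamma(\alpha)}\int_0^t (t-v)^{\alpha-1}a_{n-1}(s+v)\,\dif v.
\end{align*}
You call the non-factorization of $a_{n-1}(s+v)$ the main obstacle, but factorization is not what is needed: the induction hypothesis already gives the one-sided bound $a_{n-1}(s+v)\geq \alpha\sum_{j=0}^{n-1}a_j(s)a_{n-1-j}(v)$, and integrating this against $(t-v)^{\alpha-1}\,\dif v/\Gamma(\alpha)$ and applying the semigroup identity once more yields $I_2 \geq \alpha\sum_{j=0}^{n-1}a_j(s)a_{n-j}(t)$ cleanly. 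Closing the induction would then require $I_1 \geq \alpha\,a_n(s)$ to supply the missing $j=n$ summand, and this is false: for $\alpha<1$, $(s+t-u)^{\alpha-1}\to 0$ uniformly for $u\in[0,s]$ as $t\to\infty$, so $I_1\to 0$ while $\alpha\,a_n(s)>0$ is fixed. The ``second application of the semigroup identity'' and ``absorption of an excess constant by $\alpha\le 1$'' that you invoke would have to repair exactly this failure, but you do not carry them out and it is not clear that they can. The Beta-function alternative in your final paragraph also does not go through as described: the integral $\int_0^{s+t}y^{j\alpha}(s+t-y)^{(n-j)\alpha}\,\dif y$ sits in the \emph{denominator}, so bounding the unimodal integrand by its maximum gives an \emph{upper} bound on that integral and hence a \emph{lower} bound on the $j$-th summand of the left-hand side, which is the wrong direction for the inequality you are after. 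The actual proof in \cite{HH10} is considerably more delicate than either sketch.
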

	
	\begin{proof}
		\cite{HH10}.
	\end{proof}

	We can now prove a first important result in rough paths theory, the Extension theorem.
	
	\begin{theorem}[Lyon's extension theorem]\label{thm:lyons_extension}
		Let $\mathbf{X} \colon \Delta \to T^N(\R^d)$ be a multiplicative functional with 
		\begin{align*}
			\vertiii{\mathbf{X}}_{\alpha} \coloneqq \max_{n = 1,\ldots,N} \sup_{0 \leq s < t \leq T} \frac{|\mathbb{X}^{(n)}_{s,t}|}{|t-s|^{n\alpha}} < \infty 	 
		\end{align*}
		for $N + 1> \frac{1}{\alpha}$. Then $\mathbf{X}$ has a unique extension to a multiplicative functional $\tilde{\mathbf{X}} \colon \Delta \to T((\R^d))$ with the same regularity. More precisely, $\tilde{\mathbf{X}}$ satisfies Chen's relation, $\tilde{\mathbb{X}}^{(n)} = \mathbb{X}^{(n)}$ for every $n = 0,\ldots,N$ and there are constants $M$ and $\beta$ such that
		\begin{align*}
			\sup_{0 \leq s < t \leq T} \frac{|\tilde{\mathbb{X}}^{(n)}_{s,t}|}{|t-s|^{n\alpha}} \leq \frac{M^n}{\beta(n\alpha)!}
		\end{align*}
		holds for every $n \in \N_0$.
	\end{theorem}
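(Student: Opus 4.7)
The strategy is to extend $\mathbf{X}$ one level at a time, by induction on $n > N$, at each step invoking the Sewing lemma applied to a bilinear expression built from the already-constructed lower levels, in exactly the same pattern as the third-order computation carried out just before the theorem. Concretely, assume $\tilde{\mathbb{X}}^{(k)}$ has been defined on $\Delta$ for all $k \leq n-1$ so that Chen's relation holds in $T^{n-1}(\R^d)$ and each level satisfies a $k\alpha$-H\"older estimate. To obtain $\tilde{\mathbb{X}}^{(n)}_{s,t}$ for a fixed left endpoint $s$, set
\begin{align*}
\Xi_{u,v} \coloneqq \sum_{\substack{i+j=n\\ i,j\geq 1}} \tilde{\mathbb{X}}^{(i)}_{s,u} \otimes \tilde{\mathbb{X}}^{(j)}_{u,v}
\end{align*}
for $s\leq u\leq v\leq T$. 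Chen's relation applied to the factors produces massive cancellations, exactly as in the displayed computation for $n=3$, leaving $\delta\Xi_{u,v,w} = -\sum_{\,i+j=n,\, i,j\geq 1\,}\tilde{\mathbb{X}}^{(i)}_{u,v}\otimes \tilde{\mathbb{X}}^{(j)}_{v,w}$, which is of order $|w-u|^{n\alpha}$. Since $n\geq N+1>1/\alpha$ forces $n\alpha>1$, the Sewing lemma applies, and we declare $\tilde{\mathbb{X}}^{(n)}_{s,t}\coloneqq \delta\mathcal{I}\Xi_{s,t}$; the sewing identity then automatically ensures Chen's relation at level $n$ together with the previously constructed levels.

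Uniqueness follows by induction on $n$: if two candidate extensions $\tilde{\mathbf{X}}$ and $\hat{\mathbf{X}}$ first disagree at level $n$, then Chen's relation, applied to both and combined with their coincidence on $T^{n-1}(\R^d)$, forces the difference $D_{s,t}\coloneqq \tilde{\mathbb{X}}^{(n)}_{s,t}-\hat{\mathbb{X}}^{(n)}_{s,t}$ to be additive in $s,u,t$; as $D$ is also $n\alpha$-H\"older with $n\alpha>1$, it vanishes, exactly as in the uniqueness part of the Sewing lemma itself.

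The main obstacle is the \emph{quantitative} factorial estimate $|\tilde{\mathbb{X}}^{(n)}_{s,t}|\leq \frac{M^n}{\beta(n\alpha)!}|t-s|^{n\alpha}$. The Sewing estimate bounds $|\tilde{\mathbb{X}}^{(n)}_{s,t}|$ by the sewing constant $2^{n\alpha}(\zeta(n\alpha)-1)+1$ (which remains bounded as $n\to\infty$) times $\|\delta\Xi\|_{n\alpha}|t-s|^{n\alpha}$, and the inductive factorial bounds on levels $1,\dots,n-1$ then dominate $\|\delta\Xi\|_{n\alpha}$ by a binomial-type sum $\sum_{i+j=n,\, i,j\geq 1} \frac{M^{n}}{\beta^2(i\alpha)!(j\alpha)!}|v-u|^{i\alpha}|w-v|^{j\alpha}/|w-u|^{n\alpha}$. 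A naive estimate of this sum produces combinatorial growth that is incompatible with a factorial rate; this is precisely where the neo-classical inequality enters. Applied with $s=|v-u|$ and $t=|w-v|$, it yields
\begin{align*}
\sum_{i+j=n} \frac{|v-u|^{i\alpha}|w-v|^{j\alpha}}{(i\alpha)!(j\alpha)!}\leq \frac{|w-u|^{n\alpha}}{\alpha(n\alpha)!},
\end{align*}
which absorbs the binomial sum perfectly into the $(n\alpha)!$ structure. One then closes the induction by choosing $\beta$ of size roughly $C/\alpha$ (to swallow the extra factor $1/\beta^2$ appearing in the recursion against the single $1/\beta$ one needs on the output), and $M$ large enough in terms of $\vertiii{\mathbf{X}}_\alpha$, $\alpha$, and the chosen $\beta$ so that the base cases $k=1,\dots,N$ already fit the template; threading these constants uniformly across every level $n$ is the delicate part of the argument.
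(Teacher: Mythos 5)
Your proposal is essentially the same as the paper's proof: extend level by level, write the would-be increment as a telescoping Riemann sum of a compensator $\Xi$, show $\delta\Xi$ has $n\alpha$-H\"older order using Chen's relation at lower levels, apply the Sewing lemma (with $n\alpha>1$ from $n\geq N+1>1/\alpha$), and control the constants by a factorial induction closed via the neo-classical inequality, choosing $\beta$ first to close the recursion and then $M$ to fit the base cases. The uniqueness argument (additivity of the difference plus $n\alpha$-H\"older regularity forces it to vanish) is identical.

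The one genuine deviation is the choice of compensator. You set $\Xi_{u,v}=\sum_{i+j=n,\,i,j\geq1}\tilde{\mathbb{X}}^{(i)}_{s,u}\otimes\tilde{\mathbb{X}}^{(j)}_{u,v}$ with the base point anchored at $s$, and then declare $\tilde{\mathbb{X}}^{(n)}_{s,t}\coloneqq\delta\mathcal{I}\Xi_{s,t}$ (noting $\Xi_{s,t}=0$); the paper instead anchors at $0$, i.e.\ $\Xi^{(n+1)}_{u,v}=\sum_{1\leq j\leq n}\mathbb{X}^{(n+1-j)}_{0,u}\otimes\mathbb{X}^{(j)}_{u,v}$, and then sets $\mathbb{X}^{(n+1)}_{s,t}=\delta\mathcal{I}\Xi^{(n+1)}_{s,t}-\Xi^{(n+1)}_{s,t}$. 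Both work and give the same $\delta\Xi$ (so the neo-classical bound is unaffected), but the paper's variant lets one invoke the Sewing lemma once on $[0,T]$, whereas yours involves a one-parameter family of sewing constructions on $[s,T]$, one for each $s$. This makes your claim that "the sewing identity automatically ensures Chen's relation" too glib: with base points at $s$ and at $u$ floating around, one must still show that the three separately constructed objects $\tilde{\mathbb{X}}^{(n)}_{s,t}$, $\tilde{\mathbb{X}}^{(n)}_{s,u}$, $\tilde{\mathbb{X}}^{(n)}_{u,t}$ satisfy the Chen identity, which requires explicitly using Chen at lower levels on the Riemann sums before passing to the limit. The paper's anchored-at-$0$ version reduces this to a one-line computation: $\mathbb{X}^{(n+1)}_{s,t}-\mathbb{X}^{(n+1)}_{s,u}-\mathbb{X}^{(n+1)}_{u,t}=-\delta\Xi^{(n+1)}_{s,u,t}$, which equals the required cross-term. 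So the paper's compensator buys you a cleaner proof of multiplicativity; your variant is equally valid but leaves a nontrivial verification implicit.
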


	\begin{proof}
		\textbf{Existence}: We use an induction argument. Let $n\geq N$, and assume for every $1\leq j\leq n$, $\mathbb{X}^{(j)}$ is well-defined and for some $M,\beta >0$ satisfying 
		\begin{align}\label{Induction}
			\sup_{0\leq s< t\leq T}\frac{|\mathbb{X}^{(j)}_{s,t}|}{|t-s|^{j\alpha}} \leq \frac{M^j}{\beta(j\alpha)!}\ .\ \ \ \ 
		\end{align}
		Also, assume for $2\leq k\leq n$
		\begin{align}\label{chen}
			\mathbb{X}^{(k)}_{s,t}=\mathbb{X}^{(k)}_{s,u}+\mathbb{X}^{(k)}_{u,t}+\sum_{1\leq i\leq k-1}\mathbb{X}^{(i)}_{s,u}\otimes \mathbb{X}^{(k-i)}_{u,t},\ \ \ \forall  s,u,t\in [0,T]: \  s\leq u\leq t.
		\end{align}
		First, we show one can apply the Sewing lemma to define the following integrals 
		\begin{align*}
			\Gamma^{n+1}(s,t):=\int_{s}^{t}\mathbb{X}^{n}_{0,\sigma}\otimes\mathrm{d}X_{\sigma}, \ \ \ \ X_{\sigma}:=\mathbb{X}_{0,\sigma}^{{1}}\ .
		\end{align*}
		Set 
		\begin{align*}
			\Xi^{(n+1)}_{s,t}=\sum_{1\leq j\leq n}\mathbb{X}^{(n+1-j)}_{0,s}\otimes\mathbb{X}^{(j)}_{s,t},
		\end{align*}
		then for $u<v<w$, from \eqref{chen} 
		\begin{align}\label{increment}
			\begin{split}
				&\delta\Xi^{(n+1)}_{u,v,w}=\Xi^{(n+1)}_{u,w}-\Xi^{(n+1)}_{u,v}-\Xi^{(n+1)}_{v,w}=\sum_{1\leq j\leq n}\mathbb{X}^{(n+1-j)}_{0,u}\otimes [\mathbb{X}^{(j)}_{u,w}-\mathbb{X}^{(j)}_{u,v}]-\sum_{1\leq j\leq n}\mathbb{X}^{(n+1-j)}_{0,v}\otimes \mathbb{X}^{(j)}_{v,w}\\&\quad  =\sum_{1\leq j\leq n}\mathbb{X}_{0,u}^{(n+1-j)}\otimes \mathbb{X}^{(j)}_{v,w}+\sum_{2\leq j\leq n}\sum_{1\leq i\leq j-1}\mathbb{X}^{(n+1-j)}_{0,u}\otimes [\mathbb{X}^{(j-i)}_{u,v}\otimes\mathbb{X}^{(i)}_{v,w}]-\sum_{1\leq j\leq n}\mathbb{X}^{(n+1-j)}_{0,u}\otimes \mathbb{X}^{(j)}_{v,w}\\&\quad -\sum_{1\leq j\leq n}\mathbb{X}^{(n+1-j)}_{u,v}\otimes \mathbb{X}^{(j)}_{v,w}-\sum_{1\leq j\leq n-1}\sum_{1\leq i\leq n-j}[\mathbb{X}^{(n+1-j-i)}_{0,u}\otimes\mathbb{X}^{(i)}_{u,v}\otimes ]\otimes \mathbb{X}^{(j)}_{v,w}\\
				&\quad = -\sum_{1\leq j\leq n}\mathbb{X}^{(n+1-j)}_{u,v}\otimes \mathbb{X}^{(j)}_{v,w}\  .
			\end{split}
		\end{align}
		From our induction assumption \eqref{Induction} and the neo-classical inequality,
		\begin{align}\label{induction_step}
			\begin{split}
				&|\delta\Xi^{(n+1)}_{u,v,w} |_{(n+1)\beta}\leq \sum_{1\leq j\leq n}|\mathbb{X}^{(n+1-j)}_{u,v}| |\mathbb{X}^{(j)}_{v,w}|\leq  \frac{M^{n+1}}{\beta^2}\sum_{1\leq j\leq n}\frac{(v-u)^{(n+1-j)\alpha}(w-v)^{j\alpha}}{(j\alpha)!((n+1-j)\alpha)!}\\&\quad \leq \frac{M^{n+1}(w-v)^{(n+1)\alpha}}{\alpha \beta^2((n+1)\alpha)!}.
			\end{split}
		\end{align}
		We finally define
		\begin{align}\label{induction_second}
			\mathbb{X}^{(n+1)}_{s,t}:=\delta\mathcal{I}\Xi_{s,t}^{(n+1)}-\Xi_{s,t}^{(n+1)},\ \ \  s,t\in [0,T].
		\end{align}
		Note that from \eqref{induction_step} and \eqref{eqn:sewing},
		\begin{align*}
			&|\mathbb{X}^{(n+1)}_{s,t}|\leq \big(1+2^{(n+1)\alpha}(\zeta((n+1)\alpha)-1)\big)(t-s)^{(n+1)\alpha}\Vert \delta\mathcal{I}\Xi^{(n+1)}\Vert\\ &\quad \leq \frac{ 2^{(N+1)\alpha}(\zeta((N+1)\alpha)-1)+1}{\alpha \beta^2}\times\frac{M^{n+1}}{((n+1)\alpha)!}.
		\end{align*}
		For $\beta$ satisfying 
		\begin{align*}
			\beta\ge \frac{2^{(N+1)\alpha}(\zeta((N+1)\alpha)-1)+1}{\alpha^2},
		\end{align*}
		we choose  $M>0$ such that
		\begin{align*}
			\forall j,\   1\leq j\leq N: \ \   \sup_{0\leq s< t\leq T}\frac{|\mathbb{X}^{(j)}_{s,t}|}{|t-s|^{j\alpha}}\leq \frac{M^{j}}{\beta(j\alpha)!}
		\end{align*}
		and our claim is proved. It only remains to prove that
		\begin{align*}
			\mathbb{X}^{(n+1)}_{s,t}=\mathbb{X}^{(n+1)}_{s,u}+\mathbb{X}^{(n+1)}_{u,t}+\sum_{1\leq j\leq n}\mathbb{X}^{(j)}_{s,u}\otimes \mathbb{X}^{(n+1-j)}_{u,t},\ \ \ \forall  s,u,t\in [0,T]: \  s\leq u\leq t,
		\end{align*}
		which again follows by an induction argument. Indeed, by \eqref{increment} and \eqref{induction_second}, 
		\begin{align*}
			\mathbb{X}^{(n+1)}_{s,t}-\mathbb{X}^{(n+1)}_{s,u}-\mathbb{X}^{(n+1)}_{u,t}=-\delta\Xi^{(n+1)}_{s,u,t}=\sum_{1\leq j\leq n}\mathbb{X}^{(n+1-j)}_{s,u}\otimes \mathbb{X}^{(j)}_{u,t} .
		\end{align*}
		\textbf{Uniqueness}: Assume that $\tilde{\mathbf{X}}$ and $\hat{\mathbf{X}}$ are two extensions of $\mathbf{X}$ that agree up to some level $n \geq N$. Set 
		\begin{align*}
			\Psi_{s,t} \coloneqq \tilde{\mathbb{X}}^{(n+1)}_{s,t} - \hat{\mathbb{X}}^{(n+1)}_{s,t}.
		\end{align*}
		From Chen's identity, for $s < u < t$,
		\begin{align*}
			\Psi_{s,t} &= \pi_{n+1}(\tilde{\mathbf{X}}_{s,u} \otimes \tilde{\mathbf{X}}_{u,t}) - \pi_{n+1}(\hat{\mathbf{X}}_{s,u} \otimes \hat{\mathbf{X}}_{u,t}) \\
			&= \tilde{\mathbb{X}}^{(n+1)}_{s,u} + \tilde{\mathbb{X}}^{(n+1)}_{u,t} - \hat{\mathbb{X}}^{(n+1)}_{s,u} -  \hat{\mathbb{X}}^{(n+1)}_{u,t} \\
			&= \Psi_{s,u} + \Psi_{u,t}. 
		\end{align*}
		It follows that $t \mapsto \Psi_t := \Psi_{0,t}$ is a path that has $(n+1)\alpha$-H\"older regularity. Since $(n+1)\alpha > 1$, $\Psi_t$ is constant, thus $\Psi_{s,t} = 0$ for every $s < t$ which shows $\tilde{\mathbb{X}}^{(n+1)}= \hat{\mathbb{X}}^{(n+1)}$. Therefore, our claim about the uniqueness is proved.
	\end{proof}
	%
	The Extension theorem gives us the exact regularity of iterated Young integrals of any order. Moreover, it tells us that multiplicative functionals having a certain regularity up to a sufficiently high level can be uniquely extended to $T^{M}(\mathbb{R}^d)$ for any other integer $M>\frac{1}{\alpha}$. A rough path will be a multiplicative functional that has such an extension.

	\begin{definition}
		Let $\alpha \in (0,1]$. An \emph{$\alpha$-H\"older rough path} $\mathbf{X}$ is a multiplicative functional $\mathbf{X} \colon \Delta \to T^N(\R^d)$ such that
		\begin{align*}
			\vertiii{\mathbf{X}}_{\alpha} \coloneqq \max_{n = 1,\ldots,N} \sup_{0 \leq s < t \leq T} \frac{|\mathbb{X}^{(n)}_{s,t}|}{|t-s|^{n\alpha}}< \infty 
		\end{align*}
		where
		\begin{align*}
			\lfloor 1/\alpha \rfloor \coloneqq \max\left\{n \in \N\, :\, n \leq \frac{1}{\alpha} \right\}  = N,
		\end{align*}
		i.e. $N \leq \frac{1}{\alpha} < N+1$. The set of $\alpha$-H\"older rough paths is denoted by $\mathscr{C}^{\alpha}([0,T], \R^d)$ or simply by $\mathscr{C}^{\alpha}$. If $X \colon [0,T] \to \R^d$ is an $\alpha$-H\"older path and $\mathbf{X}$ a rough path with $\mathbb{X}^{(1)} = \delta X$, we call $\mathbf{X}$ a \emph{rough path lift} of $X$. If $\mathbf{X}$ is a rough path, the unique extension $\tilde{\mathbf{X}}$ provided by Theorem \ref{thm:lyons_extension} is called the \emph{Lyons lift} of $\mathbf{X}$.
	\end{definition}
	
	\subsection{Linear equations driven by a rough path}
	We now return to linear equations. In fact, we will see now that if $\mathbf{X}$ is an $\alpha$-H\"older rough path, we can solve linear equations driven by this path. We will first describe the equation we are looking at. Let $A_1,\ldots,A_d \in \R^{m \times m}$ and define a linear map $A \colon \R^d \to \R^{m \times m}$ by setting
	\begin{align*}
		Av \coloneqq A_1 v^1 + \ldots + A_d v^d,\qquad v = (v^1,\ldots,v^d) \in \R^d.
	\end{align*}
	Set
		\begin{align*}
			&\sigma:\mathbb{R}^m\rightarrow L(\mathbb{R}^d,\mathbb{R}^m),\\
			&\sigma(Z)(W):=(AW)Z,\ \ \ Z\in\mathbb{R}^{m} \  \text{and}\  \ W\in \mathbb{R}^{d}.
		\end{align*}
		We aim to solve
		\begin{align}\label{eqn:linear_RDE2}
			\begin{split}
				\dif Y_t &= \sigma(Y_t)\, \dif \mathbf{X}_t; \qquad t \in [0,T],\\
				Y_0 &= y \in \R^m.
			\end{split}
		\end{align}
	A natural candidate for a solution to \eqref{eqn:linear_RDE2} is
	\begin{align}\label{eqn:power_series}
		Y_t = \sum_{n = 0}^{\infty} A^{\otimes n} \left( \int_{0 < s_1 < \ldots < s_n < t} \dif X_{s_1} \otimes \cdots \otimes  \dif X_{s_n} \right)(y)
	\end{align}
	where we write $\int_{0 < s_1 < \ldots < s_n < t} \dif X_{s_1} \otimes \cdots \otimes  \dif X_{s_n} $ for the element $\mathbb{X}^{(n)}_{0,t}$ that is uniquely defined for every $n \in \N$ due to Lyons' Extension theorem. In the expression above, $A^{\otimes n} \colon (\R^d)^{\otimes n} \to \R^{m \times m}$ is the linear map defined by $A^{\otimes 0}(v) = I_m$ and
	\begin{align*}
		A^{\otimes n}(e_{i_1} \otimes \cdots \otimes e_{i_n}) = A_{i_1} \cdots A_{i_n}, \quad n \geq 1,
	\end{align*}
	where $\{e_1,\ldots,e_d\}$ denotes the Euclidean basis of $\R^d$. For example, if $A_1 = \cdots = A_d = I_m$, 
	\begin{align*}
		Y_t = y &+ y \sum_{i = 1}^d \int_0^t \dif X^i_s + \ldots  + y \sum_{i_1,\ldots,i_n = 1}^d \int_{0 < t_1 < \cdots < t_n < t} \dif X_{t_1}^{i_1} \cdots \dif X_{t_n}^{i_n} + \ldots
	\end{align*}
	where we use the notation $ \int_{0 < t_1 < \cdots < t_n < t} \dif X_{t_1}^{i_1} \cdots \dif X_{t_n}^{i_n} = \mathbb{X}_{0,t}^{(n);i_1,\ldots,i_n}$. Note that the infinite sum \eqref{eqn:power_series} indeed converges due to the superexponential decay of the iterated integrals $\mathbb{X}^{(n)}$ deduced in Theorem \ref{thm:lyons_extension}.

	\subsection{Brownian motion as a rough path}
	We saw that rough paths can be used to solve linear equations, or linear \emph{rough differential equations}. The natural question is now how we can use this result to solve linear stochastic differential equations pathwise. This would be possible if we could show that a given stochastic process can be ``naturally extended'' to a rough paths valued process. The most important process in stochastic analysis is the Brownian motion. Let $B = (B^1, \ldots, B^d)$ be a $d$-dimensional Brownian motion, i.e. the $B^i$, $i = 1, \ldots,d$, are independent, real valued Brownian motions. We know that the Brownian motion has trajectories that are $\alpha$-H\"older continuous for every $\alpha < \frac{1}{2}$. Therefore, we can construct a rough paths valued process if we determine the second iterated integral. There are (at least) two natural candidates: First, we can set $\mathbf{B}^{\text{It\=o}} = (1,B,\mathbb{B}^{\text{It\=o}})$ where
	\begin{align*}
		\mathbb{B}^{\text{It\=o}}_{s,t} = \int_s^t (B_u - B_s)\, \otimes \dif B_u.
	\end{align*}
	The integral is understood as an It\=o integral. Another choice would be $\mathbf{B}^{\text{Strat}} = (1,B,\mathbb{B}^{\text{Strat}})$,
	\begin{align*}
		\mathbb{B}^{\text{Strat}}_{s,t} = \int_s^t (B_u - B_s)\, \otimes  \circ\dif B_u = \mathbb{B}^{\text{It\=o}}_{s,t} + I_d \frac{(t-s)}{2}
	\end{align*}
	where the integral is understood as Stratonovich integral.  Since both the It\=o and the Stratonovich integral satisfy $\int_s^t = \int_s^u + \int_u^t$ for $s < u < t$, they  satisfy Chen's relation, thus they are multiplicative functionals almost surely. It remains to check that also the iterated integrals have the right H\"older-regularity. To prove this regularity, we first state the following version of the Kolmogorov-Chentsov theorem:

	\begin{theorem}[Kolmogorov-Chentsov theorem for multiplicative functionals]\label{thm:kolmogorov}
		Let $\mathbf{X} \colon \Delta \to T^2(\R^d)$ be a random continuous multiplicative functional, $q \geq 2$, $\beta > \frac{1}{q}$ and assume that
		\begin{align*}
			\| \mathbb{X}^{(1)}_{s,t} \|_{L^q} \leq C|t-s|^{\beta} \quad \text{and} \quad \| \mathbb{X}^{(2)}_{s,t} \|_{L^{q/2}} \leq C|t-s|^{2 \beta}
		\end{align*}
		for a constant $C > 0$ and any $s,t \in [0,T]$. Then for all $\alpha \in [0, \beta - 1/q)$, there are random variables $K^1_{\alpha} \in L^q$ and $K^2_{\alpha} \in L^{\frac{q}{2}}$ such that
		\begin{align*}
			|\mathbb{X}^{(1)}_{s,t}| \leq K^1_{\alpha} |t-s|^{\alpha} \quad \text{and} \quad |\mathbb{X}^{(2)}_{s,t}| \leq K^2_{\alpha} |t-s|^{2 \alpha}
		\end{align*}
		for all $s,t \in [0,T]$. In particular, $\vertiii{\mathbf{X}}_{\alpha} < \infty$ almost surely.			
	\end{theorem}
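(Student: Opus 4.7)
The first statement — the H\"older bound on $\mathbb{X}^{(1)}$ — follows verbatim from Theorem \ref{thm:kolmogorov1}, producing $K^1_\alpha \in L^q$. The whole difficulty lies in $\mathbb{X}^{(2)}$, where two features distinguish the problem from the classical setting: the object depends on a pair of times, and, more importantly, it is \emph{not additive} in its increments. My plan is to mimic the dyadic chaining used in Theorem \ref{thm:kolmogorov1}, while accounting for non-additivity through Chen's relation.

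Set $D_n \coloneqq \{k 2^{-n}\, :\, 0 \leq k \leq 2^n\}$, $D \coloneqq \bigcup_{n \geq 0} D_n$, and define the dyadic maxima
\begin{align*}
K_n^{(2)} \coloneqq \sup_{t \in D_n}\, |\mathbb{X}^{(2)}_{t, t+2^{-n}}|.
\end{align*}
The same union-bound estimate as in the proof of Theorem \ref{thm:kolmogorov1} then yields $\|K_n^{(2)}\|_{L^{q/2}} \leq C |D_n|^{2\beta - 2/q}$. For $s < t \in D$, I choose $m$ with $|D_{m+1}| < t - s \leq |D_m|$ and select $s = \tau_0 < \cdots < \tau_N = t$ in $\bigcup_{n \geq m+1} D_n$ such that, for each $n \geq m+1$, at most two of the subintervals $[\tau_i, \tau_{i+1}]$ have length $2^{-n}$, exactly as in the earlier argument.

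The main obstacle is that $\mathbb{X}^{(2)}$ does not telescope along the chain $(\tau_i)$. Iterating Chen's relation by induction on $N$ produces instead the identity
\begin{align*}
\mathbb{X}^{(2)}_{s,t} = \sum_{i=0}^{N-1} \mathbb{X}^{(2)}_{\tau_i, \tau_{i+1}} + \sum_{0 \leq i < j \leq N-1} \delta X_{\tau_i, \tau_{i+1}} \otimes \delta X_{\tau_j, \tau_{j+1}}.
\end{align*}
The diagonal sum is controlled exactly as in the first-level argument to give $\sum_i |\mathbb{X}^{(2)}_{\tau_i, \tau_{i+1}}| \leq 2 \sum_{n \geq m+1} K_n^{(2)}$. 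The cross-term double sum is bounded via the elementary inequality $\sum_{i<j} a_i a_j \leq \tfrac{1}{2}(\sum_i a_i)^2$, which yields a further bound of $2\bigl(\sum_{n \geq m+1} K_n\bigr)^2$ with $K_n$ the first-level dyadic maximum from the earlier proof.

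Dividing by $|t-s|^{2\alpha} \geq |D_{m+1}|^{2\alpha}$ and using $|D_{m+1}| \geq |D_n|$ for $n \geq m+1$, I obtain on $D$
\begin{align*}
\frac{|\mathbb{X}^{(2)}_{s,t}|}{|t-s|^{2\alpha}} \leq 2 \sum_{n \geq 0} \frac{K_n^{(2)}}{|D_n|^{2\alpha}} + 2 \Bigl(\sum_{n \geq 0} \frac{K_n}{|D_n|^{\alpha}}\Bigr)^{2} \eqqcolon K_\alpha^2,
\end{align*}
which extends to all $s,t \in [0,T]$ by continuity of $\mathbb{X}^{(2)}$. Finally I check $K_\alpha^2 \in L^{q/2}$: the first sum lies in $L^{q/2}$ by Minkowski because $\sum |D_n|^{2\beta - 2/q - 2\alpha}$ converges under the assumption $\alpha < \beta - 1/q$, and the second term is a constant multiple of $(K_\alpha^1)^2$, which lies in $L^{q/2}$ since $K_\alpha^1 \in L^q$. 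The almost sure finiteness of $\vertiii{\mathbf{X}}_\alpha$ follows immediately.
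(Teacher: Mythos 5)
Your proof is correct and follows essentially the same dyadic chaining strategy as the paper: both iterate Chen's relation along the chain $(\tau_i)$ built from $\bigcup_{n\ge m+1}D_n$, split $\mathbb{X}^{(2)}_{s,t}$ into a diagonal sum controlled by $\tilde K_n$ and a cross term controlled by $(\sum K_n)^2$, and verify the resulting series lies in $L^{q/2}$. The only cosmetic difference is that you expand the cross term as a full double sum $\sum_{i<j}\delta X_{\tau_i,\tau_{i+1}}\otimes\delta X_{\tau_j,\tau_{j+1}}$ and bound it with $\sum_{i<j}a_ia_j\le\tfrac12(\sum a_i)^2$, whereas the paper keeps the partially-summed form $\delta X_{s,\tau_i}\otimes\delta X_{\tau_i,\tau_{i+1}}$ and bounds by a maximum times a sum; these are algebraically equivalent rearrangements.
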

	
	\begin{proof}
		The proof is similar to the one of the classical Kolmogorov-Chentsov theorem that we already saw in Theorem \ref{thm:kolmogorov1}. We proceed as in \cite[Theorem 3.1]{FH20}. We assume that $T = 1$. Defining $D_n$ and $D$ as in the proof of Theorem \ref{thm:kolmogorov1}, we set 
		\begin{align*}
			K_n \coloneqq \sup_{t \in D_n} |\mathbb{X}^{(1)}_{t, t+ 2^{-n}}| \quad \text{and} \quad \tilde{K}_n \coloneqq \sup_{t \in D_n} |\mathbb{X}^{(2)}_{t, t+ 2^{-n}}|.
		\end{align*}
		As before, one can check that $\E(K^q_n) \leq C^q |D_n|^{q\beta -1}$ and  $\E(\tilde{K}^{q/2}_n) \leq C^{q/2} |D_n|^{q\beta -1}$. Fix $s, t \in D$ and choose $m$ with $|D_{m+1}| < |t-s| \leq |D_m|$. Furthermore, choose
		\begin{align*}
			s = \tau_0 < \tau_1 < \ldots < \tau_N = t
		\end{align*}
		as in the proof of Theorem \ref{thm:kolmogorov1}. Then,
		\begin{align*}
			|\mathbb{X}^{(1)}_{s,t}| \leq \max_{0 \leq i \leq N-1} |\mathbb{X}^{(1)}_{s,\tau_{i+1}}| \leq \sum_{i = 0}^{N-1} |\mathbb{X}^{(1)}_{\tau_i,\tau_{i+1}}| \leq 2 \sum_{n \geq m+1} K_n.
		\end{align*}
		Using the Chen relation repeatedly gives
		\begin{align*}
			|\mathbb{X}^{(2)}_{s,t}| &= \left| \sum_{i = 0}^{N-1} \mathbb{X}^{(2)}_{\tau_i,\tau_{i+1}} + \mathbb{X}^{(1)}_{s,\tau_i} \otimes \mathbb{X}^{(1)}_{\tau_i,\tau_{i+1}} \right| 
			\leq \sum_{i = 0}^{N-1} | \mathbb{X}^{(2)}_{\tau_i,\tau_{i+1}}|  + |\mathbb{X}^{(1)}_{s,\tau_i}| | \mathbb{X}^{(1)}_{\tau_i,\tau_{i+1}} | \\
			&\leq \sum_{i = 0}^{N-1} | \mathbb{X}^{(2)}_{\tau_i,\tau_{i+1}}|  +  \max_{0 \leq i \leq N-1} |\mathbb{X}^{(1)}_{s,\tau_{i+1}}| \sum_{i = 0}^{N-1} | \mathbb{X}^{(1)}_{\tau_i,\tau_{i+1}} | 
			\leq 2  \sum_{n \geq m+1} \tilde{K}_n + \left( 2 \sum_{n \geq m+1} K_n \right)^2.
		\end{align*}
		In the proof of Theorem \ref{thm:kolmogorov1}, we have already seen that this implies that for every $s < t$
		\begin{align*}
			\frac{|\mathbb{X}^{(1)}_{s,t}|}{|t-s|^{\alpha}} \leq 2 \sum_{n=0}^{\infty} \frac{K_n}{|D_n|^{\alpha}} \eqqcolon K^1_{\alpha}
		\end{align*}
		and $K^1_{\alpha} \in L^q$. Similarly, 
		\begin{align*}
			\frac{|\mathbb{X}^{(2)}_{s,t}|}{|t-s|^{2 \alpha}} \leq 2 \sum_{n=0}^{\infty} \frac{\tilde{K}_n}{|D_n|^{2 \alpha}} + \left(2 \sum_{n=0}^{\infty} \frac{K_n}{|D_n|^{\alpha}} \right)^2 = K^2_{\alpha} + (K^1_{\alpha})^2 
		\end{align*}
		where
		\begin{align*}
			K^2_{\alpha} \coloneqq \sum_{n=0}^{\infty} \frac{\tilde{K}_n}{|D_n|^{2 \alpha}}.
		\end{align*}
		It is then straightforward to check that $K^2_{\alpha} \in L^{\frac{q}{2}}$ which finishes the proof.

	\end{proof}
	
	\begin{corollary}\label{cor:Bm_lift}
		We have $\vertiii{\mathbf{B}^{\text{It\=o}}}_{\alpha} < \infty$ and  $\vertiii{\mathbf{B}^{\text{Strat}}}_{\alpha} < \infty$ almost surely for every $\alpha < \frac{1}{2}$.
	\end{corollary}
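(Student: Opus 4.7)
The plan is to verify the hypotheses of the Kolmogorov--Chentsov theorem for multiplicative functionals (Theorem \ref{thm:kolmogorov}) with $\beta = \frac{1}{2}$ applied to $\mathbf{B}^{\text{It\=o}}$, then deduce the Stratonovich statement from the It\=o--Stratonovich correction. Continuity of $\mathbf{B}^{\text{It\=o}}$ and the multiplicative property were already noted in the text, so only the moment estimates remain.

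For the first level, since $B_t - B_s \sim \mathcal{N}(0,(t-s)I_d)$, the Gaussian moment bound gives $\|B_t - B_s\|_{L^q} \leq C_q |t-s|^{1/2}$ for every $q \geq 2$. For the second level I would treat the component $\mathbb{B}^{\text{It\=o};ij}_{s,t}=\int_s^t(B^i_u-B^i_s)\,\mathrm{d}B^j_u$ in two cases. For $i\neq j$ the integrand is independent of $B^j$ and the integral is a martingale, so by Burkholder--Davis--Gundy followed by Jensen's inequality applied to the time integral,
\begin{align*}
\bigl\|\mathbb{B}^{\text{It\=o};ij}_{s,t}\bigr\|_{L^{q/2}}
\leq C_q\Bigl\|\Bigl(\int_s^t(B^i_u-B^i_s)^2\,\mathrm{d}u\Bigr)^{1/2}\Bigr\|_{L^{q/2}}
\leq C_q\, |t-s|.
\end{align*}
For $i=j$ the explicit It\=o formula $\int_s^t(B^i_u-B^i_s)\,\mathrm{d}B^i_u = \tfrac{1}{2}\bigl[(B^i_t-B^i_s)^2-(t-s)\bigr]$ gives the same bound. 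Hence $\|\mathbb{B}^{\text{It\=o}}_{s,t}\|_{L^{q/2}} \leq C_q |t-s|^{2\cdot 1/2}$ for every $q\geq 2$.

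Given any $\alpha < \tfrac{1}{2}$, I would then choose $q$ large enough so that $\tfrac{1}{q} < \tfrac{1}{2}-\alpha$, in which case Theorem \ref{thm:kolmogorov} yields $\vertiii{\mathbf{B}^{\text{It\=o}}}_{\alpha} < \infty$ almost surely. For the Stratonovich lift, the correction $\mathbb{B}^{\text{Strat}}_{s,t} - \mathbb{B}^{\text{It\=o}}_{s,t} = \tfrac{t-s}{2}I_d$ is Lipschitz in $t-s$, so on $[0,T]$ it contributes a term bounded by $C|t-s| \leq C\, T^{1-2\alpha}|t-s|^{2\alpha}$ to the second-level H\"older seminorm. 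The first-level parts of the two lifts coincide, and thus $\vertiii{\mathbf{B}^{\text{Strat}}}_{\alpha} < \infty$ almost surely as well.

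The only genuinely non-routine step is the $L^{q/2}$-bound for the stochastic integral, and the potential subtlety there is that one must split into the $i=j$ and $i\neq j$ components since BDG alone (applied for a fixed trajectory of the integrand) requires the martingale property that fails on the diagonal; fortunately the diagonal case admits an explicit formula. Everything else is an essentially mechanical application of results already established in the excerpt.
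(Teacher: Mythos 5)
Your proof is correct and follows the same broad strategy as the paper — verify the moment hypotheses of Theorem \ref{thm:kolmogorov} with $\beta=\tfrac12$ — but the paper gets the moment bounds in one line by Brownian scaling: since $\delta B_{s,t}\stackrel{\mathcal{D}}{=}|t-s|^{1/2}\delta B_{0,1}$ and $\mathbb{B}_{s,t}\stackrel{\mathcal{D}}{=}|t-s|\,\mathbb{B}_{0,1}$, one need only know that $\mathbb{B}_{0,1}$ has moments of every order, whereas you re-derive the $|t-s|$-dependence from scratch via Burkholder--Davis--Gundy plus Jensen. Both routes land in the same place, and your version has the small advantage of making the constants and the power of $|t-s|$ explicit. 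The Stratonovich transfer via the deterministic correction $\tfrac{t-s}{2}I_d$ is also fine.

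One inaccuracy in your commentary, though it does not invalidate the proof: the case split $i=j$ versus $i\neq j$ is not forced by a failure of the martingale property. For every pair $(i,j)$, the process $t\mapsto\int_s^t(B^i_u-B^i_s)\,\mathrm{d}B^j_u$ is a genuine square-integrable martingale on $[s,T]$, because $(B^i_u-B^i_s)_{u\ge s}$ is adapted to the Brownian filtration; adaptedness, not independence from $B^j$, is what makes the It\=o integral a martingale. Thus BDG applies uniformly in $i,j$ and the diagonal case does not require the explicit It\=o formula — using it is harmless, just unnecessary. What \emph{would} fail on the diagonal is the conditioning argument you allude to (``for a fixed trajectory of the integrand''), but you never actually need to condition; the direct application of BDG already gives the bound. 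Finally, your Jensen step implicitly assumes $q\ge 4$ so that $x\mapsto x^{q/4}$ is convex, which is harmless because the Kolmogorov--Chentsov argument allows one to take $q$ as large as desired, but it is worth flagging.
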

	
	\begin{proof}
		Using Brownian scaling, one can show that the conditions of Theorem \ref{thm:kolmogorov} hold for the It\=o- and for the Stratonovich lift of the Brownian motion for $\beta = \frac{1}{2}$ and every $q \geq 2$. 
	\end{proof}
	
	From Corollary \eqref{cor:Bm_lift}, we know that $\mathbf{B}^{\text{It\=o}}$ and $\mathbf{B}^{\text{Strat}}$ are both rough path valued stochastic processes. Therefore, we can use them both to solve linear stochastic differential equations driven by a Brownian motion. However, choosing the It\=o or the Stratonovich rough path lift leads to different solutions, which is natural and well known in stochastic analysis. In fact, the choice of the rough path lift should be regarded as another parameter in the equation and depends on the problem one aims to find a model for.

	\section{The space of rough paths}
	
	\subsection{Metrics on rough paths spaces and separability}
	In the previous section, we defined the set $\mathscr{C}^{\alpha}([0,T],\R^d)$ of $\alpha$-H\"older rough paths. Note that there is no meaningful notion of the sum of two rough paths, i.e. $\mathscr{C}^{\alpha}$ is not a linear space. We will see now that it is still a metric space.
	\begin{definition}
		Let $\mathbf{X}, \mathbf{Y} \in \mathscr{C}^{\alpha}$. Then we define
		\begin{align*}
			\varrho_{\alpha}(\mathbf{X},\mathbf{Y}) \coloneqq \sum_{n = 1}^{\lfloor 1/ \alpha \rfloor} \sup_{0 \leq s < t \leq T} \frac{|  \mathbb{X}^{(n)}_{s,t} - \mathbb{Y}^{(n)}_{s,t}|}{|t-s|^{n\alpha}}.
		\end{align*}
		
	\end{definition}
	It is not hard to see that $\varrho_{\alpha}$ is a metric on $\mathscr{C}^{\alpha}$. Moreover, one can prove the following:
	
	\begin{proposition}
		For every $\alpha \in (0,1]$, the space $(\mathscr{C}^{\alpha} , \varrho_{\alpha})$ is a complete metric space.
	\end{proposition}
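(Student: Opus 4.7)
The plan is to follow the standard Banach-space completeness argument applied level by level, then verify that the pointwise limit inherits the three defining features of a rough path: Chen's relation, the Hölder bound, and continuity.

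Let $(\mathbf{X}_k)_{k\geq 1}$ be a Cauchy sequence in $(\mathscr{C}^\alpha, \varrho_\alpha)$. First I would note that for each level $n \in \{1,\ldots,N\}$ with $N = \lfloor 1/\alpha\rfloor$,
\begin{align*}
\sup_{0\leq s<t\leq T} \frac{|\mathbb{X}_k^{(n)}{}_{s,t} - \mathbb{X}_l^{(n)}{}_{s,t}|}{|t-s|^{n\alpha}} \;\leq\; \varrho_\alpha(\mathbf{X}_k,\mathbf{X}_l) \;\xrightarrow[k,l\to\infty]{}\; 0.
\end{align*}
Hence for every fixed $(s,t)$ with $s<t$, the sequence $(\mathbb{X}_k^{(n)}{}_{s,t})_k$ is Cauchy in the finite-dimensional Banach space $(\R^d)^{\otimes n}$, so it converges to some limit $\mathbb{X}^{(n)}_{s,t}$. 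The displayed estimate then upgrades pointwise convergence to convergence in the weighted supremum, i.e.\ $\sup_{s<t}|\mathbb{X}_k^{(n)}{}_{s,t}-\mathbb{X}^{(n)}_{s,t}|/|t-s|^{n\alpha}\to 0$, which will eventually give $\varrho_\alpha(\mathbf{X}_k,\mathbf{X})\to 0$ once $\mathbf{X}$ is shown to be a rough path. Define $\mathbf{X}_{s,t} \coloneqq (1,\mathbb{X}^{(1)}_{s,t},\ldots,\mathbb{X}^{(N)}_{s,t}) \in T^N(\R^d)$ and extend by continuity to the diagonal via $\mathbf{X}_{t,t} \coloneqq 1$.

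Next I would verify the three properties of a rough path. For the Hölder bound, Cauchy sequences are bounded, so $M \coloneqq \sup_k \vertiii{\mathbf{X}_k}_\alpha < \infty$; passing to the pointwise limit in $|\mathbb{X}_k^{(n)}{}_{s,t}|\leq M\,|t-s|^{n\alpha}$ yields $\vertiii{\mathbf{X}}_\alpha \leq M$. For Chen's relation, each $\mathbf{X}_k$ satisfies $\mathbf{X}_{k;s,t} = \mathbf{X}_{k;s,u}\otimes \mathbf{X}_{k;u,t}$; since $\otimes$ on $T^N(\R^d)$ is a bilinear operation on a finite-dimensional space, it is continuous, and passing to the limit componentwise gives $\mathbf{X}_{s,t} = \mathbf{X}_{s,u}\otimes \mathbf{X}_{u,t}$. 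For continuity of $\mathbf{X}$ on $\Delta$, the Hölder bound $|\mathbb{X}^{(n)}_{s,t}|\leq M|t-s|^{n\alpha}$ ensures joint continuity away from and at the diagonal, so $\mathbf{X}$ is a continuous multiplicative functional of the required regularity, i.e.\ $\mathbf{X}\in\mathscr{C}^\alpha$.

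Finally, I would conclude by observing that the weighted uniform convergence established at the outset is exactly the statement $\varrho_\alpha(\mathbf{X}_k,\mathbf{X})\to 0$ as $k\to\infty$, which proves completeness.

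The argument is essentially a routine Banach-space-style completeness proof; no step is genuinely difficult because all objects live in a finite-dimensional target. The one place that requires a moment's care is confirming that Chen's relation survives the limit, but this is immediate from the continuity of $\otimes$ on the finite-dimensional truncated tensor algebra. Nonlinearity of the rough path space (it is not a vector space) does not enter: completeness is about convergence of Cauchy sequences, for which linearity is not needed.
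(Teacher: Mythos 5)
Your proof is correct and follows exactly the standard Hölder-space completeness argument that the paper itself only sketches and delegates to \cite[Lemma 3.3.3]{LQ02}: extract pointwise limits level by level, upgrade to convergence in the weighted supremum norm via the Cauchy estimate, and check that the Hölder bound and Chen's relation pass to the limit (the latter by continuity of the truncated tensor product on the finite-dimensional space $T^N(\R^d)$). No meaningful difference in approach.
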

	
	\begin{proof}
		The arguments are the same as those used for proving that the usual H\"older spaces are complete. We leave the details to the reader. A detailed proof can be found in \cite[Lemma 3.3.3]{LQ02}. 
	\end{proof}
	
	Sometimes, it is desirable to work with separable rough paths spaces. However, since H\"older spaces are not separable, we cannot expect that the spaces $\mathscr{C}^{\alpha}$ are separable. To solve this issue for H\"older spaces, one often considers \emph{little H\"older spaces} that are defined as the closure of the space of smooth functions in the $\alpha$-H\"older metric. A similar definition works for rough paths spaces, too.
	\begin{definition}
		Let $X \colon [0,T] \to \R^d$ be smooth (e.g. piecewise continuously differentiable) and $\alpha \in (0,1]$. Then we call $\mathbb{X} \in \mathscr{C}^{\alpha}$ with
		\begin{align*}
			\mathbb{X}_{s,t}^{(n)} = \int_{s < u_1 < \ldots < u_n < t} \dif X_{u_1} \otimes \cdots \otimes \dif X_{u_n}
		\end{align*}
		the \emph{canonical lift} of $X$ to an $\alpha$-H\"older rough path. Rough paths $\mathbf{X} \in \mathscr{C}^{\alpha}$ of this form are also called \emph{smooth rough paths}. The space $\mathscr{C}^{\alpha}_g$ is defined as the closure of smooth rough paths in the metric $\varrho_{\alpha}$. The elements in  $\mathscr{C}^{\alpha}_g$ are called \emph{geometric rough paths}.
	\end{definition}

	\begin{proposition}
		For every $\alpha \in (0,1]$, the space $(\mathscr{C}_g^{\alpha} , \varrho_{\alpha})$ is a complete separable metric, i.e. \emph{Polish} space.
	\end{proposition}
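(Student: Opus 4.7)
Completeness is the easy half. By definition, $\mathscr{C}^{\alpha}_g$ is the closure of the smooth rough paths inside $(\mathscr{C}^{\alpha}, \varrho_{\alpha})$, and the previous proposition tells us $(\mathscr{C}^{\alpha},\varrho_{\alpha})$ is complete. Since a closed subset of a complete metric space is itself complete, no further argument is needed here.

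For separability, the plan is to exhibit an explicit countable dense set $\mathcal{S} \subset \mathscr{C}^{\alpha}_g$, namely the canonical lifts $\mathbf{P}$ of polynomial paths $P \colon [0,T] \to \R^d$ each of whose $d$ coordinates is a polynomial with rational coefficients. The set $\mathcal{S}$ is countable and contained in $\mathscr{C}^{\alpha}_g$ since polynomials are smooth. To establish density, note that smooth rough paths are dense in $\mathscr{C}^{\alpha}_g$ by definition, so by the triangle inequality it suffices to approximate an arbitrary smooth rough path $\mathbf{X}$ arising from a smooth $X \colon [0,T] \to \R^d$. Given $\epsilon > 0$, use the Weierstrass approximation theorem to pick a polynomial $Q$ with rational coefficients such that $\|X' - Q\|_{\infty} < \epsilon$, and set $P(t) \coloneqq \int_0^t Q(u)\,\dif u$. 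Then $P \in \mathcal{P}_{\Q}$, $P' = Q$, and $\|X' - P'\|_{\infty} < \epsilon$; note also that the canonical lift depends only on increments, so the starting point of $P$ is irrelevant.

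The only nontrivial step, and what I expect to be the main obstacle, is continuity of the lift map on smooth paths: I need that $\|X' - P'\|_{\infty}$ being small (with $\|X'\|_{\infty}$, $\|P'\|_{\infty}$ bounded) forces $\varrho_{\alpha}(\mathbf{X},\mathbf{P})$ to be small. The key calculation is a telescoping identity at each level $n \in \{1,\dots,N\}$:
\begin{align*}
\mathbb{X}^{(n)}_{s,t} - \mathbb{P}^{(n)}_{s,t} = \sum_{j=1}^n \int_{s<u_1<\cdots<u_n<t} \dif P_{u_1} \otimes \cdots \otimes \dif(X-P)_{u_j}\otimes\cdots\otimes \dif X_{u_n}.
\end{align*}
For smooth paths each $\dif(\cdot)_{u}$ factor is bounded by the sup-norm of the corresponding derivative, and the simplex has volume $(t-s)^n/n!$, so
\begin{align*}
|\mathbb{X}^{(n)}_{s,t} - \mathbb{P}^{(n)}_{s,t}| \leq \frac{n}{n!}\,\|X' - P'\|_{\infty}\, \bigl(\|X'\|_{\infty} \vee \|P'\|_{\infty}\bigr)^{n-1}\,(t-s)^n.
\end{align*}
Dividing by $|t-s|^{n\alpha}$ and using $t-s \leq T$, the $n\alpha$-H\"older seminorm of the $n$-th level difference is linear in $\|X' - P'\|_{\infty}$. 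Summing these bounds over $n = 1,\dots,N$ and choosing the polynomial approximation fine enough makes $\varrho_{\alpha}(\mathbf{X},\mathbf{P})$ arbitrarily small, concluding the proof. Everything else is routine bookkeeping.
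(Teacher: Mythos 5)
Your proof takes a genuinely different, more self-contained route than the paper. The paper itself only sketches separability, deferring the real work (continuity of the canonical lift from the space of paths equipped with the total variation topology, together with the existence of a suitable Polish space of smooth paths) to the reference [BRS17]. You instead propose an explicit countable dense set, the lifts of rational-coefficient polynomial paths, and an explicit continuity estimate for the lift map in the $\mathcal{C}^1$ topology via the telescoping identity. That identity and the resulting bound $|\mathbb{X}^{(n)}_{s,t} - \mathbb{P}^{(n)}_{s,t}| \lesssim \|X'-P'\|_\infty\,(\|X'\|_\infty\vee\|P'\|_\infty)^{n-1}\,(t-s)^n/(n-1)!$ are correct, and dividing by $|t-s|^{n\alpha}$ with $\alpha\le 1$ indeed yields a uniform bound on the $n\alpha$-H\"older seminorm that is linear in $\|X'-P'\|_\infty$. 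The completeness half is also fine.

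However, there is a gap in the separability argument. The paper's definition of a smooth rough path is the canonical lift of a path that is ``smooth (e.g.\ piecewise continuously differentiable),'' and $\mathscr{C}^\alpha_g$ is then the $\varrho_\alpha$-closure of these lifts. Your Weierstrass step requires $X'$ to be continuous; for a merely piecewise $\mathcal{C}^1$ path the derivative has jumps, and such an $X'$ cannot be uniformly approximated by polynomials at all. So, as written, your argument only shows that the $\mathbb{Q}$-polynomial lifts are dense in the closure of lifts of $\mathcal{C}^1$ paths, not in the closure of lifts of piecewise $\mathcal{C}^1$ paths. To close this gap you would need an intermediate approximation step, showing e.g.\ that lifts of piecewise $\mathcal{C}^1$ paths lie in the $\varrho_\alpha$-closure of lifts of $\mathcal{C}^1$ paths; a $\|X'-P'\|_\infty$ bound cannot do this, and one is led back to something like the total variation topology (combined with uniform bounds and interpolation) that the paper's reference uses. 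This is exactly why the paper works with the weaker total variation metric rather than the $\mathcal{C}^1$ metric: it is the right topology in which piecewise $\mathcal{C}^1$ paths can be approximated by genuinely smooth ones while the lift map remains continuous. Everything else in your proposal is sound.
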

	
	\begin{proof}
		Completeness follows by definition. The idea to show separability is to find a complete separable space of smooth paths containing all piecewise $\mathcal{C}^1$-paths for which the canonical lift map is continuous. An example is the space obtained by taking the closure of arbitrarily often differentiable paths with respect to the total variation distance. Details can be found in \cite[Appendix A and B]{BRS17}.
	\end{proof}
	
	\begin{proposition}\label{geometric_brownian}
		The process $\mathbf{B}^{\mathrm{Strat}} = (1,B , \mathbb{B}^{\mathrm{Strat}}) $ takes values in the space $\mathscr{C}^{\alpha}_g$ for every $\frac{1}{3} < \alpha < \frac{1}{2}$ almost surely.
	\end{proposition}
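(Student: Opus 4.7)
The plan is to realize $\mathbf{B}^{\mathrm{Strat}}$ almost surely as a limit in $(\mathscr{C}^\alpha, \varrho_\alpha)$ of canonical lifts of smooth paths. Since $\mathscr{C}^\alpha_g$ is by definition closed in $(\mathscr{C}^\alpha, \varrho_\alpha)$, this will give the conclusion. Note that for $\frac{1}{3} < \alpha < \frac{1}{2}$ we have $\lfloor 1/\alpha \rfloor = 2$, so only the first two levels need to be controlled.

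Concretely, let $B^n$ denote the dyadic piecewise-linear interpolation of $B$ on the partition of $[0,T]$ of mesh $T2^{-n}$, and let $\mathbf{B}^n$ be its canonical Riemann–Stieltjes lift. Each $\mathbf{B}^n$ is a smooth rough path and therefore lies in $\mathscr{C}^\alpha_g$. I would then establish, for some exponent $\theta > 0$ and every sufficiently large $q$, the uniform-in-$(s,t)$ moment bounds
\begin{align*}
\| \delta B^n_{s,t} - \delta B_{s,t}\|_{L^q} &\leq C_q\, 2^{-n\theta}\,|t-s|^{1/2 - \theta},\\
\|(\mathbf{B}^n)^{(2)}_{s,t} - \mathbb{B}^{\mathrm{Strat}}_{s,t}\|_{L^{q/2}} &\leq C_q\, 2^{-n\theta}\,|t-s|^{1 - 2\theta}.
\end{align*}
The first is a direct Gaussian calculation combined with equivalence of $L^q$- and $L^2$-norms on the first Wiener chaos. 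For the second, on each dyadic cell $[k2^{-n},(k+1)2^{-n}]$ the derivative $\dot B^n$ is the constant $2^n\,\delta B_{k2^{-n},(k+1)2^{-n}}$, so the iterated integral of $B^n$ can be written explicitly. Splitting into the diagonal part $i=j$ (for which exact integration yields $(\mathbf{B}^n)^{(2);ii}_{s,t} = \tfrac12(\delta B^{n;i}_{s,t})^2$, to be compared with $\mathbb{B}^{\mathrm{Strat};ii}_{s,t} = \tfrac12(\delta B^i_{s,t})^2$, reducing matters to the first bound) and the off-diagonal part $i\neq j$ (handled by independence of components and a direct second-moment computation in terms of the known covariance of $B^n - B$), one gets the stated estimate; the passage from $L^2$ to $L^{q/2}$ is by the equivalence of norms on the second Wiener chaos.

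With these moment bounds in hand, I would conclude by adapting the Kolmogorov–Chentsov argument of Theorem \ref{thm:kolmogorov} to the differences $\delta(B^n - B)$ and $(\mathbf{B}^n)^{(2)} - \mathbb{B}^{\mathrm{Strat}}$. The latter does not itself satisfy Chen's relation exactly, but the defect
\begin{align*}
\bigl[(\mathbf{B}^n)^{(2)} - \mathbb{B}^{\mathrm{Strat}}\bigr]_{s,t} - \bigl[(\mathbf{B}^n)^{(2)} - \mathbb{B}^{\mathrm{Strat}}\bigr]_{s,u} - \bigl[(\mathbf{B}^n)^{(2)} - \mathbb{B}^{\mathrm{Strat}}\bigr]_{u,t} = \delta B^n_{s,u}\otimes \delta B^n_{u,t} - \delta B_{s,u}\otimes \delta B_{u,t}
\end{align*}
is a quadratic expression in first-level increments and is therefore controlled by the first moment bound. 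The dyadic-chaining argument of Theorem \ref{thm:kolmogorov} then yields, for some $\theta' > 0$,
\begin{align*}
\E\bigl[\varrho_\alpha(\mathbf{B}^n,\mathbf{B}^{\mathrm{Strat}})^{q/2}\bigr] \leq C_{\alpha,q}\,2^{-nq\theta'/2},
\end{align*}
and Borel–Cantelli delivers $\varrho_\alpha(\mathbf{B}^n,\mathbf{B}^{\mathrm{Strat}}) \to 0$ almost surely along a subsequence, whence $\mathbf{B}^{\mathrm{Strat}} \in \mathscr{C}^\alpha_g$ almost surely.

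The principal obstacle is the off-diagonal second-level estimate in the second step: one must show that the smooth iterated integral $\int_s^t (B^{n;i}_u - B^{n;i}_s)\,\dot B^{n;j}_u\,\dif u$ converges to the Stratonovich integral $\int_s^t (B^i_u - B^i_s)\circ \dif B^j_u$ at a rate that is both summable in $n$ after raising to the relevant power and of the correct polynomial order in $|t-s|$, uniformly over the simplex. The Stratonovich (rather than Itô) choice is essential precisely here, because the diagonal part $\tfrac12(\delta B^n)^2$ produces no Itô correction in the limit; this explains why the Itô lift $\mathbf{B}^{\mathrm{It\bar{o}}}$, by contrast, fails to lie in $\mathscr{C}^\alpha_g$.
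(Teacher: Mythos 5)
Your argument is correct, but it takes a genuinely different route from the paper. Both proofs start from the same piecewise-linear dyadic approximants $\mathbf{B}(n)$ and their canonical lifts, but diverge at the convergence step. The paper establishes uniform-in-$n$ moment bounds for each $\mathbf{B}(n)$ itself (not for the differences), uses Theorem~\ref{thm:kolmogorov} to obtain uniform a.s.\ $\alpha'$-H\"older bounds for some $\alpha'\in(\alpha,1/2)$, and then reduces $\varrho_{\alpha}$-convergence to \emph{pointwise} convergence of each level via an Arzel\`a--Ascoli/interpolation argument. The pointwise convergence is then obtained by a Gaussian conditioning trick: with $\mathcal{F}_n$ the $\sigma$-algebra generated by $B$ at the dyadic times of level $n$, one has $B_t(n)=\E[B_t\mid\mathcal{F}_n]$ and, for $i\neq j$, $\int_0^t B^i_s(n)\,\dif B^j_s(n)=\E\bigl[\int_0^t B^i_s\,\dif B^j_s\mid\mathcal{F}_n\bigr]$, so the martingale convergence theorem delivers both limits with no rate and no explicit second-level estimate. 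Your proposal is quantitative by design: you estimate the \emph{differences} $\delta B^n-\delta B$ and $(\mathbf{B}^n)^{(2)}-\mathbb{B}^{\mathrm{Strat}}$ directly in $L^q$ with an explicit $2^{-n\theta}$ decay (obtained by interpolating the trivial bounds $\min(2^{-n/2},|t-s|^{1/2})$ at level one), then run a Kolmogorov--Chentsov-type chaining on the differences, correctly observing that the second-level difference does not satisfy Chen's relation but only a perturbed version whose defect $\delta B^n_{s,u}\otimes\delta B^n_{u,t}-\delta B_{s,u}\otimes\delta B_{u,t}$ is bilinear in level-one quantities, hence controllable by the level-one estimate; Borel--Cantelli then gives a.s.\ convergence. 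Your approach is more self-contained and yields explicit rates, and it is essentially the argument carried out in \cite[Theorem 3.1 and Chapter 10]{FH20}; the price is the off-diagonal second-level moment computation, which you rightly flag as the crux and which the paper sidesteps entirely via conditioning. Two minor remarks: with a geometric rate the Borel--Cantelli step applies to the full sequence, so no passage to a subsequence is needed (though it is also harmless, since membership in $\mathscr{C}^{\alpha}_g$ only requires approximability by \emph{some} sequence of smooth rough paths); and your closing observation about the diagonal terms is exactly the right intuition for why the Stratonovich lift is geometric while the It\=o lift is not.
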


	\begin{proof}
		For simplicity, $T = 1$. Choose $\alpha'$ such that $\alpha < \alpha' < \frac{1}{2}$. We know that
		\begin{align*}
			\vertiii{\mathbf{B}^{\text{Strat}}}_{\alpha'} < \infty.
		\end{align*}
		For $n \in \N$, we define $B(n)$ to be the piecewise-linear approximation of $B$ at the dyadic points $0 < 2^{-n} < \cdots < (2^n -1)2^{-n} < 1$, i.e.
		\begin{align*}
			B_t(n) = B_{k2^{-n}} + 2^n (t - k2^{-n}) (B_{(k+1)2^{-n}} - B_{k2^{-n}}), \qquad t \in [k2^{-n}, (k+1)2^{-n}].
		\end{align*}
		Let $\mathbf{B}(n)$ be the canonical lift of $B(n)$ to an $\alpha$-H\"older rough path. With some basic calculations, one can show that 
		\begin{align*}
			\| \delta B(n)_{s,t} \|_{L^2} \leq C|t-s|^{\frac{1}{2}} \quad \text{and} \quad \| \mathbb{B}(n)_{s,t} \|_{L^2} \leq C|t-s|
		\end{align*}
		holds for every $s<t$ for a constant $C$ that is independent of $n$. Since $B$ is Gaussian, the same estimates also hold for the $L^q$-norm for every $q \geq 2$. The Kolmogorov-Chentsov theorem for multiplicative functionals implies that 
		\begin{align*}
			\sup_{n \in \N} \vertiii{\mathbf{B}(n)}_{\alpha'} < \infty.
		\end{align*}
		a.s. To prove that $\varrho_{\alpha}(\mathbf{B}^{\text{Strat}},\mathbf{B}(n)) \to 0$, by the Arzel\`a-Ascoli theorem, it is sufficient to show that $B(n) \to B$ and $\mathbb{B}(n) \to \mathbb{B}^{\text{Strat}}$ pointwise as $n \to \infty$. The first statement is clear. For the second, we first note that
		\begin{align*}
			\int_s^t (B^i_u(n) - B^i_s(n))\, \dif B^i_u(n) = \frac{(B_t(n) - B_s(n))^2}{2} \to \frac{(B_t - B_s)^2}{2} = \int_s^t (B^i_u - B^i_s)\, \circ \dif B^i_u.
		\end{align*}
		Define
		\begin{align*}
			\mathcal{F}_n \coloneqq \sigma(B_k\, :\, k \in \{ 0,2^{-n},\ldots,(2^n-1)2^{-n},1\}).
		\end{align*}
		Then $(\mathcal{F}_n)_{n \geq 1}$ is a filtration. Fix $t \in [0,T]$. By Gaussian conditioning, one can check that $B_t(n) = \E[B_t\, |\, \mathcal{F}_n]$. From the martingale convergence theorem, it follows that
		\begin{align*}
			B_t(n) = \E[B_t\, |\, \mathcal{F}_n]  \to B_t
		\end{align*}
		almost surely and in $L^p$ for any $p \geq 1$ as $n \to \infty$ (which yields an alternative proof of what we already know). For $i \neq j$,
		\begin{align*}
			\E \left( \int_0^t B^i_s\, \dif B^j_s \, |\, \mathcal{F}_n \right) &= \lim_{|\mathcal{P}| \to 0} \sum_{[u,v] \in \mathcal{P}} \E\left( B^i_u \delta B^j_{u,v} \, |\, \mathcal{F}_n \right) = \sum_{[u,v] \in \mathcal{P}} B^i_u(n) \delta B^j_{u,v}(n) \\
			&= \int_0^t B^i_s(n)\, \dif B^j_s(n).
		\end{align*}
		Therefore, the martingale convergence theorem yields that
		\begin{align*}
			\int_0^t B^i_s(n)\, \dif B^j_s(n) = \E \left( \int_0^t B^i_s\, \dif B^j_s \, |\, \mathcal{F}_n \right) \to \int_0^t B^i_s\, \dif B^j_s
		\end{align*}
		almost surely and in $L^p$ for any $p \geq 1$ as $n \to \infty$, which finishes the proof.
	\end{proof}
	
	A natural question is whether $\mathbf{B}^{\text{It\=o}}$ has geometric rough paths trajectories, too. We will see in the next section that this is not the case.

	\subsection{Shuffles and the signature}
	There is also an important algebraic property satisfied by geometric rough paths that is inherited from smooth rough paths. In fact, multiplying two iterated integrals of smooth paths yields a linear combination of iterated integrals. For example,
	\begin{align*}
		\int_0^T \dif X_s \cdot \int_0^T \dif Y_s = \int_{0 < s_1 < s_2 < T} \dif X_{s_1} \, \dif Y_{s_2} + \int_{0 < s_1 < s_2 < T} \dif Y_{s_1} \, \dif X_{s_2}.
	\end{align*}
	Note that this is a property that does not hold for every rough path. For example, the It\=o integral satisfies the identity
	\begin{align*}
		\int_0^T B_t \, \dif B_t = \frac{B^2_t}{2} - \frac{t}{2}
	\end{align*}
	which shows that the sample paths of $\mathbf{B}^{\text{It\=o}}$ behave differently.
	
	We aim to give a more detailed description of the product of iterated integrals of smooth paths. To do this, we introduce some more notation.
	\begin{definition}
		The direct sum
		\begin{align*}
			T(\R^d) \coloneqq \R \oplus \R^d \oplus (\R^d \otimes \R^d) \oplus \cdots = \bigoplus_{n = 0}^{\infty} (\R^d)^{\otimes n}
		\end{align*}
		is called \emph{tensor algebra}.
	\end{definition}
	One can show that the extended tensor algebra is the (algebraic) dual of the tensor algebra. We will identify the basis elements $e_{i_1} \otimes \cdots \otimes e_{i_n}$ in the tensor algebra $T(\R^d)$ with the words ${i_1 \cdots i_n}$ composed by the letters $1, \ldots, d$. The empty word will be denoted by ${\epsilon}$. For two words, we can define their shuffle product:
	\begin{definition}
		Let ${u}$, ${v}$ be words and ${a}$, ${b}$ be letters. The \emph{shuffle product} is defined recursively by
		\begin{align*}
			u \shuffle \epsilon &=  \epsilon \shuffle u = u, \\ \\
			ua \shuffle vb &= (u \shuffle vb)a + (ua \shuffle v) b.
		\end{align*}
		The shuffle product is extended bilinearly to a product
		\begin{align*}
			\shuffle \colon T(\R^d) \times T(\R^d) \to T(\R^d).
		\end{align*}
	\end{definition}

	\begin{example}
		\begin{enumerate}
			\item For example,
			\begin{align*}
				{12} \shuffle {3} &= {123} + {132} + {312}, \\
				{12} \shuffle {24} &= 2 \cdot {1224} + {1242} + {2124} + {2142}  + {2412}.
			\end{align*}
			
			\item Let $X \colon [0,T] \to \R^d$ be a smooth path (e.g. $\mathcal{C}^1$) and
			\begin{align}\label{eqn:canonical_lift}
				\mathbf{X}_{0,T} := \left( 1,\int_0^T \dif X_s, \ldots, \int_{0 < s_1 < \cdots < s_n < T} \dif X_{s_1} \otimes \cdots \otimes \dif X_{s_n} , \ldots \right) \in T((\R^d)).
			\end{align}
			With the notation we introduced above, we have, for example,
			\begin{align*}
				\langle {121} , \mathbf{X}_{0,T} \rangle &= \int_{0 < s_1 < s_2 < s_3 < T} \dif X^1_{s_1} \, \dif X^2_{s_2} \, \dif X^1_{s_3}, \\
				\langle \sqrt{3} \cdot {12} - 2 \cdot {21} , \mathbf{X}_{0,T} \rangle &= \sqrt{3} \int_{0 < s_1 < s_2 < T} \dif X^1_{s_1} \, \dif X^2_{s_2} - 2 \int_{0 < s_1 < s_2 < T} \dif X^2_{s_1} \, \dif X^1_{s_2} .
			\end{align*}
		\end{enumerate}

	\end{example}
	

	The main observation is the following:

	\begin{theorem}
		For $\mathbf{X}$ defined as in \eqref{eqn:canonical_lift}, for every $l_1, l_2 \in T(\R^d)$,
		\begin{align*}
			\langle l_1, \mathbf{X}_{0,T} \rangle \langle l_2, \mathbf{X}_{0,T} \rangle = \langle l_1 \shuffle l_2 , \mathbf{X}_{0,T} \rangle.
		\end{align*}	
	\end{theorem}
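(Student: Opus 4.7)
The plan is to reduce the statement to basis elements and then run an induction on the total length of the two words, using integration by parts (available here because $X$ is smooth) combined with the recursive definition of the shuffle product.

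First, since both sides of the identity are bilinear in $(l_1,l_2)$, it suffices to establish it when $l_1 = w_1 = i_1 \cdots i_n$ and $l_2 = w_2 = j_1 \cdots j_m$ are words, i.e.\ tensor-algebra basis elements. Introduce the shorthand
\begin{align*}
F_{w}(t) \coloneqq \langle w, \mathbf{X}_{0,t} \rangle = \int_{0 < s_1 < \cdots < s_n < t} \dif X^{i_1}_{s_1} \cdots \dif X^{i_n}_{s_n},
\end{align*}
so $F_{\epsilon}(t) = 1$ and, for $w = w'a$ with $a$ a letter,
\begin{align*}
F_{w'a}(t) = \int_0^t F_{w'}(s)\, \dif X^a_s.
\end{align*}
The goal reduces to showing $F_{w_1}(T) F_{w_2}(T) = F_{w_1 \shuffle w_2}(T)$, where the right-hand side is interpreted linearly via $F_{\sum c_k u_k} = \sum c_k F_{u_k}$.

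Second, I would prove the stronger statement $F_{w_1}(t) F_{w_2}(t) = F_{w_1 \shuffle w_2}(t)$ for all $t \in [0,T]$ by induction on $n+m$. The base case $n+m = 0$, or more generally one of the words being empty, is immediate from $w \shuffle \epsilon = w$ and $F_\epsilon \equiv 1$. For the inductive step with $w_1 = w_1' a$ and $w_2 = w_2' b$, the key identity is the product rule for smooth paths:
\begin{align*}
F_{w_1}(t) F_{w_2}(t) &= \int_0^t \dif\bigl( F_{w_1}(s) F_{w_2}(s) \bigr) \\
&= \int_0^t F_{w_1'}(s) F_{w_2}(s)\, \dif X^a_s + \int_0^t F_{w_1}(s) F_{w_2'}(s)\, \dif X^b_s.
\end{align*}
Applying the induction hypothesis to each product inside the integrals (both $(n-1)+m$ and $n+(m-1)$ are strictly less than $n+m$), the right-hand side becomes
\begin{align*}
\int_0^t F_{w_1' \shuffle w_2}(s)\, \dif X^a_s + \int_0^t F_{w_1 \shuffle w_2'}(s)\, \dif X^b_s = F_{(w_1' \shuffle w_2)a}(t) + F_{(w_1 \shuffle w_2')b}(t),
\end{align*}
where the last equality uses the definition of $F_{\cdot}$ extended linearly and the fact that $F_{ua}(t) = \int_0^t F_u(s)\, \dif X^a_s$. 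By the recursive definition of the shuffle product, this equals $F_{w_1 \shuffle w_2}(t)$, closing the induction.

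I do not foresee a serious obstacle here: everything rests on the product rule for smooth functions, which is lawful because $X$ is smooth, and on matching the algebraic recursion of $\shuffle$ with the analytic recursion $F_{ua}(t) = \int_0^t F_u\, \dif X^a$. The only item requiring mild care is the bookkeeping in the inductive step, namely making sure that linear extension of $F$ to arbitrary elements of $T(\R^d)$ commutes with the operations $u \mapsto ua$ and integration against $\dif X^a$, which it does by construction.
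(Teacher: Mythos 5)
Your proof is correct and follows essentially the same strategy as the paper: induction on the total word length, matching the shuffle recursion $ua \shuffle vb = (u \shuffle vb)a + (ua \shuffle v)b$ against an analytic decomposition of the product of iterated integrals, with bilinearity reducing the claim to basis elements. The only cosmetic difference is that you obtain the decomposition via the Leibniz product rule and the fundamental theorem of calculus, whereas the paper writes the product as a double integral over the square $\{0 < s,t < T\}$ and splits it into the two triangles $\{t < s\}$ and $\{s < t\}$ via Fubini --- two equivalent ways of expressing the same integration-by-parts identity for smooth paths.
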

	
	\begin{proof}
		Let $u$ and $v$ be words and $a$ and $b$ be letters from the alphabet $\{1,\ldots,d\}$. The proof is by induction over the length of the words. Using the induction hypothesis, we have
		\begin{align*}
			\langle ua, \mathbf{X}_{0,T} \rangle \langle vb, \mathbf{X}_{0,T} \rangle &= \int_0^T \langle u, \mathbf{X}_{0,t} \rangle \, \dif X^a_t \cdot \int_0^T \langle v, \mathbf{X}_{0,s} \rangle \, \dif X^b_s \\
			&= \int_{0 < s,t < T} \langle u, \mathbf{X}_{0,t} \rangle \langle v, \mathbf{X}_{0,s} \rangle \, \dif X^a_t\, \dif X^b_s \\
			&= \int_{0 < t < s < T} \langle u, \mathbf{X}_{0,t} \rangle \langle v, \mathbf{X}_{0,s} \rangle \, \dif X^a_t\, \dif X^b_s + \int_{0 < s < t < T}   \langle u, \mathbf{X}_{0,t} \rangle \langle v, \mathbf{X}_{0,s} \rangle \, \dif X^b_s \, \dif X^a_t \\
			&= \int_0^T \langle ua, \mathbf{X}_{0,s} \rangle \langle v, \mathbf{X}_{0,s} \rangle \, \dif X^b_s + \int_0^T \langle u, \mathbf{X}_{0,t} \rangle \langle vb, \mathbf{X}_{0,t} \rangle \, \dif X^a_t \\
			&= \int_0^T \langle ua \shuffle v, \mathbf{X}_{0,s} \rangle  \, \dif X^b_s + \int_0^T \langle u \shuffle vb, \mathbf{X}_{0,t} \rangle \, \dif X^a_t \\ 
			&= \langle (ua \shuffle v)b, \mathbf{X}_{0,T} \rangle + \langle (u \shuffle vb)a, \mathbf{X}_{0,T} \rangle \\
			&= \langle (ua \shuffle v)b + (u \shuffle vb)a, \mathbf{X}_{0,T} \rangle \\
			&= \langle ua \shuffle vb, \mathbf{X}_{0,T} \rangle.
		\end{align*}

	\end{proof}
	
	The following corollary is immediate.
	
	\begin{corollary}\label{cor:shuffle_prop}
		Let $\mathbf{X} \in \mathscr{C}^{\alpha}_g$ be a geometric rough path. We identify $\mathbf{X}$ with its Lyons-lift to a path with values in $T((\R^d))$. Then for every $l_1, l_2 \in T(\R^d)$,
		\begin{align*}
			\langle l_1, \mathbf{X}_{0,T} \rangle \langle l_2, \mathbf{X}_{0,T} \rangle = \langle l_1 \shuffle l_2 , \mathbf{X}_{0,T} \rangle.
		\end{align*}
	\end{corollary}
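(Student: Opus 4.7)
The plan is to exploit the very definition of a geometric rough path: by construction, $\mathscr{C}^{\alpha}_g$ is the $\varrho_{\alpha}$-closure of the smooth rough paths, and the preceding theorem gives the shuffle identity on that dense subclass. So I would pick a sequence $(\mathbf{X}_k)_{k \geq 1}$ of smooth rough paths with $\varrho_{\alpha}(\mathbf{X}_k, \mathbf{X}) \to 0$, apply the theorem at each $k$ to obtain
\begin{align*}
    \langle l_1, \mathbf{X}_{k;0,T}\rangle \langle l_2, \mathbf{X}_{k;0,T}\rangle = \langle l_1 \shuffle l_2, \mathbf{X}_{k;0,T}\rangle,
\end{align*}
and then pass to the limit $k \to \infty$ on both sides.

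To justify the passage to the limit I need pointwise convergence at each tensor level: $\mathbb{X}^{(n)}_{k;0,T} \to \mathbb{X}^{(n)}_{0,T}$ for every $n \geq 1$, where on both sides the components beyond level $N = \lfloor 1/\alpha \rfloor$ are to be interpreted via the Lyons lift of Theorem~\ref{thm:lyons_extension}. For $n \leq N$ this is immediate from the definition of $\varrho_{\alpha}$. For $n > N$ I would argue by induction on $n$, mirroring the construction in the proof of Theorem~\ref{thm:lyons_extension}: assuming convergence at all levels up to $n$, the map $\Xi^{(n+1)}_{s,t} = \sum_{1 \le j \le n} \mathbb{X}^{(n+1-j)}_{0,s} \otimes \mathbb{X}^{(j)}_{s,t}$ depends continuously on the lower-level components in $\mathcal{C}^{\alpha,(n+1)\alpha}_2$, and the Sewing lemma (Lemma~\ref{lemma:sewing}) is a continuous linear map. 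Since $\mathbb{X}^{(n+1)}_{s,t} = \delta \mathcal{I} \Xi^{(n+1)}_{s,t} - \Xi^{(n+1)}_{s,t}$, the convergence propagates to level $n+1$.

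Finally, since $l_1, l_2 \in T(\R^d)$ are finite linear combinations of finitely many basis words, the pairings $\langle l_i, \cdot \rangle$ and $\langle l_1 \shuffle l_2, \cdot \rangle$ only involve finitely many tensor levels, so the previous step allows us to pass to the limit in each side of the identity and conclude
\begin{align*}
    \langle l_1, \mathbf{X}_{0,T}\rangle \langle l_2, \mathbf{X}_{0,T}\rangle = \langle l_1 \shuffle l_2, \mathbf{X}_{0,T}\rangle.
\end{align*}

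The only real work is in the inductive continuity argument for the Lyons lift; everything else is a density argument. Since the proof of Theorem~\ref{thm:lyons_extension} is essentially a repeated application of the (linear and continuous) Sewing operator to data built polynomially from lower levels of the rough path, continuity at each fixed level is a straightforward consequence of the bounds already written down there, and this is the step I would expect to require the most bookkeeping if written in full detail.
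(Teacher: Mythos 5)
Your proposal is correct and is precisely the density-plus-continuity argument the paper has in mind when it calls the corollary "immediate": approximate $\mathbf{X}$ by smooth rough paths in $\varrho_{\alpha}$, use the preceding theorem on each approximant, observe that the Lyons lift is continuous level-by-level (since each level is obtained by a single application of the continuous linear Sewing operator to data built polynomially from lower levels), and finally note that only finitely many tensor levels are involved since $l_1,l_2 \in T(\R^d)$ are finite linear combinations of words. The inductive continuity of the Lyons lift is indeed the only step requiring care, and your sketch of it is the right one.
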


	\begin{remark}\label{remark:signature}
		Let $\mathbf{X}$ be a geometric rough path. As usual, we identify $\mathbf{X}$ with its Lyons lift to a path with values in $T((\R^d))$. Then the element $\mathbf{X}_{0,T} \in T((\R^d))$ is called the \emph{signature} of the rough path $\mathbf{X}$. The signature is important since it contains all (necessary) information about the rough path. Indeed, in a series of papers, it was shown that the signature determines a geometric rough path completely up to so-called ``tree-like'' excursions \cite{Che58, HL10, BGLY16}. If $\mathbf{X}$ is random, the \emph{expected signature} determines the law of $\mathbf{X}$ and can be seen as a Laplace transform for measures on path spaces \cite{CL16, CO22}. The (truncated) signature also plays an important role in machine learning as a way to extract characteristic features from a data stream, cf. \cite{CK16, LM22} for an overview. 
	\end{remark}
	
	\section{Controlled paths and rough integral}
	We aim to solve non-linear rough differential equations of the form
	\begin{align*}
		\dif Y_t &= \sigma(Y_t)\, \dif \mathbf{X}_t; \quad t \in [0,T] \\
		Y_0 &= y \in \R^m.
	\end{align*}
	As for the Young case, we want to interpret the equation as an integral equation:
	\begin{align*}
		Y_t = y + \int_0^t \sigma(Y_s) \, \dif \mathbf{X}_s; \quad t \in [0,T].
	\end{align*}
	We want to find a notion of an integral that coincides with the Young integral in case the integrand is smooth. That is, for a smooth function $f$ and a Brownian motion $B$, we would like to have that
	\begin{align*}
		\int_0^T f(s) \, \dif \mathbf{B}_s =  \int_0^T f(s) \, \dif B_s.
	\end{align*}
	Here, $\mathbf{B}$ may either denote $\mathbf{B}^{\text{It\=o}}$ or $\mathbf{B}^{\text{Strat}}$. If we want to perform a fixed point argument to solve the equation, it is desirable to look for a Banach space $E$ containing smooth functions such that the map
	\begin{align*}
		f \mapsto \big(t \mapsto \int_0^t f(s)\, \dif \mathbf{B}_s \big)
	\end{align*}
	is a continuous map from $E$ to itself. A minimal requirement for $E$ would be that it contains the trajectories of the Brownian motion, otherwise we would not be able to integrate constant functions. However, one can show that such a space $E$ does not exist:

	\begin{theorem}
		There is no space of functions $E$ carrying the Wiener measure on which we can define a continuous map $I \colon E \to E$ that coincides with the pathwise defined integral 
		\begin{align*}
			I(f) = \big(t \mapsto \int_0^t f(s)\, \dif B_s \big)
		\end{align*}
		for smooth functions $f$ on a set of full measure.
	\end{theorem}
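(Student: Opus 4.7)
\medskip

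The plan is to emulate the strategy of Lyons' Theorem \ref{Lyons}, exploiting the coupled Wiener-type series $W^N$ and $\tilde W^N$ to derive a contradiction from the continuity of a single map $I\colon E\to E$ rather than from a bilinear pairing. First I take the Brownian motion $B$ in the statement to be the almost-surely convergent series $W=\sum_{n\ge 1}\tfrac{C_n c_n+S_n s_n}{2\pi n}$ guaranteed by the assumption that $E$ carries the Wiener measure, and I set $\tilde W=\sum_{n\ge 1}\tfrac{C_n s_n-S_n c_n}{2\pi n}$, which also converges almost surely in $E$ since $\tilde W\stackrel{\mathcal{D}}{=} W$.

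Next I apply $I$ to the trigonometric polynomial approximants $\tilde W^N$. Since each $\tilde W^N$ is smooth, the hypothesis gives $I(\tilde W^N)(t)=\int_0^t\tilde W^N(s)\,\dif W_s$ almost surely, where the right-hand side is defined pathwise through integration by parts,
\[
\int_0^1 \tilde W^N(s)\,\dif W_s = \tilde W^N(1)\,W(1)-\int_0^1 (\tilde W^N)'(s)\,W_s\,\dif s.
\]
Continuity of $I$, together with $\tilde W^N\to\tilde W$ in $E$, then forces $I(\tilde W^N)$ to converge in $E$.

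The core step is a direct calculation showing that $I(\tilde W^N)(1)$ nevertheless diverges almost surely. Using $(\tilde W^N)'(s)=\sum_{n=1}^N \bigl(C_n c_n(s)+S_n s_n(s)\bigr)$ together with the $L^2[0,1]$ expansion of $W$, which yields $\langle W,c_n\rangle=\tfrac{C_n}{4\pi n}$ and $\langle W,s_n\rangle=\tfrac{S_n}{4\pi n}$, the integral term collapses to $\sum_{n=1}^N\tfrac{C_n^2+S_n^2}{4\pi n}$, which diverges almost surely, while the boundary term $\tilde W^N(1)W(1)$ converges. This would contradict the previous step as soon as convergence in $E$ entails convergence of the value at $t=1$.

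The main obstacle is precisely this last reconciliation: the divergence is visible at time $t=1$, whereas convergence in the abstract Banach space $E$ only controls its norm. In every natural example (continuous functions, H\"older spaces, $L^p$) either point evaluation or integration against a test kernel localized near $t=1$ is a continuous linear functional on $E$, which suffices. For a truly abstract $E$, the technical heart of the argument is to exhibit \emph{some} $\ell\in E^{*}$ that detects the divergent sum, using that $E$ must at least be rich enough to resolve the Fourier building blocks defining $W$; granting this, the contradiction closes exactly as in Theorem \ref{Lyons}.
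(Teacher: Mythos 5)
Your proposal matches the paper's intended approach exactly: the paper's proof of this theorem is the one-liner ``Same idea as in the proof of Theorem~\ref{Lyons}'', and you have unpacked what that idea amounts to — identify $B$ with the Wiener series $W$, apply $I$ to the smooth truncations $\tilde W^N$, integrate by parts, and observe that the integral term produces the almost surely divergent sum $\sum_{n\le N}(C_n^2+S_n^2)/(4\pi n)$. (Your normalization $4\pi n$ is the correct one; the $2\pi n$ appearing in the paper's displayed computation inside the proof of Theorem~\ref{Lyons} is a harmless factor-of-two slip, since only divergence is used.)

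The subtlety you flag in your last paragraph — that the divergence is read off at $t=1$, whereas continuity of $I$ only gives convergence of $I(\tilde W^N)$ in the abstract $E$-norm — is a genuine point, but it is not a gap \emph{relative to the paper}: the paper's one-line proof silently passes over exactly the same issue. The implicit convention, and the one under which the theorem is meant to be read, is that a ``space of functions carrying the Wiener measure'' is a Banach space of continuous paths whose norm convergence implies at least pointwise (or weak-$L^2$) convergence, so that evaluation at $t=1$, or integration against a nonnegative bump supported near $t=1$, is a continuous linear functional; in every space considered in these notes ($\mathcal C$, $\mathcal C^\alpha$, $L^p$) this is automatic. With that standing assumption your argument is complete and coincides with the intended proof; without it one is in the regime of pathological Banach topologies on sets of continuous functions, which the theorem is not concerned with. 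So your write-up, including the honest caveat, is a faithful rendering of the proof the paper refers to.
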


	\begin{proof}
		Same idea as in the proof of Theorem \ref{Lyons}.
	\end{proof}
	The solution to this issue proposed by rough paths theory is that we allow the space $E$ to depend on the trajectory of the Brownian motion, i.e. we will define spaces $\{E_{\omega}\}_{\omega \in \Omega}$ for which $B(\omega) \in E_{\omega}$ with the property that 
	\begin{align*}
		E_{\omega} \ni f \mapsto \big( t \mapsto \int_0^t f(s)\, \dif \mathbf{B}_s(\omega) \big) \in E_{\omega}
	\end{align*}
	extends the integral map on smooth paths and is continuous.  Our goal is to define a ``rough integral'' of the form
	\begin{align*}
		\int_0^T Y_t \, \dif \mathbf{X}_t,
	\end{align*}
	for a given rough path $\mathbf{X} \in \mathscr{C}^{\alpha}$. Before moving forward, let us assume the following assumptions:

	\begin{assumption}
		For the sake of simplicity, we will assume  $\alpha \in (1/3,1/2]$ from now on.
	\end{assumption}
	Remember that we deduced the regularity of a 3-times iterated Young integral by introducing a ``compensator'':
	\begin{align*}
		\int_0^T \mathbb{X}_{0,t}^{(2)} \, \dif X_t = \sum_{|\mathcal{P}| \to 0} \sum_{[u,v] \in \mathcal{P}}  \mathbb{X}_{0,u}^{(2)} \otimes \delta X_{u,v} + \delta X_{0,u} \otimes \mathbb{X}_{u,v}^{(2)}. 
	\end{align*}
	This motivates the following ansatz: for given $\mathbf{X} = (1,X,\mathbb{X}) \in  \mathscr{C}^{\alpha}$ and $Y \colon [0,T] \to L(\R^d,\R^m)$, we assume that there exists a path $Y' \colon [0,T] \to L(\R^d \otimes \R^d, \R^m)$ for which we can define the limit
	\begin{align*}
		\int_0^T Y_t \, \dif \mathbf{X}_t = \sum_{|\mathcal{P}| \to 0} \sum_{[u,v] \in \mathcal{P}}  Y_u \delta X_{u,v} + Y'_u \mathbb{X}_{u,v}.
	\end{align*}
	As before, we will use the Sewing lemma to prove the existence of the limit. Set
	\begin{align*}
		\Xi_{u,v} \coloneqq Y_u \delta X_{u,v} + Y'_u \mathbb{X}_{u,v}.
	\end{align*}
	Clearly, $\| \Xi  \|_{\alpha} < \infty$. We have to make sure that $\| \delta \Xi \|_{\beta} < \infty$ for some $\beta > 1$. After some lines of calculations, we see that
	\begin{align*}
		\delta \Xi_{s,u,t} = -(\delta Y_{s,u} - Y'_s \delta X_{s,u}) X_{u,t} - \delta Y'_{s,u} \mathbb{X}_{u,t}.
	\end{align*}
	Therefore, we arrive at the conditions
	\begin{align*}
		| \delta Y'_{s,t}| &= \mathcal{O}(|t-s|^{\gamma}) \quad \text{where} \quad \quad \gamma + 2\alpha > 1 \quad \text{and} \\
		\quad| \delta Y_{s,t} - Y'_s \delta X_{s,t}| &= \mathcal{O}(|t-s|^{\tilde{\gamma}}) \quad \text{with} \quad \tilde{\gamma} + \alpha > 1.
	\end{align*}
	These conditions are in particular satisfied for $\gamma =\alpha$ and $\tilde{\gamma} = 2\alpha$. This observation motivates the following definition that was introduced by Gubinelli in \cite{Gub04} first.

	\begin{definition}
		Let $\mathbf{X} \in \mathscr{C}^{\alpha}([0,T], \R^d)$, $\alpha \in (1/3,1/2]$. A path $Y \in \mathcal{C}^{\alpha}([0,T], W)$ is said to be \emph{controlled by $\mathbf{X}$} if there exists a path $Y' \in \mathcal{C}^{\alpha}([0,T], L(\R^d, W))$ such that the remainder term $R^Y$ given by
		\begin{align*}
			R^Y_{s,t} \coloneqq \delta Y_{s,t} - Y'_s \delta X_{s,t}
		\end{align*}
		satisfies $\| R^Y \|_{2 \alpha} < \infty$. The path $Y'$ is called a \emph{Gubinelli-derivative} of $Y$. The set of all controlled paths $(Y,Y')$ is denoted by $\mathscr{D}^{\alpha}_X([0,T],W)$. If $(Y,Y') \in \mathscr{D}^{\alpha}_X([0,T],W)$, we set
		\begin{align*}
			\|Y,Y'\|_{X, \alpha} \coloneqq \|Y'\|_{\alpha} + \| R^Y \|_{2 \alpha}.
		\end{align*}
	\end{definition}
	
	\begin{example}
		\begin{enumerate}
			\item If $\mathbf{X} = (1,\delta X, \mathbb{X}^{(2)}) \in  \mathscr{C}^{\alpha}$, the path $X$ is controlled by $\mathbf{X}$. A Gubinelli-derivative is given by the constant function $Y' = I_d$.
			\item If $Y$ is smooth or, more precisely, $2 \alpha$-H\"older continuous, the path is controlled by $\mathbf{X}$ with Gubinelli-derivative $Y' = 0$.
		\end{enumerate}
		
	\end{example}
	
	It is easily seen that the space of controlled paths is a linear space for every fixed rough path $\mathbf{X}\in  \mathscr{C}^{\alpha}$. Moreover, one can prove the following: 
	\begin{proposition}
		The spaces $\mathscr{D}^{\alpha}_X([0,T],W)$ are Banach spaces with a norm given by
		\begin{align*}
			(Y,Y') \mapsto |Y_0| + |Y'_0| + \|Y,Y'\|_{X,\alpha} \eqqcolon \vertiii{Y,Y'}_{X,\alpha}.
		\end{align*}
		
	\end{proposition}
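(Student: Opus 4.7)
The plan is to split the proof in two stages: first verify that $\vertiii{\cdot,\cdot}_{X,\alpha}$ defines a norm on the linear space $\mathscr{D}^{\alpha}_X([0,T],W)$, then establish completeness by extracting the limits of the four constituents of a Cauchy sequence and gluing them into a single controlled path.

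The norm axioms should be routine. Homogeneity and the triangle inequality follow at once because $\vertiii{Y,Y'}_{X,\alpha}$ is the sum of two norms ($|Y_0|$ and $|Y'_0|$) and two seminorms ($\|Y'\|_\alpha$ and $\|R^Y\|_{2\alpha}$). The only point requiring attention is positive definiteness: $\vertiii{Y,Y'}_{X,\alpha}=0$ gives $Y'_0=0$ together with $\|Y'\|_\alpha=0$, hence $Y'\equiv 0$; then $\|R^Y\|_{2\alpha}=0$ forces $\delta Y \equiv 0$, and $Y_0 = 0$ gives $Y \equiv 0$.

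For completeness, I would start with a Cauchy sequence $(Y^n,(Y^n)')$ and observe that each of its four ingredients is separately Cauchy. This produces limits $y_0\in W$, $y'_0\in L(\R^d,W)$, $Y'\in\mathcal{C}^\alpha([0,T],L(\R^d,W))$ with $Y'_0=y'_0$, and a function $R$ defined on the simplex with values in $W$, satisfying $R_{t,t}=0$ and $\|R\|_{2\alpha}<\infty$. The required Banach structure of $\mathcal{C}^\alpha$ (under the norm $|f(0)|+\|f\|_\alpha$) and of the space of off-diagonal simplex functions under $\|\cdot\|_{2\alpha}$ is standard, essentially analogous to the completeness of the usual Hölder spaces. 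The candidate limit path is then
\begin{align*}
Y_t \coloneqq y_0 + y'_0\,\delta X_{0,t} + R_{0,t},
\end{align*}
which is the pointwise limit of $Y^n_t = Y^n_0 + (Y^n)'_0\delta X_{0,t} + R^{Y^n}_{0,t}$.

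The only step I anticipate requiring real care is the identification of $R^Y_{s,t} \coloneqq \delta Y_{s,t} - Y'_s\delta X_{s,t}$ with the pre-constructed limit $R_{s,t}$. The idea is to pass to pointwise limits in the identity $R^{Y^n}_{s,t} = \delta Y^n_{s,t} - (Y^n)'_s\delta X_{s,t}$. On the one hand, $\|\cdot\|_{2\alpha}$-convergence implies pointwise convergence $R^{Y^n}_{s,t}\to R_{s,t}$; on the other hand, $\|\cdot\|_\alpha$-convergence combined with convergence of $(Y^n)'_0$ yields uniform convergence $(Y^n)'\to Y'$, while $Y^n \to Y$ pointwise by construction, so the right-hand side tends to $\delta Y_{s,t} - Y'_s\delta X_{s,t}$. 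Once $R^Y = R$ is in place, $(Y,Y')\in\mathscr{D}^\alpha_X$ follows—the required $\alpha$-Hölder regularity of $Y$ being automatic from the decomposition $\delta Y = Y'\delta X + R^Y$—and all four components of $\vertiii{Y^n - Y,(Y^n)' - Y'}_{X,\alpha}$ tend to zero by construction. No deeper obstruction is expected: the Banach property of $\mathscr{D}^\alpha_X$ is essentially inherited from the Banach structure of the underlying Hölder spaces.
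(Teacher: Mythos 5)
Your argument is correct and is the standard proof one would supply here; the paper simply dismisses it as ``Straightforward,'' and your write-up is precisely the intended straightforward argument: decompose the Cauchy sequence into its four Cauchy constituents, reconstruct $Y$ from $Y_t = y_0 + y'_0\,\delta X_{0,t} + R_{0,t}$, and verify via pointwise limits that the reconstructed remainder agrees with the $\|\cdot\|_{2\alpha}$-limit. One cosmetic slip: on $\mathscr{D}^{\alpha}_X$ the quantities $|Y_0|$ and $|Y'_0|$ are seminorms (each vanishes on a nontrivial subspace), not norms, but since your positive-definiteness argument correctly combines all four seminorms this does not affect anything.
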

	
	\begin{proof}
		Straightforward.
	\end{proof}
	
	\begin{remark}
		Gubinelli derivatives are not unique, in general. Indeed, if $Y$ is smooth, we can choose $Y' = 0$. But if $X$ is smooth, too, we can in fact choose any $\mathcal{C}^{\alpha}$-path as a Gubinelli-derivative. On the contrary, if $X$ is not smooth, one can show uniqueness of the Gubinelli-derivative, cf. \cite[Proposition 6.4]{FH20}.
	\end{remark}
	The most important fact about controlled paths is that they are good integrands.
	
	\begin{theorem}\label{thm:rough_integration}
		Let $\mathbf{X} \in \mathscr{C}^{\alpha}([0,T], \R^d)$ and $(Y,Y') \in \mathscr{D}^{\alpha}_X([0,T],L(\R^d,\R^m))$. 
		\begin{enumerate}
			\item  The integral
			\begin{align*}
				\int_s^t Y_u\, \dif \mathbf{X}_u := \lim_{|\mathcal{P}| \to 0} \sum_{[u,v] \in \mathcal{P}} Y_u \delta X_{u,v} + Y'_u \mathbb{X}_{u,v}
			\end{align*}
			exists and satisfies the bound
			\begin{align}\label{eqn:bound_rough_integral}
				\left| \int_s^t Y_u\, \dif \mathbf{X}_u - Y_s \delta X_{s,t} - Y'_s \mathbb{X}_{s,t} \right| \leq C ( \|X\|_{\alpha} \|R^Y\|_{2 \alpha} + \| \mathbb{X} \|_{2\alpha} \|Y\|_{\alpha})|t-s|^{3 \alpha}.
			\end{align}
			\item The path $t \mapsto \int_0^t Y_u\, \dif \mathbf{X}_u$ is a controlled path with Gubinelli-derivative $Y$. The map 
			\begin{align*}
				(Y,Y') \mapsto \left(\int_0^{\cdot} Y_u\, \dif \mathbf{X}_u, Y \right) \eqqcolon (Z,Z')
			\end{align*}
			is a continuous linear map from $\mathscr{D}^{\alpha}_X([0,T],L(\R^d,\R^m))$ to $\mathscr{D}^{\alpha}_X([0,T],\R^m)$. Moreover, we have the bound
			\begin{align*}
				\| Z,Z' \|_{X,\alpha} \leq \|Y\|_{\alpha} + \|Y'\|_{\infty} \|\mathbb{X}\|_{2\alpha} + CT^{\alpha}(\|X\|_{\alpha} \|R^Y\|_{2\alpha} + \|\mathbb{X} \|_{2 \alpha} \|Y'\|_{\alpha}).
			\end{align*}
			
		\end{enumerate}

	\end{theorem}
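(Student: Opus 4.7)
The plan is to apply the Sewing lemma to the natural ``germ'' obtained by truncating the Taylor-like expansion of the integrand along the rough path, and then read off the controlled-path structure of the resulting integral from the error estimate.

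For part (1), I will set
\begin{align*}
\Xi_{u,v} \coloneqq Y_u \delta X_{u,v} + Y'_u \mathbb{X}_{u,v}
\end{align*}
and verify the hypotheses of the Sewing lemma (Lemma \ref{lemma:sewing}) with $\beta = 3\alpha > 1$. The bound $\|\Xi\|_{\alpha} < \infty$ is immediate from $\|X\|_\alpha, \|\mathbb{X}\|_{2\alpha}, \|Y\|_\infty, \|Y'\|_\infty < \infty$. The central computation is to expand $\delta \Xi_{s,u,t} = \Xi_{s,t} - \Xi_{s,u} - \Xi_{u,t}$ using additivity of $\delta X$ and Chen's relation $\mathbb{X}_{s,t} = \mathbb{X}_{s,u} + \mathbb{X}_{u,t} + \delta X_{s,u} \otimes \delta X_{u,t}$. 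After cancellation, this yields the clean identity
\begin{align*}
\delta \Xi_{s,u,t} = -R^Y_{s,u}\, \delta X_{u,t} - \delta Y'_{s,u}\, \mathbb{X}_{u,t},
\end{align*}
in which the two ``dangerous'' first-order terms $-\delta Y_{s,u}\delta X_{u,t}$ and $Y'_s(\delta X_{s,u}\otimes \delta X_{u,t})$ combine into the $2\alpha$-H\"older remainder $R^Y$. Estimating each term gives $\|\delta\Xi\|_{3\alpha} \leq \|R^Y\|_{2\alpha}\|X\|_\alpha + \|Y'\|_\alpha \|\mathbb{X}\|_{2\alpha}$, so the Sewing lemma produces $\mathcal{I}\Xi$ with the Riemann-sum representation and the error estimate \eqref{eqn:bound_rough_integral}; I then define $\int_s^t Y_u\, \dif \mathbf{X}_u \coloneqq \delta \mathcal{I}\Xi_{s,t}$.

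For part (2), writing $Z_t \coloneqq \int_0^t Y_u \,\dif\mathbf{X}_u$, the remainder decomposes as
\begin{align*}
R^Z_{s,t} = \delta Z_{s,t} - Y_s \delta X_{s,t} = \big( \delta \mathcal{I}\Xi_{s,t} - \Xi_{s,t}\big) + Y'_s \mathbb{X}_{s,t}.
\end{align*}
The first summand is $\mathcal{O}(|t-s|^{3\alpha})$ by part (1), and the second is $\mathcal{O}(|t-s|^{2\alpha})$, so using $|t-s|^{3\alpha} \leq T^\alpha |t-s|^{2\alpha}$ gives the stated bound on $\|R^Z\|_{2\alpha}$ and hence on $\|Z,Z'\|_{X,\alpha}$. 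Linearity and continuity of $(Y,Y') \mapsto (Z,Z')$ follow from the linearity of the map $\Xi \mapsto \mathcal{I}\Xi$ (given by the Sewing lemma) together with the just-derived norm estimate.

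The only genuinely delicate step is the algebraic expansion of $\delta \Xi_{s,u,t}$: one must carefully use Chen's relation to rewrite $\mathbb{X}_{s,t}$ so that the first-order increments recombine into $R^Y$. Once this cancellation is observed, every other step is either a direct application of the Sewing lemma or a routine Hölder estimate.
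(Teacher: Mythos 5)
Your proposal follows exactly the paper's argument: apply the Sewing lemma to the germ $\Xi_{u,v} = Y_u \delta X_{u,v} + Y'_u \mathbb{X}_{u,v}$, use Chen's relation to reduce $\delta\Xi_{s,u,t}$ to $-R^Y_{s,u}\,\delta X_{u,t} - \delta Y'_{s,u}\,\mathbb{X}_{u,t}$, and then read off the controlled-path structure of $Z$ from the resulting error bound. Your exposition is slightly more detailed (e.g.\ explicitly checking $\|\Xi\|_\alpha<\infty$ and spelling out the cancellation), but the decomposition, the key lemma, and the derived estimates are identical to the paper's proof.
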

	
	\begin{proof}
		As already indicated above, we use the Sewing lemma with 
		\begin{align*}
			\Xi_{u,v} \coloneqq Y_u \delta X_{u,v} + Y'_u \mathbb{X}_{u,v}.
		\end{align*}
		From
		\begin{align*}
			\delta \Xi_{s,u,t} = -R^Y_{s,u} X_{u,t} - \delta Y'_{s,u} \mathbb{X}_{u,t},
		\end{align*}
		we see that
		\begin{align*}
			\| \delta \Xi \|_{3\alpha} \leq \|R^Y\|_{2 \alpha} \|X\|_{\alpha} + \|Y'\|_{\alpha} \|\mathbb{X}\|_{2\alpha}
		\end{align*}
		and the first assertion follows. For the second assertion, we have to prove that
		\begin{align*}
			R^Z_{s,t} \coloneqq \delta Z_{s,t} - Z'_s \delta X_{s,t} = \int_s^t Y_u\, \dif \mathbf{X}_u - Y_s \delta X_{s,t}
		\end{align*}
		is $2\alpha$-H\"older which follows from \eqref{eqn:bound_rough_integral} and the triangle inequality. Note that calculating the bound for $\|Z,Z'\| = \|Z'\|_{\alpha} + \|R^Z\|_{2 \alpha}$ directly follows from \eqref{eqn:bound_rough_integral} .
		
	\end{proof}

	\subsection{Controlled paths as a field of Banach spaces}\label{sec:controlled_paths_fobs}
	The statements discussed in this section are simplified versions of the more general results obtained in \cite{GVRST22}. Recall that we defined a Banach space of controlled paths for every rough path $\mathbf{X}$. The question we would like to answer now is whether the indexed spaces $\{ \mathscr{D}^{\alpha}_X([0,T],W) \}_{\mathbf{X} \in \mathscr{C}^{\alpha}}$ have more structure than being just a collection of isolated spaces. This will also have practical relevance. From Theorem \ref{thm:rough_integration}, we know that rough integration induces bounded linear maps
	\begin{align*}
		\Phi(\mathbf{X},\cdot) \colon   \mathscr{D}^{\alpha}_X([0,T],W) \to \mathscr{D}^{\alpha}_X([0,T],\bar{W}).
	\end{align*}
	If $\mathbf{X}$ is a stochastic process (i.e. a random rough path), the operator norm
	\begin{align*}
		\| \Phi(\omega) \| \coloneqq \sup_{\substack{(Z,Z') \in \mathscr{D}^{\alpha}_{X(\omega)}([0,T],W) \\ (Z,Z') \neq 0}} \frac{\vertiii{\Phi(\mathbf{X}(\omega),(Z,Z'))}}{ \vertiii{Z,Z'}}
	\end{align*}
	is a natural quantity to consider (note that we dropped the lower indices for the norms on controlled rough paths spaces on the right hand side of the equation to ease notation). One seemingly basic question to answer first is the measurability of this random number. We will see that having some additional structure on the space of controlled paths will help us to answer this question.
	We make the following definition:

	\begin{definition}\label{def:cont_field_Bspaces}
		Let $\mathcal{X}$ be a topological space and $\{ E_x\}_{x \in \mathcal{X}}$ a collection of Banach spaces. $\{E_x \}_{x \in \mathcal{X}}$ is called a \emph{separable continuous field of Banach spaces} if there exists a countable set of sections $\Delta \subset \prod_{x \in \mathcal{X}}E_x$, i.e. every $g \in \Delta$ is a map $g \colon \mathcal{X} \to \bigcup_{x \in \mathcal{X}} E_x$ with $g(x) \in E_x$ for every $x \in \mathcal{X}$, that has the following properties:
		\begin{enumerate}
			\item  For every $g \in \Delta$, $x \mapsto \|g(x)\|_{E_x} \in \R$ is continuous.
			\item For every $x \in \mathcal{X}$, the set $\{g(x) \, :\, g \in \Delta \}$ is dense in $E_x$.
		\end{enumerate}
		
	\end{definition}

	\begin{remark}
		The usual definition of a continuous field of Banach spaces in the literature differs slightly from the one we gave in Definition \ref{def:cont_field_Bspaces}. In \cite{Dix77}, the definition of a continuous field of Banach spaces assumes the existence of a \emph{linear subspace} of sections $\Delta'$ satisfying (1) and (2). Separability in \cite{Dix77} means that there is a countable subset $\Delta \subset \Delta'$ satisfying (2.). It is clear that our definition is equivalent since a linear subspace of sections can be just obtained by considering the linear span of $\Delta$. Also, \cite{Dix77} assumes a third property for $\Delta'$ that is as follows:
		\begin{enumerate}
			\item[(3')] Let $\tilde{g} \in \prod_{x \in \mathcal{X}} E_x$. If for every $y \in \mathcal{X}$ and $\varepsilon > 0$, there exists $g_y \in \Delta'$ such that $\| \tilde{g}(x) - g_y(x) \|_{E_x} \leq \varepsilon$ in some neighbourhood of $y$ in $\mathcal{X}$, then $\tilde{g} \in \Delta'$.
		\end{enumerate}
		However, one can show that having a $\Delta'$ satisfying only (1) and (2), one can take some ``completion'' of $\Delta'$ that satisfies (3'), too \cite[10.2.3. Proposition]{Dix77}. Therefore, the definition we gave here could also be called a separable continuous \emph{pre-field} of Banach spaces.
	\end{remark}
	The question we want to answer now is whether the spaces of controlled paths form a separable continuous field of Banach spaces. However, we cannot expect that separability holds since the spaces of controlled paths are equipped with H\"older-type norms that make them not separable themselves. Nevertheless, we will see that a slightly weaker result holds. Inspired by the little H\"older and geometric rough paths spaces, we define:

	\begin{definition}
		Let $\mathbf{X} \in \mathscr{C}^{\beta}$, $\beta \in (1/3,1/2]$ and $\alpha \leq \beta$. We define $\mathscr{D}^{\alpha, \beta}_X([0,T],W)$ to be the closure of the space $\mathscr{D}^{\beta}_X([0,T],W)$ in the $\vertiii{\cdot}_{\alpha}$-Norm.
	\end{definition}
	The key result is the following lemma.
	
	\begin{lemma}\label{lemma:density_key}
		Let $\mathbf{X} \in \mathscr{C}^{\gamma}$ and $\frac{1}{3} < \alpha < \beta \leq \gamma \leq \frac{1}{2}$. Then the set 
		\begin{align*}
			\mathcal{Z} := \left\{ (Z,Z')\, :\, Z_t = \int_0^t \phi_u\, \dif X_u + \psi_t, \ Z'_t = \phi_t \, :\, \phi \in \mathcal{C}^{\infty}([0,T],L(\R^d, W)),\
			\psi \in \mathcal{C}^{\infty}([0,T],W) \right\} 
		\end{align*}
		is dense in $\mathscr{D}^{\alpha, \beta}_X([0,T],W)$. The integral here is defined as a Young integral.
	\end{lemma}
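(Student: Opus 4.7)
The plan is to reduce to approximating an element of $\mathscr{D}^\beta_X$ (which is dense in $\mathscr{D}^{\alpha,\beta}_X$ by definition) and to construct the approximant via two successive mollifications: first of the Gubinelli derivative, then of a suitably defined complementary path. Fix $(Y,Y') \in \mathscr{D}^\beta_X$.

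\textbf{Construction of $\phi^n$.} Extend $Y'$ from $[0,T]$ to $\mathbb R$ (by reflection across the endpoints, say) and convolve with a standard smooth compactly supported mollifier: $\phi^n := \rho_{1/n} * \widetilde{Y'}|_{[0,T]}$, then add a constant so that $\phi^n_0 = Y'_0$. Standard mollification gives $\phi^n \in C^\infty$, $\sup_n\|\phi^n\|_\beta \leq C\|Y'\|_\beta$, and $\|\phi^n - Y'\|_\infty \to 0$. The interpolation inequality $\|f\|_\alpha \leq \|f\|_\infty^{1-\alpha/\beta}\|f\|_\beta^{\alpha/\beta}$, applied to $f = \phi^n - Y'$, then yields $\|\phi^n - Y'\|_\alpha \to 0$.

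\textbf{Construction of $\psi^n$ and $Z^n$.} Since $\phi^n$ is smooth, the Young integral $\int_0^t\phi^n\,dX$ is well-defined (the Hölder exponents $1$ and $\gamma$ sum to more than one). Define the complementary path $\Psi^n_t := Y_t - \int_0^t \phi^n\,dX$. Using the controlled-path identity for $Y$ and the Young-integral estimate, one checks
\[
\delta\Psi^n_{s,t} = (Y' - \phi^n)_s\,\delta X_{s,t} + R^Y_{s,t} - R^{\int \phi^n dX}_{s,t}.
\]
Now let $\psi^n$ be a smooth mollification of $\Psi^n$ at a scale $\epsilon_n\to 0$ (to be tuned), corrected to satisfy $\psi^n_0 = Y_0$. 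Set $Z^n_t := \int_0^t \phi^n\,dX + \psi^n_t$, so that $(Z^n,\phi^n)\in\mathcal{Z}$ and $Z^n_0 = Y_0$.

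\textbf{Verification.} The quantities $|Z^n_0 - Y_0|$, $|\phi^n_0 - Y'_0|$, $\|\phi^n - Y'\|_\alpha$ all vanish by construction; the single nontrivial item is the remainder bound $\|R^{Z^n} - R^Y\|_{2\alpha}\to 0$. A direct computation using the displayed identity for $\delta\Psi^n$ gives the decomposition
\[
R^{Z^n}_{s,t} - R^Y_{s,t} = \delta(\psi^n - \Psi^n)_{s,t} + (Y' - \phi^n)_s\,\delta X_{s,t}.
\]
One then applies the interpolation $\|F\|_{2\alpha}\leq \|F\|_\infty^{1-\alpha/\beta'}\|F\|_{2\beta'}^{\alpha/\beta'}$ for a suitable $\beta'\in(\alpha,\beta]$: the sup norm of $R^{Z^n} - R^Y$ tends to zero because $\psi^n \to \Psi^n$ uniformly (via the bound $\|\psi^n - \Psi^n\|_\infty \lesssim \|\Psi^n\|_\beta\,\epsilon_n^\beta$) and $\phi^n\to Y'$ uniformly, while the intermediate-Hölder norm is kept uniformly bounded by a judicious choice of $\epsilon_n$ together with the uniform $\mathcal C^{2\beta}$-control of $R^{\int\phi^n dX}$ and $R^Y$.

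\textbf{Main obstacle.} The delicate point is the uniform $\mathcal C^{2\beta'}$-control of the sum $\delta(\psi^n-\Psi^n) + (Y'-\phi^n)\delta X$ for some $\beta' > \alpha$. Neither summand is individually well-behaved in this regularity: the rough term $(Y'-\phi^n)\delta X$ fails to be $2\alpha$-Hölder because it is not a coboundary, and the coboundary $\delta(\psi^n-\Psi^n)$ inherits only the $\beta$-Hölder regularity of $\Psi^n$ (which is weaker than $2\alpha$ in the regime $\alpha > \beta/2$ that the assumption $\alpha > 1/3,\ \beta\leq 1/2$ places us in). The crucial cancellation is hidden in the identity for $\delta\Psi^n$, which shows that the rough component of $\Psi^n$ is precisely $(Y'-\phi^n)\delta X$ plus a $2\beta$-Hölder remainder; tuning the smoothing scale $\epsilon_n$ against $\|\phi^n - Y'\|_\infty$ then trades these two errors off to produce the required bound.
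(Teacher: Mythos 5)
Your plan — approximate the Gubinelli derivative by a smooth $\phi^n$, form the complementary path $\Psi^n_t = Y_t - \int_0^t \phi^n\,\dif X$, approximate $\Psi^n$ by a smooth $\psi^n$ — is precisely the structure of the paper's proof (which uses piecewise-linear interpolation rather than mollification for both approximations). Your algebraic identity
\begin{align*}
R^{Z^n}_{s,t} - R^Y_{s,t} = \delta(\psi^n - \Psi^n)_{s,t} + (Y'_s - \phi^n_s)\,\delta X_{s,t}
\end{align*}
is correct. The gap is in the verification step, and you already put your finger on the right place: the interpolation argument does not close, and no choice of $\epsilon_n$ can make it close.

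Here is why. Using the identity $\delta\Psi^n_{s,t} = (Y'_s-\phi^n_s)\delta X_{s,t} + R^Y_{s,t} - R^{\int\phi^n\dif X}_{s,t}$, the ``cancellation'' you invoke rewrites the error as
\begin{align*}
R^{Z^n}_{s,t} - R^Y_{s,t} = \delta\psi^n_{s,t} - R^Y_{s,t} + R^{\int\phi^n\dif X}_{s,t}.
\end{align*}
The latter two terms are uniformly $2\beta$-H\"older as you say. But $\delta\psi^n$ is not: since $\psi^n\to\Psi^n$ uniformly and $\Psi^n$ is only $\gamma$-H\"older with $\gamma\le 1/2 < 2/3 < 2\alpha$, the seminorm $\|\delta\psi^n\|_{2\alpha}$ must diverge. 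Quantitatively, mollifying at scale $\epsilon_n$ gives $\|\psi^n\|_{\mathrm{Lip}}\lesssim\|\Psi^n\|_{\gamma}\,\epsilon_n^{\gamma-1}$, hence $\|\psi^n\|_{2\beta'}\lesssim\epsilon_n^{-(2\beta'-\gamma)}$ by interpolation between $\gamma$ and $1$. Plugging $\|F\|_\infty\lesssim\epsilon_n^\gamma$ and $\|F\|_{2\beta'}\lesssim\epsilon_n^{-(2\beta'-\gamma)}$ into your proposed interpolation $\|F\|_{2\alpha}\le\|F\|_\infty^{1-\alpha/\beta'}\|F\|_{2\beta'}^{\alpha/\beta'}$ yields $\|F\|_{2\alpha}\lesssim\epsilon_n^{\gamma-2\alpha}$, and $\gamma-2\alpha<0$ throughout the admissible parameter range, so the bound blows up. Equivalently, in your original decomposition the best uniform bound on either summand is at exponent $\gamma$ (for $(Y'-\phi^n)\delta X$ this is sharp since it is not a coboundary), and interpolating from below $\gamma$ can never reach $2\alpha>\gamma$. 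The cancellation you cite does simplify the expression but does not change which exponent carries a uniform bound.

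The paper avoids this by not using interpolation at all. It replaces mollification by piecewise-linear interpolation of $\xi'$ on a uniform partition $\mathcal{P}$ with mesh $\theta$, and — this is the key move — it chooses $\tilde\psi$ to be the piecewise-linear interpolation of $\Psi$ \emph{at the partition points}, i.e.\ $\delta\tilde\psi_{t_i,t_{i+1}} = R^\xi_{t_i,t_{i+1}} - \rho_{t_i,t_{i+1}} = \delta\Psi_{t_i,t_{i+1}}$. This gives exact cancellation of the remainder error at partition-point pairs, so the direct estimate on $\|R^{\bar\xi}-R^\xi\|_{2\alpha}$ proceeds by (i) a within-interval bound $\|\rho\|_{2\alpha;[t_i,t_{i+1}]}\lesssim\|\xi'\|_\beta\|X\|_\gamma\,\theta^{\gamma+\beta-2\alpha}$, which is small because one works locally on an interval of length $\theta$ and $\gamma+\beta>2\alpha$, and (ii) a telescoping identity across partition points. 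Mollification destroys this local exactness and has no analogous ``within a cell of size $\epsilon_n$'' gain, which is exactly why your exponent count cannot be rescued by tuning $\epsilon_n$. If you want to repair the argument, replace the mollifications of $Y'$ and $\Psi^n$ by piecewise-linear interpolations on a uniform grid and estimate $R^{Z^n}-R^Y$ locally on cells of the grid rather than by global interpolation.
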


	\begin{proof}
		It suffices to proof that $\mathcal{Z}$ is dense in $\mathscr{D}^{\beta}_X([0,T],W)$ equipped with the norm $\vertiii{\cdot}_{\alpha}$. Let $(\xi,\xi') \in \mathscr{D}^{\beta}_X([0,T],W)$ with remainder $R^{\xi}$, i.e. $\| \xi' \|_{\beta} < \infty$ and $\|R^{\xi}\|_{2 \beta} < \infty$. Let 
		\begin{align*}
			\mathcal{P} = \{0 = t_0 < t_1 < \ldots < t_n = T\}
		\end{align*}
		be a partition with $|\mathcal{P}| = |t_{i+1} - t_i| \eqqcolon \theta > 0$ for all $i = 0,\ldots,n-1$. Define $\bar{\xi}' \colon [0,T] \to W$ to be the piecewise-linear approximation of $\xi'$ w.r.t. to $\mathcal{P}$, i.e.
		\begin{align*}
			\bar{\xi}'_t \coloneqq \xi'_{t_i} + \frac{t - t_i}{\theta} (\xi'_{t_{i+1}} - \xi'_{t_i}), \quad t \in [t_i, t_{i+1}].
		\end{align*}
		Our goal is to find a function $\psi$ with $\psi_0 = \xi_0$ such that for 
		\begin{align*}
			\bar{\xi}_t \coloneqq \int_0^t \bar{\xi}'_u\, \dif X_u + \psi_t,
		\end{align*}
		we have $\vertiii{(\xi,\xi') - (\bar{\xi},\bar{\xi}')}_{\alpha} = \|(\xi,\xi') - (\bar{\xi},\bar{\xi}')\|_{X,\alpha} \leq \varepsilon$  for any given $\varepsilon > 0$ as $\theta \to 0$.  Set $\eta := \xi' - \bar{\xi}'$. It is straightforward to show that $\|\eta\|_{\alpha} \to 0$ as $\theta \to 0$. It remains to show that
		\begin{align*}
			\|R^{\xi} - R^{\bar{\xi}} \|_{2 \alpha} \to 0
		\end{align*}
		as $\theta \to 0$ where 
		\begin{align*}
			R^{\bar{\xi}}_{s,t} = \delta \bar{\xi}_{s,t} - \bar{\xi}'_s \delta X_{s,t} = \int_s^t \bar{\xi}'_u \, \dif X_u - \bar{\xi}'_s \delta X_{s,t} + \delta \psi_{s,t}
		\end{align*}
		for a $\psi$ still to be chosen. For $s,t \in [0,T]$, we define
		\begin{align*}
			\rho_{s,t} \coloneqq \int_s^t \bar{\xi}'_u \, \dif X_u - \bar{\xi}'_s \delta X_{s,t} = \int_s^t \delta \bar{\xi}'_{s,u} \, \dif X_u.
		\end{align*}
		If $s,t \in [t_i,t_{i+1}]$, we have
		\begin{align*}
			\rho_{s,t} = \frac{\delta \xi'_{t_i,t_{i+1}}}{\theta} \int_s^t (u-s)\, \dif X_u.
		\end{align*}
		Using the estimate for the Young integral in Theorem \ref{theorem:young_integral}, we see that
		\begin{align*}
			\| \rho \|_{2 \alpha;[t_i,t_{i+1}]} \leq C \|\xi' \|_{\beta} \|X\|_{\gamma} \theta^{\gamma + \beta - 2 \alpha}.
		\end{align*}
		Now we take $t_j, t_k \in \mathcal{P}$, $k < j$. Then,
		\begin{align*}
			\rho_{t_k,t_j} &= \sum_{k \leq i < j} \left[ \int_{t_i}^{t_{i+1}} \bar{\xi}'_{t_i,u}\, \dif X_u + \delta \bar{\xi}'_{t_k,t_i} \delta X_{t_i,t_{i+1}} \right] \\
			&= \sum_{k \leq i < j} \left[ \rho_{t_i,t_{i+1}} - R^{\xi}_{t_i,t_{i+1}} + \delta \xi_{t_i,t_{i+1}} - \xi_{t_k} \delta X_{t_i,t_{i+1}} \right] \\
			&= \sum_{k \leq i < j} \left[ \rho_{t_i,t_{i+1}} - R^{\xi}_{t_i,t_{i+1}} \right] + R^{\xi}_{t_k,t_j}.
		\end{align*}
		Setting $\tilde{\rho}_{s,t} \coloneqq R^{\xi}_{s,t} - \rho_{s,t}$, the calculation above implies that
		\begin{align}\label{eqn:rhotilde}
			\tilde{\rho}_{t_k,t_j} = \sum_{k \leq i < j} \left[ \rho_{t_i,t_{i+1}} - R^{\xi}_{t_i,t_{i+1}} \right].
		\end{align}
		We define $\tilde{\psi}$ to be the continuous, piecewise-linear function satisfying $\tilde{\psi}_0 = \xi_0$ and
		\begin{align*}
			\delta \tilde{\psi}_{s,t} = \frac{t-s}{t_{i+1} - t_i} (R^{\xi}_{t_i,t_{i+1}} - \rho_{t_i, t_{i+1}}),\quad s,t \in [t_i, t_{i+1}].
		\end{align*}
		With this choice, 
		\begin{align*}
			R^{\bar{\xi}}_{s,t} = \int_s^t \bar{\xi}'_u \, \dif X_u - \bar{\xi}'_s \delta X_{s,t} + \delta \tilde{\psi}_{s,t} = \rho_{s,t} +  \delta \tilde{\psi}_{s,t}.
		\end{align*}
		Now let $s,t \in \mathcal{P}$ with $t_k \leq s \leq t_{k+1} \leq \cdots \leq t_j \leq t \leq t_{j+1}$. By \eqref{eqn:rhotilde},
		\begin{align*}
			\delta \tilde{\psi}_{s,t} = \delta \tilde{\psi}_{s,t_{k+1}} + \delta \tilde{\psi}_{t_{k+1}, t_{k+2}} + \ldots +  \delta \tilde{\psi}_{t_{j}, t} = \delta \tilde{\psi}_{s,t_{k+1}} + \delta \tilde{\psi}_{t_{j}, t} + \tilde{\rho}_{t_{k+1},t_j}.
		\end{align*}
		Furthermore,
		\begin{align*}
			\rho_{s,t} = \rho_{s,t_{k+1}} + \rho_{t_{k+1}, t_j} + \rho_{t_j, t} + \delta \bar{\xi}'_{s,t_{k+1}} \delta X_{t_{k+1}, t_j} + \delta \bar{\xi}'_{t_{k+1},t_j} \delta X_{t_j, t}
		\end{align*}
		and
		\begin{align*}
			\tilde{\rho}_{s,t} = \tilde{\rho}_{s,t_{k+1}} + \tilde{\rho}_{t_{k+1}, t_j} + \tilde{\rho}_{t_j, t} + \delta \eta_{s,t_{k+1}} \delta X_{t_{k+1}, t_j} + \delta \eta_{t_{k+1},t_j} \delta X_{t_j, t}.
		\end{align*}
		Thus, we obtain that
		\begin{align*}
			R^{\bar{\xi}}_{s,t} - R^{\xi}_{s,t} &=  \delta \tilde{\psi}_{s,t} - \tilde{\rho}_{s,t} \\
			&= \delta \tilde{\psi}_{s,t_{k+1}} + \delta \tilde{\psi}_{t_j,t} - \tilde{\rho}_{s,t_{k+1}} - \tilde{\rho}_{t_j,t} - \delta \eta_{s,t_{k+1}} \delta X_{t_{k+1}, t_j} - \delta \eta_{t_{k+1},t_j} \delta X_{t_j, t}.
		\end{align*}
		Each term can now be estimated separately and we can conclude that indeed
		\begin{align*}
			\|R^{\bar{\xi}} - R^{\xi} \|_{2 \alpha} \to 0
		\end{align*}
		as $\theta \to 0$. It remains to argue that we can replace the piecewise smooth functions $\bar{\xi}'$ and $\tilde{\psi}$ by genuine smooth functions. This, however, does not cause any problems since we can approximate any continuous function arbitrarily close my smooth functions in the H\"older metric. Therefore, our claim is proved.
	\end{proof}
	Finally, the previous Lemma  yields:

	\begin{proposition}\label{prop:controlled_paths_fobs}
		Let $\frac{1}{3} < \alpha < \beta \leq \gamma \leq \frac{1}{2}$. Then the family $\{\mathscr{D}^{\alpha, \beta}_X\}_{\mathbf{X} \in \mathscr{C}^{\gamma}}$ is a separable continuous field of Banach spaces.
	\end{proposition}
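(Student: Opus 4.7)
The plan is to exhibit an explicit countable set of sections $\Delta$ and verify the two axioms of Definition~\ref{def:cont_field_Bspaces} using Lemma~\ref{lemma:density_key} together with Young's integral estimate. Let $\mathcal{S}$ be a countable subset of $\mathcal{C}^{\infty}([0,T],\cdot)$ that is dense in the $\mathcal{C}^1$-topology (for instance, polynomials with rational coefficients and appropriate codomain). For each pair $(\phi, \psi) \in \mathcal{S} \times \mathcal{S}$, define the section
\[ g_{\phi,\psi}(\mathbf{X}) := \left( \int_0^{\cdot} \phi_u \, \dif X_u + \psi_{\cdot},\ \phi \right), \]
where the integral is interpreted as a Young integral against the $\gamma$-H\"older path $X$. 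I would set $\Delta := \{g_{\phi,\psi} : (\phi,\psi) \in \mathcal{S} \times \mathcal{S}\}$. A preliminary check is that $g_{\phi,\psi}(\mathbf{X}) \in \mathscr{D}^{\beta}_X \subset \mathscr{D}^{\alpha, \beta}_X$, which follows from $\|Z'\|_\beta = \|\phi\|_\beta < \infty$ and, by Young's estimate applied to smooth $\phi$ against $X$, the bound $|R^Z_{s,t}| \leq C(\|\phi\|_1 \|X\|_\gamma + \|\psi\|_1)|t-s|^{\min(1+\gamma,1)}$, which gives $\|R^Z\|_{2\beta}<\infty$ since $\beta \leq 1/2$.

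For the continuity property (1), note that the components $|Z_0| = |\psi(0)|$, $|Z'_0| = |\phi(0)|$ and $\|Z'\|_\alpha = \|\phi\|_\alpha$ of the norm $\vertiii{g_{\phi,\psi}(\mathbf{X})}_{X,\alpha}$ are independent of $\mathbf{X}$, so continuity reduces to $\mathbf{X} \mapsto \|R^Z(\mathbf{X})\|_{2\alpha}$. For $\mathbf{X}, \mathbf{Y} \in \mathscr{C}^\gamma$ the $\delta\psi$-term and the $\phi_s\delta X_{s,t}$ cancel in a compatible way, and one finds
\[ R^Z_{s,t}(\mathbf{X}) - R^Z_{s,t}(\mathbf{Y}) = \int_s^t \phi_u \, \dif(X-Y)_u - \phi_s\, \delta(X-Y)_{s,t}. \]
Young's estimate (Theorem~\ref{theorem:young_integral}) then yields $|R^Z_{s,t}(\mathbf{X}) - R^Z_{s,t}(\mathbf{Y})| \leq C \|\phi\|_1 \|X-Y\|_\gamma |t-s|^{1+\gamma}$, and since $1+\gamma - 2\alpha > 0$ (we have $\alpha < 1/2 \leq \gamma + 1$), we conclude
\[ \|R^Z(\mathbf{X}) - R^Z(\mathbf{Y})\|_{2\alpha} \leq C\, T^{1+\gamma-2\alpha} \|\phi\|_1 \|X-Y\|_\gamma \longrightarrow 0 \]
as $\varrho_\gamma(\mathbf{X},\mathbf{Y}) \to 0$.

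For the density property (2), Lemma~\ref{lemma:density_key} already gives that the \emph{full} set $\mathcal{Z}$ (with arbitrary smooth $\phi, \psi$) is dense in $\mathscr{D}^{\alpha,\beta}_X$ for the $\vertiii{\cdot}_{X,\alpha}$-norm. It therefore remains to show that any $g_{\phi,\psi}(\mathbf{X}) \in \mathcal{Z}$ can be approximated by elements of $\{g_{\phi_n,\psi_n}(\mathbf{X}) : (\phi_n, \psi_n) \in \mathcal{S}\times\mathcal{S}\}$, using sequences $\phi_n \to \phi$ and $\psi_n \to \psi$ in $\mathcal{C}^1$. The $|Z_0|$, $|Z'_0|$ and $\|Z'\|_\alpha$ parts trivially converge, and the remainder split
\[ R^{Z_n}_{s,t} - R^Z_{s,t} = \int_s^t (\phi_n-\phi)_u \dif X_u - (\phi_n-\phi)(s)\,\delta X_{s,t} + \delta(\psi_n-\psi)_{s,t} \]
is controlled in $2\alpha$-H\"older norm by $C(\|\phi_n-\phi\|_1 \|X\|_\gamma T^{1+\gamma-2\alpha} + \|(\psi_n-\psi)'\|_\infty T^{1-2\alpha}) \to 0$, using $1-2\alpha > 0$ once more.

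The main technical point is the bookkeeping of the three exponents $\alpha < \beta \leq \gamma \leq 1/2$: one must simultaneously ensure that Young integration is applicable (integrand plus integrator exponents summing above $1$, supplied by smoothness of $\phi$ and $\gamma > 1/3$), that the integral lies in $\mathscr{D}^{\beta}_X$ (needing $\beta \leq 1/2$), and that all convergences happen in the weaker $2\alpha$-H\"older norm (needing $2\alpha < 1+\gamma$), all of which are afforded by the hypothesis. No additional obstacle appears because the Gubinelli-derivative of $g_{\phi,\psi}(\mathbf{X})$ is the constant-in-$\mathbf{X}$ function $\phi$, so the only $\mathbf{X}$-dependence sits in $R^Z$, which is precisely where the Young estimate applies cleanly.
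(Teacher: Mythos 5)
Your proposal is correct and follows exactly the same route as the paper: you exhibit the same countable set of sections $\Delta$ (Young integrals of countably many smooth $\phi$ against $X$, shifted by countably many smooth $\psi$) and then verify axioms (1) and (2) of Definition~\ref{def:cont_field_Bspaces} via the Young estimate of Theorem~\ref{theorem:young_integral} and the density Lemma~\ref{lemma:density_key}. The paper's proof merely asserts that these two properties ``follow from continuity of the Young integral and Lemma~\ref{lemma:density_key}''; you have supplied the details the paper leaves implicit, including the observation that the $\psi$-contribution and the $\phi_s\,\delta X_{s,t}$-compensator cancel when differencing $R^Z$ across two driving rough paths, and the explicit exponent bookkeeping $1+\gamma-2\alpha>0$ and $1-2\alpha>0$ that makes the $2\alpha$-H\"older estimates close.
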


	\begin{proof}
		Let $\mathcal{S}$ and $\mathcal{S}'$ be a countable dense subsets of $\mathcal{C}^{\infty}([0,T],L(\R^d, W))$ resp. $\mathcal{C}^{\infty}([0,T],W)$. Then we can define $\Delta$ as the set of maps $g \colon \mathcal{C}^{\gamma} \to \bigcup_{X \in \mathcal{C}^{\gamma}} \mathscr{D}^{\alpha, \beta}_X$ given by $g(X) = (Z,Z')$ where
		\begin{align*}
			Z_t = \int_0^t \phi_u\, \dif X_u + \psi_t, \ Z'_t = \phi_t
		\end{align*}
		with $\phi \in \mathcal{S}$ and $\psi \in \mathcal{S}'$. The claimed properties now follow from continuity of the Young integral, cf. Theorem \ref{theorem:young_integral}, and Lemma \ref{lemma:density_key}.
		
	\end{proof}
	Remember that we considered the measurability question of the operator norm of a family of linear mappings
	\begin{align*}
		\Phi(\mathbf{X}(\omega),\cdot) \colon   \mathscr{D}^{\alpha}_{X(\omega)}([0,T],W) \to \mathscr{D}^{\alpha}_{X(\omega)}([0,T],\bar{W})
	\end{align*}
	(like rough integration, for instance). We will formulate a corresponding result now.

	\begin{proposition}\label{prop:lin_maps_meas}
		Let $\frac{1}{3} < \alpha < \beta \leq \gamma \leq \frac{1}{2}$ and let $\Delta$ be the set of sections given in the definition of a continuous field of Banach spaces. Assume that for every rough path $\mathbf{X} \in \mathscr{C}^{\gamma}$, there is a bounded linear map 
		\begin{align*}
			\Phi(\mathbf{X},\cdot) \colon  \mathscr{D}^{\alpha, \beta}_{X}([0,T],W) \to  \mathscr{D}^{\alpha, \beta}_{X}([0,T],\bar{W})
		\end{align*}
		that satisfies the property that $\mathbf{X} \mapsto \vertiii{\Phi(\mathbf{X}, g(X))}$ is continuous for every $g \in \Delta$. Let $\mathbf{X}(\omega)$ be a random rough path with the property that $\omega \mapsto \mathbf{X}(\omega)$ is measurable. Then the operator norm
		\begin{align*}
			\| \Phi(\omega) \| = \sup_{\substack{(Z,Z') \in \mathscr{D}^{\alpha, \beta}_{X(\omega)}([0,T],W) \\ (Z,Z') \neq 0}} \frac{\vertiii{\Phi(\mathbf{X}(\omega),(Z,Z'))}}{\vertiii{Z,Z'}}
		\end{align*}
		is measurable.
	\end{proposition}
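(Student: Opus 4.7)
The plan is to exploit the separability of the continuous field of Banach spaces to reduce the supremum in the definition of $\|\Phi(\omega)\|$ to a countable supremum whose individual terms are manifestly measurable. The key point is that the collection $\Delta$ from Proposition \ref{prop:controlled_paths_fobs} is \emph{countable} and has the property that $\{g(X)\,:\,g\in\Delta\}$ is dense in $\mathscr{D}^{\alpha,\beta}_{X}([0,T],W)$ for every $\mathbf{X}\in\mathscr{C}^{\gamma}$.

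First, I would prove the fiberwise identity
\begin{equation*}
\|\Phi(\mathbf{X},\cdot)\| \;=\; \sup_{\substack{g\in\Delta\\ g(X)\neq 0}}\frac{\vertiii{\Phi(\mathbf{X},g(X))}}{\vertiii{g(X)}}
\end{equation*}
for each fixed $\mathbf{X}\in\mathscr{C}^{\gamma}$. The inequality ``$\geq$'' is immediate since we are taking a supremum over a subset of the unit ball. For ``$\leq$'', given any nonzero $(Z,Z')\in\mathscr{D}^{\alpha,\beta}_{X}([0,T],W)$, density yields a sequence $g_n\in\Delta$ with $g_n(X)\to(Z,Z')$ in $\vertiii{\cdot}$. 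Since $\vertiii{(Z,Z')}>0$, eventually $g_n(X)\neq 0$, and by boundedness (hence continuity) of the linear map $\Phi(\mathbf{X},\cdot)$ on the fiber we get $\vertiii{\Phi(\mathbf{X},g_n(X))}/\vertiii{g_n(X)}\to\vertiii{\Phi(\mathbf{X},(Z,Z'))}/\vertiii{(Z,Z')}$.

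Next, I would check measurability of each term in the countable supremum. By the defining property~(1) of a separable continuous field, the map $\mathbf{X}\mapsto\vertiii{g(X)}$ is continuous on $\mathscr{C}^{\gamma}$ for every $g\in\Delta$; by assumption $\mathbf{X}\mapsto\vertiii{\Phi(\mathbf{X},g(X))}$ is also continuous. Composing with the measurable map $\omega\mapsto\mathbf{X}(\omega)$, we see that both numerator and denominator are measurable functions of $\omega$, and the set $\{\omega\,:\,g(X(\omega))\neq 0\}$ is measurable. Therefore the quotient $\omega\mapsto\vertiii{\Phi(\mathbf{X}(\omega),g(X(\omega)))}/\vertiii{g(X(\omega))}$ is measurable for each $g\in\Delta$, and taking the countable supremum over $g\in\Delta$ preserves measurability, giving the result via the identity established above.

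The only delicate point is the density step: one must make sure that $\Phi(\mathbf{X},\cdot)$ is continuous in the $\vertiii{\cdot}_\alpha$-norm so that the approximation $g_n(X)\to(Z,Z')$ can be pushed through. This however is nothing but the boundedness of $\Phi(\mathbf{X},\cdot)$ as a linear operator on $\mathscr{D}^{\alpha,\beta}_{X}$, which is built into the hypothesis. The rest is essentially bookkeeping: the separability of the field (the existence of a \emph{countable} set of sections whose values are dense in each fiber) is precisely what is needed to turn a potentially uncountable supremum into a countable one and thereby secure measurability.
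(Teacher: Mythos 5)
Your proposal is correct and follows essentially the same argument as the paper: rewrite the operator norm as the supremum over the countable set $\Delta$ (using density in each fiber and boundedness of $\Phi(\mathbf{X},\cdot)$), observe each term $\omega\mapsto\vertiii{\Phi(\mathbf{X}(\omega),g(X(\omega)))}/\vertiii{g(X(\omega))}$ is measurable as a composition of the continuous maps from property~(1) of the field and the hypothesis with the measurable map $\omega\mapsto\mathbf{X}(\omega)$, and then invoke closure of measurability under countable suprema. You simply spell out the density step and the measurability of each term in more detail than the paper, which treats both as self-evident.
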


	\begin{proof}
		For every $\omega \in \Omega$,
		\begin{align*}
			\| \Phi(\omega) \| = \sup_{\substack{(Z,Z') \in \mathscr{D}^{\alpha, \beta}_{X(\omega)}([0,T],W) \\ (Z,Z') \neq 0}} \frac{\vertiii{\Phi(\mathbf{X}(\omega),(Z,Z'))}}{\vertiii{Z,Z'}} = \sup_{g \in \Delta} \frac{\vertiii{\Phi(\mathbf{X}(\omega),g(X(\omega)))}}{\vertiii{g(X(\omega))}}.
		\end{align*}
		By our assumptions, $\omega \mapsto \frac{\vertiii{ \Phi(\mathbf{X}(\omega),g(X(\omega))) }}{\vertiii{g(X(\omega))}}$ is measurable for every fixed $g \in \Delta$. Since $\Delta$ is countable, the result follows.
		
	\end{proof}

	To apply Proposition \ref{prop:lin_maps_meas} to the rough integration map, we still have to prove that
	\begin{align*}
		\mathbf{X} \to \left\| \int g(X) \, \dif \mathbf{X} \right\|_{\mathscr{D}^{\alpha,\beta}_X}
	\end{align*}
	is continuous. This will follow by a more general result on rough integration, cf. the forthcoming Theorem \ref{thm:stability_rough_integral} and Corollary \ref{cor:cont_sect_int}. \smallskip

	It is known that every continuous field of Banach spaces $\{E_x\}_{x \in \mathcal{X}}$ induces a natural topology on the total space $E \coloneqq \bigsqcup_{x \in \mathcal{X}} E_x$. To describe it, we introduce the projection $p \colon E \to \mathcal{X}$, i.e. if $Z \in E_x$, $p(Z) = x$. We define for $g \in \Delta$, an open set $U \subset \mathcal{X}$ and $\varepsilon > 0$ the \emph{tube}
	\begin{align*}
		W(g,U,\varepsilon) \coloneqq \{Z \in E \, :\, p(Z) \in U,\ \|Z - g(p(Z)) \|_{E_{p(Z)}} < \varepsilon \},
	\end{align*}
	see the picture below.
	\begin{figure}[H]
		\includegraphics[width=10cm]{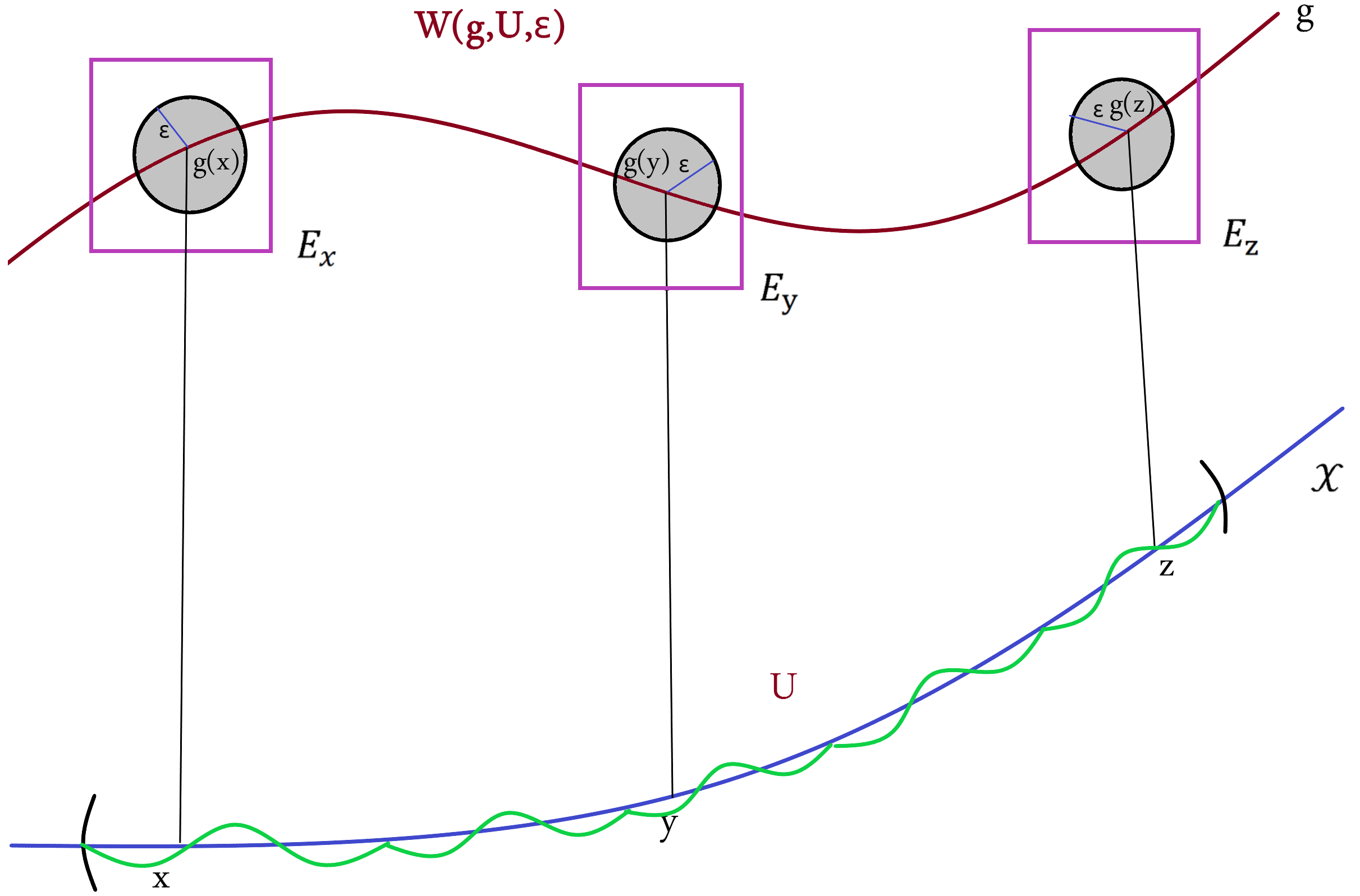}
		\caption{Open tube}
	\end{figure}
	
	The topology defined on $E$ is the smallest one containing the tubes as open sets. It is also called \emph{tube topology}.
	\smallskip
	Fortunately, in the case of controlled paths, the tube topology is completely metrizable with an explicit metric. We state this result now.

	\begin{proposition}
		Let $\alpha < \beta \leq \frac{1}{2}$ and $\mathscr{D} \coloneqq \bigsqcup_{\mathbf{X} \in \mathscr{C}^{\beta}} \mathscr{D}^{\alpha, \beta}_{X}([0,T],W)$. Then the tube topology on $\mathscr{D}$ is completely metrizable with metric given by
		\begin{align*}
			d^{\flat}_{\alpha,\beta}((Y,Y'),(\tilde{Y},\tilde{Y}')) &\coloneqq  \varrho_{\beta}(p(Y,Y'),p(\tilde{Y},\tilde{Y}')) + \|Y' - \tilde{Y}'\|_{\alpha} + \|R^Y - R^{\tilde{Y}} \|_{2 \alpha} \\
			&\quad + |Y_0 - \tilde{Y}_0| + |Y'_0 - \tilde{Y}'_0|.
		\end{align*}
		If we replace $\mathscr{C}^{\beta}$ by $\mathscr{C}^{\beta}_g$, $\mathscr{D}$ is also separable, i.e. Polish.			
	\end{proposition}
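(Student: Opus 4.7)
The plan is to verify the metric axioms for $d^{\flat}_{\alpha,\beta}$, then show that it induces the tube topology, then establish completeness, and (in the geometric case) deduce separability. The main obstacle I anticipate is confirming that limits of $d^\flat$-Cauchy sequences lie in the \emph{closed} subspace $\mathscr{D}^{\alpha,\beta}_X$ rather than merely in $\mathscr{D}^{\alpha}_X$. The metric axioms themselves are routine: symmetry, triangle inequality, and nonnegativity follow term-by-term, while the identity of indiscernibles uses that $d^\flat = 0$ forces $\varrho_\beta = 0$, so both elements share the same base $\mathbf{X}$; the remaining four summands then form (the nontrivial part of) the norm on $\mathscr{D}^{\alpha,\beta}_X$, and equality follows from the reconstruction $Y_t = Y_0 + Y'_0\, \delta X_{0,t} + R^Y_{0,t}$.

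For the topology comparison, the key observation I will use is that for the sections $g \in \Delta$ constructed in the proof of Proposition \ref{prop:controlled_paths_fobs}, namely $g(\mathbf{X}) = (\int_0^\cdot \phi_u\, \dif X_u + \psi, \phi)$ with smooth $\phi, \psi$, the Gubinelli derivative $\phi$ does \emph{not} depend on $X$, and the remainder
\[
R^{g(X)}_{s,t} = \int_s^t (\phi_u - \phi_s)\, \dif X_u + \delta\psi_{s,t}
\]
is continuous in $X$ in the $2\alpha$-H\"older norm via the estimate in Theorem \ref{theorem:young_integral} (the exponent $1 + \beta - 2\alpha$ is positive since $\beta > \alpha$). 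Combined with the triangle inequality this handles both implications: if $d^\flat(Z_n, Z) \to 0$ and $Z \in W(g, U, \varepsilon)$, expanding $\|Z_n - g(p(Z_n))\|_{E_{p(Z_n)}}$ in the five summands of $d^\flat$ shows $Z_n \in W(g, U, \varepsilon)$ eventually; conversely, given a $d^\flat$-ball of radius $\varepsilon$ around $Z$, I would apply Lemma \ref{lemma:density_key} to pick $g \in \Delta$ with $\|Z - g(p(Z))\|_{E_{p(Z)}} < \varepsilon/4$, take a sufficiently small $\mathscr{C}^\beta$-neighborhood $U$ of $p(Z)$, and verify that $W(g, U, \varepsilon/4)$ lies inside the $d^\flat$-ball.

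For completeness, given a $d^\flat$-Cauchy sequence $(Y_n, Y'_n)$ with bases $\mathbf{X}_n$, I would first extract coordinatewise limits: $\mathbf{X}_n \to \mathbf{X}$ by completeness of $(\mathscr{C}^\beta, \varrho_\beta)$, while $Y_{n,0}, Y'_{n,0}$ together with the paths $Y'_n$ (Cauchy in $\alpha$-H\"older plus the initial value) and the two-parameter maps $R^{Y_n}$ (Cauchy in the $2\alpha$-H\"older seminorm, zero on the diagonal) converge to limits $Y_0^\ast, {Y_0^\ast}', Y', R^\ast$. Setting $Y^\ast_t := Y_0^\ast + {Y_0^\ast}'\, \delta X_{0,t} + R^\ast_{0,t}$, a direct computation using $X_n \to X$ in $\beta$-H\"older shows $R^{Y^\ast} = R^\ast$, so $(Y^\ast, Y') \in \mathscr{D}^\alpha_X$. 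The hard step is showing $(Y^\ast, Y') \in \mathscr{D}^{\alpha,\beta}_X$: I plan to apply Lemma \ref{lemma:density_key} to each $(Y_n, Y'_n)$ to obtain smooth approximations $(\int_0^\cdot \phi_n\, \dif X_n + \psi_n, \phi_n) \in \mathscr{D}^\beta_{X_n}$ within distance $1/n$, then replace the driver $X_n$ by $X$ to produce genuine elements of $\mathscr{D}^\beta_X$, and use the Young-stability observation of the previous paragraph to show the resulting diagonal sequence converges to $(Y^\ast, Y')$ in the $\vertiii{\cdot}_\alpha$-norm. Finally, for separability in the geometric case, I would combine a countable dense $\mathcal{S} \subset \mathscr{C}^\beta_g$ with countable dense subsets of smooth $\phi, \psi$ via the sections from Proposition \ref{prop:controlled_paths_fobs}; the resulting countable family is $d^\flat$-dense in $\mathscr{D}$ by Lemma \ref{lemma:density_key} and the topology equivalence established above.
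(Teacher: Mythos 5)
Your topology comparison is essentially the paper's proof: the paper also establishes metrizability via the two inclusions (its ``Claim~1'' and ``Claim~2''), relying on the sections $g = (\int \phi\,\dif X + \psi, \phi)$ with smooth $\phi, \psi$ and the Young-continuity estimate $\vertiii{g(X) ; g(\tilde{X})}_\alpha \leq C_g \varrho_\beta(\mathbf{X},\tilde{\mathbf{X}})$. The separability argument via countable dense families of smooth $\phi, \psi$ and of geometric rough paths also matches.

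Where you go beyond the paper is completeness: the paper only asserts in one line that it ``follows from completeness of $\mathscr{C}^\beta$ and of the spaces $\mathscr{D}^{\alpha,\beta}_X$'', deferring to the reference for the details. You correctly isolate the crux (the limit $(Y^\ast, Y^{*\prime})$ lands a priori only in $\mathscr{D}^\alpha_X$, not in the closure $\mathscr{D}^{\alpha,\beta}_X$), and the coordinatewise extraction together with the reconstruction identity $Y^\ast_t = Y^\ast_0 + {Y^\ast_0}'\,\delta X_{0,t} + R^\ast_{0,t}$ is sound. However, the proposed diagonal argument to show $(Y^\ast, Y^{*\prime}) \in \mathscr{D}^{\alpha,\beta}_X$ has an unresolved quantifier-ordering problem. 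You fix the approximation accuracy to $1/n$, choose $g_n = (\int \phi_n \dif X_n + \psi_n, \phi_n)$ via Lemma~\ref{lemma:density_key}, and then transport to $g_n(\mathbf{X})$ using Young stability, which gives a bound of the form $C(\phi_n)\,\varrho_\beta(\mathbf{X}_n,\mathbf{X})$. The issue is that $C(\phi_n)$ depends on the Lipschitz norm of $\phi_n$, which blows up as the mesh in Lemma~\ref{lemma:density_key} shrinks (the piecewise-linear interpolant has Lipschitz constant $\sim \theta^{\alpha-1}$); at the same time, the rate at which $\varrho_\beta(\mathbf{X}_n,\mathbf{X}) \to 0$ is dictated by the given Cauchy sequence and cannot be renegotiated. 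Choosing the accuracy $1/n$ first, independently of $\varrho_\beta(\mathbf{X}_n,\mathbf{X})$, therefore does not guarantee $C(\phi_n)\varrho_\beta(\mathbf{X}_n,\mathbf{X}) \to 0$, and trying to fix $n$ before $g$ (or vice versa) leads to a circularity. To repair this you would need either to track the approximation rate and the constant $C(\phi_n)$ quantitatively in Lemma~\ref{lemma:density_key} and balance the mesh $\theta_n$ against $\varrho_\beta(\mathbf{X}_n,\mathbf{X})$ (delicate, since elements of $\mathscr{D}^{\alpha,\beta}_{X_n}$ need not have $\beta$-H\"older Gubinelli derivatives, so there is no explicit polynomial rate), or to replace the diagonalization by a different argument for membership in the closure.
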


	\begin{proof}
		We fix some notation first. For given $(Y,Y') \in \mathscr{D}^{\alpha, \beta}_{X}([0,T],W)$ and $(\tilde{Y},\tilde{Y}') \in  \mathscr{D}^{\alpha, \beta}_{\tilde{X}}([0,T],W)$, we set 
		\begin{align*}
			\vertiii{(Y,Y') ; (\tilde{Y},\tilde{Y}')}_{\alpha} \coloneqq \|Y' - \tilde{Y}'\|_{\alpha} + \|R^Y - R^{\tilde{Y}} \|_{2 \alpha}  + |Y_0 - \tilde{Y}_0| + |Y'_0 - \tilde{Y}'_0|.
		\end{align*}
		For given $(Y,Y') \in \mathscr{D}$ and $\varepsilon > 0$, we define
		\begin{align*}
			B_{\varepsilon} (Y,Y') \coloneqq \{(\tilde{Y},\tilde{Y}') \in \mathscr{D} \, :\, d^{\flat}_{\alpha,\beta}((Y,Y'),(\tilde{Y},\tilde{Y}')) < \varepsilon \}.
		\end{align*}
		For $\mathbf{X} \in \mathscr{C}^{\beta}$ and $\eta > 0$, we use the notation
		\begin{align*}
			B_{\eta}(\mathbf{X}) \coloneqq \{\tilde{\mathbf{X}} \in \mathscr{C}^{\beta} \,:\, \varrho(\mathbf{X},\tilde{\mathbf{X}}) < \eta \}.
		\end{align*}
		Recall the definition of $\Delta$ given in the proof of Proposition \ref{prop:controlled_paths_fobs}. \smallskip
		
		\textbf{Claim 1:} For given $(Y,Y') \in \mathscr{D}$ and $\varepsilon > 0$, there is an open set $U \subset \mathscr{C}^{\beta}$, an element $g \in \Delta$ and a number $\delta > 0$ such that
		\begin{align*}
			(Y,Y') \in W(g,U,\delta) \subseteq  B_{\varepsilon} (Y,Y').
		\end{align*}
		
		To prove this claim, for $\mathbf{X} = p(Y,Y')$, we define $ U \coloneqq B_{\eta}(\mathbf{X})$  where $\eta > 0$ will be chosen later. For given $\delta > 0$, we choose $g = g_{\delta} \in \Delta$ such that
		\begin{align*}
			\vertiii{(Y,Y') - g(p(Y,Y'))}_{X,\alpha} < \delta. 
		\end{align*}
		With these choices, we always have that $(Y,Y') \in W(g,U,\delta)$. Now let $(\tilde{Y},\tilde{Y}') \in   W(g,U,\delta)$ be arbitrary and set  $\tilde{\mathbf{X}} = p(\tilde{Y},\tilde{Y}')$. Note that
		\begin{align*}
			&d^{\flat}_{\alpha,\beta}((Y,Y'),(\tilde{Y},\tilde{Y}')) \\
			=\ &\varrho_{\beta}(\mathbf{X},\tilde{\mathbf{X}}) + \vertiii{(Y,Y') ; (\tilde{Y},\tilde{Y}')}_{\alpha} \\
			<\ &\eta +   \vertiii{(Y,Y') - g(p(Y,Y'))}_{X,\alpha} +  \vertiii{(\tilde{Y},\tilde{Y}') - g(p(\tilde{Y},\tilde{Y}'))}_{\tilde{X},\alpha} + \vertiii{g(p(Y,Y')) ; g(p(\tilde{Y},\tilde{Y}'))}_{\alpha} \\
			<\ &\eta + 2\delta + \vertiii{g(p(Y,Y')) ; g(p(\tilde{Y},\tilde{Y}'))}_{\alpha}.
		\end{align*}e
		Using continuity of the Young integral, we can deduce the bound
		\begin{align}\label{eqn:cont_young_fields}
			\vertiii{g(p(Y,Y')) ; g(p(\tilde{Y},\tilde{Y}'))}_{\alpha} \leq C_{g} \varrho_{\beta}(\mathbf{X},\tilde{\mathbf{X}}) \leq C_g \eta.
		\end{align}
		Therefore, if $\varepsilon > 0$ is given, we first choose $0 < \delta < \varepsilon/4$ and then $\eta > 0$ such that $\eta(1 + C_g) < \varepsilon/2$ to obtain that $d^{\flat}_{\alpha,\beta}((Y,Y'),(\tilde{Y},\tilde{Y}')) < \varepsilon$. This proves claim 1. \smallskip
		
		\textbf{Claim 2:} For given $W(g,U,\delta)$ and $(Y,Y') \in W(g,U,\delta)$, there is an $\varepsilon > 0$ such that
		\begin{align*}
			B_{\varepsilon} (Y,Y') \subseteq W(g,U,\delta).
		\end{align*}
		
		To see this, let $ \mathbf{X} = p(Y,Y')$. By definition, $\mathbf{X} \in U$ and since $U$ is open, there is an $\eta > 0$ such that $B_{\eta}(\mathbf{X}) \subseteq U$. Let $(\tilde{Y},\tilde{Y}') \in B_{\varepsilon} (Y,Y')$ be arbitrary and $\tilde{\mathbf{X}} = p(\tilde{Y},\tilde{Y}')$. If $0 < \varepsilon < \eta$, it follows that
		\begin{align*}
			\tilde{\mathbf{X}} \in B_{\eta}(\mathbf{X}) \subseteq U.
		\end{align*}
		In remains to show that choosing $\varepsilon > 0$ sufficiently small, we can obtain that
		\begin{align*}
			\vertiii{(\tilde{Y},\tilde{Y}') - g(p(\tilde{Y},\tilde{Y}'))}_{\tilde{X},\alpha} < \delta.
		\end{align*}
		Note that
		\begin{align*}
			&\left| \vertiii{(\tilde{Y},\tilde{Y}') - g(p(\tilde{Y},\tilde{Y}'))}_{\tilde{X},\alpha} - \vertiii{(Y,Y') - g(p(Y,Y'))}_{X,\alpha} \right| \\
			\leq\  &\vertiii{(Y,Y') ; (\tilde{Y},\tilde{Y}')}_{\alpha} +  \vertiii{g(p(Y,Y')) ; g(p(\tilde{Y},\tilde{Y}'))}_{\alpha}.
		\end{align*}
		Using again \eqref{eqn:cont_young_fields} and the assumption, the right hand side gets small when $\varepsilon$ is chosen small. Therefore, for any given $\nu > 0$, we can choose $\varepsilon > 0$ sufficiently small to obtain
		\begin{align*}
			\vertiii{(\tilde{Y},\tilde{Y}') - g(p(\tilde{Y},\tilde{Y}'))}_{\tilde{X},\alpha} \leq \nu + \vertiii{(Y,Y') - g(p(Y,Y'))}_{X,\alpha}.
		\end{align*}
		Since $ \vertiii{(Y,Y') - g(p(Y,Y'))}_{X,\alpha} < \delta$, we can find a $\nu > 0$ such that
		\begin{align*}
			\nu + \vertiii{(Y,Y') - g(p(Y,Y'))}_{X,\alpha} < \delta.
		\end{align*}
		From these observations, we can deduce the second claim. Both claims together prove that $d^{\flat}_{\alpha,\beta}$ indeed metrizes the tube topology. The fact that $\mathscr{D}$ is complete with respect to $d^{\flat}_{\alpha,\beta}$ follows from completeness of the space $\mathscr{C}^{\beta}$ with respect to $\varrho_{\beta}$ and completeness of the spaces $\mathscr{D}_X^{\alpha,\beta}$. Separability follows from separability of the respective spaces.

	\end{proof}
	
	We can now prove an important stability result for rough integration.

	\begin{theorem}\label{thm:stability_rough_integral}
		Let $\mathbf{X}, \tilde{\mathbf{X}} \in \mathscr{C}^{\beta}$,  $(Y,Y') \in \mathscr{D}^{\alpha, \beta}_{X}$ and $(\tilde{Y},\tilde{Y}') \in \mathscr{D}^{\alpha, \beta}_{\tilde{X}}$. Set 
		\begin{align*}
			Z \coloneqq \int_0^{\cdot} Y_u \, \dif \mathbf{X}_u, \quad Z' \coloneqq Y
		\end{align*}
		and define $(\tilde{Z},\tilde{Z}')$ similarly. Then, locally,
		\begin{align*}
			d^{\flat}_{\alpha,\beta}((Z,Z'),(\tilde{Z},\tilde{Z}')) \leq C\, d^{\flat}_{\alpha,\beta}((Y,Y'),(\tilde{Y},\tilde{Y}')).
		\end{align*}
		In other words: the integration map
		\begin{align*}
			(Y,Y') \mapsto \left( \int Y \, \dif p(Y,Y'), Y \right)
		\end{align*}
		is locally Lipschitz continuous.
	\end{theorem}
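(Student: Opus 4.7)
Writing out $d^{\flat}_{\alpha,\beta}$ term by term and using $Z_0 = \tilde Z_0 = 0$, $Z' = Y$, $\tilde Z' = \tilde Y$, the left-hand side becomes
\[
\varrho_{\beta}(\mathbf{X},\tilde{\mathbf{X}}) \;+\; \|Y - \tilde Y\|_{\alpha} \;+\; \|R^Z - R^{\tilde Z}\|_{2\alpha} \;+\; |Y_0 - \tilde Y_0|.
\]
The first and last summands already appear in $d^{\flat}_{\alpha,\beta}((Y,Y'),(\tilde Y,\tilde Y'))$, so only the two middle terms require real work. Throughout the argument the constant $C$ is allowed to depend on $\|X\|_{\beta}$, $\|\tilde X\|_{\beta}$, $\|\mathbb X\|_{2\beta}$, $\|\tilde{\mathbb X}\|_{2\beta}$, $|Y'_0|$, $|\tilde Y'_0|$, $\|Y'\|_{\alpha}$, $\|\tilde Y'\|_{\alpha}$, $\|R^Y\|_{2\alpha}$ and $\|R^{\tilde Y}\|_{2\alpha}$; since each of these quantities is bounded on any $d^{\flat}_{\alpha,\beta}$-bounded neighbourhood of $(Y,Y')$ and $(\tilde Y,\tilde Y')$, this dependence is exactly what the word \emph{locally} refers to.

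For $\|Y - \tilde Y\|_{\alpha}$, I would exploit the defining identity
\[
\delta(Y - \tilde Y)_{s,t} = (R^Y - R^{\tilde Y})_{s,t} + (Y'_s - \tilde Y'_s)\,\delta X_{s,t} + \tilde Y'_s\,(\delta X - \delta\tilde X)_{s,t}.
\]
The first summand has $\alpha$-H\"older norm bounded by $T^{\alpha}\|R^Y - R^{\tilde Y}\|_{2\alpha}$; the second by $\|Y' - \tilde Y'\|_{\infty}\|X\|_{\alpha}$, with $\|Y' - \tilde Y'\|_{\infty} \leq |Y'_0 - \tilde Y'_0| + T^{\alpha}\|Y' - \tilde Y'\|_{\alpha}$; the third by $\|\tilde Y'\|_{\infty}\,T^{\beta - \alpha}\|X - \tilde X\|_{\beta}$, and here I use $\beta > \alpha$ to absorb the $\beta$-norm into $\varrho_{\beta}(\mathbf{X},\tilde{\mathbf{X}})$.

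The heart of the argument is the estimate on $\|R^Z - R^{\tilde Z}\|_{2\alpha}$. Setting $\Xi_{s,t} := Y_s\delta X_{s,t} + Y'_s\mathbb X_{s,t}$ and $\tilde\Xi_{s,t}$ analogously, the Sewing lemma gives $\delta Z_{s,t} = \mathcal{I}\Xi_{s,t}$, so
\[
R^Z_{s,t} - R^{\tilde Z}_{s,t} = \bigl(Y'_s\mathbb X_{s,t} - \tilde Y'_s\tilde{\mathbb X}_{s,t}\bigr) + \bigl[(\mathcal{I}\Xi - \Xi)_{s,t} - (\mathcal{I}\tilde\Xi - \tilde\Xi)_{s,t}\bigr].
\]
The algebraic piece is split as $(Y'_s - \tilde Y'_s)\mathbb X_{s,t} + \tilde Y'_s(\mathbb X - \tilde{\mathbb X})_{s,t}$ and bounded in $2\alpha$-H\"older by $\|Y' - \tilde Y'\|_{\infty}\|\mathbb X\|_{2\alpha} + \|\tilde Y'\|_{\infty}\,T^{2(\beta - \alpha)}\|\mathbb X - \tilde{\mathbb X}\|_{2\beta}$. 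For the two Sewing errors, I would apply the Sewing lemma to the difference $\Psi := \Xi - \tilde\Xi$: a direct computation using the identity $\delta\Xi_{s,u,t} = -R^Y_{s,u}\delta X_{u,t} - \delta Y'_{s,u}\mathbb X_{u,t}$ (and its tilde analogue) yields
\[
\delta\Psi_{s,u,t} = -(R^Y - R^{\tilde Y})_{s,u}\,\delta X_{u,t} - R^{\tilde Y}_{s,u}(\delta X - \delta\tilde X)_{u,t} - \delta(Y' - \tilde Y')_{s,u}\,\mathbb X_{u,t} - \delta\tilde Y'_{s,u}\,(\mathbb X - \tilde{\mathbb X})_{u,t}.
\]
Each of the four products has $3\alpha$-H\"older norm bounded by a constant times a single summand of $d^{\flat}_{\alpha,\beta}((Y,Y'),(\tilde Y,\tilde Y'))$, so the Sewing bound \eqref{eqn:sewing} furnishes $|(\mathcal{I}\Psi - \Psi)_{s,t}| \leq C\,\|\delta\Psi\|_{3\alpha}|t-s|^{3\alpha} \leq C T^{\alpha}\, d^{\flat}_{\alpha,\beta}((Y,Y'),(\tilde Y,\tilde Y'))\,|t-s|^{2\alpha}$, which is the required $2\alpha$-H\"older bound.

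I expect the only real difficulty to be book-keeping: one must mirror the proof of Theorem \ref{thm:rough_integration} for the ``difference'' two-parameter object $\Psi$, and repeatedly invoke the elementary inequalities $\|f\|_{\infty} \leq |f_0| + T^{\alpha}\|f\|_{\alpha}$ and $\|\cdot\|_{\alpha} \leq T^{\beta - \alpha}\|\cdot\|_{\beta}$ to convert $\beta$-norms (controlled by $\varrho_{\beta}$) into $\alpha$-norms. Combining the estimates from the previous two paragraphs then gives a bound of the form $d^{\flat}_{\alpha,\beta}((Z,Z'),(\tilde Z,\tilde Z')) \leq C\,d^{\flat}_{\alpha,\beta}((Y,Y'),(\tilde Y,\tilde Y'))$ with $C$ of the described local type, proving the theorem.
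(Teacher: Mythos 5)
Your proposal is correct and follows essentially the same route as the paper: both reduce the bound to $\|R^Z - R^{\tilde Z}\|_{2\alpha}$ via the Sewing lemma applied to $\Psi = \Xi - \tilde\Xi$, compute $\delta\Psi$, and split by the triangle inequality into four terms each controlled by a summand of $d^{\flat}_{\alpha,\beta}((Y,Y'),(\tilde Y,\tilde Y'))$. You spell out a couple of steps the paper leaves implicit (the estimate on $\|Z'-\tilde Z'\|_{\alpha}=\|Y-\tilde Y\|_{\alpha}$ and the $T^{\beta-\alpha}$ scaling to convert $\beta$-norms to $\alpha$-norms), but the argument is the same.
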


	\begin{proof}
		It suffices to establish a bound for $\|R^Z - R^{\tilde{Z}}\|_{2 \alpha}$. Recall that
		\begin{align*}
			R^Z_{s,t} = \int_s^t Y_u \, \dif \mathbf{X}_u - Y_s \delta X_{s,t} = (\mathcal{I} \Xi)_{s,t} - \Xi_{s,t} + Y'_s \mathbb{X}_{s,t}
		\end{align*}
		where $\Xi_{u,v} = Y_u \delta X_{u,v} + Y'_u \mathbb{X}_{u,v}$ and $\mathcal{I}$ is the integration map provided by the Sewing lemma. A similar decomposition holds for $R^{\tilde{Z}}_{s,t}$ with $\Xi$ replaced by $\tilde{\Xi}_{u,v} =  \tilde{Y}_u \delta \tilde{X}_{u,v} + \tilde{Y}'_u \tilde{\mathbb{X}}_{u,v}$. Setting $\Psi \coloneqq \Xi - \tilde{\Xi}$, linearity of $\mathcal{I}$ yields
		\begin{align*}
			|R^Z_{s,t} - R^{\tilde{Z}}_{s,t}| \leq |(\mathcal{I} \Psi)_{s,t} - \Psi_{s,t}| + |Y'_s \mathbb{X}_{s,t} - \tilde{Y}'_s \tilde{\mathbb{X}}_{s,t}|.
		\end{align*}
		The Sewing lemma gives us the bound
		\begin{align*}
			|(\mathcal{I} \Psi)_{s,t} - \Psi_{s,t}| \leq C \| \delta \Psi \|_{3 \alpha} |t-s|^{3\alpha}.
		\end{align*}
		We have
		\begin{align*}
			\delta \Psi_{s,u,t} = \delta \Xi_{s,u,t} - \delta \tilde{\Xi}_{s,u,t} = R^{\tilde{Y}}_{s,u} \tilde{X}_{u,t} + \delta \tilde{Y}'_{s,u} \tilde{\mathbb{X}}_{u,t} - R^Y_{s,u} X_{u,t} - \delta Y'_{s,u} \mathbb{X}_{u,t}.
		\end{align*}
		Therefore, by using the triangle inequality:
		\begin{align*}
			\| \delta \Psi \|_{3 \alpha} \leq C d^{\flat}_{\alpha,\beta}((Y,Y'),(\tilde{Y},\tilde{Y}')).
		\end{align*}
		The triangle inequality also yields 
		\begin{align*}
			|Y'_s \mathbb{X}_{s,t} - \tilde{Y}'_s \tilde{\mathbb{X}}_{s,t}| \leq C |t-s|^{2\alpha} d^{\flat}_{\alpha,\beta}((Y,Y'),(\tilde{Y},\tilde{Y}'))
		\end{align*}
		which concludes the proof.
	\end{proof}

	\begin{corollary}\label{cor:cont_sect_int}
		For every $g \in \Delta$, the map 
		\begin{align*}
			\mathbf{X} \to \left\| \int g(X) \, \dif \mathbf{X} \right\|_{\mathscr{D}^{\alpha,\beta}_X}
		\end{align*}
		is continuous.
	\end{corollary}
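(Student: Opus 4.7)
The plan is to combine three ingredients: continuity of the section $g$ in the tube topology, the local Lipschitz continuity of the rough integration map provided by Theorem \ref{thm:stability_rough_integral}, and the Lipschitz continuity of the norm with respect to the metric $d^\flat_{\alpha,\beta}$. Since the tube topology is completely metrized by $d^\flat_{\alpha,\beta}$, to prove continuity of the map $\mathbf{X} \mapsto \vertiii{\int g(X)\,\mathrm{d}\mathbf{X}}_{X,\alpha}$, it suffices to control all three pieces in that metric.

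First, I would verify that $\mathbf{X} \mapsto g(\mathbf{X})$ is continuous from $\mathscr{C}^\beta$ into $(\mathscr{D}, d^\flat_{\alpha,\beta})$. Writing $g(\mathbf{X}) = (Z,Z')$ with $Z'_t = \phi_t$ and $Z_t = \int_0^t \phi_u\,\mathrm{d}X_u + \psi_t$ for smooth $\phi, \psi$, the Gubinelli derivative $Z' = \phi$ does not depend on $\mathbf{X}$, so the $\|Z'_n - Z'\|_\alpha$, $|Z_{n,0} - Z_0|$ and $|Z'_{n,0} - Z'_0|$ contributions to $d^\flat_{\alpha,\beta}$ vanish identically. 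The remainder splits as
\begin{align*}
R^Z_{s,t} = \int_s^t (\phi_u - \phi_s)\,\mathrm{d}X_u + \delta\psi_{s,t},
\end{align*}
and the $\psi$-part cancels in $R^{Z_n} - R^Z$. What remains is a Young-integral-type quantity linear in the driver, and by the estimate in Theorem \ref{theorem:young_integral} (using smoothness of $\phi$ and $\beta + 1 > 2\alpha$), $\|R^{Z_n} - R^Z\|_{2\alpha} \to 0$ whenever $\mathbf{X}_n \to \mathbf{X}$ in $\mathscr{C}^\beta$. Together with $\varrho_\beta(\mathbf{X}_n, \mathbf{X}) \to 0$, this gives $d^\flat_{\alpha,\beta}(g(\mathbf{X}_n), g(\mathbf{X})) \to 0$.

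Next, Theorem \ref{thm:stability_rough_integral} states that $(Y,Y') \mapsto (\int Y\,\mathrm{d}p(Y,Y'), Y)$ is locally Lipschitz with respect to $d^\flat_{\alpha,\beta}$, so composing with the continuous map $\mathbf{X} \mapsto g(\mathbf{X})$ yields continuity of $\mathbf{X} \mapsto \int g(X)\,\mathrm{d}\mathbf{X}$ in the tube metric. Finally, the triple norm $\vertiii{(Y,Y')}_{X,\alpha} = |Y_0| + |Y'_0| + \|Y'\|_\alpha + \|R^Y\|_{2\alpha}$ is $1$-Lipschitz with respect to $d^\flat_{\alpha,\beta}$ by the reverse triangle inequality:
\begin{align*}
\bigl| \vertiii{(Y,Y')}_{X,\alpha} - \vertiii{(\tilde Y,\tilde Y')}_{\tilde X,\alpha} \bigr| \leq |Y_0 - \tilde Y_0| + |Y'_0 - \tilde Y'_0| + \|Y' - \tilde Y'\|_\alpha + \|R^Y - R^{\tilde Y}\|_{2\alpha} \leq d^\flat_{\alpha,\beta}\bigl((Y,Y'),(\tilde Y,\tilde Y')\bigr).
\end{align*}
Chaining the three continuity statements yields the conclusion.

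The only real obstacle is the first step, since the remainder $R^Z$ involves an integral that depends on $\mathbf{X}$ in a nontrivial way. This is essentially a continuity statement for the Young integral in the driver, which is standard given the smoothness of $\phi$ and the fact that $\beta > 1/2$ is not required because $\phi \in \mathcal{C}^\infty$ has arbitrarily high Hölder regularity. Beyond that, the rest of the argument is a clean composition of Lipschitz continuous maps.
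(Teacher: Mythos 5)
Your proposal is correct and follows essentially the same route as the paper: you chain the $1$-Lipschitz continuity of the norm with respect to $d^\flat_{\alpha,\beta}$ (reverse triangle inequality), the local Lipschitz continuity of rough integration from Theorem \ref{thm:stability_rough_integral}, and continuity of $\mathbf{X} \mapsto g(\mathbf{X})$ via Young's estimate. The only difference is that you flesh out the Young-continuity step in more detail (observing that $Z' = \phi$ is $\mathbf{X}$-independent and that the $\psi$-contribution cancels in $R^{Z_n} - R^Z$), where the paper simply invokes ``continuity of the Young integral'' to obtain $d^\flat_{\alpha,\beta}(g(X),g(\tilde X)) \leq C\varrho_\beta(\mathbf{X},\tilde{\mathbf{X}})$; this is a welcome elaboration but does not constitute a different argument.
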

	\begin{proof}
		For $\mathbf{X}, \tilde{\mathbf{X}} \in \mathscr{C}^{\beta}$, the reverse triangle inequality for H\"older norms gives
		\begin{align*}
			\left| \left\| \int g(X) \, \dif \mathbf{X} \right\|_{\mathscr{D}^{\alpha,\beta}_X} - \left\| \int g(\tilde{X}) \, \dif \tilde{\mathbf{X}} \right\|_{\mathscr{D}^{\alpha,\beta}_{\tilde{X}}} \right| &\leq d^{\flat}_{\alpha,\beta} \left(  \int g(X) \, \dif \mathbf{X} ,  \int g(\tilde{X}) \, \dif \tilde{\mathbf{X}} \right) \\
			&\leq\ C\, d^{\flat}_{\alpha,\beta}(g(X), g(\tilde{X}))
		\end{align*}
		locally.  Recall that 
		\begin{align*}
			g(X) = \left( \int \phi \, \dif X + \psi , \phi \right)
		\end{align*}
		for some smooth functions $\phi$ and $\psi$. Therefore, we can use continuity of the Young integral to see that
		\begin{align*}
			d^{\flat}_{\alpha,\beta}(g(X), g(\tilde{X})) \leq C \varrho_{\beta}(\mathbf{X},\tilde{\mathbf{X}})
		\end{align*}
		locally and continuity follows.
	\end{proof}

	\section{Rough differential equations}
	
	Having defined the rough integral, we can now say how a general \emph{rough differential equation} should be understood.
	
	\begin{definition}
		Let $\mathbf{X} \in \mathscr{C}^{\alpha}$, $\frac{1}{3} < \alpha \leq \frac{1}{2}$, $\sigma = (\sigma_1,\ldots,\sigma_d)$ a collection of vector fields $\sigma_i \colon \R^m \to \R^m$ and $y \in \R^m$. We call $Y \colon [0,T] \to \R^m$ a \emph{solution to the rough differential equation} (RDE)
		\begin{align*}
			\dif Y_t &= \sigma(Y_t)\, \dif \mathbf{X}_t; \quad t \in [0,T], \\
			Y_0 &= y,
		\end{align*}
		if and only if $t \mapsto \sigma(Y_t)$ is controlled by $\mathbf{X}$ and satisfies the integral equation
		\begin{align}\label{eqn:rough_integral_eq}
			Y_t = y + \int_0^t \sigma(Y_s)\, \dif \mathbf{X}_s 
		\end{align}
		where the integral is understood as a rough integral.
	\end{definition}
	
	Since rough integrals are also controlled paths, any solution $Y$ that satisfies \eqref{eqn:rough_integral_eq} will be controlled by $\mathbf{X}$, too. A natural candidate for a Gubinelli derivative of $Y$ is $\sigma(Y)$. We would therefore like to consider the map 
	\begin{align*}
		\mathcal{M}(Y,Y') := \left( y + \int_0^{\cdot} \sigma(Y_s)\, \dif \mathbf{X}_s, \sigma(Y) \right)
	\end{align*}
	as a map from the space of controlled paths to itself and try show that that it is a contraction on a small time interval. To properly define this map, one has to show that the composition of a controlled path with a sufficiently smooth function $\sigma$ is again controlled.
	
	\begin{lemma}\label{lemma:comp_contr_smooth}
		Let $\mathbf{X} \in \mathscr{C}^{\alpha}$, $(Y,Y') \in \mathscr{D}^{\alpha}_X([0,T],W)$ and let $\varphi \colon W \to \bar{W}$ be twice continuously differentiable. Then the path $t \mapsto \varphi(Y_t)$ is again controlled by $X$ with a Gubinelli derivative given by $\varphi(Y)'_t = D \varphi(Y_t)Y'_t$. Moreover, if $\varphi$ is bounded with bounded derivatives, the estimate
		\begin{align*}
			\|\varphi(Y),\varphi(Y)' \|_{X,\alpha} \leq C(\|Y\|_{\alpha} + \|Y\|^2_{\alpha} + \|Y,Y'\|_{X,\alpha})
		\end{align*}
		holds where $C$ depends on $\| \varphi \|_{\mathcal{C}^2}$.
	\end{lemma}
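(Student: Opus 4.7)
The proof is a standard application of Taylor's theorem of order two. I would take as the candidate Gubinelli derivative
\begin{align*}
\varphi(Y)'_t \coloneqq D\varphi(Y_t)\,Y'_t
\end{align*}
and directly verify the two conditions: that $\varphi(Y)' \in \mathcal{C}^\alpha$ and that the remainder $R^{\varphi(Y)}_{s,t} \coloneqq \delta \varphi(Y)_{s,t} - \varphi(Y)'_s \delta X_{s,t}$ is in $\mathcal{C}^{2\alpha}_2$.

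First, for the remainder, I would expand by Taylor's theorem applied to $\varphi$ between $Y_s$ and $Y_t$:
\begin{align*}
\varphi(Y_t) - \varphi(Y_s) = D\varphi(Y_s)\,\delta Y_{s,t} + \int_0^1 (1-\theta)\,D^2\varphi\bigl(Y_s + \theta\,\delta Y_{s,t}\bigr)\bigl(\delta Y_{s,t}, \delta Y_{s,t}\bigr)\,\mathrm{d}\theta.
\end{align*}
Substituting $\delta Y_{s,t} = Y'_s \delta X_{s,t} + R^Y_{s,t}$ into the linear term and isolating $D\varphi(Y_s) Y'_s \delta X_{s,t}$, I get
\begin{align*}
R^{\varphi(Y)}_{s,t} = D\varphi(Y_s)\,R^Y_{s,t} + \int_0^1 (1-\theta)\,D^2\varphi\bigl(Y_s + \theta\,\delta Y_{s,t}\bigr)\bigl(\delta Y_{s,t}, \delta Y_{s,t}\bigr)\,\mathrm{d}\theta.
\end{align*}
Boundedness of $D\varphi$ gives the first piece the bound $\|D\varphi\|_\infty \|R^Y\|_{2\alpha} |t-s|^{2\alpha}$, and boundedness of $D^2\varphi$ controls the quadratic piece by $\tfrac{1}{2}\|D^2\varphi\|_\infty \|Y\|_\alpha^2 |t-s|^{2\alpha}$. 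This yields the $\|R^Y\|_{2\alpha}$ and $\|Y\|_\alpha^2$ contributions to the stated estimate.

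For the Hölder norm of the Gubinelli derivative I would split
\begin{align*}
D\varphi(Y_t)Y'_t - D\varphi(Y_s)Y'_s = \bigl(D\varphi(Y_t) - D\varphi(Y_s)\bigr) Y'_t + D\varphi(Y_s)\bigl(Y'_t - Y'_s\bigr),
\end{align*}
estimating the first summand by $\|D^2\varphi\|_\infty \|Y\|_\alpha \|Y'\|_\infty |t-s|^\alpha$ and the second by $\|D\varphi\|_\infty \|Y'\|_\alpha |t-s|^\alpha$. Combining with the remainder bound and absorbing the constants into $C = C(\|\varphi\|_{\mathcal{C}^2})$ gives the claim. There is no real obstacle here beyond bookkeeping; the only minor subtlety is that $\|Y'\|_\infty$ appears naturally in the estimate for $\|\varphi(Y)'\|_\alpha$, which one controls on a fixed bounded interval using the starting point $Y'_0$ (or, since the statement refers to the seminorm $\|Y,Y'\|_{X,\alpha}$, interprets the constant $C$ as depending locally on $|Y'_0|$).
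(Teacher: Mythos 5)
Your argument is essentially identical to the paper's: same choice of Gubinelli derivative, same Taylor-expansion decomposition of the remainder $R^{\varphi(Y)}_{s,t} = D\varphi(Y_s)R^Y_{s,t} + (\text{quadratic Taylor rest})$, and the same product-rule split for $\|\varphi(Y)'\|_\alpha$. You are also right to flag the $\|Y'\|_\infty$ factor in the term $\|D^2\varphi\|_\infty\|Y\|_\alpha\|Y'\|_\infty$ — the paper's displayed chain of inequalities silently drops it, and strictly speaking it must be absorbed via $\|Y'\|_\infty \le |Y'_0| + T^\alpha\|Y'\|_\alpha$, i.e. $C$ depends on $|Y'_0|$ and $T$ as well as $\|\varphi\|_{\mathcal{C}^2}$.
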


	\begin{proof}
		It suffices to consider the case of $\sigma$ being bounded with bounded derivatives, the general case follows by localization. We have
		\begin{align*}
			\| \varphi(Y) \|_{\alpha} \leq \|D \sigma \|_{\infty} \|Y\|_{\alpha}
		\end{align*}
		and
		\begin{align*}
			\| \varphi(Y)' \|_{\alpha} &= \|D \varphi(Y)Y' \|_{\alpha} \leq \|D \varphi(Y)\|_{\alpha} \|Y'\|_{\infty} + \|D \varphi(Y)\|_{\infty} \|Y'\|_{\alpha} \\
			&\leq \|D^2 \sigma \|_{\infty} \|Y\|_{\alpha} + \|D \sigma\|_{\infty} \|Y'\|_{\alpha}.
		\end{align*}
		This shows that $\varphi(Y), \varphi(Y)' \in \mathcal{C}^{\alpha}$. We have to prove that
		\begin{align*}
			R^{\varphi}_{s,t} &\coloneqq R^{\varphi(Y)}_{s,t} \coloneqq \delta \varphi(Y)_{s,t} - \sigma(Y)'_s \delta X_{s,t} \\
			&= \delta \varphi(Y)_{s,t} - D \varphi(Y_s) Y'_s \delta X_{s,t}
		\end{align*}
		is $2\alpha$-H\"older. Since
		\begin{align*}
			R^{\varphi}_{s,t} = \varphi(Y_t) - \varphi(Y_s) - D \varphi(Y_s) \delta Y_{s,t} + D \varphi(Y_s) R^Y_{s,t},
		\end{align*}
		Taylor's theorem yields the bound
		\begin{align*}
			\|R^{\varphi} \|_{2 \alpha} \leq \frac{1}{2} \|D^2 \varphi\|_{\infty} \|Y\|_{\alpha}^2 + \|D \varphi \|_{\infty} \|R^Y\|_{2\alpha},
		\end{align*}
		which shows that indeed $(\varphi(Y),\varphi(Y)')$ is controlled by $X$ and the desired bound.
	\end{proof}
	Next, we formulate the main theorem about the non-linear rough differential equations.

	\begin{theorem}
		Let $\mathbf{X} \in \mathscr{C}^{\alpha}([0,T],\R^d)$ for $\frac{1}{3} < \alpha \leq \frac{1}{2}$, $y \in \R^m$ and $\sigma \in \mathcal{C}^3(\R^m, L(\R^d,\R^m))$. Then there exists a unique controlled path $(Y,Y') \in \mathscr{D}^{\alpha}_X([0,T],\R^m)$ with $Y' = \sigma(Y)$ that satisfies
		\begin{align*}
			Y_t = y + \int_0^t \sigma(Y_s)\, \dif \mathbf{X}_s; \quad t \in [0,T].
		\end{align*}
		
	\end{theorem}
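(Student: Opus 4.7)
The plan is to solve the equation via a Banach fixed-point argument on a suitable closed ball in the space of controlled paths, analogously to the Young case treated in Theorem \ref{thm:young_ode}. For $0 < T_0 \le T$, define
\begin{align*}
    \mathcal{M}_{T_0}(Y,Y') \coloneqq \left( y + \int_0^{\cdot} \sigma(Y_s) \, \dif \mathbf{X}_s,\ \sigma(Y) \right)
\end{align*}
on $\mathscr{D}^{\alpha}_X([0,T_0], \R^m)$. This is well-defined: by Lemma \ref{lemma:comp_contr_smooth} applied to $\varphi = \sigma$, the path $\sigma(Y)$ is itself controlled by $\mathbf{X}$ with Gubinelli derivative $D\sigma(Y) Y'$, so Theorem \ref{thm:rough_integration} can be applied to produce the rough integral as a controlled path with Gubinelli derivative $\sigma(Y)$. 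Thus $\mathcal{M}_{T_0}$ maps the affine subspace of controlled paths $(Y,Y')$ with $Y_0 = y$, $Y'_0 = \sigma(y)$ into itself.

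Next I would show invariance of a suitable closed ball. Restricting to the set
\begin{align*}
    \mathcal{B}_{T_0} \coloneqq \{ (Y,Y') \in \mathscr{D}^{\alpha}_X([0,T_0], \R^m) : Y_0 = y, Y'_0 = \sigma(y), \|Y,Y'\|_{X,\alpha} \leq 1 \},
\end{align*}
I would combine the quantitative estimate in Theorem \ref{thm:rough_integration} (item (2)) with the bound from Lemma \ref{lemma:comp_contr_smooth} on $\|\sigma(Y),\sigma(Y)'\|_{X,\alpha}$ to obtain
\begin{align*}
    \|\mathcal{M}_{T_0}(Y,Y')\|_{X,\alpha} \leq C_\sigma \left( \|\mathbb{X}\|_{2\alpha} + T_0^{\alpha}\left( \|X\|_{\alpha} + \|\mathbb{X}\|_{2\alpha} \right) \right) (1 + \|Y,Y'\|_{X,\alpha}^2).
\end{align*}
As in the Young case, the supremum norm on $[0,T_0]$ of the controlled rough path norm needs to be small, which is not automatic for H\"older norms. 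One circumvents this by either working with an auxiliary slightly lower H\"older exponent $\alpha' < \alpha$ (on which the norms do shrink to $0$ as $T_0 \to 0$), or by absorbing the troublesome term $\|\mathbb{X}\|_{2\alpha}$ into a choice of ball radius larger than $1$; in any case, $T_0$ can be chosen small enough so that $\mathcal{M}_{T_0}(\mathcal{B}_{T_0}) \subseteq \mathcal{B}_{T_0}$.

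The main obstacle is the contraction estimate. For $(Y,Y'), (\tilde{Y},\tilde{Y}') \in \mathcal{B}_{T_0}$, one must bound
\begin{align*}
    \| \mathcal{M}_{T_0}(Y,Y') - \mathcal{M}_{T_0}(\tilde{Y},\tilde{Y}') \|_{X,\alpha} \leq C(T_0) \| (Y,Y') - (\tilde{Y},\tilde{Y}') \|_{X,\alpha}
\end{align*}
with $C(T_0) \to 0$ as $T_0 \to 0$. By linearity of the rough integration map and of the Gubinelli derivative, this reduces to a local Lipschitz estimate of the form
\begin{align*}
    \| \sigma(Y) - \sigma(\tilde{Y}), \sigma(Y)' - \sigma(\tilde{Y})' \|_{X,\alpha} \leq C_\sigma (|Y_0 - \tilde{Y}_0| + |Y'_0 - \tilde{Y}'_0| + \|(Y,Y') - (\tilde{Y},\tilde{Y}')\|_{X,\alpha}),
\end{align*}
which is the analogue of Lemma \ref{lemma:friz_hairer_75} for controlled paths. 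This is where the hypothesis $\sigma \in \mathcal{C}^3$ is used: the remainder of $\sigma(Y) - \sigma(\tilde{Y})$ is
\begin{align*}
    R^{\sigma(Y)}_{s,t} - R^{\sigma(\tilde{Y})}_{s,t} = \int_0^1 D\sigma(Y_s + \theta \delta Y_{s,t}) \, \dif \theta\, R^Y_{s,t} - \int_0^1 D\sigma(\tilde{Y}_s + \theta \delta \tilde{Y}_{s,t})\, \dif \theta\, R^{\tilde{Y}}_{s,t} + \text{second-order terms},
\end{align*}
so one must differentiate $D\sigma$ once more to compare integrands, obtaining the required factor of $\|Y - \tilde{Y}\|_\infty$. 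Once this Lipschitz bound is established, combining it with the stability estimate in Theorem \ref{thm:rough_integration} produces a contraction constant of order $\|X\|_{\alpha;[0,T_0]} + \|\mathbb{X}\|_{2\alpha;[0,T_0]} + T_0^\alpha$, which can be made $< 1$ either by shrinking $T_0$ (after passing to $\alpha' < \alpha$) or by localizing carefully.

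Banach's fixed point theorem then produces a unique solution in $\mathcal{B}_{T_0}$, and since any solution on $[0,T_0]$ starting at $y$ with $Y' = \sigma(Y)$ satisfies appropriate a priori bounds it must lie in such a ball, giving local uniqueness. Finally, one concatenates: on $[T_0, 2T_0]$ with new initial condition $Y_{T_0}$ one runs the same argument, and since $T_0$ depends only on the rough path norm of $\mathbf{X}$ (which is uniformly bounded on $[0,T]$) and on $\|\sigma\|_{\mathcal{C}^3}$, finitely many iterations cover $[0,T]$. An a posteriori inspection of the integral equation shows that the constructed $(Y,Y')$ lies in $\mathscr{D}^{\alpha}_X$ (with the original exponent $\alpha$), completing the proof.
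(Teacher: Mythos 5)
Your proposal follows essentially the same route as the paper: a Banach fixed-point argument for the map $\mathcal{M}_{T_0}(Y,Y') = (y + \int_0^\cdot \sigma(Y_s)\,\dif\mathbf{X}_s, \sigma(Y))$ on the unit ball $\mathcal{B}_{T_0}$ of controlled paths starting at $(y,\sigma(y))$, using Lemma \ref{lemma:comp_contr_smooth} and the quantitative rough-integral bounds from Theorem \ref{thm:rough_integration} for invariance, the lowered-exponent trick to make the norms shrink as $T_0\to 0$, and a concatenation argument to pass from local to global existence. The one place your write-up adds detail beyond the paper is the contraction step (the controlled-path Lipschitz estimate for $(Y,Y')\mapsto(\sigma(Y),D\sigma(Y)Y')$, where the $\mathcal{C}^3$ hypothesis enters), which the paper outsources to \cite[Theorem 8.3]{FH20}; your sketch of that estimate is consistent with the cited argument.
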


	\begin{proof}
		The proof is very similar to the one we gave in Theorem \ref{thm:young_ode}, i.e. we will show that a properly defined mapping has a fixed point. For $(Y,Y') \in \mathscr{D}^{\alpha}_X$ and $0 < T_0 \leq T$, set
		\begin{align*}
			(Z_t,Z'_t) \coloneqq (\sigma(Y_t), D\sigma(Y_t) Y'_t) \in \mathscr{D}^{\alpha}_X; \quad t \in [0,T_0].
		\end{align*}
		We define the map 
		\begin{align*}
			\mathcal{M}(Y,Y') := \left( y + \int_0^{t} Z_s\, \dif \mathbf{X}_s, Z_t ;\ t \in [0,T_0] \right).
		\end{align*}
		The expected solution will be a fixed point of this map. We will not define this map on the whole space of controlled paths but on the closed unit ball
		\begin{align*}
			\mathcal{B}_{T_0} \coloneqq \left\{ (Y,Y') \in \mathscr{D}^{\alpha}_X\, :\, Y_0 = y, Y'_0 = \sigma(y), \|Y,Y'\|_{X,\alpha} \leq 1 \right\}
		\end{align*}
		of controlled paths starting in $(y,\sigma(y))$. We will have to prove two things:
		\begin{enumerate}
			\item  $\mathcal{M}$ leaves $\mathcal{B}_{T_0}$ invariant, i.e. $\mathcal{M} \colon \mathcal{B}_{T_0} \to \mathcal{B}_{T_0}$ is a well defined map,
			\item  $\mathcal{M}$ is a contraction.
		\end{enumerate}
		We start with the first point. Clearly, $\mathcal{M}(Y,Y')_0 = (y,\sigma(y))$. To prove that $\|Y,Y'\|_{X,\alpha} \leq 1$, we use the estimate for the rough integral given in Theorem \ref{eqn:bound_rough_integral}: 
		\begin{align*}
			\|\mathcal{M} \|_{X,\alpha}  &= \| \int_0^{\cdot} Z_s\, \dif \mathbf{X}_s, Z \|_{X,\alpha} \\
			&\leq \|Z\|_{\alpha} + \|Z'\|_{\alpha} \|\mathbb{X}\|_{2 \alpha} + CT_0^{\alpha}(\| X \|_{\alpha} \|R^Z\|_{2 
				\alpha} + \| \mathbb{X} \|_{2 \alpha} \|Z'\|_{\alpha}) \\
			&\leq  \|Z\|_{\alpha} + \|Z,Z'\|_{X,\alpha} \|\mathbb{X}\|_{2 \alpha} + CT^{\alpha} \vertiii{\mathbf{X}}_{\alpha} \|Z,Z'\|_{X,\alpha}.
		\end{align*}
		We have $\|Z\|_{\alpha} \leq C\|Y\|_{\alpha}$ and
		\begin{align*}
			\|Y\|_{\alpha} &\leq  \|Y'\|_{\infty} \|X\|_{\alpha} + T_0^{\alpha} \| R^Y \|_{2\alpha} \\
			&\leq |Y'_0| \|X\|_{\alpha} + T_0^{\alpha} \|Y'\|_{\alpha} \|X\|_{\alpha} + T^{\alpha} \| R^Y \|_{2\alpha} \\
			&\leq C \|X\|_{\alpha} + T^{\alpha}_0(1 + \|X\|_{\alpha}) \|Y,Y'\|_{X,\alpha} \\
			&\leq C \|X\|_{\alpha} + T^{\alpha}_0(1 + \|X\|_{\alpha}).
		\end{align*}
		To estimate $\|Z,Z'\|_{X,\alpha}$, we use Lemma \ref{lemma:comp_contr_smooth}:
		\begin{align*}
			\|Z,Z'\|_{X,\alpha} &\leq C(\|Y\|_{\alpha} + \|Y\|^2_{\alpha} + \|Y,Y'\|_{X,\alpha}) \\
			&\leq C(1 + \|Y\|_{\alpha} + \|Y\|^2_{\alpha}).
		\end{align*}
		Note that we already estimated $\|Y\|_{\alpha}$ above. To summarize, we see that $\|\mathcal{M} \|_{X,\alpha}$ gets small if $T_0$ and $\vertiii{\mathbf{X}}_{\alpha}$ are getting small. As in the proof of Theorem \ref{thm:young_ode}, we will therefore assume first that $\mathbf{X}$ is smoother than only being $\alpha$-H\"older continuous to assure that $\vertiii{\mathbf{X}}_{\alpha}$ gets small as $T_0 \to 0$. In total, we can thus guarantee that $\mathcal{M}$ leaves $\mathcal{B}_{T_0}$ invariant for a sufficiently small $T_0 > 0$. It remains to prove that $\mathcal{M}$ is a contraction on $\mathcal{B}_{T_0}$. To do this, we have to estimate the difference between two rough integrals in the $\vertiii{\cdot}_{X,\alpha}$-norm. Note that we do not have to use Theorem \ref{thm:stability_rough_integral} since the driving rough path $\mathbf{X}$ is fixed. The complete proof for the contraction property is a bit long, but does not provide many new insights, that is why we will not present it here. It can be found in \cite[Theorem 8.3.]{FH20}. 
	\end{proof}
	
	%
		%
	There is also a stability result for solutions to rough differential equations that we want to cite here. To formulate it, we define the metric
	\begin{align*}
		d^{\flat}_{\alpha}((Y,Y'),(\tilde{Y},\tilde{Y}')) &\coloneqq d^{\flat}_{\alpha,\alpha}((Y,Y'),(\tilde{Y},\tilde{Y}')) \\
		&\coloneqq \varrho_{\alpha}(\mathbf{X},\tilde{\mathbf{X}}) + \|Y' - \tilde{Y}'\|_{\alpha}  + \|R^Y - R^{\tilde{Y}} \|_{2 \alpha} \\
		&\quad + |Y_0 - \tilde{Y}_0| + |Y'_0 - \tilde{Y}'_0|
	\end{align*}
	for $(Y,Y') \in \mathscr{D}_X^{\alpha}$ and $(\tilde{Y},\tilde{Y}') \in \mathscr{D}_{\tilde{X}}^{\alpha}$ which is a metric on the total space $\mathscr{D} = \sqcup_{\mathbf{X} \in \mathscr{C}^{\alpha}} \mathscr{D}_X^{\alpha}$.
	
	\begin{theorem}[Stability of RDE solutions]\label{thm:stability_RDEs}
		Let $(Y,Y')$ and $(\tilde{Y},\tilde{Y}')$ be solutions to 
		\begin{align*}
			\dif Y_t = \sigma(Y_t)\, \dif \mathbf{X}_t; \ Y_0 = y \quad \text{resp.} \quad \dif \tilde{Y}_t = \sigma(\tilde{Y}_t)\, \dif \tilde{\mathbf{X}}_t; \ \tilde{Y}_0 = \tilde{y}
		\end{align*}
		with $Y' = \sigma(Y)$ and $\tilde{Y}' = \sigma(\tilde{Y})$. Then
		\begin{align*}
			d^{\flat}_{\alpha}((Y,Y'),(\tilde{Y},\tilde{Y}')) \leq C(|y - \tilde{y}| + \varrho_{\alpha}(\mathbf{X},\tilde{\mathbf{X}}))
		\end{align*}
		locally.
	\end{theorem}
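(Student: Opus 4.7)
The strategy is to run the fixed-point argument from the existence proof with the rough path and the initial condition as \emph{varying parameters}, and then to apply the two available stability results: Theorem \ref{thm:stability_rough_integral} for the rough integral, and an extension of Lemma \ref{lemma:comp_contr_smooth} for the composition with $\sigma$. Concretely, for a rough path $\mathbf{Z} \in \mathscr{C}^{\alpha}$ and an initial condition $z \in \R^m$, consider the map
\[
\mathcal{M}_{\mathbf{Z},z}(U,U')_t \coloneqq \left( z + \int_0^t \sigma(U_s)\, \dif \mathbf{Z}_s,\ \sigma(U_t) \right), \qquad t \in [0,T_0],
\]
defined on the ball $\mathcal{B}_{T_0}$ of controlled paths introduced in the existence theorem. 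From that proof, for some $T_0 > 0$ depending only on an upper bound $M$ for $\vertiii{\mathbf{X}}_{\alpha} \vee \vertiii{\tilde{\mathbf{X}}}_{\alpha}$ and on $\|\sigma\|_{\mathcal{C}^3}$, both $\mathcal{M}_{\mathbf{X},y}$ and $\mathcal{M}_{\tilde{\mathbf{X}},\tilde{y}}$ are contractions on a common ball containing the restrictions of $(Y,Y')$ and $(\tilde{Y},\tilde{Y}')$, which are the respective fixed points.

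The central estimate is the joint Lipschitz bound
\[
d^{\flat}_{\alpha}\!\left(\mathcal{M}_{\mathbf{X},y}(Y,Y'), \mathcal{M}_{\tilde{\mathbf{X}},\tilde{y}}(\tilde{Y},\tilde{Y}')\right) \leq C\left( |y-\tilde{y}| + \varrho_{\alpha}(\mathbf{X},\tilde{\mathbf{X}}) + T_0^{\alpha}\, d^{\flat}_{\alpha}((Y,Y'),(\tilde{Y},\tilde{Y}')) \right).
\]
It splits into two contributions. First, a Lipschitz estimate for the composition map
\[
d^{\flat}_{\alpha}\!\left((\sigma(Y), D\sigma(Y)Y'),(\sigma(\tilde{Y}), D\sigma(\tilde{Y})\tilde{Y}')\right) \leq C\, d^{\flat}_{\alpha}((Y,Y'),(\tilde{Y},\tilde{Y}')),
\]
which is the natural two-rough-path extension of Lemma \ref{lemma:comp_contr_smooth}. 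Second, an application of Theorem \ref{thm:stability_rough_integral} to the integrands $(\sigma(Y), D\sigma(Y)Y')$ and $(\sigma(\tilde{Y}), D\sigma(\tilde{Y})\tilde{Y}')$, which controls the difference of the rough integrals in the $d^{\flat}_{\alpha}$-metric. The extra factor $T_0^{\alpha}$ arises because $\mathcal{M}$ pushes a controlled path to one whose remainder at time $0$ vanishes, so short-interval factors of $T_0^{\alpha}$ appear when estimating $\|\cdot\|_{\alpha}$-quantities from $\|\cdot\|_{2\alpha}$-quantities and vice versa. Inserting the fixed-point identities $(Y,Y') = \mathcal{M}_{\mathbf{X},y}(Y,Y')$ and $(\tilde{Y},\tilde{Y}') = \mathcal{M}_{\tilde{\mathbf{X}},\tilde{y}}(\tilde{Y},\tilde{Y}')$ and choosing $T_0$ small enough so that $C T_0^{\alpha} \leq \tfrac{1}{2}$, the last term can be absorbed, yielding the stated bound on $[0,T_0]$. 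Iterating on consecutive intervals $[kT_0,(k+1)T_0]$ with initial conditions taken from the endpoints of the previous step extends the estimate to $[0,T]$, the constants remaining under control as long as $\vertiii{\mathbf{X}}_{\alpha},\vertiii{\tilde{\mathbf{X}}}_{\alpha} \leq M$, which is the meaning of \emph{locally} in the statement.

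The main technical obstacle is the composition stability estimate. Recall that in Lemma \ref{lemma:comp_contr_smooth} the remainder $R^{\sigma(Y)}_{s,t}$ was decomposed into $D\sigma(Y_s) R^Y_{s,t}$ plus a second-order Taylor remainder in $\delta Y_{s,t}$. To bound $R^{\sigma(Y)} - R^{\sigma(\tilde{Y})}$ in the $2\alpha$-H\"older norm, one must compare such expansions with expansion point $Y_s$ on one side and $\tilde{Y}_s$ on the other; controlling the difference of the second-order terms requires Lipschitz continuity of $D^2\sigma$, which is exactly why the hypothesis $\sigma \in \mathcal{C}^3$ is needed (rather than only $\mathcal{C}^2$ as in existence). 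Once this patient telescoping is carried out, the rest of the argument is routine and all remaining estimates reduce to Theorem \ref{thm:stability_rough_integral}, the triangle inequality for $d^{\flat}_{\alpha}$, and the contraction framework.
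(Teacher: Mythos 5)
Your outline is correct and follows the standard strategy, which is exactly what the cited reference \cite[Theorem 8.5]{FH20} does; the paper itself only provides that citation rather than a written-out proof. The main ingredients you identify are the right ones: the two-rough-path Lipschitz estimate for $\varphi \mapsto (\varphi(Y), D\varphi(Y)Y')$ (the ``$\mathcal{C}^3$-version'' of Lemma~\ref{lemma:comp_contr_smooth}, needed precisely because comparing second-order Taylor remainders at two different base points $Y_s$, $\tilde Y_s$ requires a Lipschitz bound on $D^2\sigma$), the stability of rough integration from Theorem~\ref{thm:stability_rough_integral}, the absorption of the $C T_0^{\alpha} d^{\flat}_{\alpha}$-term after choosing $T_0$ small depending only on $\|\sigma\|_{\mathcal{C}^3}$ and an \emph{a priori} bound $M$ on $\vertiii{\mathbf{X}}_\alpha \vee \vertiii{\tilde{\mathbf{X}}}_\alpha$, and the patching over $[kT_0,(k+1)T_0]$ at the cost of a constant that grows with $T/T_0$ (hence ``locally''). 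One cosmetic point: in your displayed joint Lipschitz bound, the term $T_0^{\alpha} d^{\flat}_{\alpha}((Y,Y'),(\tilde Y,\tilde Y'))$ as written also multiplies the $\varrho_\alpha(\mathbf{X},\tilde{\mathbf{X}})$ and the initial-value parts of $d^\flat_\alpha$; this is harmless since those quantities already appear in the bound on their own and $Y_0'=\sigma(y)$, $\tilde Y_0'=\sigma(\tilde y)$, but it is cleaner to write the gain factor $T_0^\alpha$ only in front of $\|Y'-\tilde Y'\|_\alpha + \|R^Y-R^{\tilde Y}\|_{2\alpha}$, which is what actually gets absorbed.
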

	\begin{proof}
		\cite[Theorem 8.5]{FH20}.	
	\end{proof}

	\subsection{Rough differential equations driven by a Brownian motion} In this part, we discuss how to employ rough theory in stochastic analysis. Let us start with the following proposition which, loosely speaking, claims that It\=o (resp. Stratonovich) integration coincides with rough integration against the enhanced It\=o (resp. Stratonovich) Brownian motion.
	\begin{proposition}
		Let $B = (B^1,\ldots,B^d)$ be a $d$-dimensional Brownian motion and $\mathbf{B}^{\text{It\=o}}$ resp. $\mathbf{B}^{\text{Strat}}$ its It\=o resp. Stratonovich lift to a rough paths valued process. For $\frac{1}{3} < \alpha < \frac{1}{2}$, assume that $(Y(\omega),Y'(\omega)) \in \mathscr{D}^{\alpha}_{X(\omega)}$ almost surely and that $(Y,Y')$ is adapted to the filtration generated by $B$. Then
		\begin{align*}
			\int_0^T Y_s\, \dif B_s = \int_0^T Y_s \, \dif \mathbf{B}^{\text{It\=o}}_s \quad \text{and} \quad \int_0^T Y_s\, \circ \dif B_s = \int_0^T Y_s \, \dif \mathbf{B}^{\text{Strat}}_s
		\end{align*}
		almost surely.
	\end{proposition}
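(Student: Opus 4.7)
The plan is to prove both identities by comparing Riemann-type partition sums and exploiting the martingale property of the It\=o lift. For the It\=o identity, I will use that the rough integral admits the almost-sure representation
\begin{align*}
\int_0^T Y_s \, \dif \mathbf{B}^{\text{It\=o}}_s = \lim_{|\mathcal{P}| \to 0} \sum_{[u,v]\in\mathcal{P}} \bigl( Y_u \delta B_{u,v} + Y'_u \mathbb{B}^{\text{It\=o}}_{u,v} \bigr),
\end{align*}
while the classical It\=o integral of an adapted continuous integrand is the probability-limit of $\sum_{[u,v]} Y_u \delta B_{u,v}$. Extracting a subsequence of partitions along which both limits are realized almost surely, the task reduces to showing that $S_{\mathcal{P}} := \sum_{[u,v]\in\mathcal{P}} Y'_u \mathbb{B}^{\text{It\=o}}_{u,v}$ tends to zero in probability.

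The key step, which I would carry out after a standard stopping-time localization reducing to the case of bounded $Y'$, is to exploit that $Y'_u$ is $\mathcal{F}_u$-measurable and that $v \mapsto \mathbb{B}^{\text{It\=o}}_{u,v} = \int_u^v (B_r - B_u) \otimes \dif B_r$ is a mean-zero martingale on $[u,T]$. Consequently the summands of $S_{\mathcal{P}}$ are martingale differences with respect to the filtration indexed by $\mathcal{P}$, all cross terms in $\E |S_{\mathcal{P}}|^2$ vanish, and
\begin{align*}
\E |S_{\mathcal{P}}|^2 \;=\; \sum_{[u,v]\in\mathcal{P}} \E |Y'_u \mathbb{B}^{\text{It\=o}}_{u,v}|^2 \;\leq\; C \sum_{[u,v]\in\mathcal{P}} |v-u|^2 \;=\; O(|\mathcal{P}|),
\end{align*}
using the It\=o isometry to bound $\E |\mathbb{B}^{\text{It\=o}}_{u,v}|^2 = O(|v-u|^2)$. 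This yields convergence in $L^2$, hence in probability.

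For the Stratonovich identity, the relation $\mathbb{B}^{\text{Strat}}_{u,v} = \mathbb{B}^{\text{It\=o}}_{u,v} + \tfrac{1}{2}(v-u) I_d$ together with the It\=o case just established yields
\begin{align*}
\int_0^T Y_s\, \dif \mathbf{B}^{\text{Strat}}_s \;=\; \int_0^T Y_s\, \dif B_s + \tfrac{1}{2} \int_0^T Y'_s(I_d)\, \dif s,
\end{align*}
the extra term arising as the a.s.\ limit of the Riemann sum $\sum \tfrac{1}{2} Y'_u(I_d)(v-u)$. The classical It\=o--Stratonovich conversion $\int_0^T Y_s \circ \dif B_s = \int_0^T Y_s\, \dif B_s + \tfrac{1}{2}\langle Y, B\rangle_T$ then closes the argument, provided one identifies the covariation as $\langle Y, B\rangle_T = \int_0^T Y'_s(I_d)\, \dif s$. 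This identification follows by inserting the controlled decomposition $\delta Y_{s,t} = Y'_s \delta B_{s,t} + R^Y_{s,t}$ into the discrete cross-variation sum $\sum \delta Y_{u,v} \delta B_{u,v}$: the principal part converges to the trace integral (another martingale-difference computation as above), while the remainder contribution is bounded by $\sum |v-u|^{3\alpha} = O(|\mathcal{P}|^{3\alpha-1}) \to 0$ since $3\alpha > 1$.

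The main obstacle is precisely the martingale-difference argument for $S_{\mathcal{P}}$: the hypothesis that $(Y,Y')$ is adapted to the filtration generated by $B$ is used crucially to make the cross terms vanish in the $L^2$ computation. Without it, the naive Cauchy--Schwarz bound is of order $\sum |v-u|$ rather than $\sum|v-u|^2$ and provides no decay. The remaining steps---the localization to bounded $Y'$, and the Riemann-sum convergence in the Stratonovich part---are routine once this central estimate is in place.
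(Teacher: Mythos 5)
Your It\=o argument is essentially the paper's own proof: both reduce, after passing to a subsequence of partitions where the It\=o Riemann sums converge almost surely, to showing $\sum_{[u,v]\in\mathcal{P}} Y'_u \mathbb{B}^{\text{It\=o}}_{u,v} \to 0$; both localize by stopping to the case of bounded $Y'$; and both exploit that this sum has martingale-difference summands (since $Y'_u$ is $\mathcal{F}_u$-measurable and $v\mapsto\mathbb{B}^{\text{It\=o}}_{u,v}$ is a mean-zero martingale on $[u,T]$), so that the cross terms in $\E|S_{\mathcal{P}}|^2$ vanish and the diagonal gives $O(|\mathcal{P}|)$ via $\E|\mathbb{B}^{\text{It\=o}}_{u,v}|^2=O(|v-u|^2)$. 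Where you genuinely add something is the Stratonovich identity, which the paper does not prove but delegates to \cite[Corollary 5.2]{FH20}: your route---combine the It\=o case with $\mathbb{B}^{\text{Strat}}_{u,v}=\mathbb{B}^{\text{It\=o}}_{u,v}+\tfrac12(v-u)I_d$ to get $\int Y\,\dif\mathbf{B}^{\text{Strat}}=\int Y\,\dif B + \tfrac12\int Y'(I_d)\,\dif s$, then identify $\langle Y,B\rangle_T$ with the same trace integral by inserting the controlled decomposition into the discrete bracket, killing the remainder via $\sum|v-u|^{3\alpha}=O(|\mathcal{P}|^{3\alpha-1})\to0$---is correct and self-contained. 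One point worth making explicit: the ``classical'' It\=o--Stratonovich conversion is usually stated for semimartingale integrands, whereas here $Y$ is only an adapted controlled path. You should therefore either take the midpoint Riemann-sum characterization of $\int Y\circ\dif B$ as the definition (in which case the conversion is an algebraic identity, $\sum\frac{Y_u+Y_v}{2}\delta B_{u,v}=\sum Y_u\delta B_{u,v}+\tfrac12\sum\delta Y_{u,v}\delta B_{u,v}$, and your $3\alpha>1$ estimate supplies existence of the bracket limit) or invoke a version of the formula valid for processes with finite quadratic covariation; as written, the step is a little quicker than it is entitled to be.
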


	\begin{proof}
		We will only prove the It\=o-case, the identity for  Stratonovich integral can be found in \cite[Corollary 5.2]{FH20}. It is known that
		\begin{align*}
			\int_0^T Y_s \, \dif B_s  = \lim_{|\mathcal{P}| \to 0} \sum_{[u,v] \in \mathcal{P}} Y_u \delta B_{u,v}
		\end{align*}
		in probability. Passing to a subsequence, we may assume that there is a sequence of partitions such that the convergence holds almost surely. It suffices to prove that
		\begin{align*}
			\lim_{|\mathcal{P}| \to 0} \sum_{[u,v] \in \mathcal{P}} Y'_u \mathbb{B}^{\text{It\=o}}_{u,v} = 0
		\end{align*}
		in $L^2(\Omega)$. We will assume that $\|Y'(\omega)\| \leq M$ almost surely, the general case follows by a stopping argument. Fix a partition $\mathcal{P} = \{0 = \tau_0 < \ldots \tau_N = T\}$. One can check that $(S_k)$ with $S_0 = 0$ and $S_{k+1} - S_k = Y'_{\tau_k} \mathbb{B}^{\text{It\=o}}_{\tau_{k+1},\tau_k}$ is a discrete martingale. Since its increments are uncorrelated,
		\begin{align*}
			\left\|\sum_{[u,v] \in \mathcal{P}} Y'_u \mathbb{B}^{\text{It\=o}}_{u,v} \right\|_{L^2}^2 = \sum_{[u,v] \in \mathcal{P}} \left\|Y'_u \mathbb{B}^{\text{It\=o}}_{u,v} \right\|_{L^2}^2 \leq M \sum_{[u,v] \in \mathcal{P}} \left\|\mathbb{B}^{\text{It\=o}}_{u,v} \right\|_{L^2}^2 = \mathcal{O}(|\mathcal{P}|)
		\end{align*}
		and the claim follows.
	\end{proof}

	\begin{corollary}\label{cor:Ito_Strat_RP}
		For $\sigma \in \mathcal{C}^3(\mathbb{R}^{m},L(\mathbb{R}^d,\mathbb{R}^m))$, the solutions to
		\begin{align*}
			\dif Y_t = \sigma(Y_t)\, \dif B_t \quad \text{and} \quad \dif Y_t = \sigma(Y_t)\, \dif \mathbf{B_t}^{\text{It\=o}}
		\end{align*}
		resp.
		\begin{align*}
			\dif Y_t = \sigma(Y_t)\, \circ \dif B_t \quad \text{and} \quad \dif Y_t = \sigma(Y_t)\, \dif \mathbf{B_t}^{\text{Strat}}
		\end{align*}
		with the same initial conditions coincide almost surely.
		
	\end{corollary}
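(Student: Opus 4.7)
The plan is to take the unique controlled-path solution $(Y,\sigma(Y))$ of the RDE $\dif Y_t = \sigma(Y_t)\,\dif \mathbf{B}^{\text{It\=o}}_t$, $Y_0=y$, show it is adapted to the Brownian filtration, and then use the preceding proposition to identify the rough integral appearing in its defining integral equation with the classical It\=o integral; uniqueness of the It\=o SDE then closes the loop. The Stratonovich case will be entirely analogous.

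First I would argue that the RDE solution $Y$ is adapted to $(\mathcal{F}_t^B)_{t\ge 0}$. The fixed-point argument of the RDE existence theorem realizes $Y$ as the uniform limit (on a short interval, then concatenated) of Picard iterates $(Y^{(n+1)},\sigma(Y^{(n+1)})) = \mathcal{M}(Y^{(n)},\sigma(Y^{(n)}))$ starting from the constant controlled path $(y,\sigma(y))$. Each iterate is given by a rough integral, which by the Sewing lemma is a uniform limit of Riemann-type sums of the form $\sum_{[u,v]\in\mathcal{P}} \sigma(Y^{(n)}_u)\,\delta B_{u,v} + D\sigma(Y^{(n)}_u)\sigma(Y^{(n)}_u)\,\mathbb{B}^{\text{It\=o}}_{u,v}$, whose summands are $\mathcal{F}_v^B$-measurable because $\mathbb{B}^{\text{It\=o}}_{u,v} = \int_u^v (B_\tau - B_u)\,\dif B_\tau$ is adapted. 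An induction on $n$, combined with almost-sure convergence of the iterates, yields adaptedness of $Y$.

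Next, by Lemma \ref{lemma:comp_contr_smooth}, $(\sigma(Y),D\sigma(Y)\sigma(Y))$ is itself a controlled path with respect to $\mathbf{B}^{\text{It\=o}}$, and by the previous step it is adapted. The hypotheses of the preceding proposition are therefore met, so
\begin{align*}
\int_0^t \sigma(Y_s)\,\dif \mathbf{B}^{\text{It\=o}}_s = \int_0^t \sigma(Y_s)\,\dif B_s
\end{align*}
almost surely. Substituting this into the rough integral equation that defines $Y$ shows that $Y$ satisfies the classical It\=o SDE $Y_t = y + \int_0^t \sigma(Y_s)\,\dif B_s$. Since $\sigma\in\mathcal{C}^3$ is bounded with bounded derivatives (in the convention of this paper), $\sigma$ is globally Lipschitz, so the classical It\=o SDE admits a unique strong solution, and that solution must coincide with $Y$ almost surely. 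The Stratonovich case follows by repeating the argument with $\mathbf{B}^{\text{It\=o}}$ replaced by $\mathbf{B}^{\text{Strat}}$, invoking the second identity of the proposition and the analogous uniqueness statement for Stratonovich SDEs.

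The main obstacle is the adaptedness claim. Although intuitively evident, it requires one to check that the contraction map $\mathcal{M}$ of the RDE fixed-point theorem preserves the closed subspace of $\mathcal{F}^B$-adapted controlled paths. This in turn rests on the Sewing lemma producing an integral $\int_0^t$ that depends only on the rough path restricted to $[0,t]$, on $\mathbb{B}^{\text{It\=o}}_{u,v}$ being $\mathcal{F}_v^B$-measurable, and on composition with the deterministic $\mathcal{C}^3$ function $\sigma$ preserving adaptedness. Each of these is routine but must be verified carefully in order to legitimately invoke the preceding proposition.
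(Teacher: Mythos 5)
Your proposal is correct and essentially spells out the argument the paper leaves implicit: the corollary carries no proof in the text and is presented as an immediate consequence of the preceding proposition, which identifies the rough integral of an adapted controlled path against $\mathbf{B}^{\text{It\=o}}$ (resp.\ $\mathbf{B}^{\text{Strat}}$) with the classical It\=o (resp.\ Stratonovich) integral. You are right that the only nontrivial hypothesis to verify is adaptedness of the RDE solution, and your way of getting it---induction along the Picard iterates, combined with the facts that the Sewing-lemma limit is a pointwise a.s.\ limit of Riemann-type sums whose summands over $[0,t]$ are $\mathcal{F}_t^B$-measurable, and that composition with the deterministic $\mathcal{C}^3$ map $\sigma$ preserves adaptedness---is the standard and correct route. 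One small refinement worth stating explicitly: the fixed-point construction is local, so the induction on adaptedness has to be re-run on each subinterval $[kT_0,(k+1)T_0]$ using the $\mathcal{F}_{kT_0}^B$-measurable initial value, and the resulting adaptedness is with respect to the usual augmentation of $(\mathcal{F}_t^B)$, since the pointwise limit only gives measurability up to null sets. With that caveat, and noting (as you do) that $\sigma\in\mathcal{C}^3$ is bounded with bounded derivatives in this paper's convention so the It\=o SDE has a pathwise-unique strong solution (and similarly for the Stratonovich equation via the drift correction, which uses $\sigma\in\mathcal{C}^1$), the conclusion follows exactly as you describe.
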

	We can now prove an important theorem in stochastic analysis, the \emph{Wong-Zakai theorem}, that connects stochastic differential equations to random ordinary differential equations. 
	
	\begin{theorem}\label{thm:wong_zakai}
		Let $\sigma \in \mathcal{C}^3(\mathbb{R}^{m},L(\mathbb{R}^d,\mathbb{R}^m))$, $y \in \R^m$, $B = (B^1,\ldots,B^d)$ be a Brownian motion defined on $[0,1]$ and $B(n)$ and be its piecewise-linear approximation at the the dyadic points $0 < 2^{-n} < \ldots < (2^n - 1)2^{-n} < 1$. Then the solutions $Y(n)$ to the random ordinary differential equations
		\begin{align}\label{eqn:random_ODE}
			\dif Y_t(n) = \sigma(Y_t(n))\, \dif B_t(n) ; \quad Y_0(n) = y
		\end{align}
		converge in the $\alpha$-H\"older metric for any $\frac{1}{3} < \alpha < \frac{1}{2}$ to the solution of the Stratonovich stochastic differential equation
		\begin{align*}
			\dif Y_t = \sigma(Y_t)\, \circ \dif B_t ; \quad Y_0 = y
		\end{align*}
		almost surely as $n \to \infty$.

	\end{theorem}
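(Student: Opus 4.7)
The plan is to reduce the Wong--Zakai statement to a continuity statement for the It\^o--Lyons map, combining three ingredients that have already been developed: (i) the lifted piecewise-linear paths $\mathbf{B}(n)$ converge to $\mathbf{B}^{\mathrm{Strat}}$ in the $\alpha$-H\"older rough path metric, (ii) the RDE solution driven by the smooth rough path $\mathbf{B}(n)$ agrees with the classical ODE solution $Y(n)$, and (iii) the RDE solution driven by $\mathbf{B}^{\mathrm{Strat}}$ agrees with the Stratonovich SDE solution. Given these, stability of RDE solutions (Theorem \ref{thm:stability_RDEs}) immediately yields convergence of $Y(n)$ to $Y$ in the $\alpha$-H\"older metric.

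More precisely, I would first invoke Proposition \ref{geometric_brownian}, whose proof shows not only that $\mathbf{B}^{\mathrm{Strat}}\in\mathscr{C}^{\alpha}_g$ but actually establishes $\varrho_{\alpha}(\mathbf{B}(n),\mathbf{B}^{\mathrm{Strat}})\to 0$ almost surely for every $\alpha<\tfrac12$. Second, since $B(n)$ is piecewise $\mathcal{C}^1$, the canonical rough path lift $\mathbf{B}(n)=(1,\delta B(n),\mathbb{B}(n))$ coincides with the Riemann--Stieltjes iterated integral, so that the RDE
\begin{align*}
\mathrm{d}\tilde Y_t(n)=\sigma(\tilde Y_t(n))\,\mathrm{d}\mathbf{B}(n)_t,\qquad \tilde Y_0(n)=y,
\end{align*}
has its unique solution $\tilde Y(n)$ equal to the classical ODE solution $Y(n)$ of \eqref{eqn:random_ODE}; this can be checked by verifying that the candidate $(\sigma(Y(n)),D\sigma(Y(n))\sigma(Y(n)))\in\mathscr{D}^{\alpha}_{B(n)}$ makes the rough integral collapse to the Young (indeed Riemann--Stieltjes) integral, using the bound \eqref{eqn:bound_rough_integral} and the fact that $\mathbb{B}(n)_{s,t}$ is the genuine iterated integral. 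Third, Corollary \ref{cor:Ito_Strat_RP} tells us that the RDE driven by $\mathbf{B}^{\mathrm{Strat}}$ solves the Stratonovich SDE, so the two objects we want to compare are precisely the RDE solutions associated with $\mathbf{B}(n)$ and $\mathbf{B}^{\mathrm{Strat}}$.

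Finally I would apply Theorem \ref{thm:stability_RDEs} with identical initial condition $y$ and identical vector field $\sigma\in\mathcal{C}^3$ to the pair $(\mathbf{B}(n),\mathbf{B}^{\mathrm{Strat}})$, obtaining
\begin{align*}
d^{\flat}_{\alpha}\!\bigl((Y(n),\sigma(Y(n))),(Y,\sigma(Y))\bigr)\;\le\; C\,\varrho_{\alpha}(\mathbf{B}(n),\mathbf{B}^{\mathrm{Strat}})
\end{align*}
locally. Combined with step one this gives $d^{\flat}_{\alpha}\to 0$ almost surely, and in particular $\|Y(n)-Y\|_{\alpha}\to 0$ a.s., which is the claim.

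The main obstacle is really packaged in step one: ensuring that the Kolmogorov--Chentsov estimates for $\mathbb{B}(n)$ are uniform in $n$ and that the pointwise convergence $\mathbb{B}(n)_{s,t}\to\mathbb{B}^{\mathrm{Strat}}_{s,t}$ holds almost surely, as this is what upgrades the tightness-type bound to convergence in $\varrho_{\alpha}$ via an Arzel\`a--Ascoli argument. Since this is exactly what is carried out in the proof of Proposition \ref{geometric_brownian}, the Wong--Zakai theorem becomes essentially a corollary of the continuity of the It\^o--Lyons solution map on the space of geometric rough paths. A minor technical point is to justify locality of the stability bound along the (almost surely bounded) sequence $\mathbf{B}(n)$, which follows from $\sup_n\vertiii{\mathbf{B}(n)}_{\alpha'}<\infty$ almost surely together with the convergence in $\varrho_{\alpha}$.
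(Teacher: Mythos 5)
Your proof is correct and takes essentially the same route as the paper: identify $Y(n)$ with the RDE solution driven by the canonical lift $\mathbf{B}(n)$, identify the Stratonovich solution with the RDE solution driven by $\mathbf{B}^{\mathrm{Strat}}$ via Corollary \ref{cor:Ito_Strat_RP}, invoke the convergence $\varrho_{\alpha}(\mathbf{B}(n),\mathbf{B}^{\mathrm{Strat}})\to 0$ from the proof of Proposition \ref{geometric_brownian}, and conclude by the stability estimate of Theorem \ref{thm:stability_RDEs}. Your additional remarks on the locality of the stability bound and on the agreement of the rough and Riemann--Stieltjes integrals for the piecewise-linear lift are correct elaborations of points the paper leaves implicit.
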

	
	\begin{proof}
		Let $\mathbf{B}(n)$ be the canonical lift of $B(n)$ to an $\alpha$-H\"older rough path. Then the solutions $Y(n)$ to the random ordinary differential equations \eqref{eqn:random_ODE} coincide with the solutions to the rough differential equations
		\begin{align*}
			\dif Y_t(n) = \sigma(Y_t(n))\, \dif \mathbf{B}_t(n) ; \quad Y_0(n) = y.
		\end{align*}  
		From Corollary \ref{cor:Ito_Strat_RP}, the solution $Y$ of the Stratonovich stochastic differential equation coincides almost surely with the solution to the random rough differential equation 
		\begin{align*}
			\dif Y_t = \sigma(Y_t)\, \dif \mathbf{B}^{\text{Strat}}_t;  \quad Y_0 = y.
		\end{align*}
		In the proof of Proposition \ref{geometric_brownian}, we have seen that $\varrho_{\alpha}(\mathbf{B}(n), \mathbf{B}^{\text{Strat}}) \to 0$ as $n \to \infty$. From the stability result on RDE solutions (Theorem \ref{thm:stability_RDEs}), it follows that
		\begin{align*}
			d^{\flat}_{\alpha}((Y,Y'),(Y(n),Y'(n))) \to 0
		\end{align*}
		almost surely as $n \to \infty$. In particular, $Y(n) \to Y$ almost surely as $n\to \infty$ in the $\alpha$-H\"older metric.

	\end{proof}
	
	\subsection{Rough differential equations driven by a fractional Brownian motion.}
	We will come back now to our motivating problem, i.e. the question of how to define a meaningful solution to a stochastic differential equation driven by a fractional Brownian motion $B^H$. For $H > \frac{1}{2}$, we can use Young's integration theory to solve such equations. In the case $H = \frac{1}{2}$, we can either use It\=o's theory of stochastic integration or rough paths theory as we saw in the previous section. What about $H < \frac{1}{2}$? It turns out that a similar result as seen in the proof of Proposition \ref{geometric_brownian} holds for the fractional Brownian motion, too, provided $H > \frac{1}{4}$. To formulate it, let $B^H(n)$ denote a piecewise-linear approximation of $B^H$. Since $B^H(n)$ has smooth sample paths, the canonical lift $\mathbf{B}^H(n)$ to an $\alpha$-H\"older rough path exists. With much more involved arguments as in Proposition \ref{geometric_brownian} (cf. \cite{CQ02, FV10-2}), it can be shown that $(\mathbf{B}^H(n))_{n \in \N}$ is a Cauchy sequence almost surely in the space of geometric $\alpha$-H\"older rough paths for $\alpha < H$. Since the space of geometric rough paths is complete, the sequence converges to a limit $\mathbf{B}^H$ which is then called the \emph{natural lift} of the fractional Brownian motion. This result allows to study stochastic equations driven by a fractional Brownian motion with a Hurst parameter $H > \frac{1}{4}$. There are many works in which such equations are studied, the interested reader is referred to \cite[Chapter 10]{FH20} and the comments at the end of this chapter. 
	
	A natural question is whether there is a meaningful lift in the case of $H \leq \frac{1}{4}$, too. In \cite{CQ02}, it is shown that the approach we just described here does not work for $H \leq \frac{1}{4}$ because the natural lifts $\mathbf{B}^H(n)$ will diverge in this case. To the authors’ knowledge, it is currently not clear whether a meaningful rough path lift can be defined in the regime $H \in (0,1/4]$. 

		\section{Discussion and Outlook}
	
    In these notes, we gave a brief introduction to the theory of rough paths. We emphasized its application in stochastic analysis, discussing, in particular, its ability to solve stochastic differential equations driven by a fractional Brownian motion. \smallskip

    Rough path theory is nowadays a mature theory that found many applications in various fields of mathematics. At the end of these notes, we would like to discuss further branches of research in which rough paths theory plays a role. We are aware that the choice of topics we present here reflects our personal interests, and there are many important subjects we are not going to discuss here. In particular, we want to repeat that we will not touch the numerous applications of rough paths theory in the context of stochastic partial differential equation, a topic far beyond the scope of these notes.
    
		\begin{itemize}
            \item Gaussian rough paths and rough differential equations driven by Gaussian signals were studied extensively. The foundations were laid in the articles \cite{CQ02,FV10-2,FGGR16}, cf. also \cite[Chapter 15]{FV10} and \cite[Chapter 10]{FH20}. The continuity of the solution map, cf. Theorem \ref{thm:stability_RDEs}, allows to give an easy proof for the Freidlin-Wentzell large deviation principle and the Stroock-Varadhan support theorem \cite{LQZ02}. These theorems have natural extensions to stochastic differential equations driven by Gaussian rough paths, too \cite[Chapter 19]{FV10}.
            
            \item A famous theorem from H\"ormander characterizes second order hypoelliptic differential operators by stating a condition on the iterated Lie brackets of the involved vector fields \cite{Hor67}. This result has an equivalent formulation in terms of stochastic differential equations: H\"ormander's theorem says that if the vector fields of an SDE driven by a Brownian motion satisfy the bracket condition, the solution to the SDE obtains a smooth density at every time point $t > 0$. In \cite{Mal78}, Malliavin gave a proof of H\"ormander's theorem using a form of stochastic analysis on the Wiener space. Today, this calculus is called \emph{Malliavin calculus}. One core idea of Malliavin was to prove that the solution map to a stochastic differential equation is differentiable in certain directions of the noise. It turns out that the solution map of a rough differential equation enjoys a similar regularity \cite{CFV09}. This motivated the study of H\"ormander's theorem in the context of rough differential equations driven by Gaussian rough paths. In a series of papers, it was shown that H\"ormander's bracket condition is indeed sufficient for the solution to a rough differential equations driven by a Gaussian process to admit a smooth density \cite{CF10, CLL13, FR12b, CHLT15}. This density was further investigated in \cite{BOT14, BOZ15, Ina16,  BNOT16,  GOT20, IN21, GOT23, GOT22}

            \item In stochastic analysis, the Markov property ususally plays an important role. Different aspects for rough paths valued stochastic processes having the Markov property, known as \emph{Markovian rough paths}, were studied in \cite{FV08b, Lej06, Lej08, CO17, CO18, Che18}.

            \item In the classical texts about rough paths theory, one usually considers continuous paths exclusively. However, rough paths theory can be generalized to non-continuous paths, too, and is able to study stochastic processes with c\`adl\`ag sample paths such as L\'evy processes or general semimartingales, cf. \cite{FS13, FS17, FZ18, LP18, CF19}.

            \item Solving rough differential equations numerically can be a challenging problem. A natural numerical scheme to solve a rough differential equation can be deduced from the (formal) Taylor expansion of the solution, cf. \cite{Dav07} and \cite[Chapter 10]{FV10}. These schemes usually contain iterated integrals of, at least, order 2. Since these integrals are notoriously difficult to simulate, in particular if the driving signal is not a Brownian motion, several alternatives were studied. For instance, the \emph{simplified} or \emph{implementable Milstein scheme} replaces the iterated integral by a product of increments, cf. \cite{DNT12, FR14}. If this scheme is used in combination with Monte Carlo simulations, a complexity reduction can be obtained by using a multilevel Monte Carlo method, cf. \cite{BFRS16}. General Runge-Kutta schemes were studied in \cite{RR22}. Many articles study numerical schemes that are specifically designed to solve rough differential equations driven by a fractional Brownian motion and use some probabilistic properties of this process, cf. \cite{Nag15} for the (implicit) Crank-Nicolson scheme or \cite{LT19} for a first order Euler scheme with deterministic correction term.

            \item Expanding the solution to an ordinary differential equation leads to a so-called \emph{B-series}. The B-series expansion of a rough differential equation motivates the notion of a \emph{branched rough path} that was introduced by Gubinelli in \cite{Gub10}. A branched rough path does not only contain iterated integrals, but also integrated products of iterated integrals. The difference to a geometric rough path is that for branched rough paths, no product rule is assumed, i.e. the shuffle property in Corollary \ref{cor:shuffle_prop} does not hold for branched rough paths. For example, the product iterated integrals of the Brownian motion in It\=o-sense constitute a branched rough path, but not a geometric one. It turns out that there is a kind of embedding of the space of branched rough paths into a larger space of geometric rough paths, cf. \cite{HK15, BC19}. The geometry of branched rough paths was further studied in \cite{TZ20}. It turns out that the space of branched rough paths also form a continuous field of Banach spaces seen in Section \ref{sec:controlled_paths_fobs}, cf. \cite{GVRST22}.

            \item Studying the long-time behaviour of the solution to a rough differential equation is a natural problem. However, if the solution is non-Markovian, well established strategies fail. We would like to mention two approaches here that do not rely on the Markov property and were quite successful in this context. The first one was invented by Hairer to study ergodicity of stochastic differential equations driven by a fractional Brownian motion \cite{Hai05, HO07, HP11, HP13}. Hairer defines a structure that he calls \emph{stochastic dynamical system} (SDS) to study these equations. An SDS has certain similarities to L.~Arnold's notion of a \emph{random dynamical system} \cite{Arn98} (see below), but it is closer to the classical Markovian framework. In Hairer's theory, invariant measures can be similarly defined as for classical Markov processes. Existence and uniqueness of these measures can be proven with techniques (e.g. the coupling method) that are well-known in the Markovian world. Other researchers adopted this framework and studied, for instance, the convergence rate towards the equilibrium \cite{FP17, DPT19} or used it to study an estimator for the drift coefficient in an equation driven by a fractional Brownian motion \cite{PTV20}. Another approach is to study the random dynamical system (RDS) in the sense of L.~Arnold \cite{Arn98} that is generated by a stochastic differential equation. A rough differential equations generates an RDS whenever the driving rough paths valued process has stationary increments \cite{BRS17}. This is the case, for instance, for the fractional Brownian motion. In the theory of RDS, different objects can be defined that describe the long time behaviour of the solution to a rough differential equation. For example, one can study random attractors \cite{Duc22}, random center manifolds \cite{NK21} or random stable and unstable manifolds for rough delay equations \cite{GVRS22, GVR21}.

            \item As we already mentioned in Remark \ref{remark:signature}, the signature of a rough path is an important object that is still studied a lot. One interesting problem is to find an algorithm that reconstructs the path from a given signature effectively. This question was discussed e.g. in \cite{LX18, LX17, CDNX17, Gen17}. Due to its generalilty, the signature also plays a role in model-free mathematical finance, cf. \cite{LNPA19, LNPA20, KLA20, BHRS23,CGS23}. We already mentioned that the signature is an important object in machine learning and time series analysis, but we are unable to summarize the corresponding vast literature in these notes. Instead, we refer the reader to the overview articles \cite{CK16, LM22}.

            \item We saw in these lecture notes that the Sewing lemma (Lemma \ref{lemma:sewing}) is one of the cornerstones in rough paths theory. In the work \cite{Le20}, L\^{e} proves a stochastic version of it that he called \emph{Stochastic sewing lemma}, see also \cite[Section 4.6]{FH20}. With the Stochastic sewing lemma, it is possible to prove that certain Riemann-type sums involving random variables converge to a limit in a stochastic sense, taking into account stochastic cancellations. For instance, it is well-known that the It\=o-integral that integrates an adapted process with respect to a Brownian motion can be approximated by Riemann sums in probability, but this fact cannot be proven with the classical Sewing lemma that only looks at the regularity of the sample paths and neglects the probabilistic structure. With the Stochastic sewing lemma, however, this is possible. The Stochastic sewing lemma proved to be a very helpful tool and could be used in various settings, e.g. in the context of the regularization by noise phenomenon \cite{HP21, Hl22, Ger23} or for the analysis of numerical methods for singular SDEs \cite{BDG21, BDG23, DGL23}.

		\end{itemize}
	\subsection*{Acknowledgements}
	
	Both authors would like to thank the organizers of the XXV Brazilian School of Probability for their hospitality and generosity during our stay in Campinas.
	
	\bibliographystyle{alpha}
	\bibliography{refs}

\def\cprime{$'$} \def\cprime{$'$}
\begin{thebibliography}{GVRST22}

\bibitem[Arn98]{Arn98}
Ludwig Arnold.
\newblock {\em Random dynamical systems}.
\newblock Springer Monographs in Mathematics. Springer-Verlag, Berlin, 1998.

\bibitem[BC19]{BC19}
Horatio Boedihardjo and Ilya Chevyrev.
\newblock An isomorphism between branched and geometric rough paths.
\newblock {\em Ann. Inst. Henri Poincar\'{e} Probab. Stat.}, 55(2):1131--1148,
  2019.

\bibitem[BDG21]{BDG21}
Oleg Butkovsky, Konstantinos Dareiotis, and M\'{a}t\'{e} Gerencs\'{e}r.
\newblock Approximation of {SDE}s: a stochastic sewing approach.
\newblock {\em Probab. Theory Related Fields}, 181(4):975--1034, 2021.

\bibitem[BDG23]{BDG23}
Oleg Butkovsky, Konstantinos Dareiotis, and M\'{a}t\'{e} Gerencs\'{e}r.
\newblock Optimal rate of convergence for approximations of {SPDE}s with
  nonregular drift.
\newblock {\em SIAM J. Numer. Anal.}, 61(2):1103--1137, 2023.

\bibitem[BFRS16]{BFRS16}
Christian Bayer, Peter~K. Friz, Sebastian Riedel, and John Schoenmakers.
\newblock From rough path estimates to multilevel {M}onte {C}arlo.
\newblock {\em SIAM J. Numer. Anal.}, 54(3):1449--1483, 2016.

\bibitem[BGLY16]{BGLY16}
Horatio Boedihardjo, Xi~Geng, Terry~J. Lyons, and Danyu Yang.
\newblock The signature of a rough path: uniqueness.
\newblock {\em Adv. Math.}, 293:720--737, 2016.

\bibitem[BHOZ08]{BHOZ08}
Francesca Biagini, Yaozhong Hu, Bernt Oksendal, and Tusheng Zhang.
\newblock {\em Stochastic Calculus for Fractional Brownian Motion and
  Applications}.
\newblock Probability and Its Applications. Springer, 2008.

\bibitem[BHRS23]{BHRS23}
Christian Bayer, Paul~P. Hager, Sebastian Riedel, and John Schoenmakers.
\newblock Optimal stopping with signatures.
\newblock {\em Ann. Appl. Probab.}, 33(1):238--273, 2023.

\bibitem[BNOT16]{BNOT16}
F.~Baudoin, E.~Nualart, C.~Ouyang, and S.~Tindel.
\newblock On probability laws of solutions to differential systems driven by a
  fractional {B}rownian motion.
\newblock {\em Ann. Probab.}, 44(4):2554--2590, 2016.

\bibitem[BOT14]{BOT14}
Fabrice Baudoin, Cheng Ouyang, and Samy Tindel.
\newblock Upper bounds for the density of solutions to stochastic differential
  equations driven by fractional {B}rownian motions.
\newblock {\em Ann. Inst. Henri Poincar\'{e} Probab. Stat.}, 50(1):111--135,
  2014.

\bibitem[BOZ15]{BOZ15}
Fabrice Baudoin, Cheng Ouyang, and Xuejing Zhang.
\newblock Varadhan estimates for rough differential equations driven by
  fractional {B}rownian motions.
\newblock {\em Stochastic Process. Appl.}, 125(2):634--652, 2015.

\bibitem[BRS17]{BRS17}
Isma{\"e}l Bailleul, Sebastian Riedel, and Michael Scheutzow.
\newblock Random dynamical systems, rough paths and rough flows.
\newblock {\em J. Differential Equations}, 262(12):5792--5823, 2017.

\bibitem[CDNX17]{CDNX17}
Jiawei Chang, Nick Duffield, Hao Ni, and Weijun Xu.
\newblock Signature inversion for monotone paths.
\newblock {\em Electron. Commun. Probab.}, 22:Paper No. 42, 11, 2017.

\bibitem[CF10]{CF10}
Thomas Cass and Peter~K. Friz.
\newblock Densities for rough differential equations under {H}\"ormander's
  condition.
\newblock {\em Ann. of Math. (2)}, 171(3):2115--2141, 2010.

\bibitem[CF19]{CF19}
Ilya Chevyrev and Peter~K. Friz.
\newblock Canonical {RDE}s and general semimartingales as rough paths.
\newblock {\em Ann. Probab.}, 47(1):420--463, 2019.

\bibitem[CFV09]{CFV09}
Thomas Cass, Peter~K. Friz, and Nicolas~B. Victoir.
\newblock Non-degeneracy of {W}iener functionals arising from rough
  differential equations.
\newblock {\em Trans. Amer. Math. Soc.}, 361(6):3359--3371, 2009.

\bibitem[CGSF23]{CGS23}
Christa Cuchiero, Guido Gazzani, and Sara Svaluto-Ferro.
\newblock Signature-based models: theory and calibration.
\newblock {\em SIAM J. Financial Math.}, 14(3):910--957, 2023.

\bibitem[Che58]{Che58}
Kuo-Tsai Chen.
\newblock Integration of paths---a faithful representation of paths by
  non-commutative formal power series.
\newblock {\em Trans. Amer. Math. Soc.}, 89:395--407, 1958.

\bibitem[Che18]{Che18}
Ilya Chevyrev.
\newblock Random walks and {L}\'{e}vy processes as rough paths.
\newblock {\em Probab. Theory Related Fields}, 170(3-4):891--932, 2018.

\bibitem[CHLT15]{CHLT15}
Thomas Cass, Martin Hairer, Christian Litterer, and Samy Tindel.
\newblock Smoothness of the density for solutions to {G}aussian rough
  differential equations.
\newblock {\em Ann. Probab.}, 43(1):188--239, 2015.

\bibitem[CK16]{CK16}
Ilya Chevyrev and Andrey Kormilitzin.
\newblock A primer on the signature method in machine learning.
\newblock {\em arXiv preprint arXiv:1603.03788}, 2016.

\bibitem[CL16]{CL16}
Ilya Chevyrev and Terry~J. Lyons.
\newblock Characteristic functions of measures on geometric rough paths.
\newblock {\em Ann. Probab.}, 44(6):4049--4082, 2016.

\bibitem[CLL13]{CLL13}
Thomas Cass, Christian Litterer, and Terry~J. Lyons.
\newblock Integrability and tail estimates for {G}aussian rough differential
  equations.
\newblock {\em Ann. Probab.}, 41(4):3026--3050, 2013.

\bibitem[CO17]{CO17}
Thomas Cass and Marcel Ogrodnik.
\newblock Tail estimates for {M}arkovian rough paths.
\newblock {\em Ann. Probab.}, 45(4):2477--2504, 2017.

\bibitem[CO18]{CO18}
Ilya Chevyrev and Marcel Ogrodnik.
\newblock A support and density theorem for {M}arkovian rough paths.
\newblock {\em Electron. J. Probab.}, 23:Paper No. 56, 16, 2018.

\bibitem[CO22]{CO22}
Ilya Chevyrev and Harald Oberhauser.
\newblock Signature moments to characterize laws of stochastic processes.
\newblock {\em Journal of Machine Learning Research}, 23(176):1--42, 2022.

\bibitem[CQ02]{CQ02}
Laure Coutin and Zhongmin Qian.
\newblock Stochastic analysis, rough path analysis and fractional {B}rownian
  motions.
\newblock {\em Probab. Theory Related Fields}, 122(1):108--140, 2002.

\bibitem[Dav07]{Dav07}
Alexander~M. Davie.
\newblock Differential equations driven by rough paths: an approach via
  discrete approximation.
\newblock {\em Appl. Math. Res. Express. AMRX}, (2):Art. ID abm009, 40, 2007.

\bibitem[DGL23]{DGL23}
Konstantinos Dareiotis, M\'{a}t\'{e} Gerencs\'{e}r, and Khoa L\^{e}.
\newblock Quantifying a convergence theorem of {G}y\"{o}ngy and {K}rylov.
\newblock {\em Ann. Appl. Probab.}, 33(3):2291--2323, 2023.

\bibitem[Dix77]{Dix77}
Jacques Dixmier.
\newblock {\em {$C\sp*$}-algebras}.
\newblock North-Holland Publishing Co., Amsterdam-New York-Oxford, 1977.
\newblock Translated from the French by Francis Jellett, North-Holland
  Mathematical Library, Vol. 15.

\bibitem[DNT12]{DNT12}
Aur{\'e}lien Deya, Andreas Neuenkirch, and Samy Tindel.
\newblock A {M}ilstein-type scheme without {L}\'evy area terms for {SDE}s
  driven by fractional {B}rownian motion.
\newblock {\em Ann. Inst. Henri Poincar\'e Probab. Stat.}, 48(2):518--550,
  2012.

\bibitem[DPT19]{DPT19}
Aur\'{e}lien Deya, Fabien Panloup, and Samy Tindel.
\newblock Rate of convergence to equilibrium of fractional driven stochastic
  differential equations with rough multiplicative noise.
\newblock {\em Ann. Probab.}, 47(1):464--518, 2019.

\bibitem[Duc22]{Duc22}
Luu~Hoang Duc.
\newblock Random attractors for dissipative systems with rough noises.
\newblock {\em Discrete Contin. Dyn. Syst.}, 42(4):1873--1902, 2022.

\bibitem[Eva10]{Eva10}
Lawrence~C. Evans.
\newblock {\em Partial differential equations}, volume~19 of {\em Graduate
  Studies in Mathematics}.
\newblock American Mathematical Society, Providence, RI, second edition, 2010.

\bibitem[FGGR16]{FGGR16}
Peter~K. Friz, Benjamin Gess, Archil Gulisashvili, and Sebastian Riedel.
\newblock The {J}ain-{M}onrad criterion for rough paths and applications to
  random {F}ourier series and non-{M}arkovian {H}\"ormander theory.
\newblock {\em Ann. Probab.}, 44(1):684--738, 2016.

\bibitem[FH20]{FH20}
Peter~K. Friz and Martin Hairer.
\newblock {\em A Course on {R}ough {P}aths with an introduction to regularity
  structures}, volume XVI of {\em Universitext}.
\newblock Springer, second edition, 2020.

\bibitem[FP17]{FP17}
Joaquin Fontbona and Fabien Panloup.
\newblock Rate of convergence to equilibrium of fractional driven stochastic
  differential equations with some multiplicative noise.
\newblock {\em Ann. Inst. Henri Poincar\'{e} Probab. Stat.}, 53(2):503--538,
  2017.

\bibitem[FR13]{FR12b}
Peter~K. Friz and Sebastian Riedel.
\newblock Integrability of (non-)linear rough differential equations and
  integrals.
\newblock {\em Stoch. Anal. Appl.}, 31(2):336--358, 2013.

\bibitem[FR14]{FR14}
Peter Friz and Sebastian Riedel.
\newblock Convergence rates for the full {G}aussian rough paths.
\newblock {\em Ann. Inst. Henri Poincar\'{e} Probab. Stat.}, 50(1):154--194,
  2014.

\bibitem[FS13]{FS13}
Peter Friz and Atul Shekhar.
\newblock Doob-{M}eyer for rough paths.
\newblock {\em Bull. Inst. Math. Acad. Sin. (N.S.)}, 8(1):73--84, 2013.

\bibitem[FS17]{FS17}
Peter~K. Friz and Atul Shekhar.
\newblock General rough integration, {L}\'{e}vy rough paths and a
  {L}\'{e}vy-{K}intchine-type formula.
\newblock {\em Ann. Probab.}, 45(4):2707--2765, 2017.

\bibitem[FV08]{FV08b}
Peter~K. Friz and Nicolas~B. Victoir.
\newblock On uniformly subelliptic operators and stochastic area.
\newblock {\em Probab. Theory Related Fields}, 142(3-4):475--523, 2008.

\bibitem[FV10a]{FV10-2}
Peter~K. Friz and Nicolas~B. Victoir.
\newblock Differential equations driven by {G}aussian signals.
\newblock {\em Ann. Inst. Henri Poincar\'e Probab. Stat.}, 46(2):369--413,
  2010.

\bibitem[FV10b]{FV10}
Peter~K. Friz and Nicolas~B. Victoir.
\newblock {\em Multidimensional stochastic processes as rough paths}, volume
  120 of {\em Cambridge Studies in Advanced Mathematics}.
\newblock Cambridge University Press, Cambridge, 2010.
\newblock Theory and applications.

\bibitem[FZ18]{FZ18}
Peter~K. Friz and Huilin Zhang.
\newblock Differential equations driven by rough paths with jumps.
\newblock {\em J. Differential Equations}, 264(10):6226--6301, 2018.

\bibitem[Gen17]{Gen17}
Xi~Geng.
\newblock Reconstruction for the signature of a rough path.
\newblock {\em Proc. Lond. Math. Soc. (3)}, 114(3):495--526, 2017.

\bibitem[Ger23]{Ger23}
M\'{a}t\'{e} Gerencs\'{e}r.
\newblock Regularisation by regular noise.
\newblock {\em Stoch. Partial Differ. Equ. Anal. Comput.}, 11(2):714--729,
  2023.

\bibitem[GOT20]{GOT20}
Benjamin Gess, Cheng Ouyang, and Samy Tindel.
\newblock Density bounds for solutions to differential equations driven by
  {G}aussian rough paths.
\newblock {\em J. Theoret. Probab.}, 33(2):611--648, 2020.

\bibitem[GOT22]{GOT22}
Xi~Geng, Cheng Ouyang, and Samy Tindel.
\newblock Precise local estimates for differential equations driven by
  fractional {B}rownian motion: hypoelliptic case.
\newblock {\em Ann. Probab.}, 50(2):649--687, 2022.

\bibitem[GOT23]{GOT23}
Xi~Geng, Cheng Ouyang, and Samy Tindel.
\newblock Precise local estimates for differential equations driven by
  fractional {B}rownian motion: elliptic case.
\newblock {\em J. Theoret. Probab.}, 36(3):1341--1367, 2023.

\bibitem[Gub04]{Gub04}
Massimiliano Gubinelli.
\newblock Controlling rough paths.
\newblock {\em J. Funct. Anal.}, 216(1):86--140, 2004.

\bibitem[Gub10]{Gub10}
Massimiliano Gubinelli.
\newblock Ramification of rough paths.
\newblock {\em J. Differential Equations}, 248(4):693--721, 2010.

\bibitem[GVR21]{GVR21}
Mazyar Ghani~Varzaneh and Sebastian Riedel.
\newblock A dynamical theory for singular stochastic delay differential
  equations ii: nonlinear equations and invariant manifolds.
\newblock {\em Discrete Contin. Dyn. Syst.}, 26(8):4587--4612, 2021.

\bibitem[GVRS22]{GVRS22}
M.~Ghani~Varzaneh, S.~Riedel, and M.~Scheutzow.
\newblock A dynamical theory for singular stochastic delay differential
  equations {I}: linear equations and a multiplicative ergodic theorem on
  fields of {B}anach spaces.
\newblock {\em SIAM J. Appl. Dyn. Syst.}, 21(1):542--587, 2022.

\bibitem[GVRST22]{GVRST22}
Mazyar Ghani~Varzaneh, Sebastian Riedel, Alexander Schmeding, and Nikolas
  Tapia.
\newblock The geometry of controlled rough paths.
\newblock {\em arXiv preprint arXiv:2203.05946}, 2022.

\bibitem[Hai05]{Hai05}
Martin Hairer.
\newblock Ergodicity of stochastic differential equations driven by fractional
  {B}rownian motion.
\newblock {\em Ann. Probab.}, 33(2):703--758, 2005.

\bibitem[Hai13]{Hai13}
Martin Hairer.
\newblock Solving the {KPZ} equation.
\newblock {\em Ann. of Math. (2)}, 178(2):559--664, 2013.

\bibitem[Hai14]{Hai14}
Martin Hairer.
\newblock A theory of regularity structures.
\newblock {\em Invent. Math.}, 198(2):269--504, 2014.

\bibitem[Hai15]{Hai15}
Martin Hairer.
\newblock Introduction to regularity structures.
\newblock {\em Braz. J. Probab. Stat.}, 29(2):175--210, 2015.

\bibitem[HH10]{HH10}
Keisuke Hara and Masanori Hino.
\newblock Fractional order {T}aylor's series and the neo-classical inequality.
\newblock {\em Bull. Lond. Math. Soc.}, 42(3):467--477, 2010.

\bibitem[HK15]{HK15}
Martin Hairer and David Kelly.
\newblock Geometric versus non-geometric rough paths.
\newblock {\em Ann. Inst. Henri Poincar\'{e} Probab. Stat.}, 51(1):207--251,
  2015.

\bibitem[HL10]{HL10}
Ben Hambly and Terry~J. Lyons.
\newblock Uniqueness for the signature of a path of bounded variation and the
  reduced path group.
\newblock {\em Ann. of Math. (2)}, 171(1):109--167, 2010.

\bibitem[HL22]{Hl22}
Fabian~A. Harang and Chengcheng Ling.
\newblock Regularity of local times associated with {V}olterra-{L}\'{e}vy
  processes and path-wise regularization of stochastic differential equations.
\newblock {\em J. Theoret. Probab.}, 35(3):1706--1735, 2022.

\bibitem[HO07]{HO07}
M.~Hairer and A.~Ohashi.
\newblock Ergodic theory for {SDE}s with extrinsic memory.
\newblock {\em Ann. Probab.}, 35(5):1950--1977, 2007.

\bibitem[H{\"o}r67]{Hor67}
Lars H{\"o}rmander.
\newblock Hypoelliptic second order differential equations.
\newblock {\em Acta Math.}, 119:147--171, 1967.

\bibitem[HP11]{HP11}
M.~Hairer and N.~S. Pillai.
\newblock Ergodicity of hypoelliptic {SDE}s driven by fractional {B}rownian
  motion.
\newblock {\em Ann. Inst. Henri Poincar\'e Probab. Stat.}, 47(2):601--628,
  2011.

\bibitem[HP13]{HP13}
Martin Hairer and Natesh~S. Pillai.
\newblock Regularity of laws and ergodicity of hypoelliptic {SDE}s driven by
  rough paths.
\newblock {\em Ann. Probab.}, 41(4):2544--2598, 2013.

\bibitem[HP21]{HP21}
Fabian~Andsem Harang and Nicolas Perkowski.
\newblock {$C^\infty$}-regularization of {ODE}s perturbed by noise.
\newblock {\em Stoch. Dyn.}, 21(8):Paper No. 2140010, 29, 2021.

\bibitem[IN21]{IN21}
Yuzuru Inahama and Nobuaki Naganuma.
\newblock Asymptotic expansion of the density for hypoelliptic rough
  differential equation.
\newblock {\em Nagoya Math. J.}, 243:11--41, 2021.

\bibitem[Ina16]{Ina16}
Yuzuru Inahama.
\newblock Short time kernel asymptotics for rough differential equation driven
  by fractional {B}rownian motion.
\newblock {\em Electron. J. Probab.}, 21:Paper No. 34, 29, 2016.

\bibitem[KK71]{KK71}
Takayuki Kawada and Norio K\^{o}no.
\newblock A remark on nowhere differentiability of sample functions of
  {G}aussian processes.
\newblock {\em Proc. Japan Acad.}, 47(suppl, suppl. II):932--934, 1971.

\bibitem[KLA20]{KLA20}
Jasdeep Kalsi, Terry Lyons, and Imanol~Perez Arribas.
\newblock Optimal execution with rough path signatures.
\newblock {\em SIAM J. Financial Math.}, 11(2):470--493, 2020.

\bibitem[LCL07]{LCL07}
Terry~J. Lyons, Michael Caruana, and Thierry L{\'e}vy.
\newblock {\em Differential equations driven by rough paths}, volume 1908 of
  {\em Lecture Notes in Mathematics}.
\newblock Springer, Berlin, 2007.
\newblock Lectures from the 34th Summer School on Probability Theory held in
  Saint-Flour, July 6--24, 2004, With an introduction concerning the Summer
  School by Jean Picard.

\bibitem[L{\^e}20]{Le20}
Khoa L{\^e}.
\newblock A stochastic sewing lemma and applications.
\newblock {\em Electron. J. Probab.}, 25:Paper No. 38, 55, 2020.

\bibitem[Lej06]{Lej06}
Antoine Lejay.
\newblock Stochastic differential equations driven by processes generated by
  divergence form operators. {I}. {A} {W}ong-{Z}akai theorem.
\newblock {\em ESAIM Probab. Stat.}, 10:356--379, 2006.

\bibitem[Lej08]{Lej08}
Antoine Lejay.
\newblock Stochastic differential equations driven by processes generated by
  divergence form operators. {II}. {C}onvergence results.
\newblock {\em ESAIM Probab. Stat.}, 12:387--411, 2008.

\bibitem[LM22]{LM22}
Terry~J. Lyons and Andrew~D. McLeod.
\newblock Signature methods in machine learning.
\newblock {\em arXiv preprint arXiv:2206.14674}, 2022.

\bibitem[LNPA19]{LNPA19}
Terry Lyons, Sina Nejad, and Imanol Perez~Arribas.
\newblock Numerical method for model-free pricing of exotic derivatives in
  discrete time using rough path signatures.
\newblock {\em Appl. Math. Finance}, 26(6):583--597, 2019.

\bibitem[LNPA20]{LNPA20}
Terry Lyons, Sina Nejad, and Imanol Perez~Arribas.
\newblock Non-parametric pricing and hedging of exotic derivatives.
\newblock {\em Appl. Math. Finance}, 27(6):457--494, 2020.

\bibitem[LP18]{LP18}
Chong Liu and David~J. Pr\"{o}mel.
\newblock Examples of {I}t\^{o} c\`adl\`ag rough paths.
\newblock {\em Proc. Amer. Math. Soc.}, 146(11):4937--4950, 2018.

\bibitem[LQ02]{LQ02}
Terry~J. Lyons and Zhongmin Qian.
\newblock {\em System control and rough paths}.
\newblock Oxford Mathematical Monographs. Oxford University Press, Oxford,
  2002.
\newblock Oxford Science Publications.

\bibitem[LQZ02]{LQZ02}
M.~Ledoux, Z.~Qian, and T.~Zhang.
\newblock Large deviations and support theorem for diffusion processes via
  rough paths.
\newblock {\em Stochastic Process. Appl.}, 102(2):265--283, 2002.

\bibitem[LS11]{LS11}
Xue-Mei Li and Michael Scheutzow.
\newblock Lack of strong completeness for stochastic flows.
\newblock {\em Ann. Probab.}, 39(4):1407--1421, 2011.

\bibitem[LT19]{LT19}
Yanghui Liu and Samy Tindel.
\newblock First-order {E}uler scheme for {SDE}s driven by fractional {B}rownian
  motions: the rough case.
\newblock {\em Ann. Appl. Probab.}, 29(2):758--826, 2019.

\bibitem[LX17]{LX17}
Terry~J. Lyons and Weijun Xu.
\newblock Hyperbolic development and inversion of signature.
\newblock {\em J. Funct. Anal.}, 272(7):2933--2955, 2017.

\bibitem[LX18]{LX18}
Terry~J. Lyons and Weijun Xu.
\newblock Inverting the signature of a path.
\newblock {\em J. Eur. Math. Soc. (JEMS)}, 20(7):1655--1687, 2018.

\bibitem[Lyo94]{Lyo94}
Terry~J. Lyons.
\newblock Differential equations driven by rough signals. {I}. {A}n extension
  of an inequality of {L}. {C}. {Y}oung.
\newblock {\em Math. Res. Lett.}, 1(4):451--464, 1994.

\bibitem[Lyo98]{Lyo98}
Terry~J. Lyons.
\newblock Differential equations driven by rough signals.
\newblock {\em Rev. Mat. Iberoamericana}, 14(2):215--310, 1998.

\bibitem[Mal78]{Mal78}
Paul Malliavin.
\newblock Stochastic calculus of variation and hypoelliptic operators.
\newblock In {\em Proceedings of the {I}nternational {S}ymposium on
  {S}tochastic {D}ifferential {E}quations ({R}es. {I}nst. {M}ath. {S}ci.,
  {K}yoto {U}niv., {K}yoto, 1976)}, Wiley-Intersci. Publ., pages 195--263. John
  Wiley \& Sons, New York-Chichester-Brisbane, 1978.

\bibitem[MVN68]{MvN68}
Benoit~B. Mandelbrot and John~W. Van~Ness.
\newblock Fractional {B}rownian motions, fractional noises and applications.
\newblock {\em SIAM Rev.}, 10:422--437, 1968.

\bibitem[Nag15]{Nag15}
Nobuaki Naganuma.
\newblock Asymptotic error distributions of the {C}rank-{N}icholson scheme for
  {SDE}s driven by fractional {B}rownian motion.
\newblock {\em J. Theoret. Probab.}, 28(3):1082--1124, 2015.

\bibitem[NK21]{NK21}
Alexandra Neam\c{t}u and Christian Kuehn.
\newblock Rough center manifolds.
\newblock {\em SIAM J. Math. Anal.}, 53(4):3912--3957, 2021.

\bibitem[Nua06]{Nua06}
David Nualart.
\newblock {\em The {M}alliavin calculus and related topics}.
\newblock Probability and its Applications (New York). Springer-Verlag, Berlin,
  second edition, 2006.

\bibitem[PTV20]{PTV20}
Fabien Panloup, Samy Tindel, and Maylis Varvenne.
\newblock A general drift estimation procedure for stochastic differential
  equations with additive fractional noise.
\newblock {\em Electron. J. Stat.}, 14(1):1075--1136, 2020.

\bibitem[RR22]{RR22}
M.~Redmann and S.~Riedel.
\newblock Runge-{K}utta methods for rough differential equations.
\newblock {\em J. Stoch. Anal.}, 3(4):Art. 6, 24, 2022.

\bibitem[RY99]{RY99}
Daniel Revuz and Marc Yor.
\newblock {\em Continuous martingales and {B}rownian motion}, volume 293 of
  {\em Grundlehren der mathematischen Wissenschaften [Fundamental Principles of
  Mathematical Sciences]}.
\newblock Springer-Verlag, Berlin, third edition, 1999.

\bibitem[Sch54]{Sch54}
Laurent Schwartz.
\newblock Sur l'impossibilit\'{e} de la multiplication des distributions.
\newblock {\em C. R. Acad. Sci. Paris}, 239:847--848, 1954.

\bibitem[Str03]{Str03}
Robert~S. Strichartz.
\newblock {\em A guide to distribution theory and {F}ourier transforms}.
\newblock World Scientific Publishing Co., Inc., River Edge, NJ, 2003.
\newblock Reprint of the 1994 original [CRC, Boca Raton; MR1276724
  (95f:42001)].

\bibitem[TZ20]{TZ20}
Nikolas Tapia and Lorenzo Zambotti.
\newblock The geometry of the space of branched rough paths.
\newblock {\em Proc. Lond. Math. Soc. (3)}, 121(2):220--251, 2020.

\bibitem[You36]{You36}
Laurence~C. Young.
\newblock An inequality of the {H}\"older type, connected with {S}tieltjes
  integration.
\newblock {\em Acta Math.}, 67(1):251--282, 1936.

\end{thebibliography}

\end{document}